\documentclass[11pt]{amsart}
\usepackage{amssymb}
\usepackage{graphicx}
\usepackage{tensor}
\usepackage{mathrsfs}

\usepackage[margin=1in, marginpar=.6in]{geometry} % Sets all margins to 1 in.

\usepackage[usenames,dvipsnames,svgnames,table]{xcolor}
\usepackage[colorlinks = true,linkcolor = BrickRed,citecolor = Green, pdfencoding=auto, psdextra]{hyperref}
\hypersetup{linktocpage}
\usepackage{enumitem}

\allowdisplaybreaks				% To prevent a page break inside the displayed mode, use \\*

\usepackage{mathtools}
\mathtoolsset{showonlyrefs}

\definecolor{green}{rgb}{0,0.8,0} % Redefines the color green.

\newtheorem{maintheorem}{Main Theorem}
\newtheorem{theorem}{Theorem}[section]
\newtheorem{corollary}[theorem]{Corollary}
\newtheorem{lemma}[theorem]{Lemma}
\newtheorem{proposition}[theorem]{Proposition}
\theoremstyle{definition}
\newtheorem{definition}[theorem]{Definition}
\newtheorem{example}[theorem]{Example}
\theoremstyle{remark}
\newtheorem{remark}[theorem]{Remark}
\numberwithin{equation}{section}
\newcommand{\relphantom}[1]{\mathrel{\phantom{#1}}}

\makeatletter
\newcommand{\nrm}{\@ifstar{\nrmb}{\nrmi}}
\newcommand{\nrmi}[1]{\Vert{#1}\Vert}
\newcommand{\nrmb}[1]{\left\Vert{#1}\right\Vert}
\newcommand{\abs}{\@ifstar{\absb}{\absi}}
\newcommand{\absi}[1]{\vert{#1}\vert}
\newcommand{\absb}[1]{\left\vert{#1}\right\vert}
\newcommand{\brk}{\@ifstar{\brkb}{\brki}}
\newcommand{\brki}[1]{\langle{#1}\rangle}
\newcommand{\brkb}[1]{\left\langle{#1}\right\rangle}
\newcommand{\set}{\@ifstar{\setb}{\seti}}
\newcommand{\seti}[1]{\{#1\}}
\newcommand{\setb}[1]{\left\{ #1\right\}}
\makeatother

\newcommand{\td}[1]{\widetilde{#1}}
\newcommand{\br}[1]{\overline{#1}}
\newcommand{\ul}[1]{\underline{#1}}

\newcommand{\VERT}[1]{{\left\vert\kern-0.25ex\left\vert\kern-0.25ex\left\vert #1
    \right\vert\kern-0.25ex\right\vert\kern-0.25ex\right\vert}}

\DeclareMathOperator{\dist}{dist}

\DeclareMathOperator{\supp}{supp}

\let\Re\relax
\DeclareMathOperator{\Re}{Re}
\let\Im\relax
\DeclareMathOperator{\Im}{Im}

\newcommand{\aeq}{\simeq}
\newcommand{\aleq}{\lesssim}
\newcommand{\ageq}{\gtrsim}

\newcommand{\lap}{\Delta}

\newcommand{\ud}{\mathrm{d}}
\newcommand{\rd}{\partial}
\newcommand{\nb}{\nabla}

\newcommand{\bb}{\Big}

\newcommand{\0}{\emptyset}

\newcommand{\peq}{\relphantom{=}}			% makes an empty space with the same width as =
\newcommand{\alp}{\alpha}
\newcommand{\bt}{\beta}
\newcommand{\gmm}{\gamma}

\newcommand{\dlt}{\delta}

\newcommand{\eps}{\epsilon}
\newcommand{\veps}{\varepsilon}

\newcommand{\lmb}{\lambda}

\newcommand{\vphi}{\varphi}

\newcommand{\sgm}{\sigma}

\newcommand{\tht}{\theta}

\newcommand{\bfh}{{\bf h}}

\newcommand{\bfPhi}{\boldsymbol{\Phi}}

\newcommand{\bbC}{\mathbb C}

\newcommand{\bbR}{\mathbb R}
\newcommand{\bbS}{\mathbb S}

\newcommand{\bbZ}{\mathbb Z}

\newcommand{\calF}{\mathcal F}

\newcommand{\calH}{\mathcal H}

\newcommand{\calK}{\mathcal K}
\newcommand{\calL}{\mathcal L}
\newcommand{\calM}{\mathcal M}
\newcommand{\calN}{\mathcal N}

\newcommand{\calP}{\mathcal P}

\newcommand{\calS}{\mathcal S}

\newcommand{\frkc}{\mathfrak c}

\DeclareMathAlphabet{\mathbbold}{U}{bbold}{m}{n}	% Blackboard bold for numbers
\newcommand{\R}{{\mathbb R}}
\newcommand{\p}{\partial}
\newcommand{\scrS}{\mathscr S}
\renewcommand{\div}{{\mathrm{div}}}
\newcommand{\ob}{\mathrm{ob}}
\newcommand{\rst}{|}			% restriction
\usepackage{titletoc}
\newcommand{\nocontentsline}[3]{}
\let\origcontentsline\addcontentsline
\newcommand\stoptoc{\let\addcontentsline\nocontentsline}
\newcommand\resumetoc{\let\addcontentsline\origcontentsline}

\begin{document}

\title[]{The good commutator approach to global asymptotics \\ for the Schr\"odinger equation with variable coefficients}%: Title of the article
\author{Sung-Jin Oh}%
\address{Department of Mathematics, UC Berkeley, Berkeley, CA 94720, USA and School of Mathematics, KIAS, Seoul, 02455, Korea}%
\email{sjoh@math.berkeley.edu}%

\author{Federico Pasqualotto}%
\address{Department of Mathematics, UC San Diego, La Jolla, CA 92093, USA}%
\email{fpasqualotto@ucsd.edu}%

\author{Ning Tang}%
\address{Department of Mathematics, UC Berkeley, Berkeley, CA 94720, USA}%
\email{ning\_tang@math.berkeley.edu}%

%\thanks{}%
%\subjclass{}%
%\keywords{}%

%\date{\today}%
%\dedicatory{}%
%\commby{}%
% ----------------------------------------------------------------
\begin{abstract}
We present a robust physical-space approach to establish time decay and global asymptotics of solutions to variable-coefficient Schr\"odinger equations in (3+1)-dimensions. As an immediate nonlinear application, we obtain new small data global existence and asymptotics results for quasilinear Schr\"odinger equations with cubic, Hamiltonian nonlinearity, variable coefficients in their linear part, and possibly outside obstacles, even in the presence of trapped bicharacteristics (provided that they are suitably unstable).

Our approach relies on three primary ingredients.
First is the concept of a \emph{good commutator}, which extends Klainerman's classical commuting vector field method, and develops upon earlier approaches of Cuccagna--Georgiev--Visciglia and Rodnianski--Tao in the variable-coefficient case, and of Ifrim--Tataru, Ifrim--Koch--Tataru in the nonlinear case. Second, we apply Ifrim and Tataru’s \emph{testing-by-wave-packets method}, which is key for obtaining global asymptotics and handling sharp decay assumptions on the coefficients. Finally, we introduce a systematic technique for analyzing the control provided by the good commutator, termed \emph{two-scale elliptic analysis}. Together, these techniques significantly broaden the applicability of physical-space methods to a wider class of variable-coefficient nonlinear problems.

\end{abstract}
\maketitle
% ----------------------------------------------------------------
\setcounter{tocdepth}{2}
\titlecontents{section}
[0em]
{\vspace{0.4em}}% adding a vertical space before each section entry
{\contentslabel{2em}}
{\bfseries}
{\bfseries\titlerule*[0.5pc]{$\cdot$}\contentspage}
[\vspace{0.2em}]% adding a vertical space after each section entry
\titlecontents{subsection}
              [1.5em]
              {}%{\normalsize}%
              {\contentslabel{2em}}%
              {}
              {\titlerule*[0.5pc]{$\cdot$}\contentspage}%
\titlecontents{subsubsection}
              [2.5em]
              {}%{\normalsize}%
              {\contentslabel{2em}}%
              {}
              {\titlerule*[0.5pc]{$\cdot$}\contentspage}%

\tableofcontents
%----------------------------------------------------------------

\section{Introduction}\label{sec-intro}
Let $\calH$ be a time-dependent second-order elliptic operator on $\calM^{3} := \bbR^{3} \setminus \calK$,
\begin{equation} \label{eq:calH}
	\calH u = D_{\mu} (a^{\mu \nu}(t, x) D_{\nu} u) + b^{\mu}(t, x) D_{\mu} u + D_{\mu} (b^{\mu}(t, x)  u) + c(t, x) u,
\end{equation}
where $\calK$ is a compact (possibly empty) subset of $\bbR^{3}$ and $D_{\mu} = i^{-1} \rd_{\mu}$ ($\mu=1, 2, 3$). We consider the initial(-boundary, if $\calK \neq \0$) value problem associated with the time-dependent Schr\"odinger equation
\begin{equation}\label{eq:sch-gen-rough}
    \begin{cases}
        (i\p_t - \calH)u = \calN(u) + f, \quad &\hbox{ in } \calM^{1+3}, \\
        u \rst_{\set{t = 0}} = u_0 &\hbox{ on } \bbR^{3} \setminus \calK, \\
	    u = 0 &\hbox{ on } [0, \infty) \times \rd \calK.
    \end{cases}
\end{equation}
posed on a spacetime region
\begin{equation*}
	\calM^{1+3} := [0, \infty)_{t} \times \calM^{3} = [0,\infty)_t \times (\bbR^{3}_{x} \setminus \calK),
\end{equation*}
which is a submanifold of $\bbR^{1+3}$ equipped with rectangular coordinates $(t, x^{1}, x^{2}, x^{3})$.

Our aim in this paper is to present an approach to establishing the \emph{time decay} and \emph{global asymptotics} of solutions to \eqref{eq:sch-gen-rough} in $(3+1)$-dimensions based on robust \emph{physical-space} techniques (i.e., \emph{not} directly on spectral theory or Fourier methods for $\calH$ and \emph{not} on assuming any special structure such as complete integrability). It is applicable under a set of black-boxed hypotheses that have proved useful in the context of other variable-coefficient dispersive equations with a view towards nonlinear problems; specifically, see \ref{hyp:af}, \ref{hyp:iled*}, \ref{hyp:se}, and \eqref{eq:E-u} in Section~\ref{subsec-results}, as well as Remark~\ref{rem:linear}.(1)--(2). In particular, this setting includes possibly \emph{large}, \emph{time-dependent}, \emph{complex-valued} lower-order coefficients (in this case, $\calH$ would be non-symmetric), as well as those with \emph{obstacles} and \emph{(unstable) trapped geodesics}.

As a consequence of the robustness of our approach, we obtain new small data global existence results, along with precise global asymptotics, for \emph{quasilinear} problems with cubic nonlinearity, variable-coefficient linear part, and (possibly) outside of obstacles; see Main~Theorems~\ref{thm:nonlinear-1}~-~\ref{thm:nonlinear-2} below. For example, a simple-to-describe setup where our approach yields the first small data global existence result is the initial-boundary value problem \eqref{eq:sch-gen-rough} for the cubic Hamiltonian quasilinear Schr\"odinger equation outside of $\calK$,
\begin{equation*}
    (i \rd_{t} + \lap) u = D_{\mu} (G^{\mu \nu} \abs{u}^{2} D_{\nu} u) - G^{\mu \nu} u D_{\mu} \br{u} D_{\nu} u \quad \hbox{ in } [0, \infty)_{t} \times (\bbR^{3} \setminus \calK),
\end{equation*}
where $G^{\mu \nu}$ is any constant symmetric real-valued matrix, and $\calK$ consists of two disjoint compact strictly convex sets with smooth boundaries; see Example~\ref{ex:obstacle}, Main~Theorem~\ref{thm:nonlinear-1}, and Remark~\ref{rem:nonlinear1} (in particular, the term ``Hamiltonian'' refers to the matched structure of the first- and second-order terms on the right-hand side). This situation is complicated by the existence of a trapped geodesic (which, however, lies exactly on the line segment connecting the two closest points on the two obstacles and is unstable), and previous results in this setting concerned power-type nonlinearities.

Our approach relies on three main ingredients.
\begin{enumerate} [leftmargin=*, label=\arabic*.]
    \item \emph{Good commutator:} We utilize a second-order operator $\calL$, called a \emph{good commutator} in this paper, that obeys a favorable commutation property with $i \rd_{t} - \calH$ and possesses good spacetime weights (for the precise expression, see Section~\ref{subsubsec-mod-general-case} below). At the rudimentary level, it is an extension of the classical commuting vector field method of Klainerman (adapted to the setting of nonlinear Schr\"odinger equations in \cite{HT86, Const90}), which fails to apply in the variable-coefficient case. In the case of (massless) wave equations, the idea of modifying the weighted commuting operators to gain better commutation properties can be traced back to the geometric analysis of Christodoulou--Klainerman \cite{ChrKla1993}; we also refer to the more explicit approach of Alinhac \cite{Ali2003, Ali2005}.
    
    Our primary inspiration is drawn from Rodnianski--Tao \cite[Section 16]{RT15}, in which a version of this commuting operator was introduced to obtain almost optimal decay for the linear Schr\"odinger equation on three-dimensional asymptotically flat manifolds with, in particular, no zeroth order term (i.e., $c = 0$). Other key prior works are Cuccagna--Georgiev--Visciglia \cite{CGV14}, who employed a similar construction for the operator $\calH = -\lap + c$ in one space dimension to prove time decay for associated nonlinear equations with power-type nonlinearities; and Ifrim--Tataru \cite{IfrTat2019} and Ifrim--Koch--Tataru \cite{IfrKocTat2023}, who introduced and used nonlinear versions of such a commuting operator for the Benjamin--Ono and KdV equations, respectively. For earlier applications of ``good commutators'' in the context of local smoothing for the operator $\calH = -\lap + c$, we refer the reader to \cite{HO89}.

    \item \emph{Ifrim and Tataru’s testing-by-wave-packets method} \cite{IT15, IT24}: This is a powerful method for extracting the leading order asymptotics, which optimizes and balances the previous approaches based solely on ODE integration in physical space (see, e.g., \cite{LS06}) or tracking Fourier coefficients (see, e.g., \cite{HN98, KP11}). 
    
    The testing-by-wave-packets method is not only our main vehicle for obtaining sharp global asymptotics, but also a crucial ingredient for handling sharp decay assumptions for the variable coefficients; see Section~\ref{subsubsec-IT-wp} below. We apply this method with test wave packets adapted to the free Schr\"odinger flow $i \rd_{t} + \lap$, \emph{not} on $i \rd_{t} - \calH$; this version is robust and easy to implement in physical space.

    \item \emph{Two-scale elliptic analysis}: The favorable commutation property of our good commutator $\calL$ comes at a price, namely, its coercivity properties do not follow directly, unlike in the well-understood constant-coefficient case. The \emph{two-scale elliptic analysis} is a systematic technique we introduce in order to handle this difficulty. Its basic idea is to analyze the control provided by the good commutator $\calL$ by modelling it by two different elliptic operators depending on the spacetime region.
\end{enumerate}

The applicability of our framework extends beyond the Schr\"odinger equation. In a companion paper \cite{OPT25+}, we develop an analogous approach for the variable-coefficient Klein--Gordon equation in $(3+1)$-dimensions, and obtain new small data global existence and asymptotics results for quasilinear problems with \emph{quadratic} nonlinearity (as opposed to cubic in the Schr\"odinger case), variable coefficients in their linear part, both in the obstacle and non-obstacle case, even in the presence of (suitably unstable) trapped geodesics. 

\subsection{Main results} \label{subsec-results}

\subsubsection{The main theorem in the linear case, simple version (Main Theorem~\ref{thm:linear-f=0})}

We first formulate our main result in the linear case with $f = 0$, $\calK = \0$, and simplified assumptions. We consider the initial value problem \begin{equation}\label{eq:sch-gen-simple}
    \begin{cases}
        (i\p_t - \calH)u = 0, \quad &\hbox{ in } [0, \infty) \times \bbR^{3}, \\
        u \rst_{\set{t = 0}} = u_0 &\hbox{ on } \bbR^{3}.
    \end{cases}
\end{equation}

Given parameters $\eta > 0$, $s_c \in \bbZ_{\geq 0}$ and $M_{c} \in \bbZ_{\geq 0}$, we make the following assumptions:
\begin{enumerate}[label=$(\mathrm{AF}_{0})$]
\item (Asymptotic flatness)\label{hyp:af-0} We assume $a^{\mu\nu}$ to be real and positive definite. For simplicity, we set $\bfh^{\mu\nu} := a^{\mu\nu} - \delta^{\mu\nu}$. Moreover, for some positive number $A_{c}$, the following holds true for all $(t, x) \in [0, \infty) \times \calM^{3}$
\begin{align*}
	\abs{\rd_{x}^{(\leq M_c)} (\brk{x} \rd_{x})^{(\leq 3)}\bfh^{\mu \nu} (t, x)}
	+ \abs{\rd_{x}^{(\leq M_c)}\calS  \bfh^{\mu \nu} (t, x)} & \leq A_{c} \brk{x}^{-2 \eta}, \\
	\abs{  \rd_{x}^{(\leq M_c)} (\brk{x} \rd_{x})^{(\leq 2)} b^{\mu}(t, x)}
	+ \abs{\rd_{x}^{(\leq M_c)}\calS b^{\mu}(t, x)}& \leq A_{c}\brk{x}^{-1 - 2 \eta}, \\
	\abs{\rd_{x}^{(\leq M_c)}(\brk{x} \rd_{x})^{(\leq 1)}c(t, x)} + \abs{\rd_{x}^{(\leq M_c)}\calS c(t, x)} &\leq A_{c} \brk{x}^{-2 - 2 \eta},
\end{align*}
where we denote $\calS := (t \rd_{t} + x \cdot \rd_{x})$.
We furthermore require the function $r := \abs{x} = \sqrt{\sum_{1\leq \mu\leq 3} (x^\mu)^2}$ to satisfy two additional assumptions :
\begin{align*}
	\bb|\p_x^{(\leq M_c)}(\brk{x}\p_x)^{(\leq 2)} (\bfh^{\mu \nu} (t, x) \rd_{\mu} r \rd_{\nu} r) \bb| \leq A_{c} t \brk{x}^{-2-2\eta} \quad \hbox{ for } \abs{x} > \brk{t}^{\frac{1}{2}}, \label{eq:af-extra1-0}\tag{AF${}_{0}$-e1}\\
    \bb|\bfh^{\mu \nu} (t, x) \rd_{\mu} r \xi_\nu \bb| \leq A_{c}|\xi| t^{\frac12} \brk{x}^{-1-2\eta} \quad \hbox{ for } \abs{x} > \brk{t}^{\frac{1}{2}}, \quad \xi \in \R^3.\label{eq:af-extra2-0}\tag{AF${}_{0}$-e2}
\end{align*}
\end{enumerate}

\begin{enumerate}[label=$(\mathrm{ILED}_{0})$]
\item (Integrated local energy decay)\label{hyp:iled-0} For every $t_{0} \geq 0$, $T > 0$, and $N = 0, 1, \ldots, M_{c}$, the following holds:
For every $F:[t_{0}, t_{0} + T] \times \bbR^{3} \to \bbC$ such that
\begin{equation*}
 \supp F \subseteq \set{(t, x) \in [t_0, t_{0}+T] \times \bbR^{3}: \abs{x} < R}, \quad
 \nrm{\rd_{x}^{(\leq N+s_{c})} F}_{L^{2} L^{2}([t_0, t_0 + T] \times \set{\abs{x} < R})} < + \infty,
\end{equation*}
there exists a unique forward (i.e., $\psi(t_{0}) = 0$) solution $\psi$ to $(i\p_t - \calH) \psi = F$ in $[t_0, t_{0} + T] \times \bbR^{3}$ in $C_{t}([t_{0}, t_{0} + T]; H^{N})$. Moreover, the solution $\psi$ satisfies
\begin{equation*}
	\nrm{\rd_{x}^{(\leq N)} \psi}_{L^{2} L^{2}([t_0, t_0 + T] \times \set{\abs{x} < R})} \leq C_{R, N} \nrm{\rd_{x}^{(\leq N+s_{c})} F}_{L^{2} L^{2}([t_0, t_0 + T] \times \set{\abs{x} < R})},
\end{equation*}
where $C_{R, N}$ is independent of $t_0, T$.
\end{enumerate}

\begin{enumerate}[label=$(\mathrm{Th}_{0})$]
\item \label{hyp:se-0} (No threshold eigenvalue/resonance) For every $t_{0} \geq 0$,
$\calH \rst_{t=t_{0}}$ is invertible as a map $\dot{H}^{1}(\bbR^{3}) \to \dot{H}^{-1}(\bbR^{3})$ such that the operator norm of the inverse has a uniform bound independent of $t_0$.
\end{enumerate}

We view assumptions \ref{hyp:iled-0} and \ref{hyp:se-0} as a \emph{black-box}, and we emphasize that the results below do not depend on the specific technique used to prove them. We also emphasize that our approach does \emph{not} require the self-adjointness of $\calH$, nor any spectral theory, distorted Fourier transform, or resolvent estimates. (Nevertheless, these concepts can be used, and in fact are often useful, for establishing the hypotheses \ref{hyp:iled-0} and \ref{hyp:se-0}). A further discussion of these assumptions will be given in Remark~\ref{rem:linear} below. 

In its simplest form, our main result concerning the linear equation \eqref{eq:sch-gen-simple} is as follows.

\begin{maintheorem}[The linear case, $f = 0$, simplified assumptions] \label{thm:linear-f=0}
Suppose that $u$ is a solution to~\eqref{eq:sch-gen-simple} with Schwartz initial data $u_{0}$, and assume that \ref{hyp:af-0}, \ref{hyp:iled-0}, and \ref{hyp:se-0} hold true with
\begin{equation*}
	\eta > 0, \quad s_c \in \bbZ_{\geq 0}, \quad M_{c} \in \bbZ_{\geq 0}.
\end{equation*}
Then there exists $c_{0} > 0$ and $s_c' \geq s_c$ such that the following holds.

Assume that $M_c$ and $M_0$ satisfy the bounds \[
    M_c \geq M_0, \quad M_0 \geq N + c_0 s_c'
\]
for some $N \in \bbZ_{\geq 0}$. Then for any solution $u$ to \eqref{eq:sch-gen-simple} with initial data bound \begin{equation}\label{eq:D-u}
\tag{IDB}
    \|\brk{x}^2 \p_x^{(\leq M_0)} u_0\|_{L^2} \leq D_0
\end{equation}
and the a-priori (unweighted) energy bound
\begin{equation} \label{eq:E-u} \tag{EB}
	\sup_{t \in [0, \infty)} \nrm{\rd_{x}^{(\leq M_{0}+2)} u(t, \cdot)}_{L^{2}} \leq A_{0},
\end{equation}
we have the following:
\begin{enumerate}[leftmargin=*, label=\arabic*. ]
\item (Sharp time decay of the uniform norm) Denote $D := D_0 + A_0$. We have
\begin{equation} \label{eq:linear-decay}
	\sup_{x \in \br{\calM^{3}}} \abs{\rd_{x}^{(\leq N)} u(t, x)} \aleq D \brk{t}^{-\frac{3}{2}}.
\end{equation}

\item (Sharp global asymptotics)
Denote $\eta_\sharp := \min\{\eta, \frac14 - \}$.
There exists $\gmm_\infty(v) \in L^\infty$ such that for any multi-index $\alpha$ with $|\alpha| \aleq N$, any $x \in \br{\calM^{3}}$
\begin{equation} \label{eq:linear-asymp}
    \p_x^\alpha u(t, x) = t^{-\frac{3}{2}} e^{i \frac{\abs{x}^{2}}{4 t}} \sum_{|\beta_1| + |\beta_2| = |\alpha|}(\tfrac{ix}{2t})^{\beta_1} \gmm_\infty(\tfrac{x}{t}) (\p_x^{\beta_2}\varphi_{0})(t, x) + O_{L^\infty}(D t^{-\frac{3}{2} - \frac14\eta_\sharp}),
\end{equation}
where $\varphi_{0}$ is the solution to $\calH \varphi_{0} = 0$ with $\varphi_{0} \to 1$ as $x \to \infty$ (see Lemma~\ref{lem:varphi0} and its Corollary for its existence, uniqueness and regularity), and
\begin{equation*}
	\sup_{v \in \bbR^{3}} \abs{\brk{v}^N\gmm_\infty(v)} \aleq D.
\end{equation*}
We refer to $\gmm_\infty = \gmm_\infty(v)$ as the \emph{radiation field} associated with $u$.
\end{enumerate}
\end{maintheorem}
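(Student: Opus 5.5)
We build the proof on the three ingredients from the introduction, taking \ref{hyp:iled-0}, \ref{hyp:se-0} and \eqref{eq:E-u} as black boxes. The first step constructs the good commutator. In the constant-coefficient case the relevant operator is $\calL_0 = x^2 + 4it(\ldots)$, i.e.\ the square of the Galilean vector field $J = x + 2it\nabla$, conjugated suitably: $J^2 = e^{i|x|^2/4t}(2it)^2\Delta e^{-i|x|^2/4t}$. Guided by this, we set
\[
\calL := e^{i\frac{|x|^2}{4t}}\,(4t^2 \calH)\,e^{-i\frac{|x|^2}{4t}},
\]
modified by lower-order corrections built from $\calS$ and from the coefficients. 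The corrections are chosen so that $[i\rd_t - \calH, \calL]$ is a sum of terms carrying a factor $\rd_t a$, $\calS a$, or $\brk{x}^{-1-2\eta}$-decaying coefficients. The weighted $\calS$-bounds in \ref{hyp:af-0}, together with \eqref{eq:af-extra1-0}--\eqref{eq:af-extra2-0} for the radial components in the region $|x| > \brk{t}^{1/2}$, are exactly what is needed to make these error terms integrable in time.

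The second step proves that $\nrm{\calL u(t)}_{L^2}$, and $\nrm{\rd_x^{(\le N')}\calL u}_{L^2}$, grow at most like $\brk{t}^{\delta}$ with $\delta\ll\eta$. We take the energy estimate for $\calL u$. The commutator errors split into two parts. Far from the origin they are bounded by $\brk{x}^{-1-2\eta}$ times weighted quantities, which the $\brk{x}^2$-weighted data norm \eqref{eq:D-u} controls at $t=0$. Near the origin they are compactly supported in the $x$ variable, so we treat them as the forcing $F$ in \ref{hyp:iled-0}. The resulting loss of $s_c$ derivatives is absorbed by taking $M_0 \ge N + c_0 s_c'$ and using \eqref{eq:E-u}. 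A bootstrap then closes the estimate.

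The third step, which we expect to be the main obstacle, is the two-scale elliptic analysis: extracting pointwise decay from bounds on $u$ and $\calL u$. In the region $|x|\gtrsim \brk{t}^{1/2}$ we model $\calL$ by $|x - 2it\nabla|^2$-type operators with the conjugated phase. A Klainerman--Sobolev-type elliptic estimate there gives $|u|\lesssim t^{-3/2}(\nrm{\calL u}+\nrm{u})$ up to weight losses. In the region $|x|\lesssim \brk{t}^{1/2}$ the phase is essentially trivial and $\calL \approx 4t^2\calH$. We then invert $\calH$ using \ref{hyp:se-0}, with $\dot H^1\to\dot H^{-1}$ bounds and Hardy/Sobolev, writing $u = \lambda(t)\varphi_0 + (\text{remainder})$. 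Here $\varphi_0$ is the zero-energy solution from Lemma~\ref{lem:varphi0}, and the remainder is small by $t^{-2}\nrm{\calL u}$. The delicate points are, first, matching the two regions across the scale $|x|\sim t^{1/2}$, and second, controlling the coefficient $\lambda(t)$ by the outer analysis; this is why the asymptotics involve $\varphi_0$. Commuting with $\rd_x$ and repeating gives \eqref{eq:linear-decay}, possibly losing $t^{\delta}$ at first. The final step will recover the loss.

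The last step applies Ifrim--Tataru testing by free-Schr\"odinger wave packets $\Psi_v$ localized along the ray $x = vt$. We define $\gamma(t,v) := \brk{u,\Psi_v}$ and derive an ODE $\dot\gamma = O(t^{-1-\eta_\sharp/4}D)$. Its error terms come from three sources: the coefficient perturbations (using the decay in \ref{hyp:af-0} along $|x|\sim t$), the packet approximation error (controlled by $\calL u$), and the region $|v|\lesssim t^{-1/2}$, where $\varphi_0$ enters. Integrating the ODE gives the limit $\gamma_\infty$. The difference $u - t^{-3/2}e^{i|x|^2/4t}\gamma\varphi_0$ is then estimated by the elliptic bounds, which closes the bootstrap and removes the $t^\delta$ loss, yielding \eqref{eq:linear-decay}. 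Finally, differentiating the phase produces the $(ix/2t)^{\beta_1}$ factors in \eqref{eq:linear-asymp}, and the bound $\brk{v}^N\gamma_\infty$ follows from the $\rd_x^{(\le N)}$ estimates.
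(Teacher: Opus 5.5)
Your ingredients are the right ones. In fact your $\calL$ is exactly the paper's, $\calL = e^{i\frac{|x|^2}{4t}}(4t^2\calH)e^{-i\frac{|x|^2}{4t}}$, and no corrections are needed (cf.\ Proposition~\ref{prop:good-comm}). However, the analytic core, namely Step~2 and how it feeds Steps~3--4, does not work as stated.

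First, the growth rate you claim for $\nrm{\calL u}_{L^2}$ is not available. The commutator contains $t\,c_{\mathrm{gc}}u$ with $c_{\mathrm{gc}} = O(\brk{r}^{-2-2\eta})$. Even granting the sharp bound $|u|\aleq t^{-3/2}$ (generically attained on $\{r\ll t\}$), its norm in $\brk{x}^{-1}L^2L^2+L^1L^2[T_0,T]$ is of size $T^{\frac14-\eta}$. On $\{r\aleq t^{1/2}\}$ one has $\nrm{t\brk{x}^{-1-2\eta}u}_{L^2_x}\aleq t^{-\frac14-\eta}$, to be square-integrated in $t$. On $\{r\ageq t^{1/2}\}$ one has $\nrm{t\brk{x}^{-2-2\eta}u}_{L^2_x}\aleq t^{-\frac34-\eta}$, to be integrated in $t$. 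No other splitting radius does better. So these errors are not integrable in time, and $\brk{t}^{\delta}$ with $\delta\ll\eta$ is out of reach. The proof must be arranged so that $t^{\frac14-\eta}$ growth suffices. It does, because the wave-packet estimates (Propositions~\ref{prop:diff-u-gmmvarphi0-wo-1/tdv-involved} and \ref{prop:dotgmm_decay}) carry the factor $t^{-\frac14+\frac{\eta}{2}}$ in front of $\nrm{\calL u}_{L^2}$.

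Second, the region where the errors must be placed in the local-energy norm has radius $t^{1/2}$, which grows. \ref{hyp:iled-0}, whose constants $C_{R,N}$ refer to a fixed ball, cannot be used there. Splitting at a fixed radius with the remainder in $L^1L^2$ gives $T^{\frac12}$ growth, which destroys the wave-packet step. What is needed is the global dual estimate \ref{hyp:iled*} with $\beta_c=0$. The paper derives it from \ref{hyp:iled-0} together with Tataru's global-in-time local smoothing for the small perturbation $\calH_{\mathrm{out}}$ away from the origin (Propositions~\ref{prop:kato-small}, \ref{prop:iled2iled-star} and \ref{prop:iled*-L1L2+LE*}).

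Third, the far-region errors are not controlled by the weighted data at $t=0$. They involve $L^{(2)}u$, $L^{(1)}u$, $u$ at time $t$, which must be converted back into $\calL u$ and $u$ by the two-scale elliptic estimates. Your Step~3 is an input to Step~2, not a consequence of it.

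More fundamentally, the scheme is circular, and a bootstrap cannot break the circle. The energy estimate for $\calL u$ needs the decay of $u$, and the decay of $u$ needs the bound on $\calL u$. The linear problem has no small parameter. Each use of the local-energy estimate loses $s_c'$ derivatives, so a bootstrap hypothesis at regularity $N+s_c'$ is only improved at regularity $N$.

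The paper runs a finite descent instead. The base case (Lemma~\ref{lem:base_iteration}: commute $x$ and $x^2$ and use crude energy estimates with \eqref{eq:D-u}, \eqref{eq:E-u}) gives $\nrm{\rd_x^{(\le M_0)}\calL u}_{L^2}\aleq D\brk{t}^2$ and $\nrm{\rd_x^{(\le M_0)}u}_{L^\infty}\aleq A_0$. That is, $\nrm{\calL u}_{L^2}\le At^{\frac14-\eta+\alpha}$ and $\nrm{u}_{L^\infty}\le At^{-\frac32+\alpha}$ with $\alpha=2$. Each step (Lemma~\ref{lem:main_iteration}) then does two things:
\begin{enumerate}
\item The wave-packet approximation and evolution estimates, whose right-hand sides are $t^{-\frac14+\frac{\eta}{2}}\nrm{\calL u}_{L^2}+t^{\frac32-\frac{\eta}{4}}\nrm{u}_{L^\infty}$ (and $t^{-1}$ times this for $\dot\gamma$), return $\nrm{u}_{L^\infty}\aleq t^{-\frac32+\alpha-\frac{\eta}{4}}$ at the same regularity.
\item Proposition~\ref{prop:est-calLu} then returns $\calL u$ at the improved exponent with $s_c'$ fewer derivatives.
\end{enumerate}
The uniform gain of $\eta/4$ per step is what makes the process terminate after $c_0=\lceil 8/\eta\rceil$ steps. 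This is exactly where $M_0\ge N+c_0s_c'$ comes from; you quote that condition but have no mechanism to produce it. Without this gain-and-loss bookkeeping, Steps~2--4 only reproduce the decay under the assumption that it already holds.
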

\begin{proof}
    This theorem is an immediate corollary of Main Theorem~\ref{thm:linear}.
\end{proof}

In the constant-coefficient case $\calH = - \lap$, \eqref{eq:linear-asymp} is precisely the \emph{Fraunhofer formula} (see for instance \cite[Lemma 4.12]{KV13}), where $\varphi_{0} = 1$ and $\gmm(v)$ is given in terms of the Fourier transform of the initial data (more precisely, at frequency $\xi$ whose group velocity $\rd_{\xi} \abs{\xi}^{2} = 2 \xi$ equals $v$). In the variable-coefficient case, we refer to the recent work of Gell-Redman--Gomes--Hassell \cite{GRComHa2022, GRComHas2023} (for coefficients settling down to the constant-coefficient case) and Looi--Sussman \cite{LooiSussman} (in the time-independent case), in which refined global asymptotics were proved using microlocal-analysis tools. For further prior works on the variable-coefficient case (with different hypotheses on $\calP$), we refer to Section~\ref{subsubsec-refs-var-coeff}. 

As emphasized earlier, the main novelty of the present paper lies in the robustness of its argument, which is \emph{readily adaptable to nonlinear problems}; see already Main~Theorems~\ref{thm:nonlinear-1} and \ref{thm:nonlinear-2} below. At this point, we pause to make some remarks concerning Main~Theorem~\ref{thm:linear-f=0}.

\begin{remark} \label{rem:linear}
We comment on the assumptions and conclusions of Main~Theorem~\ref{thm:linear-f=0}.
 \begin{enumerate}[leftmargin=*]
\item Our setup based upon the hypotheses \ref{hyp:af-0}, \ref{hyp:iled-0}, and \ref{hyp:se-0} is directly inspired by what proved useful in the context of variable-coefficient dispersive equations with a view towards nonlinear problems. 

Identification of \emph{asymptotic flatness} such as \ref{hyp:af-0} and a (fairly weak) \emph{global $L^{2}$-type decay estimate} of the form \ref{hyp:iled-0} as the fundamental ingredients for obtaining stronger dispersive estimates goes back to the work of Tataru \cite{Tataru}. The main result of \cite{Tataru} was global-in-time dispersive estimates (as opposed to local-in-time in the prior works \cite{StaTat2002, RobZui2005, HasTaoWun2005}) for an outgoing wave-packet parametrix for the variable-coefficient Schr\"odinger equation under a weak asymptotic flatness assumption, which was then combined with the global-in-time local smoothing estimate (a strong version of \ref{hyp:iled-0}, see Item~2 below) to establish global-in-time Strichartz estimates. 

Another major influence on our work is the literature on variable-coefficient (massless) wave equations, where this paradigm is more extensively developed. In addition to the construction of outgoing parametrices and global-in-time Strichartz estimates in \cite{MetTat2012, MetTat2009}, we highlight the $r^{p}$ method of Dafermos--Rodnianski \cite{DR10}, which provides a physical-space proof of sharp uniform time decay; see Moschidis \cite{Mos2016} for a further generalization. Additionally, we refer to the local pointwise decay results of Tataru \cite{Tat2013} and Metcalfe--Tataru--Tohaneanu \cite{MetTatToh2012}, as well as the vector field methods developed by Oliver \cite{Oli2016} and Oliver--Sterbenz \cite{OliSte2020}.

Our adoption of \ref{hyp:se-0} as a quantitative, time-dependent alternative to the standard assumption of regularity (i.e., no eigenvalue or resonance) at the threshold energy in the time-independent self-adjoint case originates from Metcalfe--Tataru--Tohaneanu \cite{MetTatToh2012, MetTatToh2017} (first as what is referred to as stationary local energy decay, and second more explicitly), and Metcalfe--Sterbenz--Tataru \cite[Definition 2.9]{MST20}.

\item \ref{hyp:iled-0} may be considered a relaxation of the \emph{global-in-time local smoothing estimate} for the Sch\-r\"odinger equation (see also Proposition~\ref{prop:kato-small}). For the constant-coefficient Schr\"odinger equation, this estimate goes back to \cite{ConSau1987, Sjo1987, Veg1988}; in the variable-coefficient case, it was established in \cite{Tataru} for small asymptotically flat perturbations of the constant-coefficient case, and in \cite{RodTao2007} for the Laplace--Beltrami operator for a metric with a possibly large compactly supported perturbation with a non-trapping assumption (in particular, $c = 0$). For systematic studies of the global-in-time local smoothing estimate for variable-coefficient Schr\"odinger equations, we refer the reader to Marzuola--Metcalfe--Tataru \cite{MMT08}, as well as the work of Rodnianski--Tao \cite{RT15}.

While our assumption \ref{hyp:iled-0} also \emph{asserts $L^2$-integrability of local energy}, it differs from the global-in-time local smoothing estimate in that \emph{it does not assert smoothing of the solution} (compared to the data); in fact, a derivative loss of any order is allowed. For this reason, we call this estimate \emph{integrated local energy decay}, not local smoothing. Our setup makes it straightforward to incorporate the presence of trapped bicharacteristics, in which case some derivative loss (compared to local smoothing) is necessary; see the original paper of Doi \cite{Doi2000} and the textbook treatment in \cite[Theorem 7.1]{DZ19}. We refer to Examples~\ref{ex:lap-small-pert} and \ref{ex:non-trapping} below (as well as Example~\ref{ex:obstacle} for obstacle problems) for examples of operators for which this estimate could be verified, as well as the references cited above.

The analysis of resolvents is often a useful tool for establishing such estimates; see, for instance, \cite{MMT08, RT15, DatVas2012-gluing} in general settings, as well as the specific references in Examples~\ref{ex:lap-small-pert}--\ref{ex:obstacle} below. In particular, for high-frequency resolvent estimates in the presence of suitably unstable (i.e., hyperbolic) trapped bicharacteristics, as well as the more refined study of the associated scattering resonances, we refer to the original works of Gérard--Sjöstrand \cite{GerSjo1988}, Wunsch--Zworski \cite{WunZwo2011}, Dyatlov \cite{Dya2015, Dya2016} and Nonnenmacher--Zworski \cite{NonZwo2015}, and also the textbook treatment in \cite[Chapter 6-7]{DZ19} and the references therein. 

\item In \ref{hyp:af-0}, the first three assumptions on $\bfh$, $b$, and $c$ are sharp, in the sense that the asymptotics \eqref{eq:linear-asymp} may fail if $\eta = 0$: Take, for instance, $b^{r} = (d-2) r^{-1}$ and the remaining coefficients to be zero, which corresponds to the radial Schr\"odinger equation in a different dimension $d$. We remark already that the application of the Ifrim--Tataru testing-by-wave-packets method is crucial for handling all $\eta > 0$; see Section~\ref{subsec-ideas}.

\item The number of weighted derivatives assumed in \ref{hyp:af-0} is primarily motivated by Proposition~\ref{prop:good-comm} (and weighted elliptic regularity), while \eqref{eq:af-extra2-0} is not used in this result. On the other hand, \eqref{eq:af-extra2-0} is only used at its zeroth order in the proof of Proposition~\ref{prop:dotgmm_decay}.

\item The two additional assumptions in \ref{hyp:af-0} can be motivated as follows. Consider the Hamiltonian $H(x, \xi) = h^{\mu\nu}(x)\xi_\mu \xi_\nu$.
The first assumption is to require $\psi = r$ to solve the associated Hamilton-Jacobi equation (eikonal equation) $0 = H(x,\nabla \psi) = h^{\mu\nu}\p_\mu\psi\p_\nu\psi$ up to sufficient precision. The second assumption is a refined characterization of the difference between $X_H r$ and zero, where the Hamilton vector field $X_H$ is given by \[
    X_H = 2h^{\mu\nu}\xi_\mu \p_{x^\nu} - (\p_{x^\lambda}h^{\mu\nu})(x) \xi_\mu \xi_\nu \p_{\xi_\lambda}.
\]
In addition, when the $a^{\mu\nu}$ is written in the scattering form
\begin{equation*}
    a = a^{rr}\p_r \otimes \p_r + a^{r\tht^C} (\p_r \otimes \p_{\tht^C} + \p_{\tht^C} \otimes \p_r) + a^{AB} \p_{\tht^{A}} \otimes \p_{\tht^{B}},
\end{equation*}
the extra assumptions \eqref{eq:af-extra1-0} and \eqref{eq:af-extra2-0} may be also stated easily as \begin{align*}
    |\p_x^{(\leq M_c)} (\brk{x}\p_x)^{(\leq 2)} (a^{rr} - 1)| \aleq A_c t r^{-2-2\eta}, \quad   |a^{r\theta^C}\xi_C| \aleq A_c |\xi| t^{\frac12} r^{-1-2\eta}, \hbox{ for } r > t^{\frac12}.
\end{align*}

\item We formulate the theorem under the global bound \eqref{eq:E-u}, which, in most cases of interest, can typically be derived from the assumption \eqref{eq:D-u} with the constant $M_0$ replaced by $M_0 + 2$.
\end{enumerate}

\end{remark}

\subsubsection{The main theorem in the linear case, general version (Main Theorem~\ref{thm:linear})}
We now consider the general linear case,
\begin{equation}\label{eq:sch-gen}
    \begin{cases}
        (i\p_t - \calH)u = f, \quad &\hbox{ in } \calM^{1+3}, \\
        u \rst_{\set{t = 0}} = u_0 &\hbox{ on } \bbR^{3} \setminus \calK, \\
	    u = 0 &\hbox{ on } [0, \infty) \times \rd \calK.
    \end{cases}
\end{equation}

When an obstacle $\calK$ is present, we need to modify the existing assumptions. More specifically, it is natural to commute with $\partial_{t}$ instead of $\partial_{x}$, since only the former preserves the boundary condition on $\rd \calK$ in general. Given parameters $\eta > 0$, $M_{c} \in \bbZ_{\geq 0}$, and $A_{c} > 0$, we make the following assumptions in a way that unifies all cases:
\begin{enumerate}[label=$(\mathrm{AF})$]
\item (Asymptotic flatness)\label{hyp:af}
When $\calK = \0$, we assume \ref{hyp:af-0}. When $\calK \neq \0$, assume that
\begin{equation*}
    \rd \calK \hbox{ is a } C^{M_{c}+2} \hbox{ boundary},
\end{equation*}
that $a^{\mu \nu}$ is real and positive definite, and that the following holds true for all $(t, x) \in \br{\calM}$:
\begin{equation}
    \begin{array}{ll}
    \abs{\rd_{t}^{(\leq \lceil{\frac{M_c}{2}\rceil})} \rd_{x}^{(\leq M_c)} (\brk{x} \rd_{x})^{(\leq 3)}\bfh^{\mu \nu} (t, x)}
	+ \abs{\rd_{t}^{(\leq \lceil\frac{M_{c}}{2}\rceil)} \rd_{x}^{(\leq M_c)}\calS  \bfh^{\mu \nu} (t, x)} & \leq A_{c} \brk{x}^{-2 \eta}, \\
	\abs{\rd_{t}^{(\leq \lceil{\frac{M_c}{2}\rceil})} \rd_{x}^{(\leq M_c)} (\brk{x} \rd_{x})^{(\leq 2)} b^{\mu}(t, x)}
	+ \abs{\rd_{t}^{(\leq \lceil\frac{M_{c}}{2}\rceil)} \rd_{x}^{(\leq M_c)}\calS b^{\mu}(t, x)}& \leq A_{c}\brk{x}^{-1 - 2 \eta}, \\
	\abs{\rd_{t}^{(\leq \lceil{\frac{M_c}{2}\rceil})} \rd_{x}^{(\leq M_c)}(\brk{x} \rd_{x})^{(\leq 1)}c(t, x)} + \abs{\rd_{t}^{(\leq \lceil\frac{M_{c}}{2}\rceil)} \rd_{x}^{(\leq M_c)}\calS c(t, x)} &\leq A_{c} \brk{x}^{-2 - 2 \eta},
    \end{array}
\end{equation}
as well as the following extra assumptions for $r$:
\begin{align*}
	\bb|\rd_{t}^{(\leq \lceil\frac{M_{c}}{2}\rceil)} \p_x^{(\leq M_c)}(\brk{x}\p_x)^{(\leq 2)}(\bfh^{\mu \nu} (t, x) \rd_{\mu} r \rd_{\nu} r) \bb| \leq A_{c} t \brk{x}^{-2-2\eta} \quad \hbox{ for } \abs{x} > \brk{t}^{\frac{1}{2}}, \label{eq:af-extra1}\tag{AF-ex1}\\
    \bb| \bfh^{\mu \nu} (t, x) \rd_{\mu} r \xi_\nu \bb| \leq A_{c}|\xi| t^{\frac12} \brk{x}^{-1-2\eta} \quad \hbox{ for } \abs{x} > \brk{t}^{\frac{1}{2}}, \quad \xi \in \R^3.\label{eq:af-extra2}\tag{AF-ex2}
\end{align*}
\end{enumerate}

\begin{enumerate}[label=$(\mathrm{Th})$]
\item \label{hyp:se} (No threshold eigenvalue/resonance)
When $\calK = \0$, we assume \ref{hyp:se-0}. When $\calK \neq \0$, we assume instead that for every $t_{0} \geq 0$, $\calH \rst_{t=t_{0}}$ is invertible as a map $\dot{H}^{1}(\bbR^{3} \setminus \calK) \to \dot{H}^{-1}(\bbR^{3} \setminus \calK)$ such that the operator norm of the inverse is uniformly bounded by $A_{c}$ independent of $t_0$.
Here, $\dot{H}^{1}(\bbR^{3} \setminus \calK)$ is the closure of $\set{v = \td{v} |_{\bbR^{3} \setminus \calK}: \td{v} \in \calS(\bbR^{3}), \, \td{v} = 0 \hbox{ on } \p\calK}$ with respect to $\nrm{v}_{\dot{H}^{1}} = \nrm{\rd_{x} v}_{L^{2}(\bbR^{3} \setminus \calK)}$, and $\dot{H}^{-1}(\bbR^{3} \setminus \calK)$ is the dual of $\dot{H}^{1}(\bbR^{3} \setminus \calK)$.
\end{enumerate}

Next, we make an assumption on the forcing term $f$, using $D_{f} > 0$ as a quantifier for the size of the source:
\begin{enumerate}
[label=$(\mathrm{F})$]
    \item (Estimate on the source term)\label{hyp:f}
    When $\calK = \0$, we assume the following for all $0 \leq N \leq M_c$: \[
        \|\brk{x}^2\p_x^{(\leq N+1)} f\|_{L^1L^2[0, t]} + t^{-(\frac14-\eta)}\|\p_x^{(\leq N + s_c^\prime)}\calL f\|_{\brk{x}^{-1}L^2L^2+L^1L^2[0, t]} + t^{\frac74 + \delta_f}\|\p_{x}^{(\leq N)} f(t, \cdot)\|_{L^2_x}
        \leq D_f.
    \]

    When $\calK \neq \0$, we assume the following for all $0 \leq N \leq M_c$: \[
        \begin{aligned}
            &\sum_{2i+j \leq N+1}\|\brk{x}^2\p_t^{(\leq i)} \p_x^{(\leq j)} f\|_{L^1L^2[0, t]} + t^{-(\frac14-\eta)}\sum_{2i + j \leq N}\left(\|\p_t^{(\leq i)}\p_x^{(\leq j)}\calL f\|_{\brk{x}^{-1}L^2L^2+L^1L^2[0, t]} \right. \\
            &\qquad\qquad\qquad\qquad\qquad\qquad\qquad\qquad\qquad\qquad\qquad\qquad \left.+ t^{\frac74 + \delta_f}\|\p_t^{(\leq i)}\p_x^{(\leq j)} f(t, \cdot)\|_{L^2_x}\right) \\
            &+ t^{-(\frac14-\eta)} \sum_{2i + j \leq N}\left(\|t^{-\frac12-\eta} \p_t^{(\leq i)} \p_x^{(\leq j)} \calL f\|_{L^2L^2[T_0, t]} +  \|t^{-1 - \eta} \|\p_t^{(\leq i)}\p_x^{(\leq j)}\calL f\|_{L^2_x}\|_{L^1[T_0, t]} \right.\\
            &\qquad\qquad\qquad\qquad\qquad\qquad\qquad\qquad\qquad\qquad\qquad\qquad \left.+ \|t^{\frac54 - \eta}\|\p_t^{(\leq i)}\p_x^{(\leq j)} f\|_{L^\infty_x}\|_{\ell^1 L^2_t[T_0, t]}\right)
        \leq D_f,
        \end{aligned}
    \]
    and $f|_{[0,\infty) \times \p\calK} = 0$, where $\calL$ is defined in \eqref{eq:good-comm}, where $\ell^1 L^2_t$ is defined in a similar fashion as the spatial norm in Section~\ref{subsubsec-Sobolev}.
\end{enumerate}

Finally, instead of \ref{hyp:iled-0}, we assume a variant that is suited to the general case, and is directly used in our proof (see Remark~\ref{rem:iled-iled*}.(2) below). We say that $L = i\p_t - \calH$ satisfies the \emph{dual integrated local energy decay estimates} up to order $M_{c}^*$ with derivative loss $s_c^*$ if the following holds:
\begin{enumerate}[label=$(\mathrm{ILED}^*)$]
\item ((Dual) integrated local energy decay)\label{hyp:iled*}
 Assume that the following holds for some $\bt_{c} \geq 0$ and $N = 0, 1, \ldots, M_{c}^*$:
\begin{itemize}[leftmargin=*]
\item When $\calK = \0$,  for every $\psi_{0}: \bbR^{3} \to \bbC$ and $f: [t_{0}, t_{0}+T] \times \bbR^{3} \to \bbC$ such that
\begin{equation*}
\|\p_x^{(\leq N + s_c^*)} \psi_0\|_{L^2} + \nrm{\brk{x}^{\bt_{c} + 1} \rd_{x}^{(\leq N + s_c^*)} f}_{L^{2}[t_0, t_0 + T] L^{2}} < + \infty,
\end{equation*}
there exists a unique solution $\psi \in C_{t}([t_{0}, t_{0} + T]; H^{N})$ to $L\psi = f$ for $t \in [t_{0}, t_{0} + T]$ with $\psi(t_{0}) = \psi_{0}$. Moreover, the solution $\psi$ satisfies
\begin{equation*}
	\nrm{\rd_{x}^{(\leq N)} \psi}_{L^{\infty}[t_0, t_0 + T] L^{2}}
	\leq C_N\big( \|\p_x^{(\leq N + s_c^*)} \psi_0\|_{L^2} + \nrm{\brk{x}^{\bt_{c} + 1} \rd_{x}^{(\leq N + s_c^*)} f}_{L^{2}[t_0, t_0 + T] L^{2}}\big),
\end{equation*}
where $C_{N}$ is independent of $t_0 \geq 0, T > 0$.

\item When $\calK \neq \0$, for every $\psi_{0}: \calM^{3} \to \bbC$ and $f: [t_{0}, t_{0}+T] \times \calM^{3} \to \bbC$ such that
\begin{equation*}
 \|\p_x^{(\leq N + s_c^*)} \psi_0\|_{L^2} +  \sum_{2i + j \leq N + s_c^*} \nrm{\brk{x}^{\bt_{c} + 1} \rd_{t}^{(\leq i)}\rd_{x}^{(\leq j)} f}_{L^{2}[t_0, t_0 + T] L^{2}} < + \infty,
\end{equation*}
there exists a unique solution $\psi \in C_{t}([t_{0}, t_{0} + T]; H^{N})$ to $L\psi = f$ for $t \in [t_{0}, t_{0} + T]$ with $\psi(t_{0}) = \psi_{0}$ and $\psi = 0$ on $[t_{0}, t_{0} + T] \times \rd \calK$. Moreover, the solution $\psi$ satisfies
\begin{equation*}
	\nrm{\rd_{x}^{(\leq N)} \psi}_{L^{\infty}[t_0, t_0 + T] L^{2}}
	\leq C_N\big( \|\p_x^{(\leq N + s_c^*)} \psi_0\|_{L^2} +  \sum_{2i + j \leq N + s_c^*} \nrm{\brk{x}^{\bt_{c} + 1} \rd_{t}^{(\leq i)}\rd_{x}^{(\leq j)} f}_{L^{2}[t_0, t_0 + T] L^{2}}\big),
\end{equation*}
where $C_{N}$ is independent of $t_0 \geq 0, T > 1$.
\end{itemize}
\end{enumerate}

\begin{remark} \label{rem:hyp-dt}
    Assumptions \ref{hyp:af}, \ref{hyp:iled*}, and \ref{hyp:se} are formulated with time derivatives in order to accommodate the case of an exterior domain, where commuting $\p_t$ with the equation and using elliptic estimates is a natural way to control higher order derivatives of the solution. 
\end{remark}

\begin{remark}\label{rem:iled-iled*}
    We comment on the two additional assumptions \ref{hyp:f} and \ref{hyp:iled*} in Main~Theorem~\ref{thm:linear}:
    \begin{enumerate}[leftmargin=*]
        \item The first three bounds in \ref{hyp:f} are mainly used in the proof of Lemma~\ref{lem:base_iteration}, Proposition~\ref{prop:diff-u-gmmvarphi0-wo-1/tdv-involved} and Proposition~\ref{prop:dotgmm_decay}, respectively.
        \item In our construction, the boundary condition in \ref{hyp:f} is used crucially to ensure that $\calL u$ satisfies the Dirichlet boundary condition; see Remark~\ref{rem:Lu-BC}. See also Remark~\ref{rem:Lu-BC-other} for the sketch of an alternative (more technical) approach that avoids this feature.
        \item To connect \ref{hyp:iled-0} with \ref{hyp:iled*}, we refer to Proposition~\ref{prop:iled2iled-star}. The form we actually require to establish the theorem is formulated as Proposition~\ref{prop:iled*-L1L2+LE*}, which is derived as a corollary of \ref{hyp:iled*}.
        \item If \ref{hyp:iled-0} (or more precisely, its generalization \ref{hyp:iled} in the obstacle case stated in Appendix~\ref{sec:smoothing}) holds, then \ref{hyp:iled*} holds with the sharp exponent $\bt_{c} = 0$ according to Proposition~\ref{prop:iled2iled-star}. Roughly speaking, the idea is to use the sharp \emph{global-in-time local smoothing estimate} (Proposition~\ref{prop:kato-small}) to treat the large-$r$ region, which is applicable since $\calH$ is a small perturbation of $-\lap$ there. 
        
        We remark that \ref{hyp:iled*} with $\beta_c = 0$ is one of the two key ingredients for our ability to handle \ref{hyp:af} with the optimal (up to endpoint) range of $\eta > 0$, which is new in such generality. (The other key ingredient is the aforementioned application of the Ifrim--Tataru testing-by-wave-packet method \cite{IT15, IT24}.) We refer to Sections~\ref{subsubsec-iteration} and \ref{subsubsec-IT-wp} below for more discussion.
    \end{enumerate}
\end{remark}

The general version of our main theorem in the linear case is as follows.

\begin{maintheorem}[The linear case, general version] \label{thm:linear}
    Assume that \ref{hyp:af}, \ref{hyp:iled*} and \ref{hyp:se} hold true for $\calH$ with
\begin{equation*}
	\eta > 0, \quad \beta_c \geq 0, \quad s_c^* \in \bbZ_{\geq 0}, \quad M_c =  M_{c}^* \in \bbZ_{\geq 0}.
\end{equation*}
Let $s_{c}^{\prime} = s_{c}^{\ast} + \delta_s + 1$, where \[\delta_s :=
        \begin{cases}
            \delta_s = 1, \quad \bfh^{\mu\nu} \not\equiv 0, \\
            \delta_s = 0, \quad \bfh^{\mu\nu} \equiv 0.
        \end{cases}
    \]
Then there exists $c_0 > 0$ such that the following holds.
Let $N \in \bbZ_{\geq 0}$, and let $u: \calM^{1+3} \to \bbC$ be a solution to~\eqref{eq:sch-gen} satisfying the initial data bound \eqref{eq:D-u} and the a-priori (unweighted) energy bound \eqref{eq:E-u}, where the forcing term $f$ satisfies \ref{hyp:f}. Then if $M_{c}$ and $M_{0}$ satisfy
\begin{equation*}
    \begin{cases}
        M_c \geq M_0, \hbox{ when } \calK = \0 \\
        M_c \geq M_0 + 2, \hbox{ when } \calK \neq \0
    \end{cases},
    \quad
	M_{0} \geq N + c_{0} s_{c}^{\prime}.
\end{equation*}
Then the same conclusions as Main~Theorem~\ref{thm:linear-f=0} hold for $u$, where $D$ is replaced by $D + D_f$ and the decay rate of the error in \eqref{eq:linear-asymp} is modified to $O_{L^\infty}(D t^{-\frac{3}{2} - \min\{\frac14\eta_\sharp, \delta_f\}})$.
\end{maintheorem}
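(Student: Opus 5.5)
The plan is to run a vector-field-type bootstrap built on the good commutator $\calL$ of \eqref{eq:good-comm}, which replaces the Galilean/scaling weights $x + 2it\nabla$ of the flat case. The scheme has three stages: weighted $L^2$ control, conversion to pointwise decay, and asymptotics by testing with wave packets.

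\textbf{Step 1: commuted equation and $L^2$ bound for $\calL u$.} First I compute $[i\p_t - \calH, \calL]$. By construction, $\calL$ (essentially $\calS$-type weights modified by $t\calH$, adapted via $\varphi_0$) commutes with $L = i\p_t-\calH$ up to error terms whose coefficients are controlled by the $\calS$- and $(\brk{x}\p_x)$-derivatives in \ref{hyp:af}. These errors decay like $\brk{x}^{-2\eta}$ relative to the main terms. Hence $\calL u$ solves $L(\calL u) = \calL f + E$, where $E$ is spatially localized with weight $\brk{x}^{-1-2\eta}$ times second-order quantities of $u$.

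I then apply \ref{hyp:iled*}, in the $L^1L^2 + \brk{x}^{-1}L^2L^2$ form of Proposition~\ref{prop:iled*-L1L2+LE*}, to obtain
\[
\|\p_x^{(\leq N)}\calL u(t)\|_{L^2}\lesssim (D+D_f)\, t^{\frac14-\eta}\ \hbox{(or better)}.
\]
The errors are handled by an iteration in dyadic time, as in Lemma~\ref{lem:base_iteration}. Each pass gains $\brk{x}^{-2\eta}$-worth of decay, and derivatives are lost at each pass, which is the source of the requirement $M_0 \geq N + c_0 s_c'$. The initial data term is controlled by \eqref{eq:D-u}, since $\calL u|_{t=0}$ involves $\brk{x}^2$ weights. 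In the obstacle case, the Dirichlet condition on $\calL u$ follows from $f|_{\p\calK}=0$, and spatial derivatives are recovered from $\p_t$ via elliptic estimates using \ref{hyp:se}.

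\textbf{Step 2: two-scale elliptic analysis and pointwise decay.} Next I convert the bound on $\calL u$ together with the energy bound \eqref{eq:E-u} into $L^\infty$ decay. The region splits at $|x|\sim t^{1/2}$.

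1. In the near region $|x|\lesssim t^{1/2}$, $\calL$ behaves like $t\calH$ plus lower-order weights. Inverting $\calH$ via \ref{hyp:se} and weighted elliptic regularity gives $u \approx (\text{const})\cdot\varphi_0$ with decay $t^{-3/2}$.

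2. In the far region $|x|\gtrsim t^{1/2}$, $\calL$ is modeled by the flat operator $(x+2it\nabla)$-squared type, after conjugation by $e^{i|x|^2/4t}$. Here \eqref{eq:af-extra1} ensures the radial part of the metric is correct to sufficient precision.

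Combining Sobolev embedding with these elliptic bounds yields \eqref{eq:linear-decay}. I expect this step to be the main obstacle: the coercivity of $\calL$ is not automatic, and matching the two models across $|x|\sim t^{1/2}$ with only $\eta>0$ decay is delicate.

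\textbf{Step 3: asymptotics via testing by wave packets.} For each $v$, I define
\[
\gamma(t,v) = \langle u, \Psi_v\rangle,
\]
where $\Psi_v$ is a free Schr\"odinger wave packet traveling with velocity $v$ on the scale $t^{1/2}$. Differentiating in $t$ and using the equation, $\dot\gamma$ is controlled by the coefficient errors (through \eqref{eq:af-extra2} at zeroth order), by the source term (through the $t^{7/4+\delta_f}$ bound in \ref{hyp:f}), and by the bound on $\calL u$. This gives
\[
|\dot\gamma|\lesssim (D+D_f)\, t^{-1-\min\{\frac14\eta_\sharp,\delta_f\}},
\]
which is Proposition~\ref{prop:dotgmm_decay}, so $\gamma\to\gamma_\infty$. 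Separately, Proposition~\ref{prop:diff-u-gmmvarphi0-wo-1/tdv-involved} shows that $u - t^{-3/2}e^{i|x|^2/4t}\gamma(t,x/t)\varphi_0$ is of lower order. Here $\varphi_0$ enters in the near region, where it corrects the plane-wave profile. Derivatives $\p_x^\alpha$ are then distributed by the Leibniz rule, producing the factors $(ix/2t)^{\beta_1}$. Finally, $\brk{v}^N\gamma_\infty$ is bounded using the weighted bound on $\calL u$, with $\brk{v}$-weights converted into derivatives.
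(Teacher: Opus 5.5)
Your proposal has a genuine gap: Steps~1 and~2 cannot be carried out in the order you give. Your Step~2 asserts that the two-scale elliptic estimates plus Sobolev embedding turn $\|\calL u\|_{L^2}\aleq t^{\frac14-\eta}$ and the energy bound into $t^{-\frac32}$ decay. They do not.

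\begin{itemize}
\item In the flat model, the conjugated Gagliardo--Nirenberg inequality gives only $t^{\frac32}\|u\|_{L^\infty}\aleq \|\calL u\|_{L^2}^{3/4}\|u\|_{L^2}^{1/4}$, i.e.\ decay $t^{-\frac32+\frac{3}{16}-\frac{3\eta}{4}}$.
\item The outer-scale Morrey inequality (Lemma~\ref{lem:Morrey-out-scale}) shows only that $U$ is nearly constant on balls of radius $t^{1/2}$, with oscillation $O(t^{-\eta})$ plus a small multiple of $t^{3/2}\|u\|_{L^\infty}$. It says nothing about the size of that constant.
\item The inner-scale estimates (Lemmas~\ref{lem:elliptic-near} and \ref{lem:near-zone-asymp-zero-order}) always carry $\|u\|_{L^\infty}$ at the matching scale $r\aeq t^{1/2}$ on the right. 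The coefficient $c[U]$ in $U\approx c[U]\varphi_0$ is read off there.
\end{itemize}

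What is missing is a bound on the $t^{1/2}$-scale average of $U$, i.e.\ on $\gmm(t,v)$. This can only come dynamically: integrate the evolution estimate of Proposition~\ref{prop:dotgmm_decay} in time and combine it with Proposition~\ref{prop:diff-u-gmmvarphi0-wo-1/tdv-involved}. Testing by wave packets is thus the engine of the pointwise decay, not just a device for the asymptotics in your Step~3. It is exactly what allows $\calL u$ to grow like $t^{\frac14-\eta}$ for every $\eta>0$, instead of requiring $\calL u$ bounded.

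Conversely, Step~1 cannot be closed before the decay is known. The commutator in Proposition~\ref{prop:good-comm} contains $t\,c_{\mathrm{gc}}u$ with $c_{\mathrm{gc}}=O(\brk{x}^{-2-2\eta})$. Its $\brk{x}^{-1}L^2L^2$ norm on $\{r\aleq t^{1/2}\}$ is acceptable only through $t^{3/2}\|u\|_{L^\infty}$; with energy bounds alone it grows polynomially in $t$. The near-zone elliptic estimate used to handle the second-order terms feeds $\|u\|_{L^\infty}$ back in as well. This is why Proposition~\ref{prop:est-calLu} has $\big\|t^{-\frac14-\eta}\|t^{\frac32}\p_x^{(\le N+s_c')}u\|_{L^\infty_x}\big\|_{\ell^1L^2_t}$ on its right-hand side. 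An iteration on $\calL u$ alone, ``gaining $\brk{x}^{-2\eta}$ per pass,'' does not close.

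The paper breaks the circularity with a joint iteration on the pair
\[
\|\p_x^{(\le N+s_c')}\calL u\|_{L^2}\le At^{\frac14-\eta+\alpha},\qquad \|\p_x^{(\le N+s_c')}u\|_{L^\infty}\le At^{-\frac32+\alpha}.
\]
\begin{itemize}
\item It starts at $\alpha=2$ from the crude base case of Lemma~\ref{lem:base_iteration}, obtained by commuting $x$ twice; this is also where \eqref{eq:D-u} enters.
\item Each step first improves the $L^\infty$ bound via the two wave-packet propositions, then the $\calL u$ bound via Proposition~\ref{prop:est-calLu}. The exponent updates as $\alpha\mapsto\max\{\alpha-\frac{\eta}{4},0\}$, at a cost of $s_c'$ derivatives.
\item After $c_0=\lceil 8/\eta\rceil$ steps one reaches $\alpha=0$. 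This is the true origin of the condition $M_0\ge N+c_0s_c'$.
\end{itemize}

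A minor correction: $\calL$ does not involve $\varphi_0$. It is characterized by the conjugation $t^{3/2}e^{-i\bfPhi^\ob}\calL u=4t^2\calH U$, and in the near zone it behaves like $4t^2\calH$, not $t\calH$.
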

\begin{remark}\label{rem:eta-sharp}
    For its proof, we refer to Section~\ref{sec-main-proof}. It is clear from its proof that the result holds for any $c_0$ such that $c_0 \geq \lceil\frac{8}{\eta_\sharp}\rceil$, where $\eta_\sharp := \min\{\eta, \frac14-\}$. See also the convention for $\eta$ for future sections in Section~\ref{subsubsec-convention}.
\end{remark}

\subsubsection{Examples}
Before moving to the nonlinear setting, we provide some examples of $\calH$ that satisfy the assumptions of Main Theorem~\ref{thm:linear}, thereby demonstrating the scope of our approach.

\begin{example} \label{ex:lap-small-pert}
When $\calH$ is a small perturbation of $-\lap$ in the sense that $\calK = \0$ and \ref{hyp:af-0} holds with a sufficiently small $A_{c} < \eps_c$ (in particular, $a^{\mu \nu}$, $b^{\mu}$, and $c$ may be time-dependent, and the latter two may be complex-valued), then \ref{hyp:iled} and \ref{hyp:se} hold true. In particular, \ref{hyp:iled} is established in Proposition~\ref{prop:kato-small}, while \ref{hyp:se} can be derived taking advantage of Hardy’s inequality. 
\end{example}

\begin{example} \label{ex:non-trapping}
    \ref{hyp:iled} and \ref{hyp:iled*} can be dropped under the following assumptions:
    \begin{enumerate}[leftmargin=*, label=\arabic*. ]
    \item $\calH$ satisfies \ref{hyp:af-0} and \ref{hyp:se-0} with $\calK = \0$;
    \item $a^{\mu \nu}$, $b^{\mu}$, and $c$ are time-independent and real-valued (or equivalently, $\calH$) is symmetric;
    \item the Hamiltonian vector field $H_a$ of $a(t,x,\xi) = a^{\mu\nu}(t,x) \xi_\mu \xi_\nu$ permits no trapped geodesics \cite[Definition 1.10, 1.16]{MMT08};
    \item $\calH$ has no discrete spectrum.
    \end{enumerate}
    Indeed, \cite[Theorem 1.19]{MMT08} establishes \ref{hyp:iled*} with $\bt_{c} = 0$ under these assumptions.
\end{example}

\begin{example} [Obstacle problem] \label{ex:obstacle}
    When $\calH = -\lap$ with $\calK \neq \0$ (obstacle problem), \ref{hyp:af} and \ref{hyp:se} hold automatically. Moreover, it is well-known that \ref{hyp:iled} and \ref{hyp:iled*} hold in the following cases:
    \begin{enumerate}[leftmargin=*, label=\arabic*. ]
    \item $\calK$ is star-shaped;
    \item $\calK$ the union of one or two disjoint strictly convex obstacles; or
    \item $\calK = \cup_{i = 1}^{N} \calK_{i} \subseteq \bbR^{3}$ is the union of $N$ strictly convex obstacles, $\calK_{i}$, with $N > 2$ that satisfies the following:
    \begin{itemize}[leftmargin=*]
        \item For any $1 \leq i, j, k \leq N$, pairwise distinct,  the convex hull of $\calK_{i} \cup \calK_{j}$ is disjoint from $\calK_{k}$.
        \item Denote by $\kappa$ the infimum of the principal curvatures of the boundaries of the obstacles $\calK_{i}$, and denote by $L$ the infimum of the distances between two obstacles. Then $\kappa L > N$.
    \end{itemize}
    \end{enumerate}
    Case~1 is essentially due to Lax--Morawetz--Phillips \cite{LaxMorPhi1963}, and Cases~2 and 3 (in particular, for two or more obstacles) are essentially due to Ikawa \cite{Ika1988}, both in the context of the wave equation; see also \cite{Ika1982, Ika1983, GerSjo1987, Ger1988}. To give more direct references, in Case~1 and Case~2 with one strictly convex obstacle, observe that there are no trapped geodesics, hence \ref{hyp:iled} follows directly from \cite[Proposition 2.7]{BGT04} (as is well-known, in Case~1, multiplying the equation by a suitable radial vector field directly yields \ref{hyp:iled} \cite{LaxMorPhi1963}). In Case~2 with two strictly convex obstacles and Case~3, \ref{hyp:iled} follows from \cite[(4.5)]{Burq04}. In these cases, there exist trapped geodesics, but they are suitably unstable (or hyperbolic), thus \ref{hyp:iled} holds with a loss of derivative.
\end{example}
 
\subsubsection{The main theorems in the nonlinear setting}\label{subsubsec-nonlinear-main-thm}
To illustrate the robustness of our physical-space approach, we apply it to derive the following global existence and asymptotics results for nonlinear problems.

Consider the Schr\"odinger equation with cubic gauge-covariant nonlinearity:
\begin{equation} \label{eq:nls}
	(i \rd_{t} - \calH) u = \calN(u)
\end{equation}
in the sense that $\calN(u)$ satisfies the following assumption:
\begin{enumerate}
[label=$(\mathrm{NL})$]
    \item (Types of nonlinearity)\label{hyp:nl}
    We assume that $\calN(u)$ satisfies phase rotation symmetry in the sense that \[
        \calN(\psi e^{i\tht}) = \calN(\psi) e^{i\tht}, \hbox{ for any } \tht \in \R, \quad \psi: \R^3 \to \bbC.
    \]
\end{enumerate}
This assumption guarantees that the Leibniz rule extends to $L_\mu$ in the presence of such nonlinearities. To illustrate the method, we consider two representative scenarios.

The first application concerns the quasilinear setting. For simplicity, we present in the case of a cubic nonlinearity.

\begin{enumerate}
[label=$(\mathrm{NL-1})$]
    \item (Cubic nonlinearity with Hamiltonian quasilinearity)\label{hyp:nl-1}
    We assume that $\calN(u)$ satisfies \ref{hyp:nl} and it has the following form \begin{align*}
    \calN(u) := D_{\mu} G^{\mu \nu} |u|^2 D_{\nu} u - G^{\mu\nu}u D_\mu \bar u D_\nu u + F(D \br{u}, D u, \br{u}, u).
    \end{align*}
    We impose the following assumptions on the $G^{\mu\nu}, F$ :
    \begin{itemize}[leftmargin=*]
        \item We require that $G^{\mu \nu}$ is a constant symmetric real-valued matrix.
        \item Let $F: \bbC^{3} \times \bbC^{3} \times \bbC \times \bbC  \to \bbC$, $F = F(w, z, q, p)$ satisfy the following:
        \begin{itemize}[leftmargin=*]
        \item $F(D \br{u}, D u, \br{u}, u)$ is a linear combination of cubic gauge-covariant monomials.
        \item In the case $\calK \neq \0$, we assume that each summand contains at least one copy of $u$ or $\br{u}$ so as to ensure the boundary condition; see Remark~\ref{rem:Lu-BC}.
        \item Define
        \[
          \qquad\qquad B^{\mu}(w, z, q, p) := \rd_{z^{\mu}} F(w, z, q, p), \quad \ul B^\mu(w, z, q, p) := \rd_{w^{\mu}} F(w, z, q, p).
        \]
        Both expressions are linear combinations of quadratic gauge-invariant monomials. In particular, we require that $B^\mu(D \br{u}, D u, \br{u}, u)$ be real-valued.
        \end{itemize}
    \end{itemize}
\end{enumerate}

When $\calH$ is symmetric and $F = 0$, note that $(i \rd_{t} - \calH) u = \calN(u)$ is precisely the Hamiltonian flow associated with the energy functional
\begin{equation*}
    E(u) = \frac{1}{2} \int \Re(\br{u} \calH u - G^{\mu \nu} \abs{u}^{2} \rd_{\mu} \br{u} \rd_{\nu} u) \, \ud x
\end{equation*}
with respect to the standard symplectic form. Hence, in case $G^{\mu \nu} \neq 0$, we say that $\calN(u)$ has \emph{Hamiltonian quasilinearity}. 

\begin{remark}[On Hamiltonian quasilinearity]
    The advantage of this assumption is that the nonlinearity possesses a favorable energy structure. We refer to \cite{FI22, Ia24, HM22} for the study of local well-posedness of such equations via energy methods in various settings.
\end{remark}

We also assume the following assumptions to avoid the issue of derivative losses:
\begin{enumerate}[label=$(\mathrm{US})$]
\item(Ultimate stationarity) \label{hyp:ult-stat} In addition to \ref{hyp:af}, let $a^{\mu \nu}$, $b^{\mu}$, and $c$ satisfy the following time derivative bounds:
\begin{align*}
\nrm{\rd_{x}^{(\leq M_{c} + 1)} \rd_{t}^{(\leq \lceil \frac{M_c}{2} \rceil)} a^{\mu \nu}}_{L^{1} L^{\infty} [0, \infty)} + \nrm{\rd_{x}^{(\leq M_{c} + 1)} \rd_{t}^{(\leq \lceil \frac{M_c}{2} \rceil)} b^{\mu}}_{L^{1} L^{\infty}[0, \infty)} + \nrm{\rd_{x}^{(\leq M_{c})} \rd_{t}^{(\leq \lceil \frac{M_c}{2} \rceil)} c}_{L^{1} L^{\infty}[0, \infty)} &\leq A_{c}.
\end{align*}
\end{enumerate}

\begin{enumerate}[label=$(\mathrm{E})$]
\item(Energy estimate) \label{hyp:en} Let $\td{A}^{\mu \nu}, \td{B}^{\mu}: [t_{0}, t_{1}] \times (\bbR^{3} \setminus \calK) \to \bbR$, and $\ul{\td{B}}^{\mu}: [t_{0}, t_{1}] \times (\bbR^{3} \setminus \calK) \to \bbC$ satisfy
\begin{gather*}
    \nrm{\rd_{\mu} \td{B}^{\mu}}_{L^{1} L^{\infty}[t_{0}, t_{1}]}
    + \nrm{\rd_{\mu} \ul{\td{B}}^{\mu}}_{L^{1} L^{\infty}[t_{0}, t_{1}]} \leq A_{\td{c}},
\end{gather*}
for some $A_{\td{c}} \geq 0$. Then for every $u \in \calS(\bbR^{1+3})$ with $u = 0$ on $[0, \infty)_{t} \times \rd \calK$ (if $\calK \neq \0$), we have
\begin{equation*}
	\sup_{t \in [t_{0}, t_{1}]} \nrm{u(t)}_{L^{2}} \leq C_{A_{\td{c}}} \bb[\nrm{u(t_{0})}_{L^{2}} + \nrm{f}_{L^{1} L^{2}[t_{0}, t_{1}]}
    \bb],
\end{equation*}
where
\begin{equation*}
    f = (i \rd_{t} - \calH)u - D_{\mu} (\td{A}^{\mu \nu} D_{\nu} u) + \td{B}^{\mu} D_{\mu} u + \ul{\td{B}}^{\mu} D_{\mu} \br{u}.
\end{equation*}
\end{enumerate}

Note that \ref{hyp:ult-stat} is automatically satisfied if $a^{\mu \nu}$, $b^{\mu}$, and $c$ are time-independent; in particular, it holds for Examples~\ref{ex:non-trapping} and \ref{ex:obstacle}. In \ref{hyp:en}, we have allowed for the coefficients $\td{A}^{\mu \nu}$,  $\td{B}^{\mu}$, and $\ul{\td{B}}^{\mu}$ in anticipation of nonlinear applications. The following result shows that \ref{hyp:en} also holds for Examples~\ref{ex:non-trapping} and \ref{ex:obstacle}:
\begin{proposition} \label{prop:E-implied-by-stationarity}
Assume that \ref{hyp:af} holds true. Assume also that $a^{\mu \nu}$, $b^{\mu}$, and $c$ are time-independent and real-valued. Then \ref{hyp:en} holds.
\end{proposition}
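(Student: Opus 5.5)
The plan is a direct $L^2$ energy identity. Since $a^{\mu\nu}$, $b^{\mu}$, $c$ are real and time-independent, the operator $\calH$ is formally symmetric on $L^2(\calM^3)$ under the Dirichlet condition $u = 0$ on $\rd\calK$. The perturbation $D_\mu(\td{A}^{\mu\nu} D_\nu \cdot)$ is likewise symmetric, because $\td{A}^{\mu\nu}$ is real; I will assume $\td{A}$ is symmetric, which is harmless since only its symmetric part enters the second-order term. So the only terms that can generate $L^2$ growth are the first-order terms $\td{B}^\mu D_\mu u$ and $\ul{\td{B}}^\mu D_\mu \br{u}$, and these are handled by integration by parts, which moves the derivative onto the coefficients.

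\textbf{Step 1: the basic identity.} Rewrite the definition of $f$ as
\begin{equation*}
i\rd_t u = \calH u + D_\mu(\td{A}^{\mu\nu} D_\nu u) - \td{B}^\mu D_\mu u - \ul{\td{B}}^\mu D_\mu \br{u} + f .
\end{equation*}
Then compute
\begin{equation*}
\frac{\ud}{\ud t}\nrm{u}_{L^2}^2 = 2\Re \int \br{u}\, \rd_t u \, \ud x = 2 \Im \int \br{u}\, (i\rd_t u) \, \ud x .
\end{equation*}

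\textbf{Step 2: the symmetric terms drop out.} The contributions $\Im\int \br{u}\,\calH u$ and $\Im \int \br{u}\, D_\mu(\td{A}^{\mu\nu} D_\nu u)$ vanish. To see this, integrate by parts; the boundary terms on $\rd\calK$ vanish because $u=0$ there. After integration by parts these expressions become
\begin{equation*}
\int a^{\mu\nu} D_\mu u\,\br{D_\nu u} + \int b^\mu \bigl( \br{u}\, D_\mu u + \br{\br{u}\, D_\mu u} \bigr) + \int c\abs{u}^2 ,
\end{equation*}
together with the analogous expression for $\td{A}$. Every term is real, because the coefficients are real and $D_\mu = i^{-1}\rd_\mu$. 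Note that time-independence is not actually needed here; what matters is reality of the coefficients together with the Dirichlet condition. In the case $\calK = \0$, the Schwartz decay of $u$ kills the terms at infinity.

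\textbf{Step 3: the first-order terms.} Since $\td{B}^\mu$ is real, we have
\begin{equation*}
2\Re\bigl(\br{u}\,\td{B}^\mu \rd_\mu u\bigr) = \td{B}^\mu \rd_\mu \abs{u}^2 ,
\end{equation*}
so
\begin{equation*}
\Im \int \br{u}\,\td{B}^\mu D_\mu u = -\Re\int \td{B}^\mu \br{u}\,\rd_\mu u = \tfrac12\int (\rd_\mu \td{B}^\mu)\abs{u}^2 .
\end{equation*}
This is bounded by $\nrm{\rd_\mu\td{B}^\mu}_{L^\infty}\nrm{u}_{L^2}^2$. For the conjugate term, write $\br{u}\,\rd_\mu\br{u} = \tfrac12\rd_\mu(\br{u}^2)$ and integrate by parts. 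This gives
\begin{equation*}
\int \ul{\td{B}}^\mu \br{u}\,\rd_\mu \br{u} = -\tfrac12\int (\rd_\mu \ul{\td{B}}^\mu)\br{u}^2 ,
\end{equation*}
which is bounded by $\tfrac12\nrm{\rd_\mu\ul{\td{B}}^\mu}_{L^\infty}\nrm{u}_{L^2}^2$ whether or not $\ul{\td{B}}$ is complex. Boundary terms again vanish by the Dirichlet condition.

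\textbf{Step 4: Gronwall.} The forcing term contributes at most $2\nrm{u}_{L^2}\nrm{f}_{L^2}$. Combining with Steps 2 and 3,
\begin{equation*}
\frac{\ud}{\ud t}\nrm{u}_{L^2} \leq C\bigl(\nrm{\rd_\mu \td{B}^\mu}_{L^\infty} + \nrm{\rd_\mu \ul{\td{B}}^\mu}_{L^\infty}\bigr)\nrm{u}_{L^2} + \nrm{f}_{L^2} .
\end{equation*}
To handle the division by $\nrm{u}$ rigorously, work with $(\nrm{u}^2+\epsilon)^{1/2}$ and let $\epsilon\to0$. Gronwall's inequality then yields the claim with $C_{A_{\td{c}}} = e^{C A_{\td{c}}}$.

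The only delicate point is justifying the integrations by parts with boundary terms discarded. This is routine for $u\in\calS$ with $u=0$ on $\rd\calK$, given the $C^{M_c+2}$ boundary and the bounded coefficients supplied by \ref{hyp:af}. The hypotheses \ref{hyp:af} are used only to ensure the coefficients are bounded and regular enough for these manipulations.
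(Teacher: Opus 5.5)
Your proof is correct and follows the paper's argument: symmetry of $\calH + D_\mu(\td{A}^{\mu\nu}D_\nu\,\cdot)$ removes those terms, integration by parts reduces the first-order terms to $\rd_\mu\td{B}^\mu$ and $\rd_\mu\ul{\td{B}}^\mu$, and Gronwall (left implicit in the paper) finishes. One side remark needs correcting. Discarding the antisymmetric part of $\td{A}$ is not harmless: $D_\mu(\td{A}^{\mu\nu}_{\mathrm{anti}}D_\nu u) = -(\rd_\mu\td{A}^{\mu\nu}_{\mathrm{anti}})\rd_\nu u$ contributes $2\Im\int\td{A}^{\mu\nu}_{\mathrm{anti}}\rd_\mu\br{u}\,\rd_\nu u$, which is generally nonzero and not bounded by $\nrm{u}_{L^2}^2$. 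Symmetry of $\td{A}$ must instead be an implicit hypothesis. The paper assumes exactly this when it asserts self-adjointness, and it holds in the application $\td{A}^{\mu\nu}=G^{\mu\nu}\abs{u}^2$.
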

\begin{proof}
    Under the assumptions, $\calH + D_\mu \td{A}^{\mu\nu} D_\nu$ is self-adjoint with respect to the complex $L^2$-inner product. We write \begin{align*}
        \p_t \int |u|^2 &= \brk{\p_t u, u} + \brk{u, \p_t u} = \brk{i\p_t u, iu} + \brk{iu, i\p_t u} \\
        &= i(\brk{\td{B}^\mu D_\mu u + \ul{\td{B}}^{\mu} D_{\mu} \br{u} - f, u} - \brk{u, \td{B}^\mu D_\mu u + \ul{\td{B}}^{\mu} D_{\mu} \br{u} - f}) \\
        &= \int\td{B}^\mu \p_\mu |u|^2 + \frac12\int \ul{\td{B}}^{\mu} \p_\mu \br{u}^2 + \frac12 \int \br{\ul{\td{B}}}^{\mu} \p_\mu u^2 + 2\Im \int f \br{u}
    \end{align*}
    and hence the result follows.
\end{proof}

Our first main result concern a broad class of \eqref{eq:nls} are as follows.
\begin{maintheorem}  \label{thm:nonlinear-1}
Consider the initial value problem for \eqref{eq:nls}.
Assume that $\calH$ satisfies \ref{hyp:af}, \ref{hyp:iled*}, \ref{hyp:se}, \ref{hyp:ult-stat} and \ref{hyp:en} with
\begin{equation*}
	\eta > 0, \quad \beta_c \geq 0, \quad s_c^* \in \bbZ_{\geq 0}, \quad M_c =  M_{c}^* \in \bbZ_{\geq 0},
\end{equation*}
where $M_c$ is some sufficiently large number, and let $\calN(u)$ satisfy \ref{hyp:nl-1}.
Let $s_{c}^{\prime}$ be defined as in Main~Theorem~\ref{thm:linear}. Then there exists $c_0 > 0$ such that the following holds. Let $N \in \bbZ_{\geq 0}$, and assume that $M_{c}$ and $M_{0} \in 2\bbZ_{\geq 0}$ satisfy
\begin{equation*}
	M_{c} - 2 \geq M_{0} \geq N + 3 + c_{0} s_{c}^{\prime},
\end{equation*}
and that $\eps > 0$ is sufficiently small. Let $u_0$ satisfy the initial data bound \eqref{eq:D-u} with $D_0 \leq \eps$, and $M_0$ replaced by $M_0 + 2$ in this assumption \eqref{eq:D-u}. Then there exists a unique global solution $u: \calM^{1+3} \to \bbC$ to~\eqref{eq:nls} satisfying $u(0, x) = u_{0}(x)$. Moreover, the solution $u$ obeys the same sharp decay and asymptotics as in the conclusions of Main~Theorem~\ref{thm:linear} up to order $N$.
\end{maintheorem}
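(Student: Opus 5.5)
The proof is a continuity (bootstrap) argument that reduces the nonlinear problem to Main~Theorem~\ref{thm:linear}, with $\calN(u)$ playing the role of the forcing $f$.

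\textbf{Local theory.} Local well-posedness in $H^{M_0+2}$ (with Dirichlet condition if $\calK\neq\0$) follows from the energy estimate \ref{hyp:en}. We commute the equation with $\rd_t^{(\leq M_0/2+1)}$ and recover spatial derivatives by elliptic regularity, using \ref{hyp:se} and \ref{hyp:af}. The top-order quasilinear term $D_\mu(G^{\mu\nu}|u|^2 D_\nu\cdot)$ is absorbed into $\td A^{\mu\nu}$. The remaining derivative terms from the Hamiltonian structure and from $F$ go into $\td B^\mu$ and $\ul{\td B}^\mu$. Here $\td B^\mu$ is real because $B^\mu$ is real and $G$ is real symmetric; this is exactly why \ref{hyp:en} applies with constants controlled by $\nrm{\rd u}_{L^\infty}\nrm{\rd^2 u}_{L^\infty}$. \ref{hyp:ult-stat} handles the commutators of $\rd_t$ with the coefficients.

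\textbf{Bootstrap hypotheses.} On $[0,T]$ we assume:
\begin{enumerate}
\item the energy bound $\nrm{\rd^{(\leq M_0+2)}u}_{L^2}\leq C\eps\brk{t}^{\delta}$, a slowly growing bound;
\item the decay bound $\nrm{\rd_x^{(\leq N+3)}u}_{L^\infty}\leq C\eps\brk{t}^{-3/2}$.
\end{enumerate}
Given (2), the coefficients in \ref{hyp:en} are integrable in time: $\nrm{\rd\td B}_{L^\infty}\lesssim\eps^2 t^{-3}$ at the relevant regularity, once low-derivative $L^\infty$ norms are interpolated against high-derivative $L^2$ norms. Gronwall then closes (1), improving $\delta$ to a smaller growth, or even to a uniform bound.

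\textbf{Closing the decay bound.} Here we apply Main~Theorem~\ref{thm:linear} on $[0,T]$ with $f=\calN(u)$. We must verify \ref{hyp:f} with $D_f\lesssim\eps^3$ (or $\eps^2\cdot$ bootstrap constant). Two features make this work.
\begin{itemize}
\item By \ref{hyp:nl}, the Leibniz rule passes through the good commutator $\calL$. Hence $\calL\calN(u)$ is a sum of cubic terms, each with one factor $\calL u$ (or a derivative of it) and two factors in $L^\infty$, each decaying like $t^{-3/2}$. The weighted terms $\brk{x}^2 f$ are controlled the same way.
\item Since each monomial contains a copy of $u$ or $\br u$, the condition $f|_{\rd\calK}=0$ holds.
\end{itemize}
The term $t^{7/4+\delta_f}\nrm{f}_{L^2}$ follows from $\nrm{u}_{L^\infty}^2\lesssim\eps^2t^{-3}$.

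\textbf{Main obstacle.} The hard step is that the top-order quasilinear terms in $\calL\calN(u)$ lose derivatives: $\calL$ is second order, and $\calN$ has second derivatives. I would first try to avoid estimating $\calL u$ through \ref{hyp:iled*} at top order, where $s_c'$ derivatives are lost. Instead, I would treat $(i\rd_t-\calH-\text{quasilinear part})\calL u$ directly with \ref{hyp:en}, placing the top-order terms of the commutator into $\td A,\td B,\ul{\td B}$. The gap $M_0\geq N+3+c_0 s_c'$ then leaves room for the lower-order remainder. Consistent with the bootstrap, the $\calL u$ bound may grow like $t^{1/4-\eta}$.

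The conclusion of Main~Theorem~\ref{thm:linear} yields $|\rd^{(\leq N+3)}u|\lesssim(\eps+\eps^3)t^{-3/2}$, which improves (2). Continuity then gives global existence, uniqueness comes from the local theory, and the asymptotics up to order $N$ follow directly from the same theorem.
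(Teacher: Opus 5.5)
Your treatment of the unweighted energy matches the paper's Step~1: commuting with $\rd_t^{N/2}$, absorbing $G^{\mu\nu}|u|^2$ into $\td A^{\mu\nu}$, and using the Hamiltonian structure to make $\td B^\mu$ real in \ref{hyp:en}. The gap is in how you control $\calL u$.

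\textbf{Why the top-order \ref{hyp:en} estimate for $\calL u$ fails.} That estimate has no local-energy component, so the linear commutator of Proposition~\ref{prop:good-comm} must be placed in $L^1L^2$.
\begin{itemize}
\item Already its near-zone piece $t\,c_{\mathrm{gc}}u$ has $L^2$ norm $\aeq t^{-\frac12}$ when $|u|\aeq t^{-\frac32}$ near the origin. So you get at best $t^{\frac12}$ growth, not $t^{\frac14-\eta}$.
\item At top order the highest derivatives have no pointwise decay, and you get only $t^{2}$. The paper obtains the rate $t^{\frac14-\eta}$ only through the $\brk{x}^{-1}L^2L^2$ part of \ref{hyp:iled*} in the near zone.
\item The Leibniz expansion of $\calL\calN(u)$ also contains cross terms in which the two $L$'s land on different factors, e.g.\ $G^{\mu\nu}a^{\alpha\beta}(\overline{u}L_\alpha u-u\overline{L_\alpha u})\rd_\mu\rd_\nu L_\beta u$. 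These differentiate the full array $L_\nu L_\beta u$ rather than the scalar $\calL u$. For general $G^{\mu\nu}$ they cannot be absorbed into $\td B^\mu D_\mu(\calL u)$, so they cost a derivative in any scalar energy estimate.
\end{itemize}

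\textbf{Why Main~Theorem~\ref{thm:linear} cannot be quoted with $f=\calN(u)$.} The theorem consumes bounds on $\calL u$ but does not produce them.
\begin{itemize}
\item Verifying \ref{hyp:f} requires $\calL u$ two derivatives above the level being estimated.
\item The cross terms above require a good low-order bound on $Lu$, handled via $L^3\times L^6\times L^\infty$ as in Lemma~\ref{lem:L3L6-est}; they are not of the form ``$\calL u$ times two $L^\infty$ factors''.
\item Even the first condition in \ref{hyp:f} fails by a logarithm, since generically $\nrm{\brk{x}^2|u|^2u}_{L^2}\aeq t^{-1}$.
\end{itemize}
So $\calL u$ must itself be a bootstrap quantity, as in \eqref{eq:BA2-NL1}, and Lemma~\ref{lem:main_iteration} must be rerun with $\calN(u)$ as a source.

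\textbf{The missing idea.} No top-order estimate for $\calL u$ is needed, because the cubic decay pays for crude polynomial growth.
\begin{itemize}
\item The paper keeps the quasilinear structure only for the $H^{M_0+2}$ energy \eqref{eq:BA3-NL1}.
\item It obtains the crude bound $\nrm{\rd_x^{(\leq M_0-1)}\calL u}_{L^2}\aleq\eps\brk{t}^2$ simply by commuting $x$ twice into the equation (Lemma~\ref{lem:base_iteration_nl1}).
\item In each cubic term of $\calL\calN(u)$, it puts the highest-order factor in this crude norm and the other two in the decaying low-order norms \eqref{eq:BA1-NL1}--\eqref{eq:BA2-NL1}. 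This gives $\eps t^{-3}\cdot t^{2}=\eps t^{-1}$, hence $\nrm{\rd_x^{(\leq M_0-3)}\calL\calN(u)}_{\brk{x}^{-1}L^2L^2+L^1L^2[T_0,t]}\aleq\eps\log t$, which is negligible against $t^{\frac14-\eta}$.
\item This source is fed into Proposition~\ref{prop:est-calLu}, so \ref{hyp:iled*} still handles the linear commutator.
\item Lemma~\ref{lem:main_iteration} is then run $c_0$ times, from level $M_0-3$ down to $M_0-3-c_0s_c'$.
\end{itemize}
The gap between $M_0+2$ and $M_0-3-c_0s_c'$ absorbs both the two derivatives in $\calN$ and the $s_c'$ losses. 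Nothing quasilinear is ever done at the level of $\calL u$.
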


\begin{remark} \label{rem:nonlinear1}
This theorem provides new global existence theorems for the Schr\"odinger equation with cubic nonlinearity in the presence of variable coefficients and obstacles, particularly for nonlinearities involving derivatives or which are even quasilinear. The existing literature in 3D can be broadly categorized as follows:
\begin{itemize} [leftmargin=*]
\item \textbf{Constant-coefficient setting:} The study of cubic power-type nonlinearities has a long history, dating back to the foundational work of Klainerman \cite{Kla1982}, Shatah \cite{Sha1982}, Klainerman--Ponce \cite{KlaPon1983}, and later Constantin \cite{Const90} using the vector field method. We furthermore mention a new approach introduced by Ifrim--Tataru, initiated in \cite{IfrTat2023, IfrTat2022, IfrTat2025, IfrTat2023-survey}, which in particular yields sharp results for cubic quasilinear equations (without the Hamiltonian condition) with small \emph{non-localized data} \cite{IfrTat2024-higher}; such a result is out of the scope of our method.

In the constant-coefficient case, global existence is known for some quadratic nonlinearities, which are also outside the scope of our method; for more discussion, see \S\ref{subsubsec.small-data-results}.

\item \textbf{Variable-coefficient setting:} Results are more limited, and are somewhat disjoint from our theorem. Notably, L\'eger \cite{Leger21} and Pusateri--Soffer \cite{PS24} treated \emph{quadratic} power-type nonlinearities in the presence of a potential using spacetime resonances and the distorted Fourier transform. To the best of our knowledge, there are no prior results for derivative or quasilinear nonlinearities in this setting, even for the cubic case.
    
\item \textbf{Obstacle scattering:} Previous global results for Schr\"odinger-type equations in exterior domains have been restricted to power-type nonlinearities; see, e.g., \cite{BGT04, Anton08, Iv10, BSS12, LSZ12, KVZ16, Laf18}. The case of (massless) wave equations is better studied, and it is known that small data global existence holds for quasilinear wave equations with null structure for the quadratic nonlinearity; see, e.g., \cite{MetSog2007,MetSog2006-starshaped,MetSog2006-highdim,MetSog2005-multispeed,MetSog2005,DafHolRodTay2022, DafHolRodTay2024}. 
\end{itemize}

Compared to these earlier works for Schr\"odinger-type equations, the main advantage of our method lies in its robustness, handling variable coefficients, obstacles, and derivative nonlinearities simultaneously.
\end{remark}

\begin{remark}\label{rem:comm_pt}
    Note that establishing high-regularity well-posedness for the cubic nonlinearity requires particular care in the choice of commuting vector fields, in contrast with earlier low-regularity results on exterior domains. In this setting, one must commute with the special derivative $\p_t$ in order to preserve the Dirichlet boundary condition at higher orders when carrying out energy estimates. 
    A similar predicament was recently encountered in the context of dispersive equations with rough nonlinearities \cite{PT25}, where higher regularity turns out to be more delicate than lower regularity, and where the time derivative also plays a crucial role.
\end{remark}

Our second application is the Hartree equation :
\begin{enumerate}
[label=$(\mathrm{NL-2})$]
    \item (Hartree-type nonlinearity)\label{hyp:nl-2}
    We may also allow for a Hartree-type cubic interaction: \begin{align*}
        \calN(u) := (|\cdot|^{-1} * |u(t, \cdot)|^2)(x) u(t, x).
    \end{align*}
\end{enumerate}
As in the constant-coefficient case, the global asymptotics of the solutions exhibit a logarithmic phase modification:

\begin{maintheorem}  \label{thm:nonlinear-2}
Consider the initial value problem for \eqref{eq:nls} with $\calK = \0$ and $\calN(u)$ satisfying \ref{hyp:nl-2}.
Assume that $\calH = -\Delta + c$ satisfies \ref{hyp:af}, \ref{hyp:se} and Proposition~\ref{prop:iled*-L1L2+LE*} with $s_c' = 0$ and $M_c = 0$, where $c$ is a real potential.
Let $u_0$ satisfy the initial data bound \eqref{eq:D-u} with $D_0 \leq \eps$ and $M_0 = 2$. Then for $\eps$ sufficiently small, there exists a unique global solution $u$ to the above initial value problem such that $u \in L^\infty L^2(\calM)$ and $\calL u \in L^\infty L^2(\calM)$ (where $\calL$ is defined in \eqref{eq:good-comm}), which obeys the same sharp decay as in the conclusions of Main~Theorem~\ref{thm:linear} at its zeroth order. Moreover, it admits the following modified scattering asymptotics: \[
    u(t, x) = t^{-\frac32}e^{i\frac{|x|^2}{4t}} \gmm_\infty(\tfrac{x}{t}) \exp\left(i\big(|\cdot|^{-1} * |\gmm_\infty(\cdot)|^2(\tfrac{x}{t})\big) \log t\right) \varphi_0(t, x) + O_{L^\infty}(\eps t^{-\frac32-\frac{\eta_\sharp}{12}}),
\]
where $\vphi_{0}$ is as in Main~Theorem~\ref{thm:linear-f=0}.
\end{maintheorem}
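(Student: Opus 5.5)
We prove Main~Theorem~\ref{thm:nonlinear-2} by a bootstrap in which the linear machinery of Main~Theorem~\ref{thm:linear} is applied with the nonlinearity treated as a forcing term $f = \calN(u)$, while the logarithmic phase is captured by the Ifrim--Tataru testing-by-wave-packets method. Local well-posedness in $L^2$ with $\calL u \in L^2$ (and $\brk{x}^2 u_0 \in L^2$) is standard, since the Hartree nonlinearity is $L^2$-subcritical and $|\cdot|^{-1}*|u|^2$ is bounded by Hardy's inequality. We work on a maximal interval $[0,T)$ under the bootstrap assumptions
\begin{equation*}
\nrm{u(t)}_{L^2} + \brk{t}^{-\delta}\nrm{\calL u(t)}_{L^2} \leq C\eps, \qquad \nrm{u(t)}_{L^\infty} \leq C\eps \brk{t}^{-\frac32},
\end{equation*}
where $\delta \ll \eta_\sharp$ is small, and we aim to improve the constant $C$ to $C/2$.

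\emph{Energy and commutator bounds.} Since $c$ is real, $\calH$ is symmetric, and the potential $V := |\cdot|^{-1}*|u|^2$ is real. Hence $\nrm{u(t)}_{L^2}$ is conserved. For $\calL u$ we use the good-commutator property of $\calL$ together with the gauge covariance \ref{hyp:nl} (Leibniz rule for $L_\mu$). Then $(i\p_t-\calH - V)\calL u$ equals $V$-type terms, which preserve the $L^2$ norm, plus cubic terms in which $\calL$ (or $L_\mu$) falls on the convolution factor, plus the linear commutator errors already handled in the proof of Main~Theorem~\ref{thm:linear}. The convolution terms are bounded by $\nrm{|\cdot|^{-1}*(\br u\, L u)}_{L^\infty}\nrm{u}_{L^2}$. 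Using the two-scale elliptic analysis to convert $\calL u$ control into $L_\mu u$-type control, this yields $\eps^2 t^{-1}\nrm{\calL u}_{L^2}$ plus acceptable terms. The linear errors are absorbed via Proposition~\ref{prop:iled*-L1L2+LE*} with $s_c'=0$ (no derivative loss is needed, which is why $M_c=0$ suffices). Gronwall then gives the slow growth $\brk{t}^{C\eps^2}\leq \brk{t}^{\delta}$.

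\emph{Decay and asymptotics.} Fix $v$ and test $u$ against the free wave packet adapted to the ray $x=vt$, as in the linear proof (the analogue of Proposition~\ref{prop:dotgmm_decay}), defining $\gmm(t,v)$. In the linear case one had $\dot\gmm = O(t^{-1-})$. Here the Hartree term contributes $\dot\gmm(t,v) = i t^{-1}\big(|\cdot|^{-1}*|\gmm(t,\cdot)|^2\big)(v)\,\gmm(t,v) + O(\eps t^{-1-\eta_\sharp/12})$. To see this, note that on the packet scale $\sqrt t$, $V(t,vt)\approx t^{-1}(|\cdot|^{-1}*|\gmm|^2)(v)$, because $|u|^2 \approx t^{-3}|\gmm(x/t)|^2|\varphi_0|^2$ and $|\varphi_0-1|$ decays in $x$. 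Since $|\gmm|$ is then approximately conserved, the ODE integrates to the stated log-phase profile $\gmm_\infty e^{i(\cdots)\log t}$. The $L^\infty$ decay, closing the bootstrap, follows from $|\gmm|\lesssim\eps$ together with the linear estimate comparing $u$ with $t^{-3/2}e^{i|x|^2/4t}\gmm\varphi_0$, as in Proposition~\ref{prop:diff-u-gmmvarphi0-wo-1/tdv-involved}, whose errors are controlled by $\nrm{\calL u}$ with the $t^\delta$ loss.

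\emph{Main obstacle.} The main difficulty is the $\calL u$ estimate: commuting $\calL$ through the nonlocal Hartree term while only $L^2$-level control of $\calL u$ is available. We would first try writing $\calL$ in terms of the $L_\mu$ structure, so that a term like $|\cdot|^{-1}*\Re(\br u L_\mu u)$ can be estimated using $u$'s $L^\infty$ decay together with Hardy's inequality. We must also make sure that the resulting $t^{-1}$ loss produces only the harmless $t^{C\eps^2}$ growth and does not spoil the $\eta_\sharp/12$ error rate.
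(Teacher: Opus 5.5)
Your bootstrap hypothesis $\nrm{\calL u(t)}_{L^2}\le C\eps\brk{t}^{\delta}$ with $\delta\ll\eta_\sharp$ cannot be recovered, so there is a genuine gap. You say the linear commutator errors are ``already handled in the proof of Main~Theorem~\ref{thm:linear}'' and then close by Gronwall. But that proof only controls $\calL u$ at the level $\nrm{\calL u}_{L^2}\lesssim t^{\frac14-\eta}$; this is the output of the iteration in Lemma~\ref{lem:main_iteration}, cf.\ \eqref{eq:Lu-grow} and Remark~\ref{rem:RT15}.

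Concretely, for $\calH=-\lap+c$ the source in the equation for $\calL u$ is $-[\calL,i\p_t-\calH]u=4it(\calS c+2c)u$. Near the origin, $u\approx t^{-\frac32}e^{i|x|^2/4t}\gmm\varphi_0$ really is of size $\eps t^{-\frac32}$, so nothing better than the pointwise bound is available there. Measure this source in the norm $\brk{x}^{-1}L^2L^2+L^1L^2$ of Proposition~\ref{prop:iled*-L1L2+LE*}:
\begin{itemize}
\item The region $\{r\lesssim s^{1/2}\}$ contributes $\nrm{\eps s^{-\frac12}\brk{x}^{-1-2\eta}}_{L^2_x(r\lesssim s^{1/2})}\approx\eps s^{-\frac14-\eta}$ at each time. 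Taking $L^2_s[T_0,t]$ gives $\approx\eps t^{\frac14-\eta}$.
\item The region $\{r\gtrsim s^{1/2}\}$, measured in $L^1L^2$, gives the same $\eps t^{\frac14-\eta}$.
\end{itemize}
This is an additive forcing of size $\eps t^{\frac14-\eta}$, not a term proportional to $\nrm{\calL u}_{L^2}$ with a small coefficient. No Gronwall argument turns it into $\brk{t}^{C\eps^2}$. For $\eta<\frac14$ your bootstrap therefore does not close; it can only close when $\eta>\frac14$, which is the Rodnianski--Tao regime the paper is designed to go beyond.

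The remedy is the one the paper uses: bootstrap $\nrm{\calL u}_{L^2}\le 2C\eps\brk{t}^{\frac14-\frac\eta4}$, allowing growth just below $t^{1/4}$. This accommodates the linear source. Your cubic term $\eps^2t^{-1}\nrm{\calL u}_{L^2}$ then integrates directly to $\eps^3t^{\frac14-\frac\eta4}$, so Gronwall is not needed; the remaining Hartree terms are estimated as in Proposition~\ref{prop:est-LN(u)-Hartree}.

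The price is paid in the decay step, which your outline makes look free by using the $t^{\delta}$ bound. You must verify that the approximation and evolution estimates of Propositions~\ref{prop:diff-u-gmmvarphi0-wo-1/tdv-involved} and~\ref{prop:dotgmm_decay} still give decaying, respectively time-integrable, errors when $\nrm{\calL u}_{L^2}$ grows almost like $t^{1/4}$. They do: $\nrm{\calL u}_{L^2}$ enters there only with coefficients $t^{-\frac14+O(\delta_1)}$ and $t^{-\frac54+O(\delta_1)}$ (up to logarithms), provided the auxiliary parameters $\delta<\delta_1$ of Lemma~\ref{lem:near-zone-asymp-zero-order} are taken small compared with $\eta$. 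Tolerating near-$t^{1/4}$ growth of $\calL u$ is exactly what testing against packets at scale $t^{1/2}$ buys.

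With this change, the rest of your outline follows the paper's argument: mass conservation, the modified profile $\tilde\gmm$ that removes the phase $\int_1^t s^{-1}(|\cdot|^{-1}*|\gmm(s,\cdot)|^2)\,\ud s$, and the final phase correction.
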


\begin{remark} \label{rem:nonlinear2}
    This theorem shows that our method applies even when the problem exhibits modified scattering. In the case of the constant-coefficient linear operator $\calH = -\lap$, this result goes back to Hayashi--Naumkin \cite{HN98}; see also \cite{KP11, TVH24} for other approaches. In particular, \cite{TVH24} implements the testing-by-wave-packets method \cite{IT15, IT24} in dimensions $2$ and $3$. We expect a similar result for more general $\calH$ and $\calN$, but we focus on the present case for simplicity.
\end{remark}

\subsection{Main ideas}\label{subsec-ideas}
The main ideas of our approach can already be demonstrated with the linear equation with a real-valued time-independent potential in the absence of obstacles, i.e. $\calH = -\Delta + c(x)$, $f = 0$ and $\calK = \0$ in \eqref{eq:sch-gen}. For simplicity, we assume that $c(x)$ satisfies \ref{hyp:af-0} with $M_c = 0$, i.e., 
\begin{equation} \label{eq:ideas:af}
\abs{c(x)} + \abs{\calS c(x)} \leq A_{c} \brk{x}^{-2 - 2 \eta},
\end{equation}
for $A_{c} > 0$ and a small exponent $\eta > 0$. We also assume \ref{hyp:se-0} (for which $c \geq 0$ is a sufficient condition). In addition, we assume that for any nice enough $w$, 
\begin{equation}\label{eq:main-idea-iled}
	   \nrm{w}_{L^{\infty}[t_0, t_1] L^{2}}
	\aleq \|w(t_0)\|_{L^2} +  \|(i\p_t - \calH)w\|_{(\brk{x}^{-1}L^2L^2 + L^1L^2)[t_{0}, t_{1}]},
\end{equation}
which could be derived as a corollary of \ref{hyp:iled*}; see Proposition~\ref{prop:iled*-L1L2+LE*} (with $s_c' = 0$, $\beta_c = 0$, $N = 0$). 

We now provide an overview of our entire approach for this simple case. The additional modifications needed to handle the more general case will be discussed in Section~\ref{subsubsec-mod-general-case}.

\subsubsection{Review of classical vector field approach}

We begin by reviewing why the classical vector field method of Klainerman \cite{Kl85} fails in this simple setting. The adaptation of Klainerman's vector field method to the Schr\"odinger setting to establish pointwise decay is well-known; see, for instance, \cite{HT86, Const90}. In the case $c \equiv 0$, the method hinges on the following favorable commutation property: \[
    [i\p_t + \Delta, L_\mu] = 0,
\]
where $L_\mu := x_\mu + 2it\p_{x^\mu}$. Furthermore, in view of the conjugation identity
\begin{equation} \label{eq:ideas:conj-id}
    e^{-i \mathring{\bfPhi}} L_{\mu} u = 2 i t \rd_{x^{\mu}} (e^{-i \mathring{\bfPhi}} u), 
\end{equation}
where $\mathring{\bfPhi} := \frac{\abs{x}^{2}}{4 t}$ is the (real-valued) \emph{quadratic phase function} familiar from the asymptotics for the free Schr\"odinger equation, the sharp $t^{-\frac{3}{2}}$ pointwise decay for $\abs{u}$ follows from a rescaled Sobolev embedding for $U := e^{-i \mathring{\bfPhi}} u$ (whose modulus equals that of $u$) on each ball $B_{t}(x)$ of radius $t$ (a step often referred to as the Klainerman--Sobolev embedding), and the boundedness of $\nrm{L^{(k)} u(t, \cdot)}_{L^{2}} = \nrm{t^{k} \rd^{(k)} U(t, \cdot)}_{L^{2}}$ for $k=0, 1, 2$. 

However, this approach breaks down as soon as a potential is introduced (i.e., $c(x) \neq 0$). Since \[
    [i\p_t - \calH, L_\mu] = 2it\p_x c,
\]
the commutator grows linearly in $t$. This property seems to render $L_{\mu}$ useless as a commutator, since the coefficients of $L_{\mu}$ grow only linearly in $t$ as well. (In fact, the twice $L_{\mu}$-commuted energy necessarily grows in time generically, see \eqref{eq:growth-general-LmuLnu} below.)

\subsubsection{The good commutator $\calL$} \label{subsubsec-good-comm}
Our starting point is to use a (single) weighted second-order operator $\calL$ -- which we call \emph{good commutator} -- that avoids the preceding obstruction. In this simple setting, we may take
\begin{equation} \label{eq:ideas:good-comm}
	\calL := \dlt^{\mu\nu}L_\mu L_\nu + 4 t^2 c.
\end{equation}
(For the systematic construction of $\calL$ in the general case, see Section~\ref{subsubsec-mod-general-case}.) The operator $\calL$ has two key features across different spatial scales:
\begin{enumerate}[leftmargin=*]
\item {\bf Far-field behavior.} As $\abs{x} \to +\infty$, we have the formal convergence of $\calL$ to the sum of squares of the classical operators $L_{\mu}$'s:
\begin{equation*}
    \calL \to \calL_{0} := \dlt^{\mu \nu} L_{\mu} L_{\nu} \quad \hbox{ as } \abs{x} \to +\infty.
\end{equation*}
This ensures that $\calL$ retains the ``control'' properties of the free case in the region where the variable coefficients decay.

\item {\bf Near-field behavior.} On the other hand, for $\abs{x}$ small (say $\abs{x} < t^{1-\eta_{0}}$ for some $\eta_{0} > 0$), $L_{\mu} = 2 i t \rd_{x^{\mu}} + o(t)$ and thus 
\begin{equation*}
    \calL = 4 t^{2} (-\lap + c) + o(t^{2}) = 4 t^{2} \calH + o(t^{2})
\end{equation*} 
as $t \to +\infty$, ignoring derivatives and only focusing on the size of coefficients.
\end{enumerate}

The precise computation that reflects the improved commutation property is the following commutator identity:
\begin{equation} \label{eq:ideas:good-comm-id}
     [\calL, i\p_t - \calH] = -4it(\calS c + 2c),
\end{equation}
where we recall that, by our hypothesis \eqref{eq:ideas:af}, $\calS c$ obeys the same type of spatial decay bounds as $c$. In comparison to the largest weight $t^{2}$ in $\calL$, this commutator only has the weight $t$, i.e., it exhibits one order of cancellation (in terms of $t$-weights)!

We also provide a heuristic argument for the improved commutation property of $\calL$ compared to $L_{\mu} L_{\nu}$. For this purpose, it is instructive to begin with an {\it a-posteriori} explanation using the global asymptotics \eqref{eq:linear-asymp} that will be established\footnote{As remarked before, global asymptotics \eqref{eq:linear-asymp} had been known in some linear variable-coefficient problems using different methods; see, e.g., \cite{GRComHa2022, GRComHas2023, LooiSussman}.}. Indeed, if \eqref{eq:linear-asymp} holds, then one sees  that $L_\mu L_\nu u$ is growing from the following computation: in $\set{\abs{x} < 1}$, ignoring all terms with a better $t$-decay, we have
\begin{align*}
	L_\mu L_\nu u &= t^{-\frac32} e^{i\tfrac{|x|^2}{4t}} \gmm_\infty(\tfrac{x}{t}) (t^2\rd_{x^{\mu}} \rd_{x^{\nu}}) \vphi_{0}(x) + \cdots,
\end{align*}
and thus under the generic assumption (which we do not justify in this heuristic argument) that $\rd_{x^{j}} \rd_{x^{k}} \vphi_{0}(x) \neq 0$ and $\gmm_\infty(\frac{x}{t}) \neq 0$  in $\set{\abs{x} < 1}$ for large $t$,  we have \begin{equation}\label{eq:growth-general-LmuLnu}
    \|L_\mu L_\nu u(t, \cdot)\|_{L^2_x} \ageq t^{\frac12}.
\end{equation}
However, we also see that $\calL$ behaves better, since the leading order term in \eqref{eq:linear-asymp} will get canceled upon application of $\calL$ since $\calH \varphi_{0} = 0$. 

\begin{remark} \label{rem:good-comm-history}
The history of the operator $\calL$ spans several distinct contexts. In the case $\calH = -\lap + c$, a version of \eqref{eq:ideas:good-comm} appeared in \cite{HO89} to establish local smoothing, and later in \cite{CGV14} to prove time decay for 1D nonlinear equations with power-type nonlinearities. Notably, \cite{CGV14} utilized the distorted Fourier transform to analyze $\calL$, whereas our approach remains entirely in physical space. For Laplace--Beltrami operators on asymptotically conic manifolds, a similar operator was introduced in \cite{RT15} (see Remark~\ref{rem:RT15} for further comparison).

Beyond Schr\"odinger equations, the modification of commuting vector fields has roots in the study of the (massless) wave equation, particularly the pioneering works of \cite{ChrKla1993, Ali2003, Ali2005}. A second-order commuting operator was also utilized by Andersson--Blue \cite{AB15} to prove Morawetz estimates on Kerr black hole exteriors; see also \cite{Giorgi24}. For the broader development of ``good commutators'' in nonlinear dispersive equations, we refer the reader to \cite{IfrTat2019, IfrKocTat2023}.
\end{remark}

\subsubsection{\ref{hyp:se-0} and two-scale elliptic estimates}\label{subsubsec-two-scale}
Having established the favorable commutation relation \eqref{eq:ideas:good-comm-id}, we must now understand what control $\nrm{\calL u(t, \cdot)}_{L^{2}}$ confers on $u$. For this purpose, observe that, by \eqref{eq:ideas:conj-id} and \eqref{eq:ideas:good-comm}, we have the conjugation identity
\begin{equation} \label{eq:ideas:good-comm-conj}
    t^{\frac{3}{2}} e^{- i \mathring{\bfPhi}} \calL u = 4t^2\calH (t^{\frac{3}{2}} e^{- i \mathring{\bfPhi}} u).
\end{equation}

This identity suggests that estimates for $u$ can be recovered by applying elliptic theory for $\calH$. Matching the ``near-field'' and ``far-field'' features discussed in Section~\ref{subsubsec-good-comm}, we summarize our guiding principles for the elliptic analysis of $\calL$ as follows:
\begin{enumerate}[leftmargin=*]
    \item {\bf Inner scale ($r \aleq t^{\bt_{e}}$).} In a ball of time-dependent radius where $\calH$ may deviate significantly from $-\lap$, we apply a smoothly cutoff global elliptic estimate:
    \begin{equation} \label{eq:ideas:inner}
    \begin{aligned}
        	&t^{-2} \nrm{r^{\frac{1}{2}} L_\mu L_\nu u}_{\ell^{\infty} L^{2}(r < t^{\beta_{e}})} + t^{-1} \nrm{r^{-\frac{1}{2}} L_\mu u}_{\ell^{\infty} L^{2}(r < t^{\beta_{e}})}
	+ \nrm{r^{-\frac{3}{2}} u}_{\ell^{\infty} L^{2}(r < t^{\beta_{e}})} \\
    &\aleq t^{-2} \nrm{r^{\frac{1}{2}} \calL u}_{\ell^{1} L^{2}(r < 2 t^{\beta_{e}})} + \nrm{r^{-\frac{3}{2}} u}_{L^{2}(\frac12 t^{\beta_{e}} < r < 2 t^{\beta_{e}})},
    \end{aligned}    
    \end{equation}
    which relies on the assumption \ref{hyp:se-0} (see Section~\ref{subsec:ell-calH} for the notation $\ell^{r} L^2$ and more details).
    
    \item {\bf Outer scale ($r \ageq t^{\bt_{e}}$).} Outside this ball\footnote{In the analysis, it is convenient to further distinguish the region $r \geq t$ -- which we call the \emph{far-away zone} -- where $\nrm{L_{\mu} L_{\nu} u}_{L^{2}}$ is controlled by $\nrm{\calL u}_{L^2}$ and the standard energy $\nrm{u}_{L^2}$. See \eqref{eq:ell-r>t-U} and Remark~\ref{rem:zones}.}, we treat the difference $\calH - (-\lap)$ perturbatively and apply local elliptic regularity. This yields:
    \begin{equation} \label{eq:ideas:outer}
    \begin{aligned}
        &t^{-2+\frac{1}{2}\bt_{e}}\nrm{L_\mu L_\nu u}_{L^{2}(r > t^{\beta_{e}})} + t^{-1+\frac{1}{2}\bt_{e}}\|r^{-1}L_\mu u\|_{L^2(r > t^{\beta_{e}})} + t^{\frac{1}{2}\bt_{e}} \|r^{-2}u\|_{L^2(r > t^{\beta_{e}})} \\
	&\aleq t^{-2+\frac{1}{2}\bt_{e}}\nrm{\calL u}_{L^{2}(r > \frac12 t^{\beta_{e}})} + t^{\frac{1}{2}\bt_{e}} \nrm{r^{-2} u}_{L^{2}(r > \frac12 t^{\beta_{e}})}, 
    \end{aligned}   
    \end{equation}
    which does \emph{not} require \ref{hyp:se-0}.
\end{enumerate}
We normalize \eqref{eq:ideas:inner} so that the left-hand side controls $\nrm{u}_{L^{\infty}}$ via a rescaled Sobolev embedding on dyadic annuli, and we ensure that the left-hand sides of \eqref{eq:ideas:inner} and \eqref{eq:ideas:outer} match on the overlapping region $r \approx t^{\bt_{e}}$. We call this combined approach \textbf{two-scale elliptic analysis}. The specific exponent $\bt_{e}$ -- which partitions the two regimes -- will be determined after the discussion of the Ifrim--Tataru testing-by-wave-packet method in Section~\ref{subsubsec-IT-wp}.

\begin{remark}[Two-scale elliptic analysis without \eqref{eq:ideas:good-comm-conj}]
    While the identity \eqref{eq:ideas:good-comm-conj} already provides a simple relationship between $\calL$ and $\calH$ in the current setting, our two-scale elliptic analysis is designed for broader applicability. In many dispersive problems, such as the variable-coefficient Klein--Gordon equation \cite{OPT25+}, a simple conjugation identity may be unavailable, yet the underlying elliptic structure of the good commutator at different scales remains robust.
\end{remark}

\subsubsection{Application of Ifrim and Tataru's method of testing by wave packets}\label{subsubsec-IT-wp}

To bridge the gap between the control of $\nrm{\calL u(t, \cdot)}_{L^{2}}$ and the $L^\infty$ decay of $u$, and to subsequently obtain global asymptotics, we apply Ifrim and Tataru's \emph{method of testing by wave packets} \cite{IT15, IT24}. A \emph{test wave packet} traveling at velocity $v$ is defined as \[
    \Psi_v := \chi^v e^{i\mathring{\bfPhi}}, \quad \chi^v := \chi(\frac{x-tv}{\sqrt{t}})
\]
where $\chi$ is a non-negative bump function with normalization $\|\chi\|_{L^1} = 1$ and support property $\supp\chi(z) \subset \{|z| \leq 1\}$. Observe that $\Psi_{v}$ is a wave packet for the free Schr\"odinger flow $i \rd_{t} + \lap$ with spatial localization $t^{\frac{1}{2}}$, and thus it remains coherent over a time-scale $t$ (see also Remark~\ref{rem:wp-test-free}.(1) below). We then define the \emph{approximate profile} $\gamma(t, v)$ by testing $u(t, \cdot)$ against $\Psi_{v}$:
\[
\gmm(t, v) := \brk{u, \Psi_v},
\]
where we use bracket notation to denote the $L^2(\calM^3; \bbC)$-pairing. 

The key insight of \cite{IT15, IT24} is that testing against a wave packet with the time-dependent localization scale $t^{\frac{1}{2}}$ optimally balances two tasks: (1)~approximating $u(t, t v)$ via the approximate profile $\gamma(t, v)$, and (2)~analyzing the time evolution of $\gmm(t, v)$ itself. In our specific case, we establish the following \emph{approximation estimate} 
\begin{equation} \label{eq:ideas:wp-approx}
    \left\|t^{\frac{3}{2}} u(t, tv) -   \gmm(t, v) e^{i \mathring{\bfPhi}} \varphi_{0}(t, tv) \right\|_{L^\infty_v} 
    \aleq t^{-\frac14+\frac12\eta} \|\calL u(t, \cdot)\|_{L^2_x} + t^{\frac32-\frac{\eta}{4}} \|u(t, \cdot)\|_{L^\infty_x} + t^{-\frac14}\|u(t , \cdot)\|_{L^2_x},
\end{equation}
where the \emph{spatial profile} $\vphi_{0}$ solves $\calH \vphi_{0} = 0$ with $\vphi_{0} \to 1$ as $\abs{x} \to +\infty$ (see Remark~\ref{rem:wp-test-free}.(2) below), alongside the \emph{evolution estimate}
\begin{equation} \label{eq:ideas:wp-evol}
   t \|\tfrac{d}{dt}\gmm(t, v)\|_{L^\infty_v}\lesssim t^{-\frac14 + \frac{\eta}2} \|\calL u(t, \cdot)\|_{L^2} + t^{\frac32-\frac{\eta}{4}}\|u(t, \cdot)\|_{L^\infty}.
\end{equation}
Observe that the same powers occur in both estimates, reflecting the aforementioned optimization.

The method of testing by wave packets balances\footnote{In this strict sense, the Ifrim--Tataru method should be thought of as a \emph{phase-space} approach instead of physical- or Fourier-space. Our abuse of terminology and calling our approach ``physical-space'' refers to the useful fact that, as we sketched, it can nevertheless be implemented without reference to the Fourier transform.} the two prior approaches for computing the leading order asymptotics of $U$: ODE integration in physical space \cite{LS06} on the one hand, and tracking Fourier coefficients (cf.~the appearance of the Fourier transform in the Fraunhofer formula; see, e.g., \cite{KP11}) on the other. We further note that these two prior approaches seem unsuitable for our purpose: the former requires a pointwise bound on $\calL u$ (while we only prove an $L^{2}$ bound), and the latter requires a suitable variable-coefficient version of the Fourier transform, which we aim to avoid altogether. We also remark that the key spatial localization scale $t^{\frac{1}{2}}$ first appeared in the global outgoing parametrix construction in \cite{Tataru}. 

Notably, this wave-packet analysis determines the scale $t^{\beta_{e}}$ for our two-scale elliptic analysis. If we temporarily ignore the $L^\infty$ terms on the right-hand sides, \eqref{eq:ideas:wp-approx} and \eqref{eq:ideas:wp-evol} imply that $t^{3/2} \|u\|_{L^\infty}$ remains bounded provided that 
\begin{equation} \label{eq:Lu-grow}
    \nrm{\calL u(t, \cdot)}_{L^{2}} \aleq t^{\frac{1}{4}-\eta}. 
\end{equation}
Substituting this into the inner- and outer-scale elliptic estimates \eqref{eq:ideas:inner} and \eqref{eq:ideas:outer} reveals that we may take $\bt_{e} = 1/2$ (up to a room of $\eta$, which we ignore) to retain the $t^{-\frac{3}{2}}$ decay of the left-hand sides. Thus, the spatial scale for the elliptic analysis coincides with the wave-packet localization scale $t^{1/2}$ (see also Remark~\ref{rem:1/2} below).

\begin{remark} \label{rem:wp-test-free}
We conclude this discussion with two technical observations regarding our implementation:
\begin{enumerate}[leftmargin=*]
    \item {\bf Free test wave packets and separation of scales.}
    Remarkably, although we study a variable-coefficient flow, we test against wave packets adapted to the \emph{free} Schr\"odinger flow ($i \rd_{t} + \lap$). This is made possible by a \emph{separation of scales}: the variable coefficients are localized at a time-independent scale $O(1)$, whereas the wave packet $\Psi_{v}$ spreads and relaxes according to the time-dependent scale $O(t^{1/2})$. The resulting interaction errors in the evolution estimate \eqref{eq:ideas:wp-evol} decay in time, allowing the free wave packet to remain a valid proxy.

    \item {\bf Refined two-scale elliptic estimates.}  
    Implementation of the testing-by-wave-packets method requires high-precision inputs from our elliptic analysis. In the outer scales ($r \gtrsim t^{1/2}$), we utilize a variant of Morrey’s inequality (Lemma~\ref{lem:Morrey-out-scale}). In the inner scales ($r \lesssim t^{1/2}$), we apply a sharp elliptic asymptotics result that explicitly recovers the spatial profile $\vphi_{0}$ (see Proposition~\ref{prop:ell-asymp} and Lemma~\ref{lem:near-zone-asymp-zero-order}).
\end{enumerate}
\end{remark}

\subsubsection{\ref{hyp:iled*} and the main iteration scheme} \label{subsubsec-iteration}
The final component of the proof is an iteration argument that improves the decay rates step by step. We begin with the initial assumptions:
\begin{equation} \label{eq:ideas:iter-assume}
    \|\calL u\|_{L^2} \leq A t^{\frac14 - \eta + \alpha}, \quad \|u\|_{L^\infty} \leq A t^{-\frac32 + \alpha}.
\end{equation}
While crude energy estimates allow us to establish these bounds for a relatively large $\alpha$, our goal is to show that we can iteratively reduce $\alpha$ until it reaches zero. In each step, the constants improve in such a way that $A \leadsto A'$ (depending on $A$, $A_{c}$, and the data) and $\alpha \leadsto \alpha' := \max\{0, \alpha - \frac{\eta}{4}\}$. Since $\eta > 0$, this process terminates in finitely many steps, yielding the optimal $t^{-3/2}$ decay and sharp asymptotics (the latter is via testing by wave packets).

The iteration proceeds as follows. First, combining the wave-packet approximation \eqref{eq:ideas:wp-approx} and evolution estimate \eqref{eq:ideas:wp-evol} yields the desired improvement for $\|u\|_{L^\infty}$. Next, we use the ILED estimate \eqref{eq:main-idea-iled} to improve the bound on $\calL u$. Setting $w = \calL u$ in \eqref{eq:main-idea-iled} and using the commutator identity \eqref{eq:ideas:good-comm-id}, we have:
\begin{equation*}
    \nrm{\calL u}_{L^{\infty}[0, T] L^{2}} \aleq \|\calL u (0, \cdot)\|_{L^2} + \|t (\calS c + 2 c) u \|_{(\brk{x}^{-1}L^2L^2 + L^1L^2)[0, T]}.
\end{equation*}
To estimate the potential term, we split the domain at the spatial scale $t^{1/2}$ and apply the decay hypothesis \eqref{eq:ideas:af}:
\begin{align*}
    &\nrm{t (\calS c + 2 c) u}_{(\brk{x}^{-1}L^2L^2 + L^1L^2)[0, T]} \\
    &\aleq \nrm{t \brk{r}^{-2-2\eta} u}_{\brk{x}^{-1}L^2L^2 \set{0 < t < T, \, \abs{x} < t^{\frac{1}{2}}}}
    + \nrm{t \brk{r}^{-2-2\eta} u}_{L^1L^2 \set{0 < t < T, \, \abs{x} > t^{\frac{1}{2}}}} \\
    &\aleq t^{\frac{1}{4}-\eta + \alpha'} (t^{\frac{3}{2}-\alpha'} \nrm{u}_{L^{\infty}})
\end{align*}
These estimates provide the desired improvement of $\nrm{\calL u(t, \cdot)}_{L^{2}}$, completing the iteration step.

\begin{remark} [Spatial scale $t^{\frac{1}{2}}$] \label{rem:1/2}
The spatial scale $t^{1/2}$ appears in three different capacities in our analysis. It is: (1)~the localization scale for the test wave packets; (2)~the transition point between inner and outer elliptic estimates; and (3)~the boundary for splitting ILED and energy estimates.
\end{remark}

\begin{remark} [Comparison with \cite{RT15}] \label{rem:RT15}
The iteration scheme is primarily inspired by that in \cite[Section~16]{RT15}. Compared to \cite{RT15}, a key difference (other than generality) is our application of the Ifrim--Tataru testing-by-wave-packet method, which offers three main advantages. First, we obtain global asymptotics rather than just (almost optimal) time decay. Second, by allowing $\nrm{\calL u}_{L^{2}}$ to grow slightly as in \eqref{eq:Lu-grow}, we can handle optimal decay assumptions ($\eta > 0$ vs.~$\eta > 1/4$ in \cite{RT15}). Finally, our iteration terminates in finite steps because each step yields a uniform improvement in the decay rate.
\end{remark}

\subsubsection{Modifications in the general case}\label{subsubsec-mod-general-case}
We conclude this overview by noting the necessary adjustments for the more general settings considered in our Main Theorems.

\begin{enumerate}[leftmargin=*, label=\arabic*. ]
    \item For operators of the general form \eqref{eq:calH}, the good commutator $\calL$ is defined via the conjugation relation \eqref{eq:ideas:good-comm-conj}. This leads to a more involved commutator identity, which can be found in Proposition~\ref{prop:good-comm} below. We also remark that, in the case of $a - \dlt, b \neq 0$, two-scale elliptic estimates need to be used in the estimate for $\nrm{\calL u(t, \cdot)}_{L^{2}}$, whereas in the above sketch, they are only used implicitly in the application of the testing-by-wave-packets method.
    
    \item When $\calK \neq \emptyset$, we modify the phase $\mathring{\bfPhi}$ to a cut-off version $\bfPhi^\ob$ away from the obstacle; see \eqref{eq:bfPhi-ob}. This ensures that $\calL u$ satisfies the Dirichlet boundary condition on $\rd \calK$, allowing us to recover the required energy estimates. While this modification is crucial for the proof, it is lower-order in $t$ and does not affect the leading-order asymptotics.

    \item In cases where the assumptions \ref{hyp:iled} or \ref{hyp:iled*} involve derivative losses, we implement an iteration scheme where each step may lose a fixed number of derivatives; see, in particular, Lemma~\ref{lem:main_iteration}. Because the iteration terminates in finite steps, we can afford these losses provided $M_c$ and $M_{0}$ (regularity of the coefficients and data, respectively) are sufficiently large compared to $s_{c}$ or $s_{c}^{\ast}$ (derivative loss in \ref{hyp:iled} or \ref{hyp:iled*}, respectively). The commutation procedure developed for this purpose also enables us to derive asymptotics for higher-order derivatives.

    \item Thanks to its robustness, the approach extends naturally to nonlinear problems. We proceed via a standard bootstrap argument in the case of Main Theorem~\ref{thm:nonlinear-1}, and we follow the roadmap in \cite{IT15, IT24} in the case of Main Theorem~\ref{thm:nonlinear-2}, where modified scattering occurs. We refer to Section~\ref{sec-nonlinear} below for details.
\end{enumerate}

\subsection{Other related works} \label{subsec-refs}
We now discuss the related works in the literature that have not been highlighted so far.

\subsubsection{Dispersive estimates for the linear variable-coefficient Schr\"odinger equation} \label{subsubsec-refs-var-coeff}

There is a huge literature on the time decay of solutions to the linear variable-coefficient Schr\"odinger equation. In addition to the references already cited above, we note the development in the case of potential perturbations (i.e., $a - \dlt, b = 0$) \cite{JSS91, GS04, ES04, RS04, Yaj05, G06, BG12, BK25}, and refer to the survey \cite{Sch2007}. For a textbook treatment, we cite \cite{KK12}, and for metric perturbations, we refer to \cite{SSS10-1, SSS10-2, BoucletBurq}. Finally, we also note that for suitably ``hyperbolic'' trapped geodesics, global Strichartz estimates without(!) loss of derivatives have been proved, which stands in contrast to the aforementioned global-in-time local smoothing estimates; see \cite{BGH10, MMTT10, HSTZ26}.

\subsubsection{Testing by wave packets}
As discussed earlier, the method of testing by wave packets was first introduced in the context of the (constant-coefficient) NLS \cite{IT15, IT24}, and has since been applied in a variety of equations; see, for instance, \cite{IT16, HGIfrTat2017, HM18, L19, Clo20, AA24, HIS25}. In addition, we point out its application to the one-dimensional derivative NLS by  \cite{By25-1, Shan25, By25-2}, as well as the works \cite{Ste24, KSW25}, which combine the distorted Fourier transform (discussed further below) with testing by wave packets in the presence of a potential for the one-dimensional Schr\"odinger equation.

\subsubsection{Small data scattering results} \label{subsubsec.small-data-results}
In the constant-coefficient setting, the study of small-data scattering for NLS has a rich history, notably through the foundational series of works by Hayashi--Naumkin (e.g., \cite{HN98, HayNau2000}) and the development of the \emph{spacetime resonance method} by Germain--Masmoudi--Shatah \cite{GerMasSha2009} (see also \cite{GerMasSha2012,GerMasSha2012-2D-NLS}). The latter is a powerful Fourier-based framework designed to capture cancellations arising from phase oscillations (normal forms) and group velocity differences; notably, these techniques have been successfully applied to handle various quadratic nonlinearities, in particular $\br{u}^{2}$ and $u^2$ (see \cite{GerMasSha2012}; see also \cite{HayNau2000, HayMizNau2003, Kaw2005} for a different approach).

It remains an interesting open question how to marry our physical-space approach with tools for capturing such nonlinear cancellations to treat quadratic nonlinearities in the variable-coefficient setting. We note that existing results in this direction typically rely on the distorted Fourier transform to extend the spacetime resonance method directly (cf.~\cite{PS24}, discussed below). More recently, there has been a push toward obtaining more detailed information about the solution's behavior; we refer to \cite{JS26} for a study of higher-order asymptotics that further refines the description of the global behavior.

\subsubsection{Distorted Fourier transform (dFT), spectral methods, and spacetime resonance method}
The \emph{distorted Fourier transform} (dFT) represents a powerful alternative to the physical-space methods pursued in the present paper for studying variable-coefficient nonlinear problems. Much of this progress has focused on potential perturbations in one space dimension; see \cite{CGV14, Del16, Nam16, GPR18, CP22, Chen23, CP24} and the references therein. In the $(3+1)$-dimensional setting, a combination of dFT and the spacetime resonance method has been successfully employed to establish small-data scattering for quadratic nonlinearities such as $\bar{u}^{2}$ \cite{GHW15} and $u^{2}$ \cite{PS24} (cf.~ \cite{HayNau2000, HayMizNau2003, Kaw2005, GerMasSha2009} in the constant-coefficient case). The latter extends the earlier works \cite{Leger20, Leger21}, in which the spacetime resonance method was adapted for small potentials using a different spectral-theoretic approach. 

For other applications of spectral-theoretic techniques to nonlinear problems with variable coefficients, we refer to \cite{Schl21, SW25}. Finally, we mention related results concerning soliton scattering and stability in the presence of potentials, such as \cite{FGJS04, HZ08, LL24}.

\subsubsection{NLS outside an obstacle}
Strichartz estimates were established and used to prove local well-posedness; see, for instance, \cite{BGT04, Iv10, BSS12}.
Global well-posedness and scattering of the Schr\"odinger equation in the exteriors of balls was obtained via explicit representations in \cite{LSZ12}. Later, such a result for quintic NLS outside general domains was obtained in \cite{KVZ16}. See Remark~\ref{rem:comm_pt} for a comparison with our result.

\subsection{Organization of paper}
The remainder of this paper is structured as follows. In Section~\ref{sec:prelim}, we collect our notation and conventions and develop the basic elliptic theory for the operator $\calH$. In Section~\ref{sec-main-proof}, we introduce the fundamental components of our approach -- including the good commutator, test wave packets, and the approximate profile -- and provide a high-level proof of the linear case (Main Theorem~\ref{thm:linear}). The technical heart of the paper, which provides the rigorous foundations for the proof in Section~\ref{sec-main-proof}, is contained in the subsequent four sections:
\begin{itemize}
\item In Section~\ref{sec:good-comm}, we derive the good commutator identity.
\item In Section~\ref{sec:two-scale}, we develop the two-scale elliptic analysis.
\item In Section~\ref{sec:en-est-calLu}, we establish the energy estimates for $\calL u$.
\item In Section~\ref{sec:test-by-wp}, we implement the Ifrim--Tataru testing-by-wave-packets method.
\end{itemize}
Finally, in Section~\ref{sec-nonlinear}, we extend these techniques to the nonlinear setting to prove Main Theorems~\ref{thm:nonlinear-1} and \ref{thm:nonlinear-2}.

\stoptoc
\subsection*{Acknowledgments}
The authors would like to thank Ovidiu-Neculai Avadanei, Allison Byars, Mihaela Ifrim, Ben Pineau, Igor Rodnianski, Ely Sandine, Gavin Stewart, Zhongkai Tao, Daniel Tataru, and Maciej Zworski for insightful discussions.  

F.P.~acknowledges support from the Bergman Fellowship of the American Mathematical Society. 
S.-J.O.~was partially supported by a National Science Foundation
CAREER Grant under NSF-DMS-1945615, a National Science Foundation Grant under DMS-2452760, and the Miller Research Professorship.\resumetoc

\section{Preliminaries}\label{sec:prelim}

\subsection{Notation and conventions} \label{subsec:notation}
In this subsection, we collect pieces of notation and various conventions used throughout paper.

\subsubsection{Notation}\label{subsubsec-notation}
\begin{itemize}[leftmargin=*, label= $\circ$]
\item Given two normed vector spaces $X$ and $Y$, we define the space $X \cap Y$ by the norm
\begin{equation*}
	\nrm{u}_{X \cap Y} := \nrm{u}_{X} + \nrm{u}_{Y}.
\end{equation*}
\item If two normed vector spaces $X$ and $Y$ embed into a common vector space (which is always the case in practice when they are function spaces), we define the space $X + Y$ by the set of all sums $u_{1} + u_{2}$ with $u_{1} \in X$ and $u_{2} \in Y$, equipped with the norm
\begin{equation*}
	\nrm{u}_{X+Y} := \inf\set{\nrm{u_{1}}_{X} + \nrm{u_{2}}_{Y}: u_{1} \in X, \, u_{2} \in Y, \, u = u_{1}+u_{2}}.
\end{equation*}
\item Given a vector space $X$ for functions on space $\calM^{3}$ or spacetime $\calM^{1+4}$ and a positive function $w$, we define the space $w X$ to be the space of all elements $ w u$ for $u \in X$ equipped with the norm $\nrm{v}_{w X} := \nrm{w^{-1} v}_{X}$. 
\item Given a norm $X$ for functions on space $\calM^{3}$, we write $L^{p} [t_{0}, t_{1}] X$ for the mixed norm  $\nrm{f}_{L^{p} [t_{0}, t_{1}] X} := \nrm{\nrm{f(t)}_{X}}_{L^{p}_{t}[t_{0}, t_{1}]}$. We omit the time interval when it is $[0, \infty)$, or when it is clear from the context.
\item When we consider the restriction of a norm $X$ for a function $f$ to a subdomain $U$, we write $\nrm{f}_{X(U)}$. When $U = \set{t \in [t_{0}, t_{1}]}$, we simply write $X[t_{0}, t_{1}]$. (Note that, under this convention, $\nrm{f}_{L^{p} L^{q}[t_{0}, t_{1}]}$ coincides with  $\nrm{f}_{L^{p}[t_{0}, t_{1}] L^{q}}$.) 
\item We use the convention that lowercase Greek indices $\mu, \nu, \lmb, \ldots$ run over $\set{1, 2, 3}$. The convention of {\it raising and lowering lowercase (resp.~uppercase) Greek indices} using the Kronecker delta symbol $\dlt_{\mu\nu}$  (resp.~$\dlt^{\mu\nu}$). For instance, $x_\mu := x^\mu$ for $\mu = 1, 2, 3$.
We also adopt the usual convention of {\it summing up repeated lower and upper indices}.

\item We sometimes omit the subscript and use $L^{(k)}u$ to represent a chain of $k$-copies of $L_\mu$'s $(\mu = 1, 2, 3)$ applied to $u$. Similarly, we write $\rd_{x}^{(\leq k)}$ for a chain of at most $k$-copies of $\rd_{\mu}$'s ($\mu = 1, 2, 3$).

\item Abusing the notation slightly, we write $\rd_{x}^{k}$ for a chain of exactly $k$-copies of $\rd_{\mu}$'s, or equivalently, $\rd_{x}^{\alp}$ for a multi-index $\alp$ with $\abs{\alp} = k$; the specific multi-index being used shall be clear from the context.

\item We use the notation $r := |x|$ and $r := \brk{x}$ interchangeably when writing various weighted norms.

\item We write $A \aleq B$ for $A \leq CB$ for some positive constant $C > 0$, which may vary from line to line. We specify the dependencies of $C$ using subscripts.

\item We use the following pieces of asymptotic notation: $g = O_x^{M}(\brk{x}^{-\gmm})$ and $g = O_{t,x}^{M}(\brk{x}^{-\gmm})$ mean $|\p_x^{(\leq M)} g| \lesssim \brk{x}^{-\gmm}$ and $|\p_t^{(\leq \lceil M/2\rceil)} \p_x^{(\leq M)} g| \lesssim \brk{x}^{-\gmm}$, respectively.

\item We use $\brk{\cdot, \cdot}$ to denote the $L^2(\R^3\setminus\calK; \bbC)$ pairing, i.e., \[
    \brk{f, g} = \int_{\R^3\setminus\calK} f \bar g\, \ud x.
\]
\end{itemize}

\subsubsection{Global conventions}\label{subsubsec-convention}
\begin{itemize}[leftmargin=*, label= $\circ$]
\item Without loss of generality, we assume in the following sections (unless otherwise specified) that the decay rate $\eta$ in \ref{hyp:af} is less than $\tfrac14$. In other words, we replace $\eta$ by $\min\{\eta, \tfrac14-\}$.

\item Without loss of generality, we assume that $0 \in \calK$.

\item We use $R_0$ (resp. $T_0$) to denote a sufficiently large radius (resp. time) in various results.
\end{itemize}

\subsubsection{An index of notation and parameters}
We list the notations used frequently in this paper and point out their first (few) appearances :
\begin{itemize}[leftmargin=*, label= $\circ$]
    \item $\calM$, $\calM^{3}$, $\calK$, $\calH, D_\mu$: Section~\ref{sec-intro};
    \item $a^{\mu\nu}, b^\mu, c, \bfh^{\mu\nu}$: \ref{hyp:af-0}, \ref{hyp:af};
    \item $\calN(u)$: \ref{hyp:nl};
    \item $G^{\mu\nu}$, $F$: \ref{hyp:nl-1};
    \item $\td A^{\mu\nu}, \td B^\mu, \td{\ul B}^\mu$: \ref{hyp:en};
    \item $\eta$: \ref{hyp:af-0}, \ref{hyp:af}, Section~\ref{subsubsec-convention};
    \item $\eta_\sharp$: Main~Theorem~\ref{thm:linear}, Remark~\ref{rem:eta-sharp};
    \item $s_c, s_c^{\ast}$, $\beta_c$: \ref{hyp:iled}, \ref{hyp:iled*};
    \item $s_c^{\prime}, \delta_s$: Main Theorem~\ref{thm:linear-f=0}, Main Theorem~\ref{thm:linear},  Proposition~\ref{prop:iled*-L1L2+LE*};
    \item $A_c$: \ref{hyp:af-0}, \ref{hyp:af}, \ref{hyp:ult-stat}, Proposition~\ref{prop:good-comm};
    \item $M_c, M_c^*$: \ref{hyp:af-0}, \ref{hyp:af},  \ref{hyp:iled-0}, \ref{hyp:iled*};
    \item $D_0$: \eqref{eq:D-u};
    \item $A_0$: \eqref{eq:E-u};
    \item $D_f$, $\delta_f$: \ref{hyp:f};
    \item $M_0, c_0$: Main Theorem~\ref{thm:linear-f=0}, Main Theorem~\ref{thm:linear};
    \item $D$: Main Theorem~\ref{thm:linear-f=0};
    \item $\varphi_0$: Main Theorem~\ref{thm:linear-f=0}, Main Theorem~\ref{thm:linear}, Lemma~\ref{lem:varphi0};
    \item $\tilde\varphi_0, \eta_0$: Lemma~\ref{lem:varphi0};
    \item $\gmm_\infty$: Main Theorem~\ref{thm:linear-f=0}, Main Theorem~\ref{thm:linear}, Section~\ref{subsec-main-iter};
    \item $\gmm$, $\gmm[\p^\alpha u]$: Definition~\ref{defn_wp};
    \item $\mathring{\bfPhi}, \bfPhi^\ob, \chi^\ob$: Section~\ref{subsec-good-comm-wp};
    \item $u, U$: \eqref{eq:relation-uU};
    \item $U^{(\alpha)}, U^{(k)}$: \eqref{defn:U-higher-order};
    \item $L_\mu$: \eqref{eq:L_mu};
    \item $\calS$: \ref{hyp:af-0}, Proof of Proposition~\ref{prop:good-comm};
    \item $\calL$: Definition~\ref{def:good-comm};
    \item $X(x), X_\mu(x)$: \eqref{def:X_mu};
    \item $\Psi_v$: Definition~\ref{defn_wp};
    \item $\chi^v$: Definition~\ref{defn_wp};
    \item $\tilde\chi^v$: Case 2 for $R_{5, k}$ in Section~\ref{sec-proof_prop:dotgmm_decay}.
\end{itemize}

\subsection{Elliptic theory for $\calH$} \label{subsec:ell-calH}
The purpose of this subsection is to review, record, and establish various results for the elliptic operator $\calH$ (for each fixed time). Specifically, we record standard elliptic estimates for $\calH$; establish the existence of the spatial profile $\vphi_0$ appearing in \eqref{eq:linear-asymp}; and prove a new asymptotic formula (Proposition~\ref{prop:ell-asymp}), which plays a key role in our implementation of the testing by wave packets method. 

Throughout this subsection, the following conventions are in effect:
\begin{itemize}[leftmargin=*]
\item {\bf In case $\calK \neq 0$, we assume that $\rd \calK$ is a $C^{M_{c}+2}$ boundary}.

\item {\bf We assume that $\calH$ satisfies \ref{hyp:af-0}}.

\item {\bf We fix a time $t_{0}$ and (abusing notation) write $\calH$ for $\calH |_{t = t_{0}}$}.
\end{itemize}
We note that all the results stated here concerning $\calH$ are independent of $t_{0}$ under \ref{hyp:af-0}.

\subsubsection{Local elliptic regularity for $\calH$}

We start with the standard $L^{2}$ local elliptic regularity theorem applied to $\calH$:
\begin{proposition}[Local elliptic regularity] \label{prop:dyadic-ell-0}
Consider $\calH$ satisfying \ref{hyp:af-0}. Fix $a > 1$, and in case $\calK \neq 0$, let $\calK \subseteq \set{r < a^{-1} R}$. Then for every $w \in \scrS(\bbR^{3})$, we have
\begin{equation} \label{eq:dyadic-ell-0}
	\sum_{\alp: \abs{\alp} \leq 2} R^{\abs{\alp}-2} \nrm{\rd_{x}^{\alp} w}_{L^{2}(R < r < 2 R)}
	\aleq_{a} \nrm{\calH w}_{L^{2}(a^{-1} R < r < 2 a R)} + R^{-2} \nrm{w}_{L^{2}(a^{-1} R < r < 2 a R)}.
\end{equation}
\end{proposition}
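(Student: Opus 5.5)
The plan is to reduce to the case $R \approx 1$ by scaling, and then apply standard interior $H^{2}$ elliptic regularity for a uniformly elliptic operator with bounded coefficients.

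\emph{Scaling.} Set $w_{R}(y) := w(Ry)$ and $\calH_{R} := R^{2}\, \calH$ written in the variable $y = x/R$, so that $(\calH w)(Ry) = R^{-2} (\calH_{R} w_{R})(y)$. Concretely,
\begin{equation*}
\calH_{R} v = D_{\mu}\bigl(a^{\mu\nu}(Ry) D_{\nu} v\bigr) + R\, b^{\mu}(Ry) D_{\mu} v + D_{\mu}\bigl(R\, b^{\mu}(Ry) v\bigr) + R^{2} c(Ry)\, v .
\end{equation*}
On the annulus $a^{-1} < \abs{y} < 2a$, which avoids $\calK/R$ by the hypothesis $\calK \subseteq \set{r < a^{-1} R}$, assumption \ref{hyp:af-0} controls the rescaled coefficients.
\begin{itemize}
\item $a^{\mu\nu}(Ry)$ is real and uniformly positive definite. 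The lower bound comes from $\bfh \to 0$ at infinity together with positive-definiteness plus continuity on compact sets.
\item $\abs{\rd_{y} (a^{\mu\nu}(Ry))} = R \abs{\rd_{x} \bfh} \aleq A_{c} R \brk{Ry}^{-1-2\eta} \aleq A_{c}$. This uses the $(\brk{x}\rd_{x})$ bound for $\bfh$.
\item $R \abs{b(Ry)} + \abs{\rd_{y}(R\, b(Ry))} \aleq A_{c}$.
\item $R^{2} \abs{c(Ry)} \aleq A_{c}$.
\end{itemize}
For $R \aleq 1$ the weights $\brk{x}$ are comparable to constants, so the bounds again hold uniformly. In all cases the constants are independent of $R$.

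\emph{Interior estimate at unit scale.} With uniform ellipticity and uniformly bounded $C^{1}$ leading and first-order coefficients, the standard interior estimate on nested annuli gives
\begin{equation*}
\nrm{v}_{H^{2}(1<\abs{y}<2)} \aleq_{a} \nrm{\calH_{R} v}_{L^{2}(a^{-1}<\abs{y}<2a)} + \nrm{v}_{L^{2}(a^{-1}<\abs{y}<2a)} .
\end{equation*}
This is proved by a Caccioppoli/energy estimate with a cutoff $\chi$ equal to $1$ on $\set{1<\abs{y}<2}$ and supported in the larger annulus, followed by difference quotients (or commuting $\rd_{y}$ with the divergence-form operator) to get second derivatives. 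Complex-valued lower-order terms cause no difficulty: they are absorbed using $\abs{(b\, D v, v)} \le \epsilon \nrm{\rd v}^{2} + C_{\epsilon}\nrm{v}^{2}$ with cutoffs.

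\emph{Rescaling back.} Undoing the scaling, $\nrm{\rd_{y}^{\alp} w_{R}}_{L^{2}} = R^{\abs{\alp} - 3/2} \nrm{\rd_{x}^{\alp} w}_{L^{2}}$ and $\nrm{\calH_{R} w_{R}}_{L^{2}} = R^{2 - 3/2} \nrm{\calH w}_{L^{2}}$. Multiplying through by $R^{-1/2}$ yields exactly \eqref{eq:dyadic-ell-0}.

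The only step requiring care is verifying that the rescaled coefficient bounds are uniform in $R$, in particular the first derivative of the leading coefficients. This is exactly where the weighted $\brk{x}\rd_{x}$ control in \ref{hyp:af-0} is used. The rest is textbook (e.g.\ Gilbarg--Trudinger, Thm.~8.8).
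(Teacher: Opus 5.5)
Your proposal is correct and follows the same route as the paper, which simply says to rescale to the unit annulus and apply standard interior $H^{2}$ regularity (citing Evans, Section 6.3.1) and omits the details. You supply those omitted details correctly: the $R$-independent bounds on $a^{\mu\nu}(Ry)$, $R\,b^{\mu}(Ry)$, $R^{2}c(Ry)$ from \ref{hyp:af-0} and the scaling bookkeeping; one caveat is that your continuity/compactness argument gives the ellipticity lower bound only at a fixed time $t_{0}$, so the paper's claimed uniformity in $t_{0}$ tacitly requires $a^{\mu\nu}$ to be uniformly positive definite.
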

This result follows by rescaling the problem to the unit annulus $R = 1$ and applying the standard interior elliptic regularity result (in, say, \cite[Section~6.3.1]{Evans}). We omit the details.

\subsubsection{Weighted Sobolev spaces and global elliptic estimates for $\calH$}\label{subsubsec-Sobolev}
For $R \geq 1$, we introduce the norms
\begin{align*}
\nrm{u}_{\ell^{r} L^2(r < R)}
&= \left(\sum_{j > 0} \nrm{u}_{L^2(\set{2^{j-1} < r < 2^{j+1}} \cap \set{r < R} )}^{r} + \nrm{u}_{L^{2}(r < 1)}^{r} \right)^{\frac{1}{r}}, \\
\nrm{u}_{\ell^{r} H^{k, \dlt}(r < R)}
&= \sum_{\alp: \abs{\alp} \leq k} \left( \sum_{j \geq 0} 2^{(\abs{\alp} + \dlt) j} \nrm{\rd^{\alp} u}_{L^2(\set{2^{j-1} < r < 2^{j+1}} \cap \set{r < R})}^{r} + \nrm{\rd^{\alp} u}_{L^{2}(r < 1)}^{r} \right)^{\frac{1}{r}},
\end{align*}
with the usual modification when $r = \infty$. We omit $\ell^{r}$ when $r = 2$ (in which case the above norms coincide with the standard $L^{2}$ and weighted Sobolev spaces). We also omit $(r < R)$ in the whole space case (i.e., $R = + \infty$).

We start with the following standard result for the Laplacian $-\lap$ (see, for instance,  \cite[Proposition 2.10]{MST20}, as well as \cite[Lemma~7.7]{LukOh24} for $w_{(\geq 1)}$):
\begin{lemma} \label{lem:w-ell-lap}
Let $w \in \scrS(\bbR^{3})$, $w|_{\p\calK} = 0$. For any $\frac{1}{2} < \dlt < \frac{3}{2}$, we have
\begin{equation} \label{eq:w-ell-lap}
	\nrm{w}_{H^{2, \dlt-2}}
	\aleq_{\dlt} \nrm{(-\lap) w}_{H^{0, \dlt}}.
\end{equation}
In the endpoint case $\dlt = \frac{1}{2}$, we have
\begin{equation} \label{eq:w-ell-0-end-lap}
	\nrm{w}_{\ell^{\infty} H^{2, - \frac{3}{2}}}
	\aleq \nrm{(-\lap) w}_{\ell^{1}  H^{0, \frac{1}{2}}}.
\end{equation}
Denote by $w_{(0)}(x) := \frac{1}{\abs{\bbS^{d-1}}}\int_{\bbS^{d-1}} w(\abs{x} \tht) \, \ud \sgm_{\bbS^{d-1}}$ the spherical average of $w$, and $w_{(\geq 1)} := w - w_{(0)}$. Then for any $-\frac{1}{2} < \dlt < \frac{5}{2}$, we have
\begin{equation} \label{eq:w-ell-1-lap}
	\nrm{w_{(\geq 1)}}_{H^{2, \dlt-2}}
	\aleq \nrm{(-\lap) w_{(\geq 1)}}_{H^{0, \dlt}}.
\end{equation}
In the endpoint case $\dlt = -\frac{1}{2}$, we have
\begin{equation} \label{eq:w-ell-1-end-lap}
	\nrm{w_{(\geq 1)}}_{\ell^{\infty} H^{2, -\frac{5}{2}}}
	\aleq \nrm{(-\lap) w_{(\geq 1)}}_{\ell^{1} H^{0, -\frac{1}{2}}}.
\end{equation}
\end{lemma}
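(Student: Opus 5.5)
I would prove Lemma~\ref{lem:w-ell-lap} by reducing everything to weighted estimates for the Newtonian potential on $\bbR^{3}$, combined with dyadic local elliptic regularity. (When $\calK \neq \0$, the lemma is applied to functions vanishing on $\rd\calK$; I would first treat the whole-space case and absorb a neighbourhood of $\calK$ by a cutoff argument together with Dirichlet elliptic regularity near the boundary.)

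\textbf{Step 1: reduction to a weighted $L^{2}$ bound for $w$ itself.} Set $F := -\lap w$, so that $w = \frac{1}{4\pi}\abs{\cdot}^{-1} * F$ since $w$ is Schwartz. Proposition~\ref{prop:dyadic-ell-0}, applied with $\calH = -\lap$ on each dyadic annulus $\{R<r<2R\}$ and summed over $R$ in $\ell^{2}$ (respectively $\ell^{\infty}$), reduces the derivative terms of $\nrm{w}_{H^{2,\dlt-2}}$ to $\nrm{F}_{H^{0,\dlt}} + \nrm{r^{\dlt-2} w}_{L^{2}}$. The region $r<1$ is handled by standard interior regularity. It therefore suffices to prove the zeroth-order estimate
\begin{equation*}
\nrm{\brk{x}^{\dlt-2} w}_{L^{2}} \aleq \nrm{\brk{x}^{\dlt} F}_{L^{2}}.
\end{equation*}

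\textbf{Step 2: dyadic decomposition of the kernel.} I would split $F = \sum_{k} F_{k}$ with $F_{k}$ supported in $r \sim 2^{k}$, and estimate $\abs{\cdot}^{-1} * F_{k}$ on the annulus $r\sim 2^{j}$. There are three regimes.
\begin{itemize}
\item For $j \gg k$, the kernel is $\approx 2^{-j}$, so the contribution on the annulus is bounded by $2^{-j}\nrm{F_{k}}_{L^{1}}\,2^{3j/2}$.
\item For $j \ll k$, the kernel is $\approx 2^{-k}$, so the contribution is bounded by $2^{-k}\nrm{F_{k}}_{L^{1}}\,2^{3j/2}$.
\item For $j \approx k$, the operator has norm $\aleq 2^{2k}$ on $L^{2}$, by Young's inequality on a ball.
\end{itemize}
Using $\nrm{F_{k}}_{L^{1}} \aleq 2^{3k/2}\nrm{F_{k}}_{L^{2}}$ and inserting the weights gives a matrix with entries
\begin{equation*}
2^{(\dlt-\frac12)(j-k)} \quad (j>k), \qquad 2^{(\dlt+\frac12 - 2)(j-k)} = 2^{(\dlt-\frac32)(j-k)} \quad (j<k).
\end{equation*}
Both are summable exactly when $\frac12<\dlt<\frac32$. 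Schur's test then gives the $\ell^{2}$ bound, which proves \eqref{eq:w-ell-lap}. At $\dlt=\frac12$ the entries for $j>k$ are $O(1)$ and no longer decay. However, an $O(1)$ matrix still maps $\ell^{1}\to\ell^{\infty}$, which yields \eqref{eq:w-ell-0-end-lap}, while the $j<k$ side remains geometrically summable.

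\textbf{Step 3: the nonradial part.} I would exploit that $-\lap$ commutes with spherical averaging, so $-\lap w_{(\geq1)} = F_{(\geq1)}$ and $F_{(\geq 1)}$ has vanishing spherical mean. In the multipole expansion of $\abs{x-y}^{-1}$, the $\ell=0$ term drops out. The kernel restricted to modes $\ell\geq1$ is bounded by
\begin{equation*}
\min(\abs{x},\abs{y})\,\max(\abs{x},\abs{y})^{-2}
\end{equation*}
(this uses summation over $\ell$ with Legendre bounds; alternatively, subtract the Taylor term $\abs{x}^{-1}$ in the far region and use the cancellation of $\int F_{k}$ over spheres). Each off-diagonal entry then gains a factor $2^{-\abs{j-k}}$, which shifts the admissible range to $-\frac12<\dlt<\frac52$ and gives \eqref{eq:w-ell-1-lap}. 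The endpoint $\dlt=-\frac12$ follows in the same way as before, giving \eqref{eq:w-ell-1-end-lap}.

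\textbf{Main obstacle.} I expect the main difficulty to be making the $\ell\geq1$ kernel gain rigorous uniformly in $\ell$. Summing the multipole series naively gives $\sum_{\ell}(2\ell+1)(\rho)^{\ell}$-type factors, which are harmless only off the diagonal. Near the diagonal I would instead fall back on the local $L^{2}$ bound, and otherwise use the moment cancellation on spheres. A secondary point is the obstacle: I would apply the whole-space argument to $\chi w$ with $\chi$ vanishing near $\calK$, and control the commutator terms supported in a compact set via boundary elliptic regularity and the bounded-region estimate.
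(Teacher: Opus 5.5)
\textbf{Comparison with the paper.} The paper does not prove this lemma. It quotes it as standard from \cite[Proposition 2.10]{MST20} and, for $w_{(\geq 1)}$, from \cite[Lemma~7.7]{LukOh24}, and it only applies it on $\bbR^{3}$: to $\calH_{\mathrm{out}}$ in Proposition~\ref{prop:w-ell-0}, and to $(-\lap_{\bbR^3})^{-1}$ in Proposition~\ref{prop:ell-asymp}.

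Your route is self-contained. It uses the explicit Newtonian potential, dyadic kernel bounds with Schur's test (or the trivial $\ell^{1}\to\ell^{\infty}$ bound at the endpoints), vanishing of the monopole term for $w_{(\geq1)}$, and dyadic local regularity for the derivatives. Once the slip below is fixed, it is a sound proof of the whole-space statement and shows exactly where the ranges of $\dlt$ come from.

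What you call the main obstacle is harmless. For $r_{<}\leq r_{>}/2$ the elementary bound $\abs{\abs{x-y}^{-1}-r_{>}^{-1}}\aleq r_{<}r_{>}^{-2}$ suffices; so does $\abs{P_{\ell}}\leq 1$, which produces no $(2\ell+1)$ factors. The diagonal blocks are handled by Young's inequality as before. You only need the dyadic cutoffs to be radial, so that each piece of $F_{(\geq 1)}$ keeps vanishing spherical means.

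\textbf{The slip in Step~2.} You have swapped the two off-diagonal regimes. Inserting the weights into your (correct) bullet bounds gives
\begin{equation*}
2^{(\dlt-2)j}\cdot 2^{-j}\,2^{\frac{3k}{2}}\,2^{\frac{3j}{2}}\cdot 2^{-\dlt k}=2^{(\dlt-\frac32)(j-k)} \quad (j>k), \qquad 2^{(\dlt-\frac12)(j-k)} \quad (j<k).
\end{equation*}
With the exponents as you wrote them, both blocks grow when $\frac12<\dlt<\frac32$, and Schur's test fails. Accordingly, at $\dlt=\frac12$ it is the $j<k$ block that becomes $O(1)$: this is the near field of distant sources, i.e.\ the constant mode. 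The $j>k$ block decays like $2^{-(j-k)}$. The $\ell^{1}\to\ell^{\infty}$ conclusion is unaffected. In Step~3 the entries are $2^{(\dlt-\frac52)(j-k)}$ for $j>k$ and $2^{(\dlt+\frac12)(j-k)}$ for $j<k$.

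\textbf{The obstacle case.} Your treatment of $\calK\neq\0$ does not close as sketched. After applying the whole-space bound to $\chi w$, the commutator terms cost $\nrm{w}_{H^{1}}$ on a compact set. Boundary elliptic regularity bounds this only by $\nrm{F}_{L^{2}}$ plus $\nrm{w}_{L^{2}}$ on a compact set, and the latter is not controlled by $F$ without a global injectivity input.

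A compactness--uniqueness argument supplies this for $\frac12<\dlt<\frac32$, but not at the endpoint. The reason is that $1$ minus the capacitary potential of $\calK$ (i.e.\ $\varphi_{0}$ of Lemma~\ref{lem:varphi0} for $\calH=-\lap$) is a nonzero harmonic function vanishing on $\rd\calK$ that lies in $\ell^{\infty}H^{2,-\frac32}$.

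A fix that works for all $\dlt\in[\frac12,\frac32)$ is the comparison principle. Set $W:=\frac{1}{4\pi}\abs{\cdot}^{-1}*\abs{F}$, with $F$ extended by zero into $\calK$. Then $W\pm w$ are superharmonic in $\bbR^{3}\setminus\calK$, nonnegative on $\rd\calK$, and decay at infinity, so $\abs{w}\leq W$. Your Step~2 uses only the size of the kernel, so it applies to $W$ verbatim. The derivative bounds then follow from dyadic interior regularity away from $\calK$ and boundary regularity near it. The $w_{(\geq 1)}$ estimates only make sense on $\bbR^{3}$ anyway.
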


From Lemma~\ref{lem:w-ell-lap} and the assumptions \ref{hyp:af} and \ref{hyp:se}, we obtain the following elliptic estimate for $\calH$ in weighted Sobolev spaces:
\begin{proposition}[Global elliptic estimate in weighted Sobolev space] \label{prop:w-ell-0}
Consider $\calH$ satisfying \ref{hyp:af-0} and \ref{hyp:se}, then for every $w \in \scrS(\bbR^{3})$ with $w|_{\p\calK} = 0$, we have
\begin{equation} \label{eq:w-ell-0}
	\nrm{w}_{\ell^{\infty} H^{2, - \frac{3}{2}}}
	\aleq \nrm{\calH w}_{\ell^{1}  H^{0, \frac{1}{2}}}.
\end{equation}
\end{proposition}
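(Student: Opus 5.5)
We argue by a perturbation-plus-compactness scheme: treat $\calH$ as a perturbation of $-\lap$ outside a large ball, and use \ref{hyp:se} to handle the compact region. Fix a large radius $R_0$, with $\calK \subseteq \set{r < R_0/4}$, and a smooth cutoff $\chi_{>R_0}$ equal to $1$ for $r > 2R_0$ and vanishing for $r < R_0$. Write $w = \chi_{>R_0} w + (1-\chi_{>R_0}) w$.

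\emph{Exterior piece.} We apply \eqref{eq:w-ell-0-end-lap} to $\chi_{>R_0} w$:
\begin{equation*}
\nrm{\chi_{>R_0} w}_{\ell^{\infty} H^{2,-\frac32}} \aleq \nrm{\lap(\chi_{>R_0} w)}_{\ell^1 H^{0,\frac12}} \aleq \nrm{\chi_{>R_0}\calH w}_{\ell^1 H^{0,\frac12}} + \nrm{\chi_{>R_0}(\calH+\lap) w}_{\ell^1 H^{0,\frac12}} + \mathrm{Err}_{R_0},
\end{equation*}
where $\mathrm{Err}_{R_0}$ collects the commutator terms supported in $R_0<r<2R_0$, bounded by $C_{R_0}\nrm{w}_{H^1(R_0<r<2R_0)}$. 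Using \ref{hyp:af-0}, the operator $\calH+\lap$ equals $\bfh\, \rd^2 + O(\brk{x}^{-1-2\eta})\rd + O(\brk{x}^{-2-2\eta})$. On the dyadic annulus $r\sim 2^j$, its $H^{0,\frac12}$ contribution is therefore bounded by $\brk{2^j}^{-2\eta}$ times the $\ell^\infty H^{2,-\frac32}$ norm on a slightly enlarged annulus. The decay factor $2^{-2\eta j}$ converts $\ell^\infty$ into $\ell^1$, and the resulting geometric sum over $2^j \geq R_0$ is $\aleq R_0^{-2\eta}$. Hence this term is bounded by $R_0^{-2\eta}\nrm{w}_{\ell^\infty H^{2,-\frac32}(r>R_0/2)}$, which is absorbed for $R_0$ large.

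\emph{Interior piece.} The remaining task is to bound $\nrm{w}_{H^2(r<4R_0)}$, which controls both the interior piece and $\mathrm{Err}_{R_0}$. By local elliptic regularity up to $\rd\calK$ (\ref{hyp:af} gives a $C^{M_c+2}$ boundary and Dirichlet data), this reduces to bounding $\nrm{\calH w}_{L^2(r<8R_0)} + \nrm{w}_{L^2(r<8R_0)}$. For the lower-order term we use \ref{hyp:se}:
\begin{equation*}
\nrm{w}_{L^2(r<8R_0)} \aleq_{R_0} \nrm{w}_{L^6} \aleq \nrm{w}_{\dot H^1} \aleq \nrm{\calH w}_{\dot H^{-1}}.
\end{equation*}
To close, we need $\nrm{\calH w}_{\dot H^{-1}} \aleq \nrm{\calH w}_{\ell^1 H^{0,\frac12}}$. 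This follows by duality: $\brk{x}^{-\frac12}\cdot$ maps $\dot H^1$ into $\ell^\infty L^2$ via Hardy's inequality, dyadically $\nrm{r^{-1} \phi}_{L^2(r\sim 2^j)} \aleq \nrm{\phi}_{\dot H^1}$. Therefore $|\brk{F,\phi}| \leq \sum_j \nrm{F}_{L^2(r\sim 2^j)}\nrm{\phi}_{L^2(r\sim2^j)} \aleq \nrm{F}_{\ell^1 H^{0,1}}\nrm{\phi}_{\dot H^1}$.

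\emph{Main obstacle.} The duality argument only yields the weight $r$, one half-power more than $r^{1/2}$, so it does not directly give $\nrm{\calH w}_{\dot H^{-1}} \aleq \nrm{\calH w}_{\ell^1 H^{0,\frac12}}$. To handle this, I would split $\calH w$ into interior and exterior parts rather than estimate it through $\dot H^{-1}$ globally. Write $w = w_1 + w_2$, where $w_2$ solves $-\lap w_2 = \chi_{>R_0}\calH w$ on the exterior, controlled by \eqref{eq:w-ell-0-end-lap}. Then $\calH w_1 = \calH w - \calH w_2$, and $\calH w_2 - \chi_{>R_0}\calH w$ is small and decaying like $\brk{x}^{-2-2\eta}$ times $\ell^\infty H^{2,-\frac32}$, so it lies in $\ell^1 H^{0,1}$ with a gain. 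We apply \ref{hyp:se} only to $w_1$, whose forcing has the extra decay.

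A cleaner alternative is a contradiction/compactness argument. If the estimate fails, take $\nrm{w_n}_{\ell^\infty H^{2,-\frac32}}=1$ with $\nrm{\calH w_n}_{\ell^1 H^{0,\frac12}}\to0$. The exterior smallness above prevents mass from escaping to infinity, so local compactness gives a nonzero limit with $\calH w=0$ and $w\in \ell^\infty H^{2,-\frac32}$. Since the decay gain of $\calH+\lap$ improves $w$ into $\dot H^1$, this contradicts the injectivity in \ref{hyp:se}. Either way, verifying this decay upgrade for solutions of $\calH w = 0$ is the delicate step.
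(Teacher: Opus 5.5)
Your exterior/interior reduction is sound. It correctly isolates the one place where \ref{hyp:se} must enter: bounding a local norm such as $\nrm{w}_{L^2(r<8R_0)}$ by $\nrm{\calH w}_{\ell^1H^{0,\frac12}}$. You are also right that $\ell^1 H^{0,\frac12}\not\subset\dot H^{-1}$. However, neither of your two repairs closes this step, so the proposal has a genuine gap.

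\textbf{First repair.} The forcing $\calH w_1=(1-\chi_{>R_0})\calH w-(\calH+\lap)w_2$ is not compactly supported. Its exterior part is also not in $\ell^1H^{0,1}$. On $r\sim 2^j$ you only get $\nrm{r(\calH+\lap)w_2}_{L^2(r\sim2^j)}\aleq 2^{(\frac12-2\eta)j}\nrm{w_2}_{\ell^\infty H^{2,-\frac32}}$, which is not summable for $\eta\le\frac14$, and that is exactly the regime of interest.

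This is not merely a bad choice of embedding. Take $\calH=-\lap+c$ with $c=\brk{x}^{-2-2\eta}$, and let $g\geq 0$ be a bump at scale $2^j$ normalized in $\ell^1H^{0,\frac12}$. Then $w_2=(-\lap)^{-1}g\approx1$ on $\set{r\aleq 2^j}$. Hence $(\calH+\lap)w_2=c\,w_2$ has $\dot H^{-1}$ norm of order $2^{(\frac12-2\eta)j}$, which is unbounded in $j$.

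The missing idea is to invert not $-\lap$ but an operator that coincides with $\calH$ outside a compact set. Your own absorption argument shows that $\calH_{\mathrm{out}}$ is invertible $\ell^1H^{0,\frac12}\to\ell^\infty H^{2,-\frac32}$, via a Neumann series around $(-\lap)^{-1}$; here $\calH_{\mathrm{out}}$ is $-\lap$ plus the coefficients $\bfh,b,c$ cut off to $r>R$. Then
$\calH(w-\calH_{\mathrm{out}}^{-1}\calH w)=(\calH_{\mathrm{out}}-\calH)\calH_{\mathrm{out}}^{-1}\calH w$
is compactly supported and in $L^2$. It therefore lies in $\dot H^{-1}$ with norm $\aleq_R\nrm{\calH w}_{\ell^1H^{0,\frac12}}$, and \ref{hyp:se} applies. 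You still need to check that $w-\calH_{\mathrm{out}}^{-1}\calH w\in\dot H^1$ for Schwartz $w$, for example because the same Neumann series also converges in $\dot H^1$. This is precisely the paper's construction $\td R_0h=\calH_{\mathrm{out}}^{-1}h+R_0(h-\calH\calH_{\mathrm{out}}^{-1}h)$, where $R_0$ there is the inverse $\dot H^{-1}\to\dot H^1$, together with its Step~2.

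\textbf{Second repair.} The concluding step is false as stated. Lemma~\ref{lem:varphi0} gives $\varphi_0\in\ell^\infty H^{2,-\frac32}$ with $\varphi_0\neq0$ and $\calH\varphi_0=0$. Since $\varphi_0\to1$ at infinity, $\varphi_0\notin\dot H^1$; for $\calH=-\lap$ it is just the constant $1$. So a nonzero limit $w\in\ell^\infty H^{2,-\frac32}$ with $\calH w=0$ contradicts nothing, and no decay upgrade into $\dot H^1$ holds for such $w$ in general. This is also why the paper establishes left-invertibility only on $\ell^1H^{2,-\frac32}$.

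To rescue the compactness argument, carry along from the Schwartz approximants the identity $w_n=(-\lap)^{-1}(-\lap w_n)$ (Newtonian potential, up to a compactly supported correction in the obstacle case). Pass to the limit to obtain $w=-(-\lap)^{-1}[(\calH+\lap)w]$, which rules out a constant mode at infinity. Only then can you bootstrap the decay in steps of $2\eta$ up to $H^{2,-1}\subset\dot H^1$. Even so, the compactness route must be run over the whole family $\calH|_{t=t_0}$ to get a $t_0$-independent constant, whereas the constructive argument gives this directly.
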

\begin{proof}
    Thanks to \ref{hyp:se}, $\calH$ admits an inverse operator $R_0: \dot H^{-1} \to \dot H^1$. It suffices to show that $\calH$ admits a right inverse $\td R_0: \ell^{1} H^{0, \frac{1}{2}} \to \ell^{\infty} H^{2, -\frac{3}{2}}$ which agrees with $R_0$ on $\ell^{1} H^{0, \frac{1}{2}} \cap \dot H^{-1}$.

    \noindent {\it Step 1: Construction of $\td R_0$ as a right inverse. }
    Given any $h \in \ell^{1} H^{0, \frac{1}{2}}$, we take $R$ sufficiently large and note that $\calH_{\mathrm{out}}$ (as defined in \eqref{eq:defn-Hout}) satisfies the desired weighted Sobolev estimate. Indeed, by noting that $\calH_{\mathrm{out}}$ is a small perturbation of $-\Delta$, suppose $\calH_{\mathrm{out}} w_1 = h$, \begin{align*}
        \|w_1\|_{\ell^\infty H^{2, -\frac32}} \aleq \|h\|_{\ell^1 H^{0, \frac12}} + \|(\calH_{\mathrm{out}} + \Delta) w_1\|_{\ell^1 H^{0, \frac12}} \aleq \|h\|_{\ell^1 H^{0, \frac12}} + R^{-\eta}\|w_1\|_{\ell^\infty H^{2, -\frac32}},
    \end{align*}
    where we require here the weighted bounds in \ref{hyp:af-0}. Absorbing
    the small error to the left, it allows us to extend $\calH_{\mathrm{out}}^{-1}$ (well-defined between non-endpoint weighted spaces by duality) to $\ell^1 H^{0, \frac12} \to \ell^\infty H^{2, -\frac32}$. With a slight abuse of notations, we define $w_1 := \calH_{\mathrm{out}}^{-1} h$. Then \[
        \|w_1\|_{\ell^\infty H^{2, -\frac32}} \aleq \|h\|_{\ell^1 H^{0, \frac12}}.
    \]
    Since $-(\calH - \calH_{\mathrm{out}})w_1$ is compactly supported and hence it is in $L^2 \subset \dot H^{-1}$. Define $w_2 := R_0(-(\calH - \calH_{\mathrm{out}})w_1) \in \dot H^1$ and then \begin{equation}\label{eq:calH-w2}
        \calH w_2 = -(\calH - \calH_{\mathrm{out}})w_1.
    \end{equation}
    Now we apply \ref{hyp:se}, \begin{align*}
        \|w_2\|_{\ell^\infty H^{1, -\frac32}} \aleq \|w_2\|_{\dot H^1} \aleq \|\calH w_2\|_{\dot H^{-1}} \aleq \|h\|_{\ell^1 H^{0, \frac12}}.
    \end{align*}
    Then we could obtain from the standard elliptic regularity result that \begin{align*}
        \|w_2\|_{\ell^\infty H^{2, -\frac32}} \aleq \|\calH w_2\|_{\ell^1 H^{1, -\frac32}} + \|w_2\|_{H^{1, \dlt'}} \aleq \|h\|_{\ell^1 H^{0, \frac12}},
    \end{align*}
    where the first inequality holds for any $\dlt' \in \R$ and we choose $\dlt' = -\frac32$ to make the second one work.

    Now we are ready to introduce the following definition: \[
        \td R_0 h := w_1 + w_2 = \calH_{\mathrm{out}}^{-1}h + R_0 (h - \calH \calH_{\mathrm{out}}^{-1} h), \quad \td R_0: \ell^1 H^{0, \frac12} \to \ell^\infty H^{2, -\frac32},
    \]
    and the discussion above reveals that \[
        \|\td R_0 h\|_{\ell^\infty H^{2, -\frac32}} \aleq \|h\|_{\ell^1 H^{0, \frac12}}.
    \]
    It is then obvious that $\calH \td R_0 h = h$ for $h \in \ell^{1} H^{0, \frac{1}{2}}$.

    \noindent {\it Step 2: Showing that $\td R_0$ is also a left inverse and it agrees with $R_0$ on their common domain. }
    For any $\varphi \in C^\infty_c$, we observe that $\td R_0 \varphi = \td R_0 \calH R_0 \varphi = R_0 \varphi$. By density $C_c^\infty \subset \ell^1 H^{0, \frac12} \cap \dot H^{-1}$, $R_0 = \td R_0$ on their common domain. Therefore, for any $h \in \ell^{1} H^{2, -\frac{3}{2}}$ (note that this is not taken to be $\ell^\infty$ since $\calH h$ might not be in $\ell^1 H^{0,\frac12}$ otherwise), we take $C^\infty_c \ni h_n \to h$ and hence $\td R_0 \calH h_n = R_0 \calH h_n = h_n$. This implies that $\td R_0 \calH h = h$ for all $h \in \ell^{1} H^{2, -\frac{3}{2}}$ and hence $\td R_0$ is a left-inverse on $\ell^1 H^{2, -\frac32}$.
\end{proof}

\begin{remark}
    A Fredholm alternative statement for $\calH$ between weighted Sobolev spaces can be obtained and will also be used in Lemma~\ref{lem:varphi0}. Note that it cannot hold for $\ell^1-\ell^\infty$-based spaces since as mentioned above in step 2, the domain of $\td R_0$ as the left inverse is modified. Thus, we have actual invertibility and Fredholm alternative when consider the non-endpoint elliptic estimates based in $\ell^2$ for both norms.
\end{remark}

\subsubsection{Existence of $\varphi_0$ and its properties}
We now extract a function $\varphi_0$ from the spectral information, and it appears in the main theorems to describe the asymptotics.

\begin{lemma} \label{lem:varphi0}
Assume that $\calH$ satisfies \ref{hyp:af-0} and \ref{hyp:se}. Choose $\eta_0 \in C^\infty(\overline{\calM^3})$ such that \begin{equation}\label{eq:defn-eta_0}
    \begin{cases}
        \eta_0 :\equiv 1, \quad \calK = \0, \\
        \eta_0 := \begin{cases}
            1, \quad |x| > 3\dist(\p\calK, 0) \\
            0, \quad |x| < 2\dist(\p\calK, 0)
        \end{cases}, \quad \calK \neq \0.
    \end{cases}
\end{equation}
There exists a unique solution $\varphi_{0}$ to $\calH \varphi_{0} = 0$ such that $\varphi_0|_{\p\calK} = 0$ when $\calK \neq \0$ and $\tilde\varphi_0 := \varphi_{0} - \eta_0 \in H^{2, -\frac{3}{2}+\dlt'}$ for all $0 < \dlt' < \min \set{1, 2 \eta}$. Moreover, for $\dlt'$ in this range, we have
\begin{equation} \label{eq:varphi0-est}
	\nrm{\varphi_{0} - \eta_0}_{H^{2, -\frac{3}{2}+\dlt'}} \aleq A_c.
\end{equation}
We remark that $\eta_0$ is not unique and $\varphi_0$ does not depend on the choice of $\eta_0$.
\end{lemma}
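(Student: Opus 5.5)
We look for $\varphi_0$ in the form $\varphi_0 = \eta_0 + \tilde\varphi_0$, where $\tilde\varphi_0$ solves
\begin{equation*}
	\calH \tilde\varphi_0 = -\calH \eta_0 =: g, \qquad \tilde\varphi_0|_{\p\calK} = 0 .
\end{equation*}
Since $\eta_0 \equiv 1$ for large $r$, only the zeroth- and first-order parts of $\calH$ acting on a constant survive there. Thus for $r$ large,
\begin{equation*}
	g = -\big(D_\mu b^\mu + c\big),
\end{equation*}
up to the sign conventions for $D_\mu$, plus terms involving $\rd_\mu a^{\mu\nu}\rd_\nu\eta_0 = 0$. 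In the bounded region where $\eta_0$ transitions, $g$ is compactly supported and smooth. By \ref{hyp:af-0}, $|D_\mu b^\mu| + |c| \aleq A_c \brk{x}^{-2-2\eta}$. Hence $g \in H^{0, \frac12+\dlt'}$ with norm $\aleq A_c$ whenever
\begin{equation*}
	\int \brk{x}^{1+2\dlt'-4-4\eta}\, r^2\,\ud r < \infty, \quad\text{i.e.}\quad \dlt' < 2\eta .
\end{equation*}
If $\calK \neq \0$, $\eta_0$ vanishes near $\p\calK$, so $g$ is supported away from the obstacle and the boundary condition causes no difficulty.

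Next we invert $\calH$ in the non-endpoint weighted spaces $H^{0,\dlt} \to H^{2,\dlt-2}$ with $\dlt = \frac12 + \dlt'$. The condition $\dlt' < 1$ ensures $\dlt \in (\frac12, \frac32)$, the range of Lemma~\ref{lem:w-ell-lap}. We follow Step~1 of the proof of Proposition~\ref{prop:w-ell-0}, but with $\ell^2$-based norms.
\begin{itemize}
\item \emph{Exterior problem.} Set $w_1 = \calH_{\mathrm{out}}^{-1} g$. Since $\calH_{\mathrm{out}}$ is a small perturbation of $-\lap$ for large $R$, \eqref{eq:w-ell-lap} plus absorption of the error (using the weighted bounds in \ref{hyp:af-0}) gives $\|w_1\|_{H^{2,\dlt-2}} \aleq \|g\|_{H^{0,\dlt}}$.
\item \emph{Compact correction.} Set $w_2 = R_0\big(-(\calH - \calH_{\mathrm{out}}) w_1\big) \in \dot H^1$. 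The source is compactly supported, so it lies in $L^2 \subset \dot H^{-1}$, and \ref{hyp:se} applies.
\item \emph{Weighted decay of $w_2$.} A $\dot H^1$ function only sits in $H^{1,-1}$, which is weaker than the needed weight $-\frac32 + \dlt'$. Away from a large ball, $w_2$ solves $\calH_{\mathrm{out}} w_2 = 0$ (after cutting off), with a compactly supported error. Re-applying the exterior weighted estimate to a cutoff of $w_2$, and using that $\dot H^1$ decay excludes the constant mode, we expect to place $w_2$ in $H^{2,-\frac32+\dlt'}$. I expect this to be the main obstacle: establishing that the $\dot H^1$ solution actually gains the weighted decay.
\end{itemize}
Local elliptic regularity (Proposition~\ref{prop:dyadic-ell-0}, and boundary regularity near $\p\calK$ using the $C^{M_c+2}$ boundary) upgrades everything to $H^2$. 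Combining the pieces yields the estimate \eqref{eq:varphi0-est}.

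For uniqueness, suppose two solutions are given. Their difference $w$ satisfies $\calH w = 0$, $w \in H^{2,-\frac32+\dlt'}$, and $w|_{\p\calK}=0$. Since $-\frac32+\dlt' > -\frac32$, such $w$ decays like roughly $r^{-\dlt'}$ in an averaged sense. Hence $w \in \ell^1 H^{2,-\frac32}$, the space on which $\td R_0$ is a left inverse by Step~2 of Proposition~\ref{prop:w-ell-0}. Therefore $w = \td R_0 \calH w = 0$.

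Independence of the choice of $\eta_0$ follows similarly. Two admissible choices differ by a compactly supported function, which lies in $H^{2,-\frac32+\dlt'}$. So the two resulting $\varphi_0$ both satisfy the defining conditions for either choice of $\eta_0$, and coincide by uniqueness.
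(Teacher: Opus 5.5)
Your proposal is correct and follows the paper's skeleton. It uses the same splitting $\varphi_0=\eta_0-\calH^{-1}(\calH\eta_0)$ and the same weight count: $\calH\eta_0=O(A_c\brk{x}^{-2-2\eta})$ at infinity, so $\calH\eta_0\in H^{0,\frac12+\dlt'}$ for $\dlt'<2\eta$. Uniqueness comes from the same mechanism, namely $H^{2,-\frac32+\dlt'}\subset\ell^1H^{2,-\frac32}$ together with the left-inverse property from Step~2 of Proposition~\ref{prop:w-ell-0}. That is exactly the injectivity the paper needs, and it also makes the paper's ``contradiction argument'' for independence of $\eta_0$ explicit.

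The difference is how you obtain $\calH^{-1}:H^{0,\dlt}\to H^{2,\dlt-2}$ for $\frac12<\dlt<\frac32$. The paper takes the Fredholm property (index zero) as standard, so surjectivity follows from injectivity and no decay analysis of a $\dot H^1$ correction is ever needed. You instead build the inverse by hand from $\calH_{\mathrm{out}}^{-1}$ and the $\dot H^1$ inverse supplied by \ref{hyp:se}. This is self-contained, but it forces you to check the decay of $w_2$.

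That check is easier than you fear, because your comparison of weights is reversed. For $\dlt'<\frac12$ one has $-1>-\frac32+\dlt'$; this covers the whole range under the paper's convention $\eta<\frac14$. So Hardy's inequality $\dot H^1\subset H^{1,-1}$ already gives $w_2\in H^{1,-\frac32+\dlt'}$. Proposition~\ref{prop:dyadic-ell-0}, applied on far dyadic annuli where $\calH w_2=0$, then upgrades this to $H^{2,-\frac32+\dlt'}$.

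Your cutoff argument is needed only for $\dlt'\in[\frac12,1)$, and it works. For $R'$ large, $\calH_{\mathrm{out}}(\chi_{>R'}w_2)=[\calH_{\mathrm{out}},\chi_{>R'}]w_2$ is compactly supported. Both $\chi_{>R'}w_2$ and the $\calH_{\mathrm{out}}$-preimage of this source lie in the non-endpoint space $H^{2,-1}$, on which $\calH_{\mathrm{out}}$ is injective by the a priori estimate. Hence they coincide, and the preimage lies in $H^{2,\dlt-2}$ because the source lies in $H^{0,\dlt}$ for every $\dlt$.

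One small repair is needed in the obstacle case: $w_1=\calH_{\mathrm{out}}^{-1}g$ need not vanish on $\rd\calK$. Replace it by $\chi_{>R}w_1$, with $\chi_{>R}$ vanishing near $\calK$, and set $w_2:=R_0(g-\calH(\chi_{>R}w_1))$. This source is still compactly supported, and the sum $\chi_{>R}w_1+w_2$ obeys the Dirichlet condition.
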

\begin{proof}
By Proposition~\ref{prop:w-ell-0} and Fredholm alternative, it follows that $\calH^{-1}: H^{0, \dlt} \to \mathrm{Dom}(\calH)$ is well-defined if $\frac{1}{2} < \dlt < \frac{3}{2}$, where $\mathrm{Dom}(\calH)$ are those functions in $H^{2, \dlt-2}$ with Dirichlet boundary condition.
Note that $\calH \eta_0 \in \calH^{0, \dlt}$ for $0 < \dlt - \frac{1}{2} < 2\eta$ thanks to \ref{hyp:af-0}; moreover,
\begin{equation*}
\varphi_{0} = \eta_0 + \td{\varphi}_{0}, \hbox{ where } \td{\varphi}_{0} := -\calH^{-1} \left( \calH \eta_0 \right) \in H^{2, \dlt-2},
\end{equation*}
where $\calH \eta_0$ is viewed as an element in $H^{0, \dlt}$ with $\frac{1}{2} < \dlt < \min\set{\frac{3}{2}, \frac{1}{2}+2 \eta}$. The desired estimate for $\varphi_{0} - \eta_0 = \td{\varphi}_{0}$ follows from Proposition~\ref{prop:w-ell-0}.

The fact that $\varphi_0$ is independent of the choice of $\eta_0$ can be seen from a contradiction argument. \qedhere
\end{proof}

\begin{corollary}[Elliptic regularity, uniform decay, and time regularity]\label{cor:decay_varphi0}
    Under the assumptions of Lemma~\ref{lem:varphi0}, the following result holds for any $\dlt' \in (0, \min(2\eta, 1))$: \begin{align*}
        \nrm{\p_x^{(\leq M_c)}\tilde\varphi_{0}}_{H^{2, -\frac{3}{2}+\dlt'}} &\aleq A_c, &
        \nrm{\p_x^{(\leq M_c-1)} t \rd_{t} \tilde\varphi_{0}}_{H^{2, -\frac{3}{2}+\dlt'}} &\aleq A_c, \\
        |\p_x^{(\leq M_c)}\tilde\varphi_0| & \lesssim A_c \brk{x}^{-\delta'}, &
        \quad |\p_x^{(\leq M_c-1)} t \rd_{t} \tilde\varphi_0| &\lesssim A_c \brk{x}^{-\delta'}.
    \end{align*}
\end{corollary}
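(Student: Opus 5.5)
We treat the four bounds in Corollary~\ref{cor:decay_varphi0} in turn. First the higher regularity in weighted spaces, then pointwise decay by Sobolev embedding on dyadic annuli, and finally the time-derivative bounds by differentiating the equation $\calH\vphi_0 = 0$ in $t$.

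\emph{Spatial regularity.} The function $\td\vphi_0$ solves
\[
\calH\td\vphi_0 = -\calH\eta_0 =: g .
\]
By \ref{hyp:af-0}, $g$ is compactly supported near $\rd\calK$ (where $\eta_0$ is nonconstant) up to terms whose $k$-th derivatives decay like $\brk{x}^{-2-2\eta-k}$. Hence $\brk{x}^{k}\rd_x^{k} g \in H^{0,\dlt}$ for $\dlt = \frac12 + \dlt'$ and all $k \le M_c$.

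We apply the local elliptic estimate Proposition~\ref{prop:dyadic-ell-0} on each dyadic annulus $R<r<2R$. Inductively on $k$, we commute $\rd_x^{k}$ through $\calH$. The commutator $[\rd_x^{k},\calH]$ involves derivatives of $\bfh$, $b$, $c$, which decay like $\brk{x}^{-2\eta-j}$, $\brk{x}^{-1-2\eta-j}$, $\brk{x}^{-2-2\eta-j}$ respectively. Each extra derivative on the solution therefore comes with a gain of $R^{-1}$, which is exactly the weighted scaling of $H^{2,-\frac32+\dlt'}$.

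Summing the dyadic estimates in $\ell^2$, with the base case given by \eqref{eq:varphi0-est}, yields
\[
\nrm{\rd_x^{(\le M_c)}\td\vphi_0}_{H^{2,-\frac32+\dlt'}} \aleq A_c .
\]
Near $\rd\calK$ we use boundary elliptic regularity instead, which is valid since $\rd\calK\in C^{M_c+2}$ and $\td\vphi_0 = 0$ there. This costs nothing, because the region is compact.

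\emph{Pointwise decay.} On the annulus $R<r<2R$ we rescale to unit size and use $H^2\subset L^\infty$ in three dimensions. This gives
\[
\abs{\rd_x^{j}\td\vphi_0} \aleq R^{-\frac32}\sum_{\abs{\alp}\le2} R^{\abs{\alp}}\nrm{\rd^{\alp}\rd_x^{j}\td\vphi_0}_{L^2(\sim R)} .
\]
The weighted norm controls the right-hand side by $R^{-\frac32}\cdot R^{\frac32-\dlt'} = R^{-\dlt'}$. Since we already have regularity up to order $M_c+2$, all $j\le M_c$ are covered.

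\emph{Time derivative.} Set $\calS = t\rd_t + x\cdot\rd_x$ and note $t\rd_t = \calS - x\cdot\rd_x$. I would rather differentiate in $t$ directly: $\eta_0$ is time independent, so
\[
\calH(t\rd_t\td\vphi_0) = -[t\rd_t,\calH]\vphi_0 .
\]
The commutator involves $t\rd_t$ of the coefficients. We write this as $\calS$ of the coefficients minus $x\cdot\rd_x$ of the coefficients, and both pieces are bounded in \ref{hyp:af-0} with the same decay rates as the coefficients themselves.

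The right-hand side therefore has the form
\[
D(O(\brk{x}^{-2\eta})D\vphi_0) + O(\brk{x}^{-1-2\eta})D\vphi_0 + O(\brk{x}^{-2-2\eta})\vphi_0 .
\]
Because $\vphi_0 = \eta_0 + \td\vphi_0$, with $\td\vphi_0$ bounded and $D\vphi_0$ gaining decay from the previous step, this right-hand side lies in $H^{0,\frac12+\dlt'}$ with $M_c-1$ derivatives. (We lose one derivative because the principal part of the commutator is second order.)

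Uniqueness in Lemma~\ref{lem:varphi0}, applied through the invertibility of $\calH: H^{2,\dlt-2}\to H^{0,\dlt}$, identifies $t\rd_t\td\vphi_0$ with $\calH^{-1}$ of this right-hand side. We then repeat the two steps above.

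The main obstacle is justifying that $t\rd_t\td\vphi_0$ is well defined and lies in $H^{2,-\frac32+\dlt'}$ before using the equation. I would handle this through difference quotients in $t$, using the uniform-in-$t$ bound on the inverse in \ref{hyp:se}.
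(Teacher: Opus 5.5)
Your route is the paper's: induct on $\rd_x^k$ by commuting through $\calH$, get the pointwise bounds from rescaled Sobolev embedding on dyadic annuli, and treat $t\rd_t\td\vphi_0$ by differentiating $\calH\vphi_0=0$ in $t$ with $t\rd_t=\calS-x\cdot\rd_x$ on the coefficients. Using Proposition~\ref{prop:dyadic-ell-0} on annuli plus boundary regularity near $\rd\calK$, instead of a global weighted inverse, is a harmless variant. It is actually better suited to the obstacle case, where $\rd_x^k\td\vphi_0$ no longer vanishes on $\rd\calK$.

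In the spatial step you overread \ref{hyp:af-0}. Derivatives of $\bfh$, $b$, $c$ gain powers of $\brk{x}^{-1}$ only through the factors $(\brk{x}\rd_x)^{(\le 3)}$, $(\brk{x}\rd_x)^{(\le 2)}$, $(\brk{x}\rd_x)^{(\le 1)}$ respectively. So neither $\abs{\rd_x^j\bfh}\aleq\brk{x}^{-2\eta-j}$ for all $j$ nor $\brk{x}^k\rd_x^k g\in H^{0,\dlt}$ for all $k$ is available. This is harmless, because the target $\rd_x^k\td\vphi_0\in H^{2,-\frac32+\dlt'}$ carries no extra weight. The induction only uses $\abs{\rd_x^k g}\aleq\brk{x}^{-2-2\eta}$, the base decay of all coefficient derivatives, and one power of gain for $\rd_x\bfh$ and $\rd_x b$; \ref{hyp:af-0} provides all of these.

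\textbf{The gap is in the time-derivative step, exactly where that first-order gain is needed for $t\rd_t$ of the coefficients.} To put $-[t\rd_t,\calH]\vphi_0$ into $H^{0,\frac12+\dlt'}$ you must expand the divergence terms. This requires $\rd_x(t\rd_t\bfh)=O(\brk{x}^{-1-2\eta})$ and $\rd_x(t\rd_t b)=O(\brk{x}^{-2-2\eta})$. Your displayed form of the right-hand side silently assumes the latter when it files $(D_\mu t\rd_t b^\mu)\vphi_0$ under $O(\brk{x}^{-2-2\eta})\vphi_0$.

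The $x\cdot\rd_x$ part of $t\rd_t=\calS-x\cdot\rd_x$ does have this gain. But \ref{hyp:af-0} bounds $\rd_x^{(\le M_c)}\calS\bfh$ only by $\brk{x}^{-2\eta}$ and $\rd_x^{(\le M_c)}\calS b$ only by $\brk{x}^{-1-2\eta}$, with no gain from the extra derivative. Since $\vphi_0\to1$, the term $(D_\mu\calS b^\mu)\vphi_0$ is then merely $O(\brk{x}^{-1-2\eta})$, and $\brk{x}^{\frac12+\dlt'-1-2\eta}\notin L^2$.

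This is not just a lossy estimate. Take $\bfh=0$, $c=\brk{x}^{-2-2\eta}$, and $b=(b^1,0,0)$ with $b^1=\eps\brk{x}^{-1-2\eta}\cos(x_1)\sum_j\chi_j(x)\,4^{-j}\sin\bigl(4^j\log\tfrac{1+t}{2}\bigr)$, where $\chi_j$ is a dyadic partition of unity. Then all of \ref{hyp:af-0} holds, and \ref{hyp:se-0} holds for small $\eps$. Yet at $t=1$ one has $D_\mu(t\rd_t b^\mu)=\tfrac{i\eps}{2}\brk{x}^{-1-2\eta}\sin x_1+O(\brk{x}^{-2-2\eta})$. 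Hence $\calH(t\rd_t\td\vphi_0)\notin H^{0,\frac12+\dlt'}$. Since $\calH$ maps $H^{2,-\frac32+\dlt'}$ into $H^{0,\frac12+\dlt'}$, it follows that $t\rd_t\td\vphi_0\notin H^{2,-\frac32+\dlt'}$.

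The paper's proof asserts the same inclusion by the same reasoning, so the defect lies in the formulation of \ref{hyp:af-0}. If one also assumes $\abs{\rd_x^{(\le M_c-1)}(\brk{x}\rd_x)^{(\le 1)}\calS\bfh}\aleq\brk{x}^{-2\eta}$ and the analogous bound for $\calS b$ (with $\brk{x}^{-1-2\eta}$), your argument goes through as written, including the difference-quotient justification.
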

\begin{proof}
    It follows from \ref{hyp:af-0} that $\calH\eta_0$ actually satisfies an improved regularity assumption \[\p_x^{(\leq M_c)}(\calH \eta_0) \in H^{0, \delta}.\]
    Writing \[
        \calH(\p_x^k \tilde\varphi_0) = [\calH, \p_x^k]\tilde\varphi_0 - \p_x^k \calH \eta_0 \in H^{0, \delta},
    \]
    one can make an induction on $0 \leq k \leq M_c$ to upgrade the regularity and hence conclude the first assertion.

    For the second bound, we begin by noting that
    \begin{equation*}
        \calH(t \rd_{t} \tilde{\vphi}_{0})
        = - D_{\mu} (t \rd_{t} a^{\mu \nu} D_{\nu} \vphi_{0}) 
        - t \rd_{t} b^{\mu} D_{\mu} \vphi_{0}
        - D_{\mu} (t \rd_{t} b^{\mu} \vphi_{0}) - t \rd_{t} c \vphi_{0} \in H^{0, \dlt},
    \end{equation*}
    where we used \ref{hyp:af-0} and $t \rd_{t} = \calS - x \cdot \rd_{x}$. It follows that $t \rd_{t} \tilde{\vphi}_{0} \in H^{2, -\frac{3}{2}+\delta}$. Then, by a similar induction argument as above, the second assertion follows. (In this case, we can only go up to order $M_{c}-1$ due to the factors $D_{\mu} (t \rd_{t} a^{\mu \nu})$ and $D_{\mu} (t \rd_{t} b^{\mu})$.)

    For the third bound, we choose $\beta > 0$ such that $\beta + \delta' < \min \set{1, 2 \eta}$. Then it follows from weighted Sobolev embeddings that \begin{align*}
        \|\brk{x}^{\delta'}\p_x^{(\leq M_c)}\tilde\varphi_0\|_{L^\infty} \lesssim \|\brk{x}^{\delta'}\p_x^{(\leq M_c)}\tilde\varphi_0\|_{H^{2, -\frac32+\beta}} \lesssim \|\p_x^{(\leq M_c)}\tilde\varphi_0\|_{H^{2,-\frac32+\delta'+\beta}} \lesssim A_c.
    \end{align*}
    The last bound is proved similarly. \qedhere
\end{proof}
\begin{remark}
    In the obstacle case, the  Dirichlet boundary condition does not hold for higher derivatives anymore.
\end{remark}

\subsubsection{A precise asymptotic formula}

The following asymptotics will be useful later in Section~\ref{sec:test-by-wp}:
\begin{proposition}[Asymptotic formula] \label{prop:ell-asymp}
Assume that $\calH$ satisfies \ref{hyp:af-0} and \ref{hyp:se}. Given $R > \min\{1, 2\dist(\p\calK, 0)\}$, let $f_{R} \in L^{2}$ satisfy $\supp f_{R} \subseteq \set{\frac{1}{2} R < r < 2 R}$. Define $U_{R} := \calH^{-1} f_{R}$. For every $\lmb \geq 2$ such that $\lmb^{-1}R > 3\dist(\p\calK, 0)$, we have the following estimate with constant independent of $\lambda$ and $R$ :
\begin{equation*}
	\nrm{U_{R} - \frkc_{(0)}[f_{R}] \varphi_{0}}_{\ell^{\infty} H^{2, -\frac{3}{2}}(r < \lmb^{-1} R)} \aleq (\lmb^{-1} + A (\lmb^{-1} R)^{-\eta} )\nrm{f_{R}}_{H^{0, \frac{1}{2}}},
\end{equation*}
where
\begin{equation*}
\frkc_{(0)}[f_{R}] = \frac{1}{4 \pi} \int \frac{f_{R}(x)}{\abs{x}} \ud x.
\end{equation*}
\end{proposition}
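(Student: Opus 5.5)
We compare $U_R$ with $\frkc_{(0)}[f_R]\varphi_0$ by building an explicit approximate solution from the Newtonian potential and then correcting with the global estimate of Proposition~\ref{prop:w-ell-0}. Let $N_R := (-\lap)^{-1} f_R = \frac{1}{4\pi}\int \frac{f_R(y)}{\abs{x-y}}\,\ud y$.

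The first step is the multipole expansion of $N_R$ in $\set{r < \frac14 R}$. There $\abs{x} < \abs{y}$, so
\begin{equation*}
N_R(x) = \frkc_{(0)}[f_R] + E_R(x), \qquad \abs{\rd_x^{k} E_R(x)} \aleq \abs{x}^{1-k} R^{-1} \cdot R^{-1}\int \abs{f_R}\,\ud y
\end{equation*}
for $k=0,1,2$ (for $k \geq 1$ one may replace $\abs{x}^{1-k}$ by $R^{1-k}$, since $E_R$ is harmonic there). By Cauchy--Schwarz on the annulus, $R^{-1}\int\abs{f_R} \aleq R^{1/2}\nrm{f_R}_{L^2} \aeq \nrm{f_R}_{H^{0,\frac12}}$. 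Consequently $\abs{\frkc_{(0)}[f_R]} \aleq \nrm{f_R}_{H^{0,\frac12}}$ and $\abs{E_R} \aleq (r/R)\nrm{f_R}_{H^{0,\frac12}}$. Measuring in $\ell^\infty H^{2,-\frac32}(r<\lmb^{-1}R)$, the dyadic piece at scale $2^j$ contributes $2^{-\frac32 j}\cdot 2^{j}R^{-1}\cdot 2^{\frac32 j}=2^{j}/R \le \lmb^{-1}$. This gives
\begin{equation*}
\nrm{E_R}_{\ell^\infty H^{2,-\frac32}(r<\lmb^{-1}R)} \aleq \lmb^{-1}\nrm{f_R}_{H^{0,\frac12}}.
\end{equation*}
The same bound holds for derivatives up to second order.

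The second step treats the error from the variable coefficients. Write $U_R = \frkc_{(0)}[f_R]\varphi_0 + W$ and fix a cutoff $\chi_\rho$ equal to $1$ on $r < \rho$ and supported in $r < 2\rho$, with $\rho := \lmb^{-1}R$; by hypothesis $\rho > 3\dist(\p\calK,0)$. Take the ansatz
\begin{equation*}
V := (1-\eta_0)\cdot 0 + \eta_0 N_R - \frkc_{(0)}[f_R]\eta_0 ,
\end{equation*}
so that, heuristically, $W \approx \eta_0 E_R + \text{correction}$. The precise formulation is
\begin{equation*}
W = \eta_0 (N_R - \frkc_{(0)}[f_R]) + Z, \qquad \calH Z = f_R - \calH(\eta_0 N_R) + \frkc_{(0)}[f_R]\,\calH\eta_0 - \frkc_{(0)}[f_R]\,\calH\varphi_0 .
\end{equation*}
Since $\calH\varphi_0 = 0$, this becomes $\calH Z = (-\lap - \calH)(\eta_0 N_R) + [\lap,\eta_0] N_R + \frkc_{(0)}[f_R]\calH\eta_0$. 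The commutator and $\calH\eta_0$ terms near the obstacle combine into $[\lap,\eta_0](N_R - \frkc_{(0)}[f_R]) + \frkc_{(0)}[f_R](\calH\eta_0 + \lap \eta_0)$.

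Now estimate each piece of $\calH Z$ in $\ell^1 H^{0,\frac12}$ and invert with Proposition~\ref{prop:w-ell-0}.
\begin{itemize}
\item The commutator piece is compactly supported near $\calK$, where $\abs{E_R}\aleq R^{-1}\nrm{f_R}$; it is therefore $O(\lmb^{-1})$.
\item The piece $(\calH+\lap)(\eta_0\frkc_{(0)})$ is $O(\brk{x}^{-2-2\eta})$. It is harmless because this constant part is precisely what $\varphi_0$ absorbs: combine it with the $\frkc_{(0)}[f_R]\calH\eta_0$ term and observe the cancellation.
\item The main term is $(\calH+\lap)(\eta_0 E_R)$ plus the contribution of $(\calH+\lap)N_R$ in the region $r \ageq R$. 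Using \ref{hyp:af-0}, the coefficient decay gives $\abs{(\calH+\lap)(\eta_0 N_R)} \aleq \brk{x}^{-2-2\eta}$ times the local size of $N_R$ and its derivatives. On $r\aleq R$ this local size is $\aleq \nrm{f_R}_{H^{0,\frac12}}$; at $r \aeq R$ one also uses local elliptic control of $\rd^2 N_R$ by $f_R$; for $r \gg R$, $N_R$ decays like $R/r$ times the same quantity.
\end{itemize}

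The main obstacle is that this last piece yields only an $O(1)$ bound in $\ell^1 H^{0,\frac12}$ rather than a small one. To get smallness I would first replace $\calH$ by $\calH_{\mathrm{out}}$ on $r > \rho$. Concretely, solve $\calH_{\mathrm{out}} U^{\mathrm{out}} = f_R$; since $\calH_{\mathrm{out}}$ is an $O(\rho^{-\eta})$-perturbation of $-\lap$ in $\ell^1H^{0,\frac12}\to\ell^\infty H^{2,-\frac32}$, as in Step~1 of Proposition~\ref{prop:w-ell-0}, we get $U^{\mathrm{out}} = N_R + O(A\rho^{-\eta})$. The remainder $U_R - U^{\mathrm{out}}$ then solves $\calH(\cdot) = -(\calH-\calH_{\mathrm{out}})U^{\mathrm{out}}$, a source supported in $r<\rho$. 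Because $U^{\mathrm{out}}$ is nearly constant $=\frkc_{(0)}[f_R]$ there, up to $O(\lmb^{-1} + A\rho^{-\eta})$, this source equals $-\frkc_{(0)}[f_R](\calH-\calH_{\mathrm{out}})1 + \text{small}$. Its inverse reproduces $\frkc_{(0)}[f_R](\varphi_0 - \eta_0)$ up to the same error, and Proposition~\ref{prop:w-ell-0} converts all of these errors into the claimed $(\lmb^{-1}+A(\lmb^{-1}R)^{-\eta})\nrm{f_R}_{H^{0,\frac12}}$ bound.
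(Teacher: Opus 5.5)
Your final route ($\calH_{\mathrm{out}}$ as reference operator) is sound, but it is organized differently from the paper's proof. Your first ansatz also closes, contrary to your diagnosis.

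\textbf{How the two proofs differ.} The paper takes $-\lap$ as reference operator, $U_R=(-\lap)^{-1}f_R+\calH^{-1}\big((-\lap-\calH)(-\lap)^{-1}f_R\big)$. It gets inner smallness from a spherical-harmonic splitting. The radial part $v_{(0)}$ equals $\frkc_{(0)}[f_R]$ exactly in $r<\frac12R$. The rest, $v_{(\geq1)}$, is $O(r/R)$ in weighted norms by \eqref{eq:w-ell-1-lap}. The paper then extracts $\frkc_{(0)}[f_R]\td{\varphi}_0$ by cutting the constant part of the source, $\frkc_{(0)}[f_R]\calH1$, at $r\approx\lmb^{-1}R$.

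You replace the spherical-harmonic step with the explicit multipole bound for $E_R=N_R-\frkc_{(0)}[f_R]$, which is more elementary. You also cut $\calH_{\mathrm{out}}$ at $\rho=\lmb^{-1}R$. This packages all far-field smallness into one Neumann-series bound $U^{\mathrm{out}}-N_R=O(A\rho^{-2\eta})$, and leaves a correction whose source is supported in $r\aleq\rho$.

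\textbf{Points to make explicit.}
\begin{enumerate}
\item State the bound $\nrm{(\calH-\calH_{\mathrm{out}})w}_{\ell^1H^{0,\frac12}}\aleq A\nrm{w}_{\ell^\infty H^{2,-\frac32}(r\aleq\rho)}$, uniformly in $\rho$. It holds because $\sum_j2^{-2\eta j}<\infty$, and it is what makes the non-constant part of that source small.
\item The Neumann series needs $A\rho^{-2\eta}$ small. You may assume this, since otherwise the claim follows trivially from Proposition~\ref{prop:w-ell-0} and Lemma~\ref{lem:varphi0}.
\item In the obstacle case, $U^{\mathrm{out}}$ lives on $\bbR^3$ and does not vanish on $\rd\calK$, so compare $U_R$ with $\eta_0U^{\mathrm{out}}$. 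The constant part of the source then becomes $-\frkc_{(0)}[f_R](\calH\eta_0-\eta_0\calH_{\mathrm{out}}1)$, whose inverse is $\frkc_{(0)}[f_R]\td{\varphi}_0+O(A\rho^{-2\eta})$.
\end{enumerate}

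\textbf{Your first ansatz already works.} After the cancellation you observed, and writing $E_R:=N_R-\frkc_{(0)}[f_R]$ everywhere, you have $\calH Z=(-\lap-\calH)(\eta_0E_R)+[\lap,\eta_0]E_R$. The $O(1)$ you found came from measuring $N_R$ instead of $E_R$ on $r\aleq R$.
\begin{itemize}
\item On the shell $r\approx2^j\le R/4$, the sizes of $E_R$, $r\nb E_R$ and $r^2\nb^2E_R$ are $\aleq(2^j/R)\nrm{f_R}_{H^{0,\frac12}}$. Summing gives $AR^{-1}\sum_{2^j\aleq R}2^{(1-2\eta)j}\aleq AR^{-2\eta}$.
\item The shells $r\approx R$ and $r\gg R$ give $AR^{-2\eta}$ from coefficient decay alone.
\item The region $r\aleq1$ gives $O(R^{-1})$.
\end{itemize}
Proposition~\ref{prop:w-ell-0} then yields $\nrm{Z}_{\ell^\infty H^{2,-\frac32}}\aleq(R^{-1}+AR^{-2\eta})\nrm{f_R}_{H^{0,\frac12}}$. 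This is within the claim since $R/\lmb\ageq1$.

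This single-corrector version is the closest analogue of the paper's proof. The only change is that $\calH\varphi_0=0$ absorbs the constant part exactly, instead of the paper's cut-off splitting of $\calH1$.
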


\begin{proof}
According to our convention for $\eta$ in Section~\ref{subsubsec-convention}, $0 < \eta < \frac{1}{2}$. Moreover, we fix a choice of $\eta_0$ from Lemma~\ref{lem:varphi0} and extend $\eta_0$ trivially in $\calK$ such that $\eta_0 \in C^\infty(\R^3)$. We start with the proof of the obstacle-free case in which $\eta_0 \equiv 1$. Then we sketch the proof for the obstacle case by highlighting the key difference compared to $\calK = \0$.

\smallskip
\noindent {\it Step~1: Structure of $U_{R}$.}
We introduce the shorthands $f_{(0)} = (f_{R})_{(0)}$ and $f_{(\geq 1)} = f_{R} - (f_{R})_{(0)}$, and define
$v_{(0)} = (-\lap)^{-1} (f_{R})_{(0)}$, $v_{(\geq 1)} = (-\lap)^{-1} (f_{R})_{(\geq 1)}$. In view of spherical harmonic decompositions, $v_{(0)} = ((-\lap)^{-1} (f_{R}))_{(0)}$ and hence is spherical symmetric. The basic idea is to use $v_{(0)} + v_{(\geq 1)}$ (which equals $(-\lap)^{-1} f_{R}$) as the first approximation for $U_{R}$. Since
\begin{equation*}
\calH (U_{R} - v_{(0)} - v_{(\geq 1)}) = ((-\lap) - \calH) (v_{(0)} + v_{(\geq 1)}),
\end{equation*}
we arrive at the key formula
\begin{equation} \label{eq:ell-asymp-u}
U_{R} = v_{(0)} + \calH^{-1} ((-\lap) - \calH) v_{(0)} + v_{(\geq 1)} + \calH^{-1} ((-\lap) - \calH) v_{(\geq 1)}.
\end{equation}
In the remainder of the proof, we study each term in \eqref{eq:ell-asymp-u}.

\smallskip
\noindent {\it Step~2: The term $v_{(0)}$.}
By Lemma~\ref{lem:w-ell-lap}, we have the trivial bound
\begin{equation} \label{eq:ell-asymp-v0-0}
	\nrm{v_{(0)}}_{\ell^{\infty} H^{2, -\frac{3}{2}}} \aleq \nrm{f_{(0)}}_{H^{0, \frac{1}{2}}}.
\end{equation}
However, this result does not give any information of decay and one needs to extract more precise information as follows.
Observe that $v_{(0)}$ is radial and harmonic in $\set{r < \lmb^{-1} R}$, which already means that it is constant there.
In fact, we have the following exact formula:
\begin{equation} \label{eq:ell-asymp-v0}
	v_{(0)}(r) = \int f_{(0)}(r') r' \, \ud r' = \frkc_{(0)}[f_{R}] \quad \hbox{ for } r < \lmb^{-1} R.
\end{equation}
To see this, recall that, by the variation of constants formula,
\begin{equation}\label{eq:var-const-v}
	v(r) := \frac{1}{r} \int_{0}^{r} f(r') (r')^{2} \, \ud r' + \int_{r}^{\infty} f(r') (r') \, \ud r'
\end{equation}
defines the unique regular and decaying solution to $-(\rd_{r}^{2} + 2 r^{-1} \rd_{r}) v = f$.
Consider the boundedness of $\|v_{(0)}\|_{H^{2,-\frac32+\delta'}}$ with $\delta' > 0$, one knows that $v_{(0)}$ is decaying. Thus, the desired formula for $v_{(0)}$ in $\set{r < \lmb^{-1} R}$ is an immediate consequence of \eqref{eq:var-const-v} and the support property of $f_R$ when $\lambda \geq 2$.

\smallskip
\noindent {\it Step~3: The term $\calH^{-1} ((-\lap) - \calH) v_{(0)}$.}
By \eqref{eq:ell-asymp-v0}, we obtain
\begin{align*}
& \calH^{-1} \left((-\lap) - \calH) v_{(0)}\right) \\
&= \calH^{-1} \left(\chi_{< \lmb^{-1} R}((-\lap) - \calH) v_{(0)}\right) +  \calH^{-1} \left((1 - \chi_{< \lmb^{-1} R})((-\lap) - \calH) v_{(0)}\right) \\
&= - \frkc_{(0)}[f_{R}] \calH^{-1} \left(\chi_{< \lmb^{-1} R} \calH 1\right) +  \calH^{-1} \left((1 - \chi_{< \lmb^{-1} R})(\calH - (-\lap)) v_{(0)}\right) \\
&= \frkc_{(0)}[f_{R}] \td{\varphi}_{0}
+ \frkc_{(0)}[f_{R}] \calH^{-1} \left((1 - \chi_{< \lmb^{-1} R}) \calH 1\right)
+ \calH^{-1} \left((1 - \chi_{< \lmb^{-1} R})(\calH - (-\lap)) v_{(0)}\right).
\end{align*}
Observe that the last two terms should be better in view of the exterior cut-off $(1 - \chi_{< \lmb^{-1} R})$ and \ref{hyp:af-0}. Indeed, by \ref{hyp:af-0}, \eqref{eq:ell-asymp-v0-0}, and the obvious bound $\frkc_{(0)}[f_{R}] \aleq \nrm{f_{(0)}}_{H^{0, \frac{1}{2}}}$, we have
\begin{equation}\label{eq:ell-asymp-v0'-prelim}
	\frkc_{(0)}[f_{R}]\nrm{(1 - \chi_{< \lmb^{-1} R}) \calH 1}_{H^{0, \frac{1}{2}}}
	+ \nrm{(1 - \chi_{< \lmb^{-1} R})(\calH - (-\lap)) v_{(0)}}_{H^{0, \frac{1}{2}}}
	\aleq A (\lmb^{-1} R)^{-2 \eta} \nrm{f_{(0)}}_{H^{0, \frac{1}{2}}}.
\end{equation}
By Proposition~\ref{prop:w-ell-0}, we arrive at
\begin{equation} \label{eq:ell-asymp-v0'}
\nrm{\calH^{-1} \left((-\lap) - \calH) v_{(0)}\right) - \frkc_{(0)}[f_{R}] \td{\varphi}_{0}}_{\ell^{\infty} H^{2, - \frac{3}{2}}(r < \lmb^{-1} R)} \aleq A \lmb^{2 \eta} R^{-2 \eta} \nrm{f_{(0)}}_{H^{0, \frac{1}{2}}}.
\end{equation}

\smallskip
\noindent {\it Step~4: The contribution of $v_{(\geq 1)}$.} According to Lemma~\ref{lem:w-ell-lap}, $v_{(\geq 1)}$ enjoys a larger range of weighted elliptic estimates. Combined with the support property of $f_R$, we have
\begin{equation}\label{eq:ell-asymp-v1-end}
    R \nrm{v_{(\geq 1)}}_{\ell^{\infty} H^{2, -\frac{3}{2}-1}} \aleq \nrm{f_{(\geq 1)}}_{H^{0, \frac{1}{2}}}.
\end{equation}
Restricting the left-hand side to $\set{r < \lmb^{-1} R}$, it follows that
\begin{equation} \label{eq:ell-asymp-v1}
	\nrm{v_{(\geq 1)}}_{\ell^{\infty} H^{2, -\frac{3}{2}}(r < \lmb^{-1} R)}  \aleq \lambda^{-1}R\nrm{v_{(\geq 1)}}_{\ell^{\infty} H^{2, -\frac{3}{2}-1}(r < \lmb^{-1} R)} \aleq \lmb^{-1} \nrm{f_{(\geq 1)}}_{H^{0, \frac{1}{2}}}.
\end{equation}
Moreover, we decompose
\begin{align*}
\calH^{-1} ((-\lap) - \calH) v_{(\geq 1)}
= \calH^{-1} \left(\chi_{< R} ((-\lap) - \calH) v_{(\geq 1)} \right) +  \calH^{-1} \left((1 - \chi_{< R}) ((-\lap) - \calH) v_{(\geq 1)} \right)
\end{align*}
For the first term -- which is more delicate since it involves the interior where $(-\lap) - \calH$ may be large -- observe that \eqref{eq:ell-asymp-v1-end} and \ref{hyp:af-0} imply
\begin{align*}
	\nrm{\chi_{< R} ((-\lap) - \calH) v_{(\geq 1)}}_{H^{0, \frac{1}{2}}}
	&\aleq A \sum_{j \geq 0, \, 2^{j} \aleq R} 2^{(1-2 \eta) j} \nrm{v_{(\geq 1)}}_{\ell^{\infty} H^{2, -\frac{5}{2}}}
	\aleq A R^{-2 \eta} \nrm{f_{(\geq 1)}}_{H^{0, \frac{1}{2}}},
\end{align*}
since we assumed $\eta < \frac12$. By Proposition~\ref{prop:w-ell-0}, it follows that
\begin{equation} \label{eq:ell-asymp-v1'-in}
	\nrm{\calH^{-1} \left(\chi_{< R} ((-\lap) - \calH) v_{(\geq 1)} \right)}_{H^{2, -\frac{3}{2}}}
	\aleq A R^{-2 \eta} \nrm{f_{(\geq 1)}}_{H^{0, \frac{1}{2}}}.
\end{equation}
On the other hand, proceeding as in the end of Step~2, we obtain
\begin{equation} \label{eq:ell-asymp-v1'-out}
\nrm{\calH^{-1} \left((1 - \chi_{< R}) ((-\lap) - \calH) v_{(\geq 1)} \right)}_{H^{2, -\frac{3}{2}}} \aleq A R^{-2 \eta} \nrm{f_{(\geq 1)}}_{H^{0, \frac{1}{2}}}.
\end{equation}

\smallskip
\noindent {\it Step~5: Completion of proof for the obstacle-free case.} Combining \eqref{eq:ell-asymp-v0}, \eqref{eq:ell-asymp-v0'}, \eqref{eq:ell-asymp-v1}, \eqref{eq:ell-asymp-v1'-in}, and \eqref{eq:ell-asymp-v1'-out} with the formula \eqref{eq:ell-asymp-u} (and using $\lmb \geq 1$), the desired conclusion follows.

\smallskip
We now turn to the obstacle case and distinguish the steps by using the prime notation.

\smallskip
\noindent{\it Step~1': Structure of $U_R$.}
We keep the shorthand notations $f_{(0)}, f_{(\geq 1)}$ defined in Step 1 and reiterate the following notations
$v_{(0)} = (-\lap_{\R^3})^{-1} (f_{R})_{(0)}$, $v_{(\geq 1)} = (-\lap_{\R^3})^{-1} (f_{R})_{(\geq 1)}$. This is to emphasize that we do not use the Dirichlet Laplacian on the exterior domain but to specify that we invert the standard Laplacian on $\R^3$ by inserting a subscript.
We write
\begin{equation*}
\calH (U_{R} - \eta_0 v_{(0)} - \eta_0 v_{(\geq 1)}) = \left((- \calH)(\eta_0 v_{(0)}) - \Delta_{\R^3} v_{(0)} \right) + \left((- \calH)(\eta_0 v_{(\geq 1)}) - \Delta_{\R^3} v_{(\geq 1)} \right).
\end{equation*}
Since $U_{R} - \eta_0 v_{(0)} - \eta_0 v_{(\geq 1)}$ satisfies the Dirichlet boundary condition, we arrive at the key formula
\begin{equation} \label{eq:ell-asymp-u-ob}
U_{R} = \eta_0 v_{(0)} + \calH^{-1} \left((- \calH)(\eta_0 v_{(0)}) - \Delta_{\R^3} v_{(0)} \right) + \eta_0 v_{(\geq 1)} + \calH^{-1} \left((- \calH)(\eta_0 v_{(\geq 1)}) - \Delta_{\R^3} v_{(\geq 1)} \right).
\end{equation}
In the remainder of the proof, we study each term in \eqref{eq:ell-asymp-u-ob}.

\noindent {\it Step~2': The term $\eta_0 v_{(0)}$ and $\eta_0 v_{(\geq 1)}$.}
Since $v_{(0)}$ and $v_{(1)}$ stay the same as before, \eqref{eq:ell-asymp-v0} still holds and \eqref{eq:ell-asymp-v1} carries over as well:  \[
    \nrm{\eta_0 v_{(\geq 1)}}_{\ell^{\infty} H^{2, -\frac{3}{2}}(r < \lmb^{-1} R)}  \aleq \lmb^{-1} \nrm{f_{(\geq 1)}}_{H^{0, \frac{1}{2}}}.
\]

\smallskip
\noindent {\it Step~3': The term $\calH^{-1} \left((- \calH)(\eta_0 v_{(0)}) - \Delta_{\R^3} v_{(0)} \right)$.}
We write
\begin{align*}
& \calH^{-1} \left((- \calH)(\eta_0 v_{(0)}) - \Delta_{\R^3} v_{(0)} \right) \\
&= \calH^{-1} \left(\chi_{< \lmb^{-1} R}\left((- \calH)(\eta_0 v_{(0)}) - \Delta_{\R^3} v_{(0)} \right)\right) +  \calH^{-1} \left((1 - \chi_{< \lmb^{-1} R})\left((- \calH)(\eta_0 v_{(0)}) - \Delta_{\R^3} v_{(0)} \right)\right) \\
&= - \frkc_{(0)}[f_{R}] \calH^{-1} \left(\chi_{< \lmb^{-1} R} \calH \eta_0\right) +  \calH^{-1} \left((1 - \chi_{< \lmb^{-1} R})\left((- \calH)(\eta_0 v_{(0)}) - \Delta_{\R^3} v_{(0)} \right)\right) \\
&= \frkc_{(0)}[f_{R}] \td{\varphi}_{0}
+ \frkc_{(0)}[f_{R}] \calH^{-1} \left((1 - \chi_{< \lmb^{-1} R}) \calH \eta_0\right)
+ \calH^{-1} \left((1 - \chi_{< \lmb^{-1} R})\left((- \calH)(\eta_0 v_{(0)}) - \Delta_{\R^3} v_{(0)} \right)\right).
\end{align*}
Since $\eta_0 = 1$ when $|x| > \lmb^{-1}R > 3\dist(\p\calK, 0)$, we have
\[
(1 - \chi_{< \lmb^{-1} R})\left((- \calH)(\eta_0 v_{(0)}) - \Delta_{\R^3} v_{(0)} \right) = (1 - \chi_{< \lmb^{-1} R})\left((- \calH) - \Delta_{\R^3} v_{(0)} \right)
\]
Estimating in the same way as in \eqref{eq:ell-asymp-v0'-prelim}, we arrive at
\begin{equation*}
\nrm{\calH^{-1} \left((- \calH)(\eta_0 v_{(0)}) - \Delta_{\R^3} v_{(0)} \right) - \frkc_{(0)}[f_{R}] \td{\varphi}_{0}}_{\ell^{\infty} H^{2, - \frac{3}{2}}(r < \lmb^{-1} R)} \aleq A \lmb^{2 \eta} R^{-2 \eta} \nrm{f_{(0)}}_{H^{0, \frac{1}{2}}}.
\end{equation*}

\smallskip
\noindent {\it Step~4': The contribution of $\calH^{-1} \left((- \calH)(\eta_0 v_{(\geq 1)}) - \Delta_{\R^3} v_{(\geq 1)} \right)$.}
Following the same proof of \eqref{eq:ell-asymp-v1'-in}, \eqref{eq:ell-asymp-v1'-out}, proceeding with the same observation of the support condition of $\eta_0$ as above, we obtain
\begin{equation*}
\nrm{\calH^{-1} \left((- \calH)(\eta_0 v_{(\geq 1)}) - \Delta_{\R^3} v_{(\geq 1)} \right)}_{H^{2, -\frac{3}{2}}} \aleq A R^{-2 \eta} \nrm{f_{(\geq 1)}}_{H^{0, \frac{1}{2}}}.
\end{equation*}

\smallskip
\noindent {\it Step~5': Completion of proof.} Combining all the previous steps, the desired conclusion follows. \qedhere
\end{proof}

\section{Proof of Main Theorem~\ref{thm:linear}: main objects and the main iteration argument}\label{sec-main-proof}

In Section~\ref{subsec-good-comm-wp}, we introduce the main objects of our approach, namely, the \emph{good commutator} $\calL$ and the \emph{approximate profile} $\gmm(t, v)$, and state their main properties, namely, Propositions \ref{prop:est-calLu}, \ref{prop:diff-u-gmmvarphi0-wo-1/tdv-involved}, and \ref{prop:dotgmm_decay}. Then in Section~\ref{subsec-main-iter}, we provide a high-level proof of Main Theorem~\ref{thm:linear} assuming these propositions, whose proofs will be given in subsequent sections.

\subsection{The good commutator and the approximate profile} \label{subsec-good-comm-wp}
We introduce the \emph{quadratic phase function}
\begin{equation} \label{eq:std-phase}
	\mathring{\bfPhi} = \frac{\abs{x}^{2}}{4t}.
\end{equation}
which is a solution to the Hamilton--Jacobi equation associated with $H (\xi) = \dlt^{\mu \nu} \xi_{\mu} \xi_{\nu}$:
\begin{equation*}
	\rd_{t} \mathring{\bfPhi} + \dlt^{\mu \nu} \rd_{\mu} \mathring{\bfPhi} \rd_{\nu} \mathring{\bfPhi} = 0.
\end{equation*}

To handle the possible presence of an obstacle $\calK$, we make the following modification. Let $\chi^\ob(x)$ be a cut-off vanishing near the obstacle $\calK$ such that it is constant one away from the obstacle. In the case where $\calK = \0$, we choose $\chi^\ob \equiv 1$. Then we introduce the modified phase: \begin{equation} \label{eq:bfPhi-ob}
    \bfPhi^\ob := \chi^\ob \frac{|x|^2}{4t}.
\end{equation}

\subsubsection{Good commutator}
We define the following operator: \begin{equation}\label{eq:L_mu}
    L_\mu = \chi^\ob x_\mu + \frac12(\p_\mu \chi^\ob)|x|^2 + 2it\p_\mu,
\end{equation}
which is adapted to the phase conjugation: \[
    t^{\frac32} e^{-i\bfPhi^\ob} \circ L_\mu \circ t^{-\frac32} e^{i\bfPhi^\ob} = 2it \p_{x^\mu} = -2t D_{x^\mu}.
\]
We will frequently switch between the notations $u = u(t,x)$ (the original solution in \eqref{eq:sch-gen}) and $U$, where a new variable $U = U(t, x)$ in terms of \begin{equation}\label{eq:relation-uU}
    u(t, x) = t^{-\frac32} e^{i\bfPhi^\ob} U(t, x).
\end{equation}
We note that this new variable was also employed effectively in \cite{LS06}.

Then the (modified) good commutator we will use throughout the paper is given as follows.
\begin{definition}\label{def:good-comm}
The \emph{good commutator} $\calL$ is defined as
\begin{equation}  \label{eq:good-comm}
	\calL := L_{\mu} a^{\mu \nu} L_{\nu} - 2 t (b^{\mu} L_{\mu} + L_{\mu} b^{\mu}) + 4 t^{2} c.
\end{equation}
\end{definition}

We record a useful conjugation identity :
\begin{equation}\label{eq:conj_calLu_calHU}
    t^{\frac32}e^{-i\bfPhi^\ob}\calL u = 4t^2\calH U.
\end{equation}
For additional properties of $\calL$, we refer to Section~\ref{sec:good-comm}.

\begin{remark}\label{rem:Lu-BC}
 In the obstacle case (i.e., $\calK \neq \0$), the main purpose of utilizing the cut-off phase $\bfPhi^{\ob}$ is to ensure that the Dirichlet boundary condition holds for $\calL u$. Indeed, assuming \ref{hyp:f}, we have: \[
    \calL u|_{\p\calK} = 4t^{\frac12} \calH U|_{\p\calK} = 4t^2 \calH u|_{\p\calK} = 4t^2(i\p_t u - f)|_{\p\calK} = -4t^2 f|_{\p\calK} = 0.
\]
    Therefore, in the obstacle case, one could still commute $\calL$ into the equation and make sense of $\calL u$ as a solution so that \ref{hyp:iled} or \ref{hyp:iled*} would apply.
\end{remark}

\begin{remark}\label{rem:Lu-BC-other}
Another viable approach in the obstacle case is to define directly
\begin{equation*}
\calL^{\ob} := \chi^{\ob} \calL + (1 - \chi^{\ob}) 4 i t^{2} \rd_{t},
\end{equation*}
where $\calL$ is defined using the exact quadratic phase $\mathring{\bfPhi}$. Then the preservation of the Dirichlet boundary condition is obvious (cf.~Remark~\ref{rem:Lu-BC}). Moreover, in view of the identity
\begin{equation*}
	\calL u = \calL^{\ob} u - (1-\chi^{\ob}) (f - (\calL - 4 t^{2} \calH) u),
\end{equation*}
and the observation that $\calL \approx 4 t^{2} \calH$ on $\supp(1-\chi^{\ob})$ for $t$ large, we may control $\calL u$ via $\calL^{\ob} u$ under appropriate assumptions on $f$. While we will not carry out the details of this other approach, we point out that it has the advantage of not requiring $f |_{\rd \calK}$ to vanish, and hence a larger class of nonlinearities than \ref{hyp:nl-1} would be admissible.
\end{remark}

One fundamental property of $\calL$ is that the commutator $[i \rd_{t} - \calH, \calL]$ exhibits nontrivial cancellations; see Proposition~\ref{prop:good-comm}. Another important property is that, thanks to the weights in \eqref{eq:L_mu} and \eqref{eq:good-comm}, decay properties of $u$ follow from a control of $\calL u$; this is the purpose of the two-scale elliptic analysis in Section~\ref{sec:two-scale}. In this section, we only write down the final consequence of these properties, namely, a good \emph{energy estimate} for $\calL u$:

\begin{proposition}[Energy estimate with the good commutator] \label{prop:est-calLu}
    Assume that $\calH$ satisfies \ref{hyp:af}, \ref{hyp:iled*} (with $M_c = M_c^*$) and \ref{hyp:se}, then there exists $T_0$ such that for all $t > T_0 + 1$, for all $0 \leq N + s_c' \leq M_c$, for any solution $u$ to \eqref{eq:sch-gen},
    \begin{itemize}[leftmargin=*]
        \item $\calK = \0$: \begin{equation}\label{eqn:est-calLu}
    \begin{aligned}
        \|\p_x^{(\leq N)} \calL u\|_{L^\infty L^2[T_0, t]} \aleq\, &\|\p_x^{(\leq N + s_c')} \calL u(T_0, \cdot)\|_{L^2}
        +  \|\p_x^{(\leq N + s_c^\prime)}\calL f\|_{\brk{x}^{-1}L^2L^2+L^1L^2[T_0, t]} \\
        &+ \|t^{-\frac12 - \eta}\p_x^{(\leq N + s_c^\prime)} \calL u\|_{L^2L^2[T_0, t]}
        + \|t^{-\frac14 - \eta}\|t^{\frac32}\p_x^{(\leq N + s_c^\prime)} u\|_{L^\infty_x}\|_{\ell^1L^2_t[T_0, t]} \\
        & + \|t^{-1 - \eta} \|\p_x^{(\leq N + s_c^\prime)}\calL u\|_{L^2_x}\|_{L^1[T_0, t]}
        + \|t^{-1 - 2\eta}\|\p_x^{(\leq N + s_c^\prime)} u\|_{L^2_x}\|_{L^1_t[T_0, t]} ,
    \end{aligned}
    \end{equation}
    \item $\calK \neq \0$: \begin{align*}
        \|\p_x^{(\leq N)} \calL u\|_{L^\infty L^2[T_0, t]} \aleq\,
        &\|\p_x^{(\leq N + s_c')} \calL u(T_0, \cdot)\|_{L^2} +  \|\p_t^{(\leq i)}\p_x^{(\leq j)}\calL f\|_{\brk{x}^{-1}L^2L^2+L^1L^2[T_0, t]} \\
        &+ \sum_{2i + j \leq N + s_c'}\bb(\|t^{-\frac12-\eta} \p_t^{(\leq i)} \p_x^{(\leq j)} \calL f\|_{L^2L^2[T_0, t]} +  \|t^{-1 - \eta} \|\p_t^{(\leq i)}\p_x^{(\leq j)}\calL f\|_{L^2_x}\|_{L^1[T_0, t]} \\
        & \relphantom{+ \sum_{2i + j \leq N + s_c'}\bb(} + \|t^{\frac54 - \eta}\|\p_t^{(\leq i)}\p_x^{(\leq j)} f\|_{L^\infty_x}\|_{\ell^1 L^2_t[T_0, t]} + \|t^{-1 - 2\eta}\|\p_t^{(\leq i)}\p_x^{(\leq j)} f\|_{L^2_x}\|_{L^1_t[T_0, t]} \bb) \\
       &  + \|t^{-\frac12 - \eta}\p_x^{(\leq N + s_c')} \calL u\|_{L^2L^2[T_0, t]}
         + \|t^{-\frac14 - \eta}\|t^{\frac32}\p_x^{(\leq N + s_c')} u\|_{L^\infty_x}\|_{\ell^1 L^2_t[T_0, t]} \\
       &+ \|t^{-1 - \eta} \|\p_x^{(\leq N + s_c')}\calL u\|_{L^2_x}\|_{L^1[T_0, t]}
         + \|t^{-1 - 2\eta}\|\p_x^{(\leq N + s_c')} u\|_{L^2_x}\|_{L^1_t[T_0, t]} .
    \end{align*}
    \end{itemize}
\end{proposition}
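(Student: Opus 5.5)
The plan is to view $w := \p_x^{(\leq N)} \calL u$ (or $\p_t^{(\le i)}\p_x^{(\le j)}\calL u$ in the obstacle case, where Remark~\ref{rem:Lu-BC} ensures the Dirichlet condition) as a solution of an inhomogeneous equation $(i\p_t - \calH) w = \calL f + [i\p_t - \calH, \calL] u + (\text{commutators with } \p_x)$. We then apply the dual integrated local energy decay estimate in the form of Proposition~\ref{prop:iled*-L1L2+LE*}, which bounds $\|w\|_{L^\infty L^2[T_0,t]}$ by $\|\p_x^{(\leq N+s_c')}w(T_0)\|_{L^2}$ plus the $\brk{x}^{-1}L^2L^2 + L^1L^2$ norm of the source, with a loss of $s_c'$ derivatives. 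The $\calL f$ contribution then appears directly on the right-hand side. The work is to estimate the commutator $[i\p_t - \calH, \calL]u$, using the good commutator identity of Proposition~\ref{prop:good-comm}. In the model case $\calH=-\lap+c$ this commutator is $-4it(\calS c+2c)u$. In general it is a sum of terms of schematic form $t\,O(\brk{x}^{-2-2\eta})u$, $O(\brk{x}^{-1-2\eta})\,t^{-1}\cdot t L u$, and $O(\brk{x}^{-2\eta})\,t^{-1} L L u$, whose coefficients are built from $\calS$ and $\brk{x}\p_x$ applied to $\bfh,b,c$, as controlled by \ref{hyp:af}. The key point is that the commutator carries one fewer power of $t$ than $\calL$.

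Each commutator term is split at the spatial scale $r = t^{1/2}$, in line with Remark~\ref{rem:1/2}. In the inner region $r<t^{1/2}$ we place the term in $\brk{x}^{-1}L^2L^2$. For the zeroth-order part, $\|t\brk{x}^{-1-2\eta}u\|_{L^2(r<t^{1/2})}\lesssim t\cdot t^{\frac14-\eta}\|u\|_{L^\infty}$; written as $t^{-\frac14-\eta}\cdot t^{3/2}\|u\|_{L^\infty}$, this gives the $\ell^1L^2_t$ term after dyadic decomposition in time. For terms involving $L_\mu u$ and $L_\mu L_\nu u$, we invoke the inner-scale two-scale elliptic estimate \eqref{eq:ideas:inner} (in its rigorous form from Section~\ref{sec:two-scale}). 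That estimate controls $t^{-2}r^{1/2}LLu$ and $t^{-1}r^{-1/2}Lu$ in $\ell^\infty L^2$ by $t^{-2}\|r^{1/2}\calL u\|_{\ell^1L^2}$ plus boundary terms at $r\approx t^{1/2}$. The $\ell^1$ sum of $r^{1/2}$ over dyadic shells up to $t^{1/2}$ yields the weight $t^{-\frac12-\eta}$ on $\calL u$ in $L^2L^2$, while the lower-order $u$ terms are again bounded by $t^{3/2}\|u\|_{L^\infty}$.

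In the outer region $r>t^{1/2}$ we put the source in $L^1L^2$ and use the outer estimate \eqref{eq:ideas:outer} together with the far-away zone bound for $r\geq t$. The coefficient decay $\brk{x}^{-2\eta}\le t^{-\eta}$ then yields $t^{-1-\eta}\|\calL u\|_{L^2}$, and the $u$ contributions become $t^{-1-2\eta}\|u\|_{L^2}$. The radial components of $\bfh$, which naively fail to decay fast enough here, are handled by \eqref{eq:af-extra1}: there the $t$ growth is compensated by the extra decay $r^{-2-2\eta}$ with $r>t^{1/2}$.

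Derivatives $\p_x^{(\leq N)}$ produce additional commutators $[\calH,\p_x]$, which are lower order with decaying coefficients, and these are absorbed with the same splitting at the cost of the $\delta_s$ extra derivatives in $s_c'$. In the obstacle case, $\p_x$ is replaced by $\p_t$ combined with elliptic regularity near $\p\calK$. The main obstacle is the inner-region treatment of the second-order commutator terms $O(\brk{x}^{-2\eta})t^{-1}LLu$. Here the elliptic control must be sharp at the endpoint $\ell^1$–$\ell^\infty$ level, which relies on \ref{hyp:se} through Proposition~\ref{prop:w-ell-0}. The gluing at $r\sim t^{1/2}$ must also be arranged so that no $t^{\eta}$ loss appears.
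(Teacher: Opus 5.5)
Your overall plan follows the paper's proof. You apply Proposition~\ref{prop:iled*-L1L2+LE*} on $[T_0,t]$ to the equation for $\calL u$ with source $-[\calL,i\p_t-\calH]u+\calL f$, and expand the commutator by Proposition~\ref{prop:good-comm}. You place the near zone $r\aleq t^{1/2}$ in $\brk{x}^{-1}L^2L^2$ using the endpoint inner estimate, and the intermediate and far-away zones in $L^1L^2$ using the outer estimates and \eqref{eq:ell-r>t-U}. Your weights $t^{-\frac12-\eta}$, $t^{-\frac14-\eta}t^{3/2}\|u\|_{L^\infty}$, $t^{-1-\eta}$, $t^{-1-2\eta}$ are the right ones.

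The step that fails is your handling of the derivatives. If you take $w=\p_x^{(\leq N)}\calL u$ as the unknown, the equation acquires the source $[\calH,\p_x^{(\leq N)}]\calL u$. This is of the form $O(\brk{x}^{-1-2\eta})\p_x^{(\leq N+1)}\calL u$ and carries no gain in $t$ relative to $\calL u$. The splitting at $r=t^{1/2}$ works for $[\calL,i\p_t-\calH]u=t^{-1}a_{\mathrm{gc}}LLu+b_{\mathrm{gc}}Lu+tc_{\mathrm{gc}}u$ only because that commutator is a full power of $t$ smaller than $\calL u$; a commutator with $\p_x$ has no such cancellation. Placing the new term in $\brk{x}^{-1}L^2L^2$ costs $\|\brk{x}^{-2\eta}\p_x^{(\leq N+s_c'+1)}\calL u\|_{L^2L^2}$, and placing it in $L^1L^2$ costs a time integral of $\|\p_x^{(\leq N+s_c'+1)}\calL u\|_{L^2}$. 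Neither quantity is controlled by the right-hand side of the proposition, where every $\calL u$ term carries the decaying weight $t^{-\frac12-\eta}$ or $t^{-1-\eta}$. Neither is finite under the growth $\|\calL u\|_{L^2}\aleq t^{\frac14-\eta+\alpha}$ supplied by the iteration. So these commutators are not ``absorbed with the same splitting.''

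The repair is not to commute at all. Proposition~\ref{prop:iled*-L1L2+LE*}, like \ref{hyp:iled*}, is already a higher-order estimate: it bounds $\p_x^{(\leq N)}$ of the solution by $\p_x^{(\leq N+s_c')}$ of the data and the forcing. Apply it to $\calL u$ itself, so that all derivatives land on $[\calL,i\p_t-\calH]u+\calL f$. You then need elliptic estimates for $\p_x^{(\leq N+s_c')}L^{(2)}u$, $\p_x^{(\leq N+s_c')}L^{(1)}u$ and $\p_x^{(\leq N+s_c')}u$, with the derivatives outside the $L$'s. This is exactly why Propositions~\ref{prop:elliptic-near}, \ref{prop:elliptic-int} and \ref{prop:elliptic-rad} are stated in that order. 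Derivatives falling on $a_{\mathrm{gc}},b_{\mathrm{gc}},c_{\mathrm{gc}}$ are harmless by the $O^{M_c}$ bounds. In the obstacle case the same role is played by Corollaries~\ref{cor:elliptic-near}, \ref{cor:elliptic-int} and \ref{cor:elliptic-rad}. There the extra $f$- and $\calL f$-terms arise from converting $\p_t$ via $i\p_t^i\p_x^j u=\p_t^{i-1}\p_x^j\calH u+\p_t^{i-1}\p_x^j f$. Accordingly, the loss $\delta_s$ in $s_c'$ is inherited from Proposition~\ref{prop:iled*-L1L2+LE*}, through the term $(\calH-\calH_{\mathrm{out}})v$ in its proof; it is not the cost of absorbing commutators.

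Two smaller points. First, \eqref{eq:af-extra1} is already used inside the proof of Proposition~\ref{prop:good-comm}, to treat $[\calH,\bfh^{\mu\nu}x_\mu x_\nu]$, so nothing further is needed in the outer region. Second, the near-zone step does not need \ref{hyp:se}: the higher-order near-zone estimate uses only dyadic local elliptic regularity with a global $\|u\|_{L^\infty}$ error, which is already on the right-hand side. That step is therefore not the main obstacle you describe.
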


We remark that, in each case, the terms on the last two lines will be removed (after possibly losing more $\rd_{x}$-derivatives) via an iteration argument; see Section~\ref{subsec-main-iter}. Proposition~\ref{prop:est-calLu} will be proved in Section~\ref{sec:en-est-calLu}. 

\subsubsection{Wave packet and approximate profile}
In order to implement the testing-by-wave-packet method in Section~\ref{sec:test-by-wp}, we introduce :
\begin{definition}[Wave packet and approximate profile]\label{defn_wp}
We define the \emph{(test) wave packet traveling at velocity $v$} as \[
    \Psi_v := \chi^v e^{i\bfPhi^\ob}, \quad \chi^v := \chi(\frac{x-tv}{\sqrt{t}})
\]
where $\chi$ is a non-negative bump function with normalization $\|\chi\|_{L^1} = 1$ and support property $\supp\chi(z) \subset \{|z| \leq 1\}$.
As a measure of the decay of $u$ along $\{x = vt\}$, we introduce a function $\gmm[u] = \gmm[u](t, v)$, \emph{the approximate profile of $u$}, given by
\[
\gmm[u](t, v) := \brk{u, \Psi_v},
\]
where we use bracket notation to denote the $L^2(\calM^3; \bbC)$-pairing. Sometimes we will work with alternative formula :
\begin{equation}\label{eq:defn_gmm}
        \gmm(t, v) = \int_{\calM^3} u(t, x)\overline{\Psi_v}\, \ud x = t^{-\frac{3}{2}}\int_{\calM^3} U(t, x)\chi\big(\frac{x-vt}{\sqrt{t}}\big)\, \ud x = t^{\frac32} \int_{\R^3 \setminus t^{-1}\calK} U(t, t y)\chi\big(\sqrt{t}(y-v)\big)\, \ud y.
    \end{equation}
\end{definition}

To deal with higher order derivatives, we also introduce its higher order counterpart \[
    \gmm[\p^\alpha u](t, v) := \brk{\p^\alpha u, \Psi_v}, \hbox{ for all multi-index } \alpha.
\]
    
The first main property of $\gmm$ is that it describes the precise asymptotics of $u$, given an appropriate control of $\calL u$:
\begin{proposition}[Approximation via $\gmm$] \label{prop:diff-u-gmmvarphi0-wo-1/tdv-involved}
    Assume that $\calH$ satisfies \ref{hyp:af-0} and \ref{hyp:se}, then there exists $T_0$ such that for all $t > T_0$, given any $u$ satisfying \eqref{eq:E-u}, the following pointwise bound holds \begin{align*}
    &\left\|t^{\frac32}(\p_x^\alpha u)(t, tv) - \sum_{|\beta_1| + |\beta_2| = |\alpha| =: N} \gmm[\p_x^{\beta_1} u](t, v) e^{i\bfPhi^\ob(t, tv)} (\p_x^{\beta_2} \varphi_0)(t, tv) \right\|_{L^\infty_v(\{v \in \overline{\R^3 \setminus t^{-1} \calK}\})} \\
    \aleq\, &t^{-\frac14+\frac12\eta} \|\p_x^{(\leq N)}\calL u(t, \cdot)\|_{L^2_x} + t^{\frac32-\frac{\eta}{4}} \|\p_x^{(\leq N)} u(t, \cdot)\|_{L^\infty_x} + t^{-\frac14}\|\p_x^{(\leq N)}u(t , \cdot)\|_{L^2_x}, \hbox{ for all } 0 \leq N \leq M_0.
\end{align*}
\end{proposition}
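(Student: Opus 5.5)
We work with the conjugated variable $U = t^{\frac32}e^{-i\bfPhi^\ob}u$ from \eqref{eq:relation-uU}. By \eqref{eq:defn_gmm}, $\gmm(t,v)$ is the average of $U(t,\cdot)$ against the normalized bump $t^{-\frac32}\chi((x-tv)/\sqrt t)$, supported in the ball $B_v := \{|x-tv|\le \sqrt t\}$. So for $N=0$ the quantity to bound is $|U(t,tv) - \varphi_0(tv)\,\mathrm{avg}_{B_v} U|$ up to the phase factor. The argument splits according to whether $|tv| \gtrsim t^{\frac12}$ (outer scale) or $|tv| \lesssim t^{\frac12}$ (inner scale), following the two-scale philosophy of Section~\ref{subsubsec-two-scale}. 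Throughout we use $\calH U = \frac14 t^{-\frac32}e^{-i\bfPhi^\ob}\calL u$, from \eqref{eq:conj_calLu_calHU}. In $L^2$ this gives $\|\calH U\|_{L^2} = \frac14 t^{-\frac12}\|\calL u\|_{L^2}$.

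\emph{Outer scale, $|tv| \ge t^{\frac12+\eta_1}$ (with $\eta_1$ small, to be tuned so that the final losses equal $t^{\pm\eta/2},t^{-\eta/4}$).} Here $\varphi_0 = 1 + O(\brk{x}^{-\dlt'})$ on $B_v$ by Corollary~\ref{cor:decay_varphi0}. Hence it suffices to bound the oscillation $|U(tv) - \mathrm{avg}_{B_v}U|$ plus $t^{-\dlt'/2}|\mathrm{avg}\,U|$. The latter is at most $t^{-c\eta}\,t^{\frac32}\|u\|_{L^\infty}$, which accounts for the $t^{\frac32-\frac\eta4}\|u\|_{L^\infty}$ term. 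For the oscillation we use a Morrey-type inequality on $B_v$ (scale $\rho = \sqrt t$):
\[
|U(tv)-\mathrm{avg}_{B_v}U| \lesssim \rho^{\frac12}\|\nabla^2 U\|_{L^2(B_v)} .
\]
We control $\nabla^2U$ on $2B_v$ by local elliptic regularity, Proposition~\ref{prop:dyadic-ell-0}, at dyadic scale $R\approx|tv|$. We treat $\calH+\lap$ perturbatively there, since it is $O(R^{-2\eta})$. Because the ball has radius $\sqrt t \le R$, we first apply interior regularity at scale $\sqrt t$:
\[
\|\nabla^2 U\|_{L^2(B_v)} \lesssim \|\calH U\|_{L^2(2B_v)} + t^{-1}\|U\|_{L^2(2B_v)}.
\]
The second term is bounded by $t^{-1}t^{\frac34}\|U\|_{L^\infty}$. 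Altogether this gives
\[
t^{\frac14}\Big(t^{-\frac12}\|\calL u\|_{L^2} + t^{-\frac14}\,t^{\frac32}\|u\|_{L^\infty}\Big).
\]
The second piece is an $O(1)$ multiple of $t^{\frac32}\|u\|_{L^\infty}$ with no gain, so this term needs improving. Instead we subtract the free part and use the Laplacian's mean-value structure. We write $U - \mathrm{avg}$ via the Green function of the ball, $U(tv) - \mathrm{avg}_{B_v}U = \int_{B_v} K\,\lap U$ with $|K|\lesssim |x-tv|^{-1}$. This yields the bound
\[
\rho^{\frac12}\|\lap U\|_{L^2(B_v)} \le t^{\frac14}\bigl(\|\calH U\|_{L^2} + \|(\calH+\lap)U\|_{L^2(B_v)}\bigr).
\]
The first part gives $t^{-\frac14}\|\calL u\|$. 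The second part uses $\calH+\lap = O(R^{-2\eta})\nabla^{\le2}$ with $R \ge t^{\frac12+\eta_1}$, and the previous crude bound for $\nabla^{\le 2}U$, which gains a factor $t^{-\eta}$. This produces the $t^{\frac32-\frac\eta4}\|u\|_{L^\infty}$ term, with some slack used to absorb $\eta_1$. The region $r\ge t$ and the $L^2$ tail terms give $t^{-\frac14}\|u\|_{L^2}$.

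\emph{Inner scale, $|tv|\le t^{\frac12+\eta_1}$.} Now $B_v$ may contain the region where $\calH$ is far from $-\lap$, and $\varphi_0$ is genuinely nonconstant. We cut $\calH U$ as $f_{\rm in} + \sum_R f_R$, with dyadic pieces $f_R$ supported at $r\approx R\ge \lambda t^{\frac12+\eta_1}$, $\lambda = t^{\eta_1}$, and we set $U = \calH^{-1}(\calH U\text{ cut}) + (\text{harmonic-type remainder})$. Proposition~\ref{prop:ell-asymp} then gives $U = \frkc\,\varphi_0 + O(\lambda^{-1}+R^{-\eta})$ on $r<\lambda^{-1}R$, for the far-field pieces. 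The part of $\calH U$ supported in $r\lesssim t^{\frac12+2\eta_1}$ is handled by Proposition~\ref{prop:w-ell-0}, paying a factor $t^{(\frac12+2\eta_1)\frac12}$ from the weight, which gives $t^{\frac14+\eta_1}\|\calH U\|_{L^2} \approx t^{-\frac14+\frac\eta2}\|\calL u\|_{L^2}$. Thus $U = C(t)\varphi_0 + E$ on $r\lesssim t^{\frac12+\eta_1}$, with $E$ small in $\ell^\infty H^{2,-3/2}$ and hence, by Sobolev on dyadic annuli, pointwise. We then test against $\chi^v$. Since $\varphi_0$ is slowly varying on $B_v$ away from the origin, but not near it, we compare $U(tv) = C\varphi_0(tv)+E$ with $\gmm = C\,\mathrm{avg}_{B_v}\varphi_0 + \mathrm{avg}\,E$. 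The remaining difference is $C(\varphi_0(tv) - \varphi_0(tv)\,\mathrm{avg}\,\varphi_0)$, and it is small because $\mathrm{avg}_{B_v}\varphi_0 = 1 + O(t^{-\dlt'/2})$ (the ball has radius $\sqrt t$ and $\tilde\varphi_0$ decays).

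\emph{Higher derivatives.} We apply the Leibniz rule to $\p^\alpha u$, matching $\p_x^{\beta_2}\varphi_0$ as in the statement. \emph{The main obstacle} is the inner-scale matching: one must choose the cutoff radius and $\lambda$ so that the $\lambda^{-1}$ and $(\lambda^{-1}R)^{-\eta}$ errors from Proposition~\ref{prop:ell-asymp}, and the weight loss in Proposition~\ref{prop:w-ell-0}, all fit into the stated exponents $t^{-\frac14+\frac\eta2}$ and $t^{\frac32-\frac\eta4}$. The obstacle case only alters $\bfPhi^\ob$ near $\calK$, which lies in the inner region and is absorbed there.
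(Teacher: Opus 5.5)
\textbf{There is a genuine gap in your inner-scale step.} You represent $U$ near the origin through $\calH^{-1}$ of \emph{all} of $\calH U$, split into $f_{\rm in}$ and dyadic far-field pieces $f_R$ with $R\ge t^{\frac12+2\eta_1}$ (all the way to $R=\infty$). You then apply Proposition~\ref{prop:ell-asymp} piece by piece. The ``harmonic-type remainder'' is left unspecified, and controlling it is exactly the issue.

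\emph{Why the per-piece errors cannot be summed.} Each application of Proposition~\ref{prop:ell-asymp} costs $(\lmb^{-1}+(\lmb^{-1}R)^{-\eta})\|f_R\|_{H^{0,\frac12}}$, where $\|f_R\|_{H^{0,\frac12}}\approx R^{\frac12}\|\calH U\|_{L^2(r\approx R)}$.
\begin{itemize}
\item With your fixed $\lmb=t^{\eta_1}$, the factor $\lmb^{-1}$ does not decay in $R$.
\item With the best admissible choice $\lmb_R=R\,t^{-\frac12-\eta_1}$, the factor $(\lmb_R^{-1}R)^{-\eta}=t^{-\eta(\frac12+\eta_1)}$ does not decay in $R$.
\item Even a factor $R^{-\eta}$ would not beat the weight $R^{\frac12}$.
\end{itemize}
You control $\calH U$ only in $L^2$, via $\|\calL u\|_{L^2}=4t^{\frac12}\|\calH U\|_{L^2}$. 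So an error like $t^{-\eta(\frac12+\eta_1)}\sum_R R^{\frac12}\|\calH U\|_{L^2(r\approx R)}$ is not bounded by the right-hand side:
\begin{itemize}
\item the part $R>t$ is not controlled at all;
\item the part $R\le t$ alone can be as large as $t^{-\frac\eta2}\|\calL u\|_{L^2}$ (when $\calH U$ concentrates at $r\approx t$), which exceeds the allowed $t^{-\frac14+\frac\eta2}\|\calL u\|_{L^2}$ since $\eta<\frac14$.
\end{itemize}
The same defect affects the constant $C(t)=\sum_R\frkc_{(0)}[f_R]+\cdots$. You need a bound on $C(t)$ to control $C\varphi_0(tv)(1-\mathrm{avg}\,\varphi_0)$, and you never estimate it. This is not a matter of tuning $\lmb$ and the cutoff radii, as your last paragraph suggests. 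The influence of the far field on $U$ near the origin must be measured through the size of $U$ itself at the transition scale, i.e.\ through $\|u\|_{L^\infty}$, not through $\calH U$ at all larger scales.

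\textbf{The missing idea is to localize $U$ rather than $\calH U$.} This is what Lemma~\ref{lem:near-zone-asymp-zero-order} does: write $\calH(\chi_{<1}(t^{-\frac12-\dlt_1}x)U)=\chi_{<1}\calH U+[\calH,\chi_{<1}]U$.
\begin{itemize}
\item The first term is your $f_{\rm in}$ estimate via Proposition~\ref{prop:w-ell-0}.
\item The commutator is a \emph{single} source supported at $r\approx t^{\frac12+\dlt_1}$. Proposition~\ref{prop:ell-asymp} is applied to it once, with $R=t^{\frac12+\dlt_1}$ and $\lmb=t^{\dlt_1-\dlt}$.
\item Its $H^{0,\frac12}$ norm, and hence the constant $c$ of \eqref{eq:const_c}, is bounded by $\|U\|_{H^{2,-\frac32}(r\approx t^{\frac12+\dlt_1})}\aleq t^{-\frac14+\frac{\dlt_1}{2}}\|\calL u\|_{L^2}+t^{\frac32}\|u\|_{L^\infty}$. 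This follows from local elliptic regularity, \eqref{eq:est-U-wt-Sob-norm-aux}.
\item The resulting errors are $(t^{-(\dlt_1-\dlt)}+t^{-\eta(\frac12+\dlt)})\,t^{\frac32}\|u\|_{L^\infty}$. With $\dlt=\frac\eta2$ and $\dlt_1=\frac34\eta$, this is the origin of the $t^{\frac32-\frac\eta4}\|u\|_{L^\infty}$ term in the inner region.
\end{itemize}

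\textbf{Two smaller points.}

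\emph{Outer scale.} Your argument there (a mean-value kernel plus a perturbative treatment of $\calH+\lap$) is a valid alternative to the paper's global Morrey inequality for $\chi_{>R}U$ (Lemma~\ref{lem:Morrey-out-scale}, Corollary~\ref{cor:out-scale-ell-lap}). However, a compactly supported kernel with $|K|\aleq|x-tv|^{-1}$ exists only when $\chi$ is radial. For a general bump, a first-moment term $\sqrt t\,\nabla U\cdot\int z\chi(z)\,\ud z$ appears, and your local bounds do not control it with a gain. So either take $\chi$ radial or work globally as the paper does.

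\emph{Higher order.} This case is not just the Leibniz rule. Applying the $N=0$ bound to $\p^\alpha u$ would put $\|\calL\p^\alpha u\|_{L^2}$, rather than $\|\p_x^{(\leq N)}\calL u\|_{L^2}$, on the right. The paper instead proceeds in two steps:
\begin{itemize}
\item It first bounds $t^{\frac32}\p_x^k u(t,tv)-(t^{-1}\p_v)^k(\gmm\varphi_0e^{i\bfPhi^\ob})$. It uses Proposition~\ref{prop:elliptic-int-rad} in the outer region, and Corollary~\ref{cor:near-zone-asymp-higher-order} in the inner region (higher-order near-zone estimates applied to $u$ minus its leading part built from $c\varphi_0$).
\item It then converts to the stated form via \eqref{eq:relation-der-0th-w-higher-order-gmm}.
\end{itemize}
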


Another key property of $\gmm$ is that it obeys a nice evolution equation, again given an appropriate control of $\calL u$:
\begin{proposition}[Evolution of $\gmm$] \label{prop:dotgmm_decay}
    Assume that $\calH$ satisfies \ref{hyp:af-0} and \ref{hyp:se}. Then there exists $T_0$ such that for all $t > T_0$, given any solution $u$ to \eqref{eq:sch-gen} satisfying \eqref{eq:E-u}, the time derivative of $\gmm$ satisfies the following bound uniformly in $v$ for any multi-index $\alp$ with $N := \abs{\alp}$, $0 \leq N \leq M_c$: \begin{align*}
        \|\tfrac{d}{dt}\gmm[\p^{\alp} u](t, v) + i\int \p_x^{\alp} f \overline{\Psi_v}\, \ud x \|_{L^\infty_v(\{v \in \overline{\R^3 \setminus t^{-1} \calK}\})}\lesssim t^{-\frac54 + \frac{\eta}2} \|\p_x^{(\leq N)}\calL u(t, \cdot)\|_{L^2} + t^{\frac12-\frac{\eta}{4}}\|\p_x^{(\leq N)} u(t, \cdot)\|_{L^\infty} .
    \end{align*}
\end{proposition}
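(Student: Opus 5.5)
We differentiate $\gmm[\p^\alp u](t,v)=\brk{\p^\alp u,\Psi_v}$ in time and use the equation. Commuting $\p_x^\alp$ through \eqref{eq:sch-gen} gives $i\p_t \p^\alp u = \calH \p^\alp u + [\p^\alp,\calH]u + \p^\alp f$. Hence
\[
\tfrac{d}{dt}\gmm[\p^\alp u] = -i\brk{\p^\alp f,\Psi_v} - i\brk{(\calH+\lap)\p^\alp u + [\p^\alp,\calH]u,\Psi_v} + \bigl(\brk{i\lap \p^\alp u,\Psi_v} + \brk{\p^\alp u,\p_t\Psi_v}\bigr).
\]
We therefore split $\calH=-\lap+(\calH+\lap)$ and treat the last bracket (free part) and the middle term (coefficient part) separately.

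\textbf{Free part.} Integrating by parts, the free part equals $\brk{\p^\alp u,(i\p_t+\lap)\Psi_v}$, since $\Psi_v$ is supported away from $\calK$ once $t>T_0$ (for $v$ in the allowed range, up to the cutoff $\chi^\ob$ region, which only contributes the easier boundary-layer errors). Because $\mathring{\bfPhi}$ solves the Hamilton--Jacobi equation, one computes
\[
(i\p_t+\lap)\Psi_v = e^{i\mathring{\bfPhi}}\bigl(i\p_t\chi^v + i t^{-1}x\cdot\nb\chi^v + \tfrac{3i}{2t}\chi^v + \lap\chi^v\bigr).
\]
The first-order terms combine into $i t^{-1}$ times a divergence of $\tfrac{x-tv}{\sqrt t}$-weighted bump functions. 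Passing to $U$ (so that $u\bar\Psi_v = t^{-3/2}U\chi^v$), this becomes, after another integration by parts, $t^{-3/2}\cdot t^{-1}\int \sqrt t\,\nb U\cdot \tilde\chi(\tfrac{x-tv}{\sqrt t})$, plus the $\lap\chi^v$ term, which we rewrite as $\int \lap U\,\chi^v$. Now $\nb U$ is expressed through $L_\mu u$ and $\lap U$ through $\calL u$ via \eqref{eq:conj_calLu_calHU}, modulo $(\calH+\lap)U$. The main control comes from the outer-scale two-scale elliptic estimate \eqref{eq:ideas:outer} and the Morrey-type Lemma~\ref{lem:Morrey-out-scale} on the packet's support, which has volume $t^{3/2}$. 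This gives a bound of the form $t^{-2}\cdot t^{3/4}\|\calL u\|_{L^2}\cdot(\text{weights})$. Bookkeeping should produce $t^{-5/4+\eta/2}\|\calL u\|_{L^2}$ when $|tv|\gtrsim t^{1/2}$. The $L^2$-tail of $u$ is absorbed using the $L^\infty$ term.

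\textbf{Coefficient part.} For $\brk{(\calH+\lap)\p^\alp u+[\p^\alp,\calH]u,\Psi_v}$ we integrate by parts onto $\Psi_v$ and use the decay in \ref{hyp:af-0}. There are two regimes.
\begin{itemize}
\item If the packet lies in $\{r\gtrsim t^{1/2}\}$, then derivatives on $\Psi_v$ cost $\nb\mathring{\bfPhi}\sim r/t$. The coefficients give $\brk{x}^{-2\eta}\cdot(r/t)^2$ for $\bfh$, with analogous bounds for $b$ and $c$. Here the extra assumption \eqref{eq:af-extra2} is needed at zeroth order: it handles the $\bfh^{\mu\nu}\p_\mu r\,\xi_\nu$ component, where the radial phase gradient pairs with $\bfh$. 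Bounding $u$ in $L^\infty$ over a set of volume $t^{3/2}$ gives $t^{1/2-\eta/4}\|u\|_{L^\infty}$ after using $r\gtrsim t^{1/2}$.
\item If the packet reaches $\{r\lesssim t^{1/2}\}$ (i.e.\ $|v|\lesssim t^{-1/2}$), the phase is essentially constant and $\chi^v\approx t^{-3/2}\chi(0)$ smooth on scale $t^{1/2}$. The pairing is then close to $\brk{\calH\p^\alp u, \bar\chi^v e^{i\mathring\bfPhi}}$ with a slowly varying test function. We move $\calH^*$-type derivatives onto it, producing factors $t^{-1/2}$ per derivative, and use $\calH u = t^{-2}\cdot(\calL u$ conjugated$)$ together with the inner-scale estimate \eqref{eq:ideas:inner}.
\end{itemize}

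The \textbf{main obstacle} is this near-field regime. The coefficients are large there and the commutator $[\p^\alp,\calH]$ is not small, so the cancellation must come from $\calL u$ itself. I would first try writing the pairing as $t^{-2}\brk{\calL \p^\alp u,\Psi_v}$ plus commutator errors (from $[\calL,\p^\alp]$ and from replacing $4t^2\calH$ by $\calL$). Each of these errors should carry at least $t^{-1/2}$ relative to the leading term, and I would close them with Cauchy--Schwarz on the $t^{3/2}$-volume packet support against $\|\p^{(\le N)}\calL u\|_{L^2}$. Summing the contributions yields the claimed bound uniformly in $v$.
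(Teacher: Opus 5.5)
There is a genuine gap, and it is not where you place it. The hard term is the first-order one you correctly extract from the free part, $t^{-2}\int \nb U^{(\alpha)}\cdot\tilde\chi^v\,\ud x$ with $\tilde\chi^v=\tfrac{x-tv}{\sqrt t}\chi^v$ (the paper's $R_{5,k}$). It comes from $\p_t\Psi_v$ and the transport part, and it is present even when $\calH=-\lap$.

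You bound it only for $|tv|\gtrsim t^{1/2}$, and even that is optimistic. Lemma~\ref{lem:Morrey-out-scale} (or the $L^6$ bound for $Lu$) gives $t^{-5/4}\|\calL u\|_{L^2}+t^{3/4}R^{-1/2}\|u\|_{L^\infty}$ with $R\simeq |tv|$. This beats $t^{1/2}\|u\|_{L^\infty}$ only when $|tv|\gtrsim t^{1/2+\delta}$.

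For $|v|\lesssim t^{-1/2+\delta}$, integrating by parts gives only $t^{-2}\cdot t^{-1/2}\cdot t^{3/2}\|U\|_{L^\infty}=t^{1/2}\|u\|_{L^\infty}$. That falls short of the required $t^{1/2-\eta/4}$. The inner-scale estimate \eqref{eq:ideas:inner} does not help: its error term $\|U\|_{L^\infty}$ is consistent with $|\nb U|\sim r^{-1}\|U\|_{L^\infty}$ on $\{r\lesssim t^{1/2}\}$, which reproduces exactly $t^{-1}\|U\|_{L^\infty}$. Your near-field paragraph, which reduces to $t^{-2}\brk{\calL\p^\alpha u,\Psi_v}$ plus commutators, never touches this term.

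The paper closes it using \ref{hyp:se}, through the sharp asymptotics of Proposition~\ref{prop:ell-asymp} as packaged in Lemma~\ref{lem:near-zone-asymp-zero-order} and Corollary~\ref{cor:near-zone-asymp-higher-order}. On $\{r\lesssim t^{1/2+\delta}\}$ one has $U=c[U](t)\varphi_0$ up to an error that gains a power of $t$. After moving the derivative onto $\div\tilde\chi^v$:
\begin{itemize}
\item the constant part $c\,\eta_0$ integrates to zero;
\item $c\tilde\varphi_0$ gains from $|\tilde\varphi_0|\lesssim\brk{x}^{-\delta'}$;
\item $|c|$ is bounded via \eqref{eq:est-U-wt-Sob-norm-aux}.
\end{itemize}
The choice $\delta=\eta/2$, $\delta_1=3\eta/4$ is what produces the exponents $-\tfrac54+\tfrac{\eta}{2}$ and $\tfrac12-\tfrac{\eta}{4}$ in the statement. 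Within your own framework there also appears to be a shortcut, at least without obstacle. For radial $\chi$ one has $z\chi(z)=\nb_z F(z)$ with $F$ compactly supported. The term then becomes a multiple of $t^{-3/2}\int\lap U\,F(\tfrac{x-tv}{\sqrt t})\,\ud x$, which can be treated like your $\lap\chi^v$ term via $\lap U=-\calH U+(\calH+\lap)U$. Some structural input of this kind is indispensable.

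Two smaller points. First, in the far field your coefficient bound fails for the radial--radial component. With only $|\bfh|\lesssim\brk{x}^{-2\eta}$, the term $\bfh^{\mu\nu}\tfrac{x_\mu x_\nu}{4t^2}$ has size $r^{-2\eta}$ at $r\simeq t$ and contributes $t^{3/2-2\eta}\|u\|_{L^\infty}$ over the packet. You need \eqref{eq:af-extra1}, which makes this coefficient $O(t^{-1}\brk{x}^{-2\eta})$ (the paper's $R_{3,k}$), in addition to \eqref{eq:af-extra2} for the cross term.

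Second, the near-field coefficient part that you call the main obstacle is routine. After integrating by parts onto $\Psi_v$, each term is pointwise $O(t^{-1}\brk{x}^{-2\eta}+t^{-1/2}\brk{x}^{-1-2\eta}+\brk{x}^{-2-2\eta})$ on the packet. An $L^1$--$L^\infty$ bound then gives a contribution $O(t^{1/2-\eta}\|\p_x^{(\le N)}u\|_{L^\infty})$, uniformly in $v$. Your proposed Cauchy--Schwarz route through $t^{-2}\brk{\calL\p^\alpha u,\Psi_v}$ would actually lose here. The difference $\calL\p^\alpha u-\p^\alpha\calL u$ contains $4t^2[c,\p^\alpha]u$, and Cauchy--Schwarz on the packet bounds its contribution only by $t^{3/4}\|\p_x^{(\le N)}u\|_{L^\infty}$.
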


Finally, we also state some auxiliary properties of $\gmm$ that we will need in the proof of Main Theorem~\ref{thm:linear}:

\begin{lemma} \label{lem:wp-aux}
For $k \geq 0$, we have
\begin{equation} \label{eq:dv-gmm}
        |(t^{-1}\p_v)^k \gmm(t, v)| \lesssim t^{-\frac{k}{2}}\|U(t, \cdot)\|_{L^\infty}.
\end{equation}
Moreover, we also have
\begin{equation}\label{eq:relation-der-0th-w-higher-order-gmm}
\begin{aligned}
    \left|(\tfrac1{t}\p_v)^k (\gmm(t, v)e^{i\bfPhi^\ob(t, tv)}) - \gmm[\p_x^k u](t, v) e^{i\bfPhi^\ob(t, tv)}\right| 
    \aleq t\|\p_x^{(\leq k)}u(t, \cdot)\|_{L^\infty_x}.
\end{aligned}
\end{equation}
\end{lemma}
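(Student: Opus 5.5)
Both estimates come from the third form of \eqref{eq:defn_gmm}. Change variables $y = v + t^{-1/2} z$ to write
\[
\gmm(t,v) = \int U\big(t, tv + t^{1/2}z\big)\,\chi(z)\,\ud z,
\]
restricted to $z$ with $tv+t^{1/2}z\in\calM^3$.

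For \eqref{eq:dv-gmm}, the plan is not to differentiate $U$. Instead, keep the $x$-variable fixed in $\gmm(t,v)=t^{-3/2}\int U(t,x)\chi((x-vt)/\sqrt t)\,\ud x$ and let every $v$-derivative fall on the cutoff. Each application of $t^{-1}\p_v$ produces a factor $t^{-1}\cdot t\cdot t^{-1/2}=t^{-1/2}$ times $(\nabla^{k}\chi)((x-vt)/\sqrt t)$. We then bound
\[
|(t^{-1}\p_v)^k\gmm| \le t^{-k/2}\, t^{-3/2}\|U\|_{L^\infty}\int |\nabla^k\chi|\big((x-vt)/\sqrt t\big)\,\ud x = C_k\, t^{-k/2}\|U\|_{L^\infty}.
\]
In the obstacle case the domain $\calM^3$ does not depend on $v$, so there is no boundary term. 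This part is routine.

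For \eqref{eq:relation-der-0th-w-higher-order-gmm}, first note that $\gmm[\p_x^k u](t,v)=\int \p_x^k u\,\overline{\chi^v}e^{-i\bfPhi^\ob}\,\ud x$. Integrate by parts $k$ times to move $\p_x^k$ onto $\overline{\chi^v}e^{-i\bfPhi^\ob}$; the boundary terms vanish because $\chi^\ob$ kills the phase region near $\calK$ and $u=0$ on $\rd\calK$. Alternatively, and this is the route I would take, compute $(t^{-1}\p_v)^k(\gmm e^{i\bfPhi^\ob(t,tv)})$ directly. Since $\p_v\chi^v=-\sqrt t\,(\nabla\chi)(\cdot)$ and $\p_x\chi^v=t^{-1/2}(\nabla\chi)(\cdot)$, we have $t^{-1}\p_v\chi^v=-\p_x\chi^v$. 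Integrating by parts then gives $t^{-1}\p_v\gmm=\int \p_x u\,\overline{\chi^v}e^{-i\bfPhi^\ob}+\int u\,\overline{\chi^v}\,\p_x(e^{-i\bfPhi^\ob})$. The derivative of the phase factor gives $-i\,\tfrac{x}{2t}$ (away from $\calK$), while $t^{-1}\p_v e^{i\bfPhi^\ob(t,tv)}$ gives $+i\,\tfrac{v}{2}e^{i\bfPhi^\ob(t,tv)}$. These combine into the factor $\tfrac{i}{2t}(tv-x)$. On $\supp\chi^v$ we have $|x-tv|\le t^{1/2}$, so this factor has size $t^{-1/2}$.

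Thus one application of $t^{-1}\p_v$ to $\gmm[w]e^{i\bfPhi^\ob(t,tv)}$ yields $\gmm[\p_x w]e^{i\bfPhi^\ob}$ plus an error of the form $\int w\,\overline{\tilde\chi^v}e^{-i\bfPhi^\ob}$ with $|\tilde\chi^v|\lesssim t^{-1/2}|\chi^v|$. We may also need terms involving $\p_x\chi^\ob$ near $\calK$; these are supported in a compact set and bounded by $t^{-1}\|w\|_{L^\infty}$ times the size of the set. I would iterate this $k$ times, tracking the errors as a finite sum of generalized profiles $\int \p_x^{j}u\,\overline{\tilde\chi}\,e^{-i\bfPhi^\ob}$, $j<k$, where $\tilde\chi$ is a polynomial in $(x-tv)/\sqrt t$ times derivatives of $\chi$, times powers $t^{-1/2}$. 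Each such error is bounded by
\[
\|\p_x^{j}u\|_{L^\infty}\int|\tilde\chi^v|\,\ud x \lesssim t^{3/2}\, t^{-1/2}\|\p_x^{(\le k)}u\|_{L^\infty}= t\,\|\p_x^{(\le k)}u\|_{L^\infty},
\]
which is exactly the claimed bound.

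The main obstacle is the bookkeeping in the iteration. Later derivatives can fall on the error cutoffs $\tilde\chi^v$ rather than on $u$, and a polynomial weight $(x-tv)/t$ differentiated by $t^{-1}\p_v$ produces $t^{-1}$. I need to check that such terms never gain a positive power of $t$, i.e., that every error keeps at least one factor $t^{-1/2}$ against the $t^{3/2}$ volume. To handle this, I would set up an induction hypothesis on the class of admissible cutoffs: bounded smooth functions of $(x-tv)/\sqrt t$ supported in the unit ball, with coefficients $O(t^{-1/2})$. I would verify that this class is closed under $t^{-1}\p_v$ up to the same structure.
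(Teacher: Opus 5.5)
Your proposal follows the paper's proof. For \eqref{eq:dv-gmm} you put all $v$-derivatives on $\chi^v$ in the convolution form \eqref{eq:defn_gmm}. For \eqref{eq:relation-der-0th-w-higher-order-gmm} you use $t^{-1}\p_v\chi^v=-\p_x\chi^v$ and one integration by parts, which produces the weight $\frac{i(X(tv)-X(x))}{2t}$, and then induct on $k$. For that weight it is simpler to note that $X$ is globally Lipschitz, so $|X(tv)-X(x)|\lesssim|x-tv|\le t^{1/2}$ on $\supp\chi^v$. The correction $X(tv)-tv$ is localized in $v$, not in $x$, so it is not literally ``supported in a compact set''; it is still harmless.

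One claim needs correcting in the obstacle case: the boundary terms on $\p\calK$ do not vanish after the first step. Only the first integration by parts is against $u$ itself. From the second application of $t^{-1}\p_v$ on, you integrate by parts against $\p_x^{j}u$ with $j\geq 1$, and against the lower-order error terms. None of these vanish on $\p\calK$. The cutoff $\chi^\ob$ does not help, since it only makes $e^{-i\bfPhi^\ob}\equiv 1$ near $\calK$.

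As the paper does, keep these terms and bound them directly:
\begin{itemize}
\item They have the form $\int_{\p\calK}\p_x^{j}u\,\phi\big(\tfrac{x-tv}{\sqrt t}\big)\,n\,\ud\sigma$ with $\phi$ bounded.
\item Hence they are $\lesssim\|\p_x^{(\le k)}u\|_{L^\infty(\p\calK)}\lesssim\|\p_x^{(\le k)}u\|_{L^\infty_x}$. Continuity of $\p_x^{(\le M_0)}u$ up to $\p\calK$ comes from \eqref{eq:E-u}.
\item Further applications of $t^{-1}\p_v$ keep them bounded. They vanish unless $|tv|\lesssim t^{1/2}$, and there the phase derivative $\frac{iX(tv)}{2t}$ is $O(t^{-1/2})$.
\end{itemize}
These terms are far below the allowed $t\|\p_x^{(\le k)}u\|_{L^\infty_x}$, so your conclusion is unaffected once they are accounted for.
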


Propositions~\ref{prop:diff-u-gmmvarphi0-wo-1/tdv-involved} and \ref{prop:dotgmm_decay} and Lemma~\ref{lem:wp-aux} are proved in Section~\ref{sec:test-by-wp}.

\subsection{Main iteration and proof of Main Theorem~\ref{thm:linear}}\label{subsec-main-iter}
In this subsection, we assume that the propositions in Section~\ref{subsec-good-comm-wp} hold and establish the main theorem. The proof relies on an iteration argument. To verify the base case, we record an energy estimate for perturbations with complex coefficients.

\begin{lemma}[An energy estimate with derivative loss]\label{lem:eng-est-w-loss}
    Assume that \ref{hyp:af} holds true for $\calH$. For any $u$ satisfying \eqref{eq:sch-gen}, we have
    \begin{itemize}[leftmargin=*]
        \item when $\calK = \0$, for $0 \leq N \leq M_c + 1$, \[
            \|\p_{x}^{(\leq N)}u(t, \cdot)\|_{L^2} \lesssim \|\p_{x}^{(\leq N)}u_0\|_{L^2} + \|\brk{x}^{-1-2\eta} \p_{x}^{(\leq N)} u\|_{L^1L^2[0, t]}^{\frac12}\|\p_x^{(\leq N+1)} u\|_{L^\infty L^2[0, t]}^{\frac12} + \|\p_{x}^{(\leq N)} f\|_{L^1L^2[0, t]},
        \]
        \item when $\calK \neq \0$, for $0 \leq N \leq M_c + 1$ with $N \in 2\bbZ_{\geq 0}$, \begin{align*}
        \|\p_x^{(\leq N)}u(t, \cdot)\|_{L^2} \aleq \|\p_x^{(\leq N)} u_0\|_{L^2} &+ \|r^{-1-2\eta} \p_{x}^{(\leq N)} u\|_{L^1L^2[0, t]}^{\frac12}\| \p_x^{(\leq N+1)} u\|_{L^\infty L^2[0, t]}^{\frac12} \\
        &+ \sum_{2i + j \leq N} \|\p_t^{(\leq i)}\p_x^{(\leq j)} f\|_{L^1L^2 \cap L^\infty L^2[0, t]},
        \end{align*}
        
        \item when $\calK \neq \0$, for $0 \leq N \leq M_c$ with $N \in 2\bbZ_{\geq 0} + 1$, \begin{align*}
            \|\p_x^{(\leq N)}u(t, \cdot)\|_{L^2} \aleq \|\p_x^{(\leq N+1)} u_0\|_{L^2} &+ \|r^{-1-2\eta} \p_{x}^{(\leq N+1)} u\|_{L^1L^2[0, t]}^{\frac12}\| \p_x^{(\leq N+2)} u\|_{L^\infty L^2[0, t]}^{\frac12} \\
            &+ \sum_{2i + j \leq N+1} \|\p_t^{(\leq i)}\p_x^{(\leq j)} f\|_{L^1L^2 \cap L^\infty L^2[0, t]}.
        \end{align*}
    \end{itemize}
\end{lemma}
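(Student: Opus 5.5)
We prove Lemma~\ref{lem:eng-est-w-loss} by a standard $L^2$ energy identity for $\p_x^{\alpha} u$ (or $\p_t^i u$ in the obstacle case), treating the non-self-adjoint part of $\calH$ perturbatively. \textbf{Whole-space case $\calK = \0$.} Fix a multi-index $\alpha$ with $|\alpha| \leq N$ and set $w := \p_x^\alpha u$. It satisfies
\begin{equation*}
(i\p_t - \calH) w = \p_x^\alpha f + [\p_x^\alpha, \calH] u.
\end{equation*}
Compute $\frac{d}{dt}\|w\|_{L^2}^2 = 2\Im\brk{(i\p_t - \calH)w, w} + 2\Im\brk{\calH w, w}$. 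Since $a^{\mu\nu}$ is real symmetric, the principal part contributes nothing to $\Im\brk{\calH w, w}$. The remaining contributions involve only $\Im b^\mu$, $\Im c$ and $\p_\mu \Re b^\mu$. Integrating by parts where needed, they are bounded by $\int \brk{x}^{-1-2\eta}|w||\p_x w| + \brk{x}^{-2-2\eta}|w|^2$, using \ref{hyp:af-0}. The commutator $[\p_x^\alpha,\calH]u$ is a sum of terms of the form (derivative of a coefficient)$\times\p_x^{(\leq |\alpha|+1)}u$, where the highest-order terms are written in divergence form $D_\mu(\p^\beta a^{\mu\nu} D_\nu \p^{\alpha-\beta}u)$. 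Pair with $w$ and integrate the outer derivative by parts onto $w$. Every coefficient decays at least like $\brk{x}^{-1-2\eta}$, since $\bfh$ carries $\brk{x}\p_x$-weighted bounds, so $\p\bfh = O(\brk{x}^{-1-2\eta})$. Each such term is therefore bounded by $\|\brk{x}^{-1-2\eta}\p_x^{(\leq N)}u\|_{L^2}\|\p_x^{(\leq N+1)}u\|_{L^2}$.

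Integrating in time and using Cauchy--Schwarz on the $f$ term gives
\begin{equation*}
\sup_{[0,t]}\|w\|^2 \leq \|w(0)\|^2 + C\|\brk{x}^{-1-2\eta}\p_x^{(\leq N)}u\|_{L^1L^2}\|\p_x^{(\leq N+1)}u\|_{L^\infty L^2} + 2\|\p^{(\leq N)}_x f\|_{L^1L^2}\sup\|w\|.
\end{equation*}
Absorbing the last term and taking square roots yields the first bullet. The range $N \leq M_c+1$ is allowed because the commutator involves at most $M_c$ derivatives of the coefficients, plus the one extra weighted derivative permitted by \ref{hyp:af-0}.

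\textbf{Obstacle case.} Here $\p_x$ does not preserve the Dirichlet condition, so we commute with $\p_t^i$ instead. Since $\p_t^i u = 0$ on $\p\calK$, the same energy identity applies to $\p_t^i u$; the boundary terms vanish, and commutators involve $\p_t$-derivatives of the coefficients, which are controlled by \ref{hyp:af}. To recover spatial derivatives, use the equation $\calH u = i\p_t u - f$ and elliptic regularity for $\calH$ with Dirichlet data: local boundary regularity near $\p\calK$, which is $C^{M_c+2}$, together with Proposition~\ref{prop:dyadic-ell-0} away from it. This converts $\|\p_x^{(\leq 2k)}u\|$ into $\sum_{i\leq k}\|\p_t^{i}u\|$ plus lower-order terms and $f$ terms with $2i+j\leq N$. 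The $L^\infty L^2$ norm of $f$ appears because $f$ enters the elliptic step at a fixed time. For odd $N$ we simply bound by the next even order $N+1$, which explains the shifted indices in the third bullet. The lower-order $L^2$ term in the elliptic estimate is handled either by the energy bound for $u$ itself or by interpolation.

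\textbf{Main obstacle.} The step needing the most care is the bookkeeping in the obstacle case: matching $\p_t^i$ with $\p_x^{2i}$ via the elliptic estimates, while keeping the error in the product form $\|r^{-1-2\eta}\cdot\|_{L^1L^2}^{1/2}\|\cdot\|_{L^\infty L^2}^{1/2}$. That form requires the weighted coefficient bounds to be used before the elliptic step, not after.
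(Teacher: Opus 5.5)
Your proposal follows the paper's proof essentially step for step. The imaginary part of the $L^2$ identity kills the real principal part, leaving only the non-real parts of $b^\mu, c$ and the coefficient commutators, which you bound in the $\brk{x}^{-1-2\eta}$-weighted product form. In the whole space you commute with $\p_x^\alpha$, and that argument is complete as written. In the obstacle case you commute with $\p_t$ to preserve the Dirichlet condition, then return to $\p_x^{(\leq 2i)}$ via Lemma~\ref{lem:unweight-ell} and pointwise use of the equation, exactly as the paper does. Reducing odd $N$ to the even case $N+1$ gives the third bullet directly.

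The one step that your sketch, like the paper's, passes over too quickly is the claim that $[\p_t^k,\calH]$ is controlled by \ref{hyp:af} in the stated weighted form.
\begin{itemize}
\item For the lower-order part this is fine, since $\p_t b^\mu = O(\brk{x}^{-1-2\eta})$ and $\p_t c = O(\brk{x}^{-2-2\eta})$.
\item For the principal part, \ref{hyp:af} only gives $|\p_t \bfh^{\mu\nu}| \aleq \brk{x}^{-2\eta}$ and, via $t\p_t = \calS - x\cdot\p_x$, $|\p_t \bfh^{\mu\nu}| \aleq \brk{t}^{-1}\brk{x}^{-2\eta}$. This is $\aleq \brk{x}^{-1-2\eta}$ only where $\brk{x} \aleq t$.
\item The term $\Im\brk{D_\mu(\p_t\bfh^{\mu\nu} D_\nu \p_t^{k-1}u), \p_t^{k}u}$ has no cancellation: substituting the equation turns it into a real quadratic form in second derivatives weighted by $\p_t\bfh$.
\end{itemize}
So for time-dependent $\bfh$ and data concentrated in $\set{r \gg t}$, this contribution is not dominated by $\|r^{-1-2\eta}\p_x^{(\leq N)}u\|_{L^1L^2}\|\p_x^{(\leq N+1)}u\|_{L^\infty L^2}$.

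This issue does not arise for time-independent $a^{\mu\nu}$. In general you can repair it by splitting $u = \chi u + (1-\chi)u$, with $\chi \in C^\infty_c$ equal to $1$ near $\calK$:
\begin{itemize}
\item Commute $\p_t^k$ only with $\chi u$. There compact support makes every weight free, and the cutoff commutators $[\calH,\chi]u$ are compactly supported and of lower order.
\item Commute $\p_x^\alpha$ with $(1-\chi)u$, which vanishes near $\p\calK$, exactly as in your whole-space argument.
\end{itemize}
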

\begin{proof}
    Expanding the equation as \[
        (i\p_t + \Delta)u = D_\mu \bfh^{\mu\nu} D_\nu u + b^\mu D_\mu u + D_\mu (b^\mu u) + c u + f.
    \]
    Since $\bfh^{\mu\nu}$ is real, we directly multiply it by $\bar u$, integrate by parts and take the imaginary part. The result for $N = 0$ follows readily. Compared with Proposition~\ref{prop:E-implied-by-stationarity}, the derivative loss occurs precisely when $b^{\mu}$ is not real, where integration by parts is no longer available to shift derivatives onto $b^\mu$. For $N > 0$ and $\calK = \0$, one simply commutes $\p_{x}^k$ into the equation and it follows from an induction on $k = N$.

    For the case $\calK \neq \0$, we apply Lemma~\ref{lem:unweight-ell} and \ref{hyp:af} with $N \in 2\bbZ_{\geq 0}$ iteratively and \begin{align*}
        &\quad \|\p_x^{(\leq N)}u(t \cdot)\|_{L^2_x} \aleq \|\p_t^{(\leq \frac{N}{2})} u(t, \cdot)\|_{L^2_x} + \sum_{2i + j\leq N} \|\p_t^{(\leq i)}\p_x^{(\leq j)}f(t, \cdot)\|_{L^2_x} \\
        &\aleq \|\p_x^{(\leq N)} u_0\|_{L^2} + \|r^{-1-2\eta} \p_{t}^{(\leq \frac{N}{2})} u\|_{L^1L^2[0, t]}^{\frac12}\|\p_x^{(\leq 1)} \p_t^{(\leq \frac{N}{2})} u\|_{L^\infty L^2[0, t]}^{\frac12} +  \sum_{2i + j\leq N} \|\p_t^{(\leq i)}\p_x^{(\leq j)}f\|_{L^1L^2 \cap L^\infty L^2[0, t]}\\
        &\aleq \|\p_x^{(\leq N)} u_0\|_{L^2} + \|r^{-1-2\eta} \p_{x}^{(\leq N)} u\|_{L^1L^2[0, t]}^{\frac12}\| \p_x^{(\leq N+1)} u\|_{L^\infty L^2[0, t]}^{\frac12} + \sum_{2i + j \leq N} \|\p_t^{(\leq i)}\p_x^{(\leq j)} f\|_{L^1L^2 \cap L^\infty L^2[0, t]}.
    \end{align*}
    To obtain the result for odd numbers of derivatives, one interpolates and lose one more derivative.
\end{proof}

Noting that some results in Section~\ref{sec:good-comm} are only applicable when $t > T_0$. We introduce the following lemma, which is applicable for any $t > 0$, without referring to any of the results in Section~\ref{sec:good-comm}. This lemma allows for growth in time and serves as the base case for our iteration.

\begin{lemma}[Base case for the iteration]\label{lem:base_iteration}
    Assume that $\calH$ satisfies \ref{hyp:af}. Given a solution $u$ to \eqref{eq:sch-gen} with \eqref{eq:E-u}, \eqref{eq:D-u} and \ref{hyp:f} satisfying \[
    \begin{cases}
        M_c \geq M_0, \hbox{ when } \calK = \0 \\
        M_c \geq M_0 + 2, \hbox{ when } \calK \neq \0
    \end{cases},
    \] we have \[
        \|\p_{x}^{(\leq M_0)} u(t, \cdot)\|_{L^\infty} \lesssim A_0, \quad \|\p_{x}^{(\leq M_0)} \calL u(t, \cdot)\|_{L^2} \lesssim (D_0 + A_0 + D_f)\brk{t}^2, \hbox{  for all } t > 0.
    \]
\end{lemma}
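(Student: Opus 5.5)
The plan has two parts: an $L^\infty$ bound from Sobolev embedding, and a polynomially growing bound on $\calL u$ obtained by expanding $\calL$ and estimating each piece crudely. Neither part uses the commutator structure of Section~\ref{sec:good-comm}, which is why the lemma holds for all $t>0$.

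\textbf{The $L^\infty$ bound.} When $\calK=\0$, Sobolev embedding $H^2(\bbR^3)\subset L^\infty$ together with \eqref{eq:E-u} gives
\[
\|\p_x^{(\leq M_0)}u(t)\|_{L^\infty}\aleq \|\p_x^{(\leq M_0+2)}u(t)\|_{L^2}\leq A_0 .
\]
When $\calK\neq\0$, I would use the same embedding on the exterior domain. This is valid because $\p\calK$ is $C^{M_c+2}$ with $M_c\geq M_0+2$, so an extension operator is available. The needed $H^{M_0+2}$ norm is again supplied by \eqref{eq:E-u}.

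\textbf{The bound on $\calL u$.} I would expand $\calL$ from \eqref{eq:good-comm} using \eqref{eq:L_mu}. Each $L_\mu$ has the form
\[
L_\mu=X_\mu(x)+2it\p_\mu,\qquad |X_\mu(x)|\aleq \brk{x}.
\]
Hence $\calL$ is a sum of terms of the schematic form $t^j\,w_j(x)\,\p_x^{(\leq j)}$ with $j\leq 2$ and weights $w_j=O(\brk{x}^{2-j})$. Here I use \ref{hyp:af}: the coefficients $a^{\mu\nu}$, $b^\mu$, $c$ and their derivatives are bounded, and $\chi^\ob$ is smooth with derivatives supported near the obstacle. It follows that
\[
\|\p_x^{(\leq M_0)}\calL u(t)\|_{L^2}\aleq \brk{t}^2\,\|\brk{x}^{2}\p_x^{(\leq M_0+2)}u(t)\|_{L^2}.
\]
The remaining task, and the main obstacle, is to show that the weighted norm on the right grows at most like $D_0+A_0+D_f$, with no further powers of $t$. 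The naive alternative of commuting $\brk{x}^2$ with the flow costs powers of $t$ and loses derivatives, so it would not give the stated $\brk{t}^2$ rate.

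To control the weighted norm I would pass to the good commutator itself rather than the weights. In the case $\calK=\0$, I would run a weighted energy estimate directly on $\calL u$. Commuting $i\p_t-\calH$ with $\calL$ by direct computation produces only terms like $t\,(\calS c+2c)u$ and analogous expressions involving $a$ and $b$, each carrying at most the weight $t$ times decaying coefficients times up to two derivatives of $u$. Their $L^1L^2[0,t]$ norms are bounded by $t^2A_0$. The forcing contributes $\calL f$, which \ref{hyp:f} controls by $t^{1/4}D_f$ in $\brk{x}^{-1}L^2L^2+L^1L^2$. The data contribute $\calL u(0)=L_\mu a^{\mu\nu}L_\nu u_0+\dots$, which at $t=0$ reduces to $x$-weights of $u_0$ bounded by \eqref{eq:D-u}.

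The energy estimate for $\calL u$ loses a derivative exactly as in Lemma~\ref{lem:eng-est-w-loss}. I would absorb this loss in one of two ways: by Cauchy--Schwarz combined with the a priori $L^\infty H^{M_0+2}$ bound \eqref{eq:E-u}, or by using the Gronwall-type structure, which is fine on compact time intervals; for $t\leq 1$ the bound is trivial anyway.

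In the obstacle case the extra step is to handle $\p_x$ derivatives. I would convert them to $\p_t$ derivatives via Lemma~\ref{lem:unweight-ell}, using the boundary condition from Remark~\ref{rem:Lu-BC}, and this conversion is where the extra two derivatives in $M_c\geq M_0+2$ are consumed.
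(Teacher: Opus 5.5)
Your $L^\infty$ bound is fine and matches the paper's argument. The gap is in the bound for $\calL u$.

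\textbf{The reduction and the target.} Your reduction $\|\p_x^{(\leq M_0)}\calL u\|_{L^2}\aleq\brk{t}^2\|\brk{x}^2\p_x^{(\leq M_0+2)}u\|_{L^2}$ throws away the structure of $\calL$. Up to bounded coefficients and lower-order pieces, its terms are $X^2u$, $t\,X\p u$ and $t^2\p_x^{(\leq 2)}u$. The target you then set, a $t$-uniform bound on $\|\brk{x}^2\p_x^{(\leq M_0+2)}u(t)\|_{L^2}$, is false in general. Already for $\calH=-\lap$, the norm $\|L_\mu u(t)\|_{L^2}$ is conserved and $x_\mu u=L_\mu u-2it\p_\mu u$, so $\|xu(t)\|_{L^2}$ grows linearly. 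Also, \eqref{eq:D-u} only gives the weight $\brk{x}^2$ at order $M_0$.

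\textbf{The route you dismiss is the paper's proof, and it gives $\brk{t}^2$.} The operator $[i\p_t-\calH,x_\mu]$ is first order with bounded coefficients. So the forcing in the equation for $xu$ is a combination of $\nb u$, $[x,\calH+\lap]u$ and $xf$. By \eqref{eq:E-u} and the first term of \ref{hyp:f}, its $\p_x^{(\leq M_0+1)}$-derivatives are bounded in $L^1L^2[0,t]$ by $tA_0+D_f$. Doing this once, and again for $x^2u$ (whose forcing contains $\nb(xu)$ and $x\nb u$), yields
\[
\|\p_x^{(\leq M_0+1)}(xu)(t)\|_{L^2}\aleq D_0+A_0\brk{t}+D_f,\qquad \|\p_x^{(\leq M_0)}(x^2u)(t)\|_{L^2}\aleq (D_0+A_0+D_f)\brk{t}^2 .
\]
The three groups of terms in $\calL u$ are then $O(\brk{t}^2)$: the first by the second bound, the second by $t$ times the first bound, the third by \eqref{eq:E-u}. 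Each power of $x$ is propagated with one fewer derivative. This is also why the derivative loss in Lemma~\ref{lem:eng-est-w-loss} is harmless here: its coefficient is $O(\brk{x}^{-1-2\eta})$, absorbs one power of $x$, and so only costs the previous (less weighted, more regular) bound.

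\textbf{Why the direct energy estimate for $\calL u$ does not close.} Under the hypotheses of the lemma it fails for three reasons.
\begin{enumerate}
\item The commutator is not ``$t$ times decaying coefficients times $\p_x^{(\leq2)}u$''. Proposition~\ref{prop:good-comm} gives $\frac1t a_{\mathrm{gc}}^{\mu\nu}L_\mu L_\nu u+b^\mu_{\mathrm{gc}}L_\mu u+tc_{\mathrm{gc}}u$. Since $L_\mu=X_\mu+2it\p_\mu$, this contains $\frac1t a_{\mathrm{gc}}^{\mu\nu}X_\mu X_\nu u$ and $a_{\mathrm{gc}}^{\mu\nu}X_\mu\p_\nu u$, with growing weights $\brk{x}^{2-2\eta}$ and $\brk{x}^{1-2\eta}$. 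These are not controlled by \eqref{eq:E-u}. Bounding them needs exactly the weighted bounds above, or the two-scale elliptic estimates of Section~\ref{sec:two-scale}. Those rely on \ref{hyp:se} at the inner scale and on $t>T_0$ at the outer scales, and neither is available here. Only for $\calH=-\lap+c$ does the commutator reduce to $-4it(\calS c+2c)$, where your argument would work.
\item Lemma~\ref{lem:eng-est-w-loss} applied to $\p_x^{(\leq M_0)}\calL u$ requires $\|\p_x^{(\leq M_0+1)}\calL u\|_{L^\infty L^2}$. Through the top-order part $-4t^2\p_\mu a^{\mu\nu}\p_\nu$ of $\calL$, this involves $M_0+3$ derivatives of $u$, one more than \eqref{eq:E-u} provides. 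That is a loss of regularity, which neither Cauchy--Schwarz, Gronwall, nor decay of the coefficients can absorb.
\item The $\brk{x}^{-1}L^2L^2$ component of the \ref{hyp:f}-bound on $\calL f$ can only be used through \ref{hyp:iled*} (Proposition~\ref{prop:iled*-L1L2+LE*}), which this lemma does not assume.
\end{enumerate}
Finally, the range $t\leq1$ is not trivial: the weighted norms still have to be propagated from $t=0$.
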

\begin{proof}
    The first estimate follows from a simple Sobolev embedding \[
        \|\p_{x}^{(\leq N)}u\|_{L^\infty} \lesssim \|\p_{x}^{(\leq N+2)} u\|_{L^2} \lesssim A_0, \hbox{ for all } 0 \leq N \leq M_0.
    \]

    For simplicity, we prove the second estimate in the case $\calK = \0$, where the extra loss in the obstacle case is due to that in Lemma~\ref{lem:eng-est-w-loss}. We proceed as follows according to the structure of $\calL$.
    Since $[i\p_t + \Delta, x] = \nabla$, we commute $x$ into the equation and obtain \begin{align*}
       (i\p_t - \calH)x u = \nabla u +[x, D_\mu \bfh^{\mu\nu} D_\nu + b^\mu D_\mu + D_\mu b^\mu + c]u + x f.
    \end{align*}
    Since \[
    \|\p_{x}^{(\leq M_0+1)}[x, D_\mu \bfh^{\mu\nu} D_\nu + b^\mu D_\mu + D_\mu b^\mu + c]u\|_{L^1L^2[0,t]} \aleq t\|\p_{x}^{(\leq M_0+2)} u\|_{L^\infty L^2[0,t]} \aleq A_0 t,
    \]
    it follows from Lemma~\ref{lem:eng-est-w-loss} and \eqref{eq:E-u} that \begin{align*}
        \|\p_{x}^{(\leq M_0+1)}(xu)(t, \cdot)\|_{L^2} \aleq D_0 + A_0 \brk{t} + D_f.
    \end{align*}
    In addition, commuting $x$ once more, applying Lemma~\ref{lem:eng-est-w-loss} to \begin{align*}
       (i\p_t - \calH)x^2 u = \nabla(xu) + x\nabla u + [x^2, D_\mu \bfh^{\mu\nu} D_\nu + b^\mu D_\mu + D_\mu b^\mu + c]u + x^2 f,
    \end{align*}
    it follows that \begin{align*}
        \|\p_{x}^{(\leq M_0)}(x^2u)(t, \cdot)\|_{L^2} \aleq D_0 + A_0\brk{t} + D_f + (D_0 + A_0 t + D_f)t \aleq (D_0 + A_0 + D_f)\brk{t}^2.
    \end{align*}
    In particular, the second estimate in the lemma follows readily.
\end{proof}

\begin{lemma}[Main iteration]\label{lem:main_iteration}
    Under the assumptions of Main Theorem~\ref{thm:linear}, there exists $T_0$ such that for any $t > T_0$, if we fix an arbitrary $\alpha > 0$, and  additionally assume that there exists $0 \leq N \leq M_0 - s_c^\prime$ such that $u$ satisfies the following bounds: \begin{equation}\label{eq:iter_assump}
        \|\p_{x}^{(\leq N + s_c^\prime)}\calL u\|_{L^2} \leq A t^{\frac14 - \eta + \alpha}, \quad \|\p_{x}^{(\leq N + s_c^\prime)} u\|_{L^\infty} \leq A t^{-\frac32 + \alpha},
    \end{equation}
    where we have $\eta < \frac14$ according to our convention in Section~\ref{subsubsec-convention}.
    Then an improved bound with derivative loss can be obtained: \[
    \|\p_{x}^{(\leq N)}\calL u\|_{L^2} \leq A' t^{\frac14 - \eta + \alpha'}, \quad \|\p_{x}^{(\leq N + s_c^\prime)} u\|_{L^\infty} \leq A' t^{-\frac32 + \alpha'},
    \]
    where $\alpha' := \max\{\alpha - \frac{\eta}{4}, 0\}$. Here, $A' \aeq D_0 + A + A_0 + D_f$ with implicit constant depending on $A_c$.
\end{lemma}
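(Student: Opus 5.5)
The proof has two halves, run in the order sketched in Section~\ref{subsubsec-iteration}: first improve the $L^\infty$ bound on $u$ via testing by wave packets, then feed that into Proposition~\ref{prop:est-calLu} to improve $\calL u$.

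\emph{Step 1: the $L^\infty$ bound.} We insert the hypothesis \eqref{eq:iter_assump} into Proposition~\ref{prop:dotgmm_decay} with $|\alp|\le N+s_c'$. The right-hand side is then bounded by
\[
t^{-\frac54+\frac\eta2}At^{\frac14-\eta+\alpha}+t^{\frac12-\frac\eta4}At^{-\frac32+\alpha}\aleq At^{-1-\frac\eta4+\alpha}.
\]
The forcing contribution is $\aleq t^{\frac34}\|\p_x^{\alp} f\|_{L^2}\aleq D_f t^{-1-\delta_f}$ by \ref{hyp:f}.
\begin{itemize}
\item If $\alpha<\eta/4$, this is integrable in time, so $\gmm[\p^\alp u](t,v)$ stays bounded by $\gmm[\p^{\alp} u](T_0,v)$ plus $O(A+D_f)$.
\item If $\alpha\ge\eta/4$, integrating gives growth of order $At^{\alpha-\frac\eta4}$.
\end{itemize}
For the initial size at $T_0$ we use Lemma~\ref{lem:base_iteration} together with $\|\chi^v\|_{L^1}\aleq t^{3/2}$.

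We then apply Proposition~\ref{prop:diff-u-gmmvarphi0-wo-1/tdv-involved} at each order $\le N+s_c'$. Its error terms are:
\begin{itemize}
\item $t^{-\frac14+\frac\eta2}At^{\frac14-\eta+\alpha}=At^{\alpha-\frac\eta2}$;
\item $t^{\frac32-\frac\eta4}At^{-\frac32+\alpha}=At^{\alpha-\frac\eta4}$;
\item $t^{-\frac14}A_0$, which is bounded using \eqref{eq:E-u}.
\end{itemize}
Since $\varphi_0$ and its derivatives are bounded (Corollary~\ref{cor:decay_varphi0}), we get $t^{3/2}\|\p_x^{(\le N+s_c')}u\|_{L^\infty}\aleq A't^{\alpha'}$ with $A'\aeq D_0+A+A_0+D_f$. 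This half is routine.

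\emph{Step 2: the $\calL u$ bound.} We apply Proposition~\ref{prop:est-calLu} at order $N$, so its right-hand side involves $N+s_c'$ derivatives, exactly what \eqref{eq:iter_assump} controls. The terms are handled as follows.
\begin{itemize}
\item The data term at $T_0$ is bounded by Lemma~\ref{lem:base_iteration}, using $N+s_c'\le M_0$.
\item The forcing term is bounded by $D_ft^{\frac14-\eta}$ by \ref{hyp:f}.
\item The term $\|t^{-\frac12-\eta}\calL u\|_{L^2L^2}$ is at most $A(\int^t s^{-1-2\eta+\frac12-2\eta+2\alpha}ds)^{1/2}\aleq At^{\frac14-2\eta+\alpha}$.
\item The $L^1_t$ term for $\calL u$ gives $At^{\frac14-2\eta+\alpha}$.
\item The energy term is bounded by $A_0$.
\end{itemize}
Each of these is $\le A t^{\frac14-\eta+\alpha-\eta}\le At^{\frac14-\eta+\alpha'}$, up to constants.

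The delicate term is the $\ell^1L^2_t$ norm of $t^{-\frac14-\eta}\|t^{3/2}u\|_{L^\infty}$. Here we use the improved bound from Step~1 rather than the hypothesis. On a dyadic block $[2^k,2^{k+1}]$ its contribution is $A'2^{k(-\frac14-\eta+\alpha')}2^{k/2}=A'2^{k(\frac14-\eta+\alpha')}$. Summing over $k$ gives $A't^{\frac14-\eta+\alpha'}$, which is a geometric sum because $\frac14-\eta>0$ by the convention $\eta<\frac14$.

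\emph{Main obstacle.} I expect the difficulty to lie in the bookkeeping of exponents and derivative counts, not in any new idea. Two points need care:
\begin{itemize}
\item The $\ell^1L^2_t$ sum must use the Step~1 bound in order to gain $\eta/4$. With the hypothesis alone it only reproduces $t^{\frac14-\eta+\alpha}$.
\item In Step~1, the case $\alpha<\eta/4$ must be checked separately, so that $\alpha'=0$ rather than a negative exponent.
\end{itemize}
The $T_0$-dependent constants are absorbed into $A'$.
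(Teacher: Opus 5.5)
Your proposal is correct and follows the same two-step route as the paper. First, integrate Proposition~\ref{prop:dotgmm_decay} and insert the result into Proposition~\ref{prop:diff-u-gmmvarphi0-wo-1/tdv-involved} to improve the $L^\infty$ bound; then bound each term of Proposition~\ref{prop:est-calLu}. Feeding the Step~1 bound into the $\ell^1L^2_t$ term is exactly what the weight $t^{-\frac14-\eta}$ stated in Proposition~\ref{prop:est-calLu} requires, as in the sketch of Section~\ref{subsubsec-iteration}. The paper's written bound $At^{\max\{\alpha-\eta,0\}}$ for that term would only hold with the weight $t^{-\frac12-\eta}$, which appears (apparently as a typo) inside that proposition's proof. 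The only loose end, which the paper shares, is the borderline $\alpha=\frac{\eta}{4}$: there, integrating $\frac{d}{dt}\gmm$ produces $\log t$ rather than $O(1)$. This is easily avoided by starting the iteration at an exponent (e.g.\ $2+\frac{\eta}{8}$) from which $\alpha=\frac{\eta}{4}$ is never reached.
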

\begin{proof}
    Fix a sufficiently large $T_0$ so that the results in Section~\ref{subsec-good-comm-wp} hold.
    It follows from Proposition~\ref{prop:diff-u-gmmvarphi0-wo-1/tdv-involved} and \ref{prop:dotgmm_decay} that \begin{align*}
       \|t^{\frac32}\p_x^{(\leq N + s_c^\prime)} u(t, tv)\|_{L^\infty_v} &\lesssim \|\gmm[\p_x^{(\leq N + s_c^\prime)} u](t, v)\|_{L^\infty_v} + (A + A_0) t^{\alpha - \frac{\eta}{4}} \\
       &\lesssim (A + A_0 + D_f)\max\{t^{\alpha - \frac{\eta}{4}}, 1\} + \|\gmm[\p_x^{(\leq N + s_c^{\prime})} u](T_0, v)\|_{L^\infty_v} \lesssim A' t^{\alpha'}.
    \end{align*}

    We then apply Proposition~\ref{prop:est-calLu} and obtain \begin{align*}
        \|\p_x^{(\leq N)} \calL u(t, \cdot)\|_{L^2} \aleq \|\p_x^{(\leq N+s_c^{\prime})} \calL u(T_0, \cdot)\|_{L^2} &+ At^{\max\{\frac14-2\eta+\alpha, 0\}} + At^{\max\{\alpha - \eta, 0\}}\\
        &+ At^{\max\{\frac14-2\eta +\alpha, 0\}} + A_0 T_0^{-\frac12-2\eta} + D_f t^{\frac14 - \eta}.
    \end{align*}
    Moreover, the terms at time $T_0$ could be just bounded by the base case provided that $T_0$ is just a fixed number.
\end{proof}

We are now ready to prove Main Theorem~\ref{thm:linear}.

\begin{proof}[Proof of  Main Theorem~\ref{thm:linear}]
    {\it Step 1: Sharp pointwise decay estimate.}
    Thanks to Lemma~\ref{lem:base_iteration}, the additional assumptions in Lemma~\ref{lem:main_iteration} hold with $\alpha = 2$ and $N = M_0 - s_c^\prime$.
    Running Lemma~\ref{lem:main_iteration}
   once would reach \[
        \alpha \leadsto \alpha' := \max\{\alpha - \frac{\eta}{4}, 0\}, \quad N \leadsto N' := N - s_c^\prime.
    \]
    Therefore, after running the iteration for $c_0 := \lceil\frac{8}{\eta}\rceil$ many times, one could reach the sharp pointwise decay for $\p_x^{(\leq N)} u$ with $0 \leq N \leq M_0 - c_0 s_c^\prime$.

    In particular, the first step, together with \eqref{eq:E-u}, \ref{hyp:f}, Propositions~\ref{prop:diff-u-gmmvarphi0-wo-1/tdv-involved} and \ref{prop:dotgmm_decay}, implies the following bounds for $0 \leq N \leq M_0 - c_0s_c'$, $t > T_0$: \begin{equation}\label{eq:sharp-est-in-proof-mainthm}
    \begin{aligned}
        \left\|t^{\frac32}(\p_x^\alpha u)(t, tv) - \sum_{|\beta_1| + |\beta_2| = |\alpha| =: N} \gmm[\p_x^{\beta_1} u](t, v) e^{i\bfPhi^\ob(t, tv)} (\p_x^{\beta_2} \varphi_0)(t, tv) \right\|_{L^\infty_v} &\aleq D t^{-\frac{\eta}{4}}, \\
        \|\tfrac{d}{dt}\gmm[\p^N u](t, v)\|_{L^\infty_v} &\aleq (D + D_f) t^{-1-\min\{\frac{\eta}{4}, \delta_f\}}.
    \end{aligned}
    \end{equation}

    {\it Step 2: Leading order asymptotics for $N = 0$. }
    Taking $N = 0$ in \eqref{eq:sharp-est-in-proof-mainthm}, we know that \[
        |\gmm(t, v)| \lesssim |\gmm(T_0, v)| + \int_{T_0}^t |\dot\gmm(s,v)|\, \ud s \lesssim D + D_f,
    \]
    where $|\gmm(T_0, v)|$ satisfies the trivial bound by \eqref{eq:D-u} and the estimate is independent of $t$ and $v$. This in turn tells us that there exists $\gmm_\infty(v) \in L^\infty_v$ such that \[
        \gmm_\infty(v) := \lim_{t \to \infty}\gmm(t, v).
    \]
    Then it follows that \[
        |U(t, vt) - \gmm_\infty(v) \varphi_0(tv)| \lesssim |U(t, vt) - \gmm(t, v) \varphi_0(tv)| + |\gmm_\infty(v) - \gmm(t, v)||\varphi_0(tv)| \lesssim (D + D_f) t^{-\min\{\frac{\eta}{4}, \delta_f\}}.
    \]
    This completes the proof for the case $N = 0$.

    {\it Step 3: Leading order asymptotics for $0 < N \leq M_0 - c_0 s_c'$. }
    For any multi-index $\beta$, running the same analysis as in the $N = 0$ case, we know that there exists $\gmm_\infty[\p_x^\beta u](v) \in L^\infty_v$ such that $\gmm_\infty[\p_x^\beta u](v) := \lim_{t \to \infty} \gmm[\p_x^\beta u](t, v)$.
    Combining with \eqref{eq:relation-der-0th-w-higher-order-gmm}, we know that \begin{equation}
        \label{eq:main-proof-1}
        e^{-i\bfPhi^\ob(t, tv)}(\tfrac1{t}\p_v)^\alpha\big(\gmm(t, v) e^{i\bfPhi^\ob(t, tv)}\big) \to \gmm_\infty[\p_x^\alpha u](v).
    \end{equation}

    In addition, we fix an arbitrary $v$, and consider $(\tfrac1{t}\p_v)^\alpha\big(\gmm(t, v) e^{i\bfPhi^\ob(t, tv)}\big)$.
    We compute \begin{equation}\label{eq:main-proof-2}
        \begin{aligned}
        (\tfrac1{t}\p_v)^\alpha\big(\gmm(t, v) e^{i\bfPhi^\ob(t, tv)}\big) &= \gmm(t,v) (\tfrac1{t}\p_v)^\alpha\big(e^{i\bfPhi^\ob(t, tv)}\big) + \underbrace{\sum_{|\beta_1| + |\beta_2| = |\alpha|, |\beta_1| \geq 1} \big((\tfrac1{t}\p_v)^{\beta_1} \gmm (t, v) \big) \big((\tfrac1{t}\p_v)^{\beta_2} e^{i\bfPhi^\ob(t, tv)} \big)}_{\to 0, \hbox{ as } t \to \infty, \hbox{ due to \eqref{eq:dv-gmm}}}\\
        &= \big(\tfrac{iX(tv)}{2t}\big)^\alpha \gmm(t, v) e^{i\bfPhi^\ob(t, tv)}  + o(1),
    \end{aligned}
    \end{equation}
    where in the last step, we use the following claim: for any fixed $v$, \[
        (\tfrac1{t}\p_v)^{\beta_3}\big(\big(\tfrac{iX(tv)}{2t}\big)^{\beta_4}\big) \to 0, \hbox{ when } \beta_3 \neq 0.
    \]
    This can be easily shown by induction.
    Combining \eqref{eq:main-proof-1} with \eqref{eq:main-proof-2}, it yields that \[
        \gmm_\infty[\p_x^\alpha u](v) = \lim_{t \to \infty} \big(\tfrac{iX(tv)}{2t}\big)^\alpha \gmm(t, v) = \big(\tfrac{iv}{2}\big)^\alpha \gmm_\infty(v),
    \]
    where we used the shorthand $X_\mu(x) := \chi^\ob(x)x_\mu + \frac12(\p_\mu \chi^\ob) |x|^2$.
    As a side remark, the fact that this belongs to $L^\infty_v$ allows us to conclude decay properties of $\gmm_\infty$. Indeed, from this convergence, we deduce that $\brk{v}^N\gmm_\infty(v) \in L^\infty$.

    The proof concludes by plugging this into \eqref{eq:sharp-est-in-proof-mainthm}.
\end{proof}

\section{The good commutator identity}\label{sec:good-comm}
Our good commutator $\calL$ was defined in Definition~\ref{def:good-comm}. The goal of this section is to prove the following key commutation identity:
\begin{proposition}[Good commutator identity] \label{prop:good-comm}
We have
\begin{equation*}
	[\calL, i \rd_{t} - \calH] = \frac{1}{t} a_{\mathrm{gc}}^{\mu \nu} L_{\mu} L_{\nu} + b_{\mathrm{gc}}^{\mu} L_{\mu} + t c_{\mathrm{gc}},
\end{equation*}
where the coefficients satisfy the decay estimates
\begin{equation*}
    \begin{cases}
    a_{\mathrm{gc}}^{\mu \nu} = O^{M_c}_{x}(A_{c} \brk{r}^{-2 \eta}), \quad
	b_{\mathrm{gc}}^{\mu} = O^{M_c}_{x}(A_{c} \brk{r}^{-1-2 \eta}), \quad
	c_{\mathrm{gc}} = O^{M_c}_{x}(A_{c} \brk{r}^{-2-2 \eta}), \hbox{ when } \calH \hbox{ satisfies \ref{hyp:af-0};} \\
    a_{\mathrm{gc}}^{\mu \nu} = O^{M_c}_{t,x}(A_{c} \brk{r}^{-2 \eta}), \quad
	b_{\mathrm{gc}}^{\mu} = O^{M_c}_{t,x}(A_{c} \brk{r}^{-1-2 \eta}), \quad
	c_{\mathrm{gc}} = O^{M_c}_{t,x}(A_{c} \brk{r}^{-2-2 \eta}), \hbox{ when } \calH \hbox{ satisfies \ref{hyp:af}.}
    \end{cases}
\end{equation*}

\end{proposition}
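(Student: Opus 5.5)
The plan is to prove the identity entirely on the conjugated side, using the conjugation identity \eqref{eq:conj_calLu_calHU}. Set $\mathcal{C} := t^{\frac32} e^{-i\bfPhi^\ob}$, so that $u = \mathcal{C}^{-1} U$. Then \eqref{eq:conj_calLu_calHU} reads $\mathcal{C}\calL\mathcal{C}^{-1} = 4t^2\calH$. Also $\mathcal{C} L_\mu \mathcal{C}^{-1} = -2t D_\mu$, and each coefficient function of $x$ commutes with $\mathcal{C}$. Hence it suffices to compute
\begin{equation*}
	\mathcal{C}[\calL, i\rd_t - \calH]\mathcal{C}^{-1} = [4t^2 \calH, P], \qquad P := \mathcal{C}(i\rd_t - \calH)\mathcal{C}^{-1},
\end{equation*}
and to write the result in the form $\frac{1}{t}\tilde a^{\mu\nu} (tD_\mu)(tD_\nu) + \tilde b^\mu (tD_\mu) + t\tilde c$. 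Undoing the conjugation then gives the claimed form, since $tD_\mu \leftrightarrow -\tfrac12 L_\mu$.

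\textbf{Step 1: compute $P$.} Away from $\supp(1-\chi^\ob)$ we have $\bfPhi^\ob = |x|^2/4t$. There $e^{-i\bfPhi}D_\mu e^{i\bfPhi} = D_\mu + \frac{x_\mu}{2t}$ and $\mathcal{C} i\rd_t \mathcal{C}^{-1} = i\rd_t - \frac{3i}{2t} + \frac{|x|^2}{4t^2}$. Therefore
\begin{equation*}
	P = i\rd_t - \calH - \frac{1}{2t}\big(a^{\mu\nu}x_\mu D_\nu + D_\nu(a^{\mu\nu} x_\mu) + 3i + 2b^\mu x_\mu\big) - \frac{\bfh^{\mu\nu}x_\mu x_\nu}{4t^2}.
\end{equation*}
Here the free parts $|x|^2/4t^2$ cancel exactly; this cancellation is the whole point. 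In the constant-coefficient case the first-order piece is $-\frac1t(\tfrac{3i}{2}+ x\cdot\rd)$, the generator of the parabolic scaling.

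\textbf{Step 2: commute.} We have $[4t^2\calH, i\rd_t] = -8it\calH - 4it^2(\rd_t\calH)$, where $\rd_t \calH$ denotes $\calH$ with its coefficients differentiated in $t$. For constant coefficients, the commutator with the scaling piece produces $+8it\calH$, since $[\calH, x\cdot \rd] = 2\calH$ for $-\lap$. These two terms cancel, which is the one order of $t$-cancellation.

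With variable coefficients, the scaling commutator also produces $x\cdot\rd_x$ applied to the coefficients. Combined with $-4it^2\rd_t\calH$, this yields $-4it\,(\calS\text{-derivatives of } a, b, c)$. In addition, $b$, $c$ have homogeneity $-1$, $-2$, which produces the $+2c$-type terms as in \eqref{eq:ideas:good-comm-id}. By \ref{hyp:af-0} (resp. \ref{hyp:af}), these have the decay $\brk{r}^{-2\eta}, \brk{r}^{-1-2\eta}, \brk{r}^{-2-2\eta}$ at orders $2, 1, 0$, with all $\rd_x^{(\le M_c)}$ (resp. also $\rd_t$) derivatives.

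\textbf{Step 3: the remaining pieces (the main obstacle).} What remains are the non-scaling parts of the $\frac1t$ terms, namely those involving $\bfh^{\mu\nu}x_\mu$ and $b^\mu x_\mu$, and the $\frac{1}{t^2}\bfh xx$ term. Their commutators with $4t^2\calH$ carry explicit factors of $x$. Each such term must be checked to carry enough decay, with at most one power of $x$ per weight $\brk{x}\rd_x$ falling on a coefficient. For example, $[4t^2\calH, \frac{1}{t^2}\bfh^{\mu\nu}x_\mu x_\nu]$ is first order with coefficient $\rd(\bfh xx) = O(\brk{r}^{1-2\eta})$. In $tD$-form this is $\frac1t \cdot t \cdot O(r^{1-2\eta})\,tD$, which appears too large. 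My first attempt would be to regroup: write $a^{\mu\nu}x_\mu D_\nu = x\cdot D + \bfh^{\mu\nu}x_\mu D_\nu$ and keep $\bfh^{\mu\nu}x_\mu$ as a coefficient, then check that each commutator of these with $\calH$ is expressed through $(\brk{x}\rd_x)^{(\le 3)}\bfh$, $(\brk{x}\rd_x)^{(\le 2)}b$, $(\brk{x}\rd_x)^{(\le1)}c$. This is exactly the number of weighted derivatives assumed in \ref{hyp:af-0}, consistent with Remark~\ref{rem:linear}.(4).

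If a residual term does not close in this way, I would reconsider whether $\calL$ should absorb a correction. Preferably, I would verify directly that $\calL$ as defined is $\mathcal{C}^{-1} 4t^2\calH\, \mathcal{C}$ with all $x$-growth hidden inside $L_\mu$, since $\chi^\ob x_\mu = L_\mu - 2it\rd_\mu$ modulo compact terms. I would then redo the commutator in the $u$-picture, using $[i\rd_t - \calH, L_\mu] = -2t[\calH, \rd_\mu]$-type formulas. In that picture no bare $x$ appears and the bookkeeping is cleaner.

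Finally, terms supported on $\supp \rd\chi^\ob$ or $\supp(1-\chi^\ob)$ are compactly supported. Since $\bfPhi^\ob$ is $t^{-1}$ times a fixed compactly modified function there, they contribute coefficients of the stated form with arbitrary decay.
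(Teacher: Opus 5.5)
The gap is in Step~3, at the term you flagged yourself and then left unresolved. Steps~1--2 are correct and follow the paper's route: the paper also conjugates by $t^{\frac32}e^{-i\mathring{\bfPhi}}$, reduces to $[4t^{2}\calH,P]$, and obtains the one order of $t$-cancellation from $[\lap,\calS]=2\lap$. What remains there is $\calS$-derivatives and homogeneity terms of the coefficients. (A small slip: for $\calH=-\lap$ one gets $P=\frac{i}{t}\calS+\lap$, so the first-order piece is $+\frac{i}{t}x\cdot\nb$. Nothing depends on this.)

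\textbf{Why the $\bfh^{\mu\nu}x_\mu x_\nu$ term does not close.} The term $[4t^{2}\calH,\frac{1}{4t^{2}}\bfh^{\mu\nu}x_\mu x_\nu]=[\calH,\bfh^{\mu\nu}x_\mu x_\nu]$ carries no power of $t$. It is first order with coefficient of size $\nb(\bfh^{\mu\nu}x_\mu x_\nu)$, plus a zeroth-order part of size $\nb^{2}(\bfh^{\mu\nu}x_\mu x_\nu)$. Since $L_\mu$ corresponds to $2it\rd_\mu$ on the conjugated side, fitting it into $b^{\mu}_{\mathrm{gc}}L_\mu+t\,c_{\mathrm{gc}}$ requires $\nb^{j}(\bfh^{\mu\nu}x_\mu x_\nu)=O(t\brk{r}^{-j-2\eta})$ for $j=1,2$. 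The first three lines of \ref{hyp:af-0} only give $O(\brk{r}^{2-j-2\eta})$, which suffices only where $r^{2}\lesssim t$.

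None of your remedies closes this.
\begin{itemize}
\item Regrouping $\bfh^{\mu\nu}x_\mu$ as a coefficient and counting $(\brk{x}\rd_x)$-derivatives handles the $\frac{i}{t}\bfh^{\mu\nu}x_\mu\rd_\nu$ term, not this one.
\item Every other contribution to $[4t^{2}\calH,P]$ carries an explicit factor $t$, so no cancellation is available, already for time-independent coefficients.
\item $\calL$ is fixed by Definition~\ref{def:good-comm}, so it cannot absorb a correction.
\item The $u$-picture does not remove bare $x$'s: there $D_\mu=-\frac{1}{2t}(L_\mu-x_\mu)$, so $\calH$ itself contains $\frac{1}{4t^{2}}a^{\mu\nu}x_\mu x_\nu$.
\end{itemize}
In fact, with only those three lines of the hypothesis, the claimed bound on $b_{\mathrm{gc}}$ fails. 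An example is a time-independent isotropic $\bfh$ with $\bfh^{\mu\nu}x_\mu x_\nu\simeq r^{2-2\eta}$: its contribution to $b_{\mathrm{gc}}$ is of size $t^{-1}r^{1-2\eta}$, i.e.\ about $t^{-2\eta}$ at $r\simeq t$, against the required $r^{-1-2\eta}\simeq t^{-1-2\eta}$.

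\textbf{The missing ingredient.} You never use the extra hypothesis \eqref{eq:af-extra1-0}, or respectively \eqref{eq:af-extra1}, which also supplies the $\rd_t$-derivatives needed under \ref{hyp:af}. Write $\bfh^{\mu\nu}x_\mu x_\nu=r^{2}\,\bfh^{\mu\nu}\rd_\mu r\,\rd_\nu r$.
\begin{itemize}
\item On $\{r>\brk{t}^{\frac12}\}$, the extra hypothesis gives $\nb^{j}(\bfh^{\mu\nu}x_\mu x_\nu)=O^{M_c}(t\brk{r}^{-j-2\eta})$ for $j=0,1,2$.
\item On $\{r<\brk{t}^{\frac12}\}$, the same bound follows from the standard hypothesis, because $r^{2}\lesssim t$ there.
\end{itemize}
With this input, $-[\calH,\bfh^{\mu\nu}x_\mu x_\nu]$ contributes admissibly to $b_{\mathrm{gc}}$ and $c_{\mathrm{gc}}$. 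The remaining terms, including the compactly supported ones coming from $\chi^{\ob}$, are handled by the first three lines of the hypotheses essentially as you outline. This is exactly how the paper's proof closes.
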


\subsection{Conjugation by a modified phase function}
We start with the following conjugation identity for the operator $i \rd_{t} - \calH$, which is a straightforward computation:
\begin{lemma} \label{lem:phase-conj}
Given $\bfPhi : \bbR^{1+d} \to \bbC$, we have
\begin{equation} \label{eq:phase-conj}
\begin{aligned}
	&t^{\frac{d}{2}} e^{- i \bfPhi} (i \rd_{t} - \calH) t^{-\frac{d}{2}}e^{i \bfPhi} \\
	&= i \left( \rd_{t} - \tfrac{d}{2} t^{-1} + i \bfPhi_{t} \right) - (D_{\mu} + \bfPhi_{\mu}) a^{\mu \nu} (D_{\nu} + \bfPhi_{\nu}) - b^{\mu} (D_{\mu} + \bfPhi_{\mu}) - (D_{\mu} + \bfPhi_{\mu}) b^{\mu} - c \\
	&= i (\rd_{t} + 2 \dlt^{\mu \nu} \bfPhi_{\mu} \rd_{\nu}) - \calH - (\bfPhi_{t} +  \dlt^{\mu \nu} \bfPhi_{\mu}\bfPhi_{\nu}) + i \left( \dlt^{\mu \nu} \rd_{\mu} \bfPhi_{\nu}- \tfrac{d}{2} t^{-1} \right) \\
	&\peq -  \bfh^{\mu \nu} \bfPhi_{\nu} D_{\mu} -  D_{\mu} \bfh^{\mu \nu} \bfPhi_{\nu} 
	- \bfh^{\mu \nu} \bfPhi_{\mu} \bfPhi_{\nu} 
	- 2 b^{\mu} \bfPhi_{\mu},
\end{aligned}
\end{equation}
where we used the shorthands
\begin{equation*}
	\bfPhi_{t} := \rd_{t} \bfPhi, \quad \bfPhi_{\mu} := \rd_{\mu} \bfPhi.
\end{equation*}
\end{lemma}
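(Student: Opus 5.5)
The identity is a direct conjugation computation, so the plan is to compute both sides as operators acting on a test function $w$ and match terms. We write $e^{i\bfPhi}$ for the multiplication operator and use the product rules
\[
D_{\mu}(e^{i\bfPhi} w) = e^{i\bfPhi}(D_{\mu} + \bfPhi_{\mu}) w, \qquad \rd_{t}(t^{-\frac{d}{2}} e^{i\bfPhi} w) = t^{-\frac{d}{2}} e^{i\bfPhi}\big(\rd_{t} - \tfrac{d}{2} t^{-1} + i\bfPhi_{t}\big) w.
\]
Here $D_{\mu} = i^{-1}\rd_{\mu}$, so $D_{\mu} e^{i\bfPhi} = \bfPhi_{\mu} e^{i\bfPhi}$. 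Multiplying through by $t^{\frac d2}e^{-i\bfPhi}$ converts each occurrence of $D_\mu$ in $\calH$ into $D_\mu+\bfPhi_\mu$. It also converts $i\rd_t$ into $i(\rd_t - \tfrac d2 t^{-1} + i\bfPhi_t)$. This gives the first line of \eqref{eq:phase-conj} immediately from the form \eqref{eq:calH} of $\calH$.

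For the second equality, we expand the quadratic term. We write $a^{\mu\nu} = \dlt^{\mu\nu} + \bfh^{\mu\nu}$ and expand $(D_\mu+\bfPhi_\mu)a^{\mu\nu}(D_\nu+\bfPhi_\nu)$ as the sum of four pieces:
\[
D_\mu a^{\mu\nu}D_\nu, \qquad a^{\mu\nu}\bfPhi_\mu D_\nu, \qquad D_\mu a^{\mu\nu}\bfPhi_\nu, \qquad a^{\mu\nu}\bfPhi_\mu\bfPhi_\nu.
\]
We then treat the $\dlt$-part and the $\bfh$-part separately.

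For the $\dlt$-part, the two cross terms combine into a single operator. Using symmetry in $\mu,\nu$ and the operator identity $D_\mu(\bfPhi_\mu\,\cdot) = \bfPhi_\mu D_\mu + i^{-1}\rd_\mu\bfPhi_\mu$, we get
\[
-\dlt^{\mu\nu}\big(\bfPhi_\mu D_\nu + D_\mu\bfPhi_\nu\big) = 2i\dlt^{\mu\nu}\bfPhi_\mu\rd_\nu + i\dlt^{\mu\nu}\rd_\mu\bfPhi_\nu .
\]
The first piece joins $i\rd_t$ to form the transport operator $i(\rd_t + 2\dlt^{\mu\nu}\bfPhi_\mu\rd_\nu)$. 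The second piece, together with $-i\tfrac d2 t^{-1}$, gives the term $i(\dlt^{\mu\nu}\rd_\mu\bfPhi_\nu - \tfrac d2 t^{-1})$. The term $i\cdot i\bfPhi_t = -\bfPhi_t$ combines with $-\dlt^{\mu\nu}\bfPhi_\mu\bfPhi_\nu$ to give the Hamilton--Jacobi expression $-(\bfPhi_t + \dlt^{\mu\nu}\bfPhi_\mu\bfPhi_\nu)$.

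For the $\bfh$-part, we keep the cross terms in symmetric form, namely $-\bfh^{\mu\nu}\bfPhi_\nu D_\mu - D_\mu\bfh^{\mu\nu}\bfPhi_\nu$, together with $-\bfh^{\mu\nu}\bfPhi_\mu\bfPhi_\nu$. For the first-order terms, we expand $-b^\mu(D_\mu+\bfPhi_\mu) - (D_\mu+\bfPhi_\mu)b^\mu$. This yields the $b$-part of $\calH$ plus $-2b^\mu\bfPhi_\mu$. Collecting the unconjugated pieces $D_\mu a^{\mu\nu}D_\nu + b^\mu D_\mu + D_\mu b^\mu + c$ reproduces $-\calH$.

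The only point requiring care, and hence the main (minor) obstacle, is bookkeeping. We must decide which cross terms are written in non-symmetric form, namely the $\dlt$-part, where commuting $D_\mu$ past $\bfPhi_\mu$ produces the divergence term. The remaining cross terms, namely the $\bfh$-part, are kept in symmetric form, and no derivative of $\bfh$ is produced because we do not commute there. The signs of the factors of $i$ from $D_\mu = i^{-1}\rd_\mu$ must also be tracked. A quick sanity check is the free case $\bfh=b=c=0$, $\bfPhi=\mathring\bfPhi$. There the Hamilton--Jacobi term vanishes and $\lap\mathring\bfPhi = \tfrac d{2t}$, so the conjugated operator reduces to $i(\rd_t + \tfrac{x}{t}\cdot\nb) + \lap$. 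This matches the known conjugation of the free Schr\"odinger operator.
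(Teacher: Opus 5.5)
Your proposal is correct and is the same direct expand-and-regroup computation the paper relies on; the paper only remarks that the first identity is immediate and that the second follows by expanding and regrouping. One implicit input is worth stating: rewriting the cross term $\bfPhi_\mu \bfh^{\mu\nu} D_\nu$ as the paper's $\bfh^{\mu\nu}\bfPhi_\nu D_\mu$ uses the symmetry $\bfh^{\mu\nu}=\bfh^{\nu\mu}$, which the paper assumes tacitly.
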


The first identity is immediate, and the second identity follows by expanding and regrouping all the terms.

From this identity, we see the distinguished property of the \emph{quadratic phase} $\mathring{\bfPhi}$ given in \eqref{eq:std-phase}.
Indeed, with the choice \eqref{eq:std-phase}, we have
\begin{equation} \label{eq:std-phase-d}
	\mathring{\bfPhi}_{t} = - \frac{\abs{x}^{2}}{4 t^{2}}, \quad \mathring{\bfPhi}_{\mu} = \frac{x_{\mu}}{2 t},
\end{equation}
from which it follows that the following terms in \eqref{eq:phase-conj}, which are present even if $\calH = -\lap$, vanish:
\begin{equation*}
	\mathring{\bfPhi}_{t} + \dlt^{\mu} \dlt^{\nu} \mathring{\bfPhi}_{\mu} \mathring{\bfPhi}_{\nu} = 0, \quad
	\dlt^{\mu \nu} \rd_{\mu} \mathring{\bfPhi}_{\nu} - \tfrac{d}{2} t^{-1} = 0.
\end{equation*}

To treat variable-coefficient and/or obstacle problems, using a suitably cutoff version of $\bfPhi = \bfPhi^\ob$ turns out to be useful. With this choice, we have
\begin{equation} \label{eq:ob-phase-d}
	\bfPhi^\ob_{t} = - \chi^\ob \frac{\abs{x}^{2}}{4 t^{2}}, \quad \bfPhi^\ob_{\mu} = \chi^\ob \frac{x_\mu}{2t} + (\p_\mu \chi^\ob) \frac{|x|^2}{4t}.
\end{equation}

To facilitate the computation in this section and Section~\ref{sec-proof_prop:dotgmm_decay}, we also record termwise conjugation identities
\begin{proposition}[Basic conjugation properties]\label{prop:conj_calH}
    In the obstacle-free case,
    \begin{align*}
	e^{- i \mathring{\bfPhi}} \circ D_\mu a^{\mu \nu} D_{\nu} \circ e^{i \mathring{\bfPhi}}
	&= D_\mu a^{\mu\nu} D_\nu + a^{\mu\nu} \frac{x_{\mu} x_{\nu}}{4 t^{2}} + a^{\mu \nu} \frac{x_{\mu}}{t} D_\nu - i \frac{1}{2 t} a^{\mu \nu} \dlt_{\mu \nu} + (D_\mu a^{\mu\nu}) \frac{x_\nu}{2t}, \\
	e^{- i \mathring{\bfPhi}} \circ (b^{\mu} D_{\mu} + D_\mu b^\mu) \circ e^{i \mathring{\bfPhi}}
	&= b^{\mu} D_{\mu} + D_\mu b^\mu + b^{\mu} \frac{ x_{\mu}}{t}, \\
	e^{- i \mathring{\bfPhi}} \circ c \circ  e^{i \mathring{\bfPhi}}
	&= c, \\
    t^{\frac{3}{2}} e^{- i \mathring{\bfPhi}} \circ \rd_{t} \circ t^{-\frac{3}{2}} e^{i \mathring{\bfPhi}}
    &= - i \frac{\abs{x}^{2}}{4 t^{2}} - \frac{3}{2} \frac{1}{t}+ \rd_{t}.
\end{align*}
    With the choice $\bfPhi = \bfPhi^\ob$, we record \begin{align*}
        b^{\nu}D_\nu \circ e^{i\bfPhi^\ob}
        &= e^{i\bfPhi^\ob} b^{\nu} \left(\chi^\ob \frac{x_\nu}{2t} + (\p_\nu \chi^\ob) \frac{|x|^2}{4t}\right) - i e^{i\bfPhi^\ob} b^{\nu}\p_\nu, \\
        e^{-i\bfPhi^\ob} \circ D_\mu a^{\mu\nu}D_\nu \circ e^{i\bfPhi^\ob} &= D_\mu a^{\mu\nu} D_\nu + a^{\mu\nu}\left(\chi^\ob \frac{x_\mu}{2t} + (\p_\mu \chi^\ob) \frac{|x|^2}{4t}\right)\left(\chi^\ob \frac{x_\nu}{2t} + (\p_\nu \chi^\ob) \frac{|x|^2}{4t}\right) \\
        &\quad + \left(\chi^\ob \frac{x_\nu}{2t} + (\p_\nu \chi^\ob) \frac{|x|^2}{4t}\right)(
        (D_\mu a^{\mu\nu}) + 2 a^{\mu\nu}D_\mu) - \frac{i}{2t} \chi^\ob a^{\mu\nu}\delta_{\mu\nu} \\
        &\quad - i a^{\mu\nu} (\p_\mu\chi^\ob) \frac{x_\nu}{t} - i a^{\mu\nu}(\p_{\mu\nu}^2 \chi^\ob) \frac{|x|^2}{4t}.
    \end{align*}
\end{proposition}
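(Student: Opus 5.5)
The statement to prove is the list of termwise conjugation identities in Proposition~\ref{prop:conj_calH}. The plan is a direct Leibniz-rule computation: every identity comes from commuting a multiplication by $e^{i\bfPhi}$ past $D_\mu = i^{-1}\rd_\mu$. The basic relation, for any real phase $\bfPhi$, is
\begin{equation*}
	D_\mu \circ e^{i\bfPhi} = e^{i\bfPhi} (D_\mu + \bfPhi_\mu),
\end{equation*}
where $\bfPhi_\mu = \rd_\mu \bfPhi$. Multiplication operators such as $a^{\mu\nu}$, $b^\mu$, $c$ commute with $e^{i\bfPhi}$, so they pass through unchanged; in particular the identity $e^{-i\mathring{\bfPhi}} c\, e^{i\mathring{\bfPhi}} = c$ is immediate.

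\emph{Step 1 (general $\bfPhi$).} Conjugating the first-order terms gives
\begin{equation*}
	e^{-i\bfPhi}(b^\mu D_\mu + D_\mu b^\mu)e^{i\bfPhi} = b^\mu D_\mu + D_\mu b^\mu + 2b^\mu\bfPhi_\mu.
\end{equation*}
For the second-order term I will write
\begin{equation*}
	e^{-i\bfPhi} D_\mu a^{\mu\nu} D_\nu e^{i\bfPhi} = (D_\mu+\bfPhi_\mu)a^{\mu\nu}(D_\nu+\bfPhi_\nu)
\end{equation*}
and expand it into four pieces: $D_\mu a^{\mu\nu}D_\nu$, the quadratic term $a^{\mu\nu}\bfPhi_\mu\bfPhi_\nu$, the cross term $\bfPhi_\mu a^{\mu\nu}D_\nu$, and $D_\mu(a^{\mu\nu}\bfPhi_\nu\,\cdot)$. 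In the last piece I distribute $D_\mu$ by the product rule, producing $(D_\mu a^{\mu\nu})\bfPhi_\nu$, a zeroth-order term $-i\,a^{\mu\nu}\rd_\mu\bfPhi_\nu$, and $a^{\mu\nu}\bfPhi_\nu D_\mu$. By the symmetry of $a^{\mu\nu}$, this last piece combines with the cross term into $2a^{\mu\nu}\bfPhi_\nu D_\mu$.

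\emph{Step 2 (specialization).} For $\mathring{\bfPhi}$, substitute \eqref{eq:std-phase-d}: $\mathring{\bfPhi}_\mu = x_\mu/(2t)$ and $\rd_\mu\mathring{\bfPhi}_\nu = \dlt_{\mu\nu}/(2t)$. This yields exactly the listed terms $a^{\mu\nu}\frac{x_\mu x_\nu}{4t^2}$, $a^{\mu\nu}\frac{x_\mu}{t}D_\nu$, $-\frac{i}{2t}a^{\mu\nu}\dlt_{\mu\nu}$ and $(D_\mu a^{\mu\nu})\frac{x_\nu}{2t}$, together with $2b^\mu\mathring{\bfPhi}_\mu = b^\mu x_\mu/t$.

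For the time derivative, differentiate the product $t^{-3/2}e^{i\mathring{\bfPhi}}$. This gives $\rd_t$, plus $-\frac{3}{2t}$ from the power of $t$, plus $i\mathring{\bfPhi}_t = -i\frac{|x|^2}{4t^2}$ from the phase.

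For $\bfPhi^\ob$, use \eqref{eq:ob-phase-d}. The formula for $b^\nu D_\nu\circ e^{i\bfPhi^\ob}$ is the one-sided version of the basic relation, written with $D_\nu = -i\rd_\nu$. For the second-order term, the only new ingredient is the Hessian, obtained by one more differentiation of $\bfPhi^\ob_\nu$:
\begin{equation*}
	\rd_\mu\bfPhi^\ob_\nu = \chi^\ob\frac{\dlt_{\mu\nu}}{2t} + (\rd_\mu\chi^\ob)\frac{x_\nu}{2t} + (\rd_\nu\chi^\ob)\frac{x_\mu}{2t} + (\rd^2_{\mu\nu}\chi^\ob)\frac{|x|^2}{4t}.
\end{equation*}
Contracting with the symmetric $a^{\mu\nu}$ merges the two middle terms into $a^{\mu\nu}(\rd_\mu\chi^\ob)\frac{x_\nu}{t}$. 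Multiplying by $-i$ then reproduces the three listed zeroth-order $-i(\cdots)$ terms.

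\emph{Main obstacle.} There is no conceptual difficulty. The only care needed is bookkeeping: keeping the factor $-i$ coming from $D_\mu$ acting on $\bfPhi_\nu$, and ordering $D_\mu a^{\mu\nu}$ correctly as an operator rather than the coefficient $(D_\mu a^{\mu\nu})$. I would check the signs against the constant-coefficient case $a=\dlt$, where the result must reduce to the cancellations recorded after \eqref{eq:std-phase-d}.
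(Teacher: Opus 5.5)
Your proposal is correct and follows the paper's approach: the paper omits the proof as a straightforward computation. Your expansion of $(D_\mu+\bfPhi_\mu)a^{\mu\nu}(D_\nu+\bfPhi_\nu)$ (the same identity behind Lemma~\ref{lem:phase-conj}), followed by substituting \eqref{eq:std-phase-d} and \eqref{eq:ob-phase-d} and using the symmetry of $a^{\mu\nu}$ to merge the cross terms, reproduces every listed term, including the obstacle Hessian contributions.
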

We omit the proof, which is a straightforward computation.

\subsection{Proof of Proposition~\ref{prop:good-comm}}\label{subsec-good-comm}
Now we are ready to prove Proposition~\ref{prop:good-comm}.
\begin{proof}[Proof of Proposition~\ref{prop:good-comm}]

{\it Step 1: The obstacle-free case with \ref{hyp:af-0}.}
    We start with the obstacle-free case. 
    Substituting \eqref{eq:std-phase-d} into Lemma~\ref{lem:phase-conj}, we have
    \[
    \begin{aligned}
        &\peq t^{\frac{3}{2}} e^{- i \mathring{\bfPhi}} \circ (i \rd_{t} - \calH) \circ t^{-\frac{3}{2}} e^{i \mathring{\bfPhi}} \\
        &= it^{-1}(t\p_t + x \cdot \nabla_x) - \calH - \bfh^{\mu\nu}\frac{x_\mu x_\nu}{4t^2} + i \bfh^{\mu\nu} \frac{x_\mu}{t}\p_{x^\nu} + i \frac1{2t}\bfh^{\mu\nu}\delta_{\mu\nu} - (D_\mu a^{\mu\nu} + 2b^\nu) \frac{x_\nu}{2t},
    \end{aligned}
    \]
    where $a^{\mu\nu} = \delta^{\mu\nu} + \bfh^{\mu\nu}$. Together with \eqref{eq:conj_calLu_calHU}, we arrive at \begin{equation}\label{eq:comm_id_in_proof}
        \begin{aligned}
    & \peq t^{\frac{3}{2}} e^{- i \mathring{\bfPhi}} \circ [\calL,  i \rd_{t} - \calH]
	\circ t^{-\frac{3}{2}} e^{i \mathring{\bfPhi}} \\
	&=  i t^{-1} [4 t^{2} \calH, \calS] - [\calH, \bfh^{\mu \nu} x_{\mu} x_{\nu}] + 4 i t [\calH, \bfh^{\mu \nu} x_{\mu} \rd_{x^{\nu}}] + 2 i t [\calH, \bfh^{\mu \nu} \dlt_{\mu \nu}] - 2t [\calH, (D_{\mu} a^{\mu\nu} + 2b^{\nu}) x_{\nu}] \\
	&=  4 i t ([\calH, \calS] - 2 \calH) - [\calH, \bfh^{\mu \nu} x_{\mu} x_{\nu}] + 4 i t [\calH, \bfh^{\mu \nu} x_{\mu} \rd_{x^{\nu}}] + 2 i t [\calH, \bfh^{\mu \nu} \dlt_{\mu \nu}] - 2t [\calH, (D_{\mu} a^{\mu\nu} + 2b^{\nu}) x_{\nu}],
        \end{aligned}
    \end{equation}
where $\calS := t\p_t + x \cdot \nabla_x$. Note that the second order contribution solely comes from $[\calH, \calS] - 2\calH + [\calH, \bfh^{\mu\nu}x_\mu \p_{x^\nu}]$. Thanks to the identity $[\Delta, \calS] = 2\Delta$, the decay property of $a^{\mu\nu}_{\mathrm{gc}}$ is justified. Furthermore, one can use the three standard assumptions in \ref{hyp:af-0} to handle all the terms in \eqref{eq:comm_id_in_proof} except $[\calH, \bfh^{\mu \nu} x_{\mu} x_{\nu}]$. On the other hand, we treat this term by taking \eqref{eq:af-extra1-0} into account and hence $\nb^{j} (\bfh^{\mu \nu} x_{\mu} x_{\nu}) = O^{M_c}(t \brk{r}^{-j-2 \eta})$ for $j = 0, 1, 2$. This finishes the proof of the case $\calK = \0$.

{\it Step 2: The obstacle case with \ref{hyp:af-0}.}
Now we deal with the case $\calK \neq \0$. Since introducing $\p^{\leq 2} \chi^\ob$ would not change the spatial decay, in view of the compatible time decay of the coefficients in the conjugation identities above, the decay estimates for the coefficients under higher order spatial derivatives remain the same as in the previous case.

{\it Step 3: Under the hypothesis \ref{hyp:af}. }
Under the additional assumptions in \ref{hyp:af} compared to \ref{hyp:af-0}, the corresponding estimates for temporal derivatives follow immediately.
\end{proof}

\section{Two-scale elliptic analysis} \label{sec:two-scale}
We now adapt the standard elliptic estimates in Section~\ref{subsec:ell-calH} to study the good commutator $\calL$.
Recall that $\calL$ is time-dependent; in this section, unless otherwise stated, {\bf $\calL u$ is evaluated at time $t$}. (We remark that all constants in this section are independent of $t$)

\subsection{Zeroth-order statements}
We first use Proposition~\ref{prop:w-ell-0} to derive an estimate that is adapted to spatial scales that are fixed in $t$. We dub it the \emph{inner-scale} elliptic estimate:

\begin{lemma} [Inner-scale elliptic estimate] \label{lem:elliptic-near}
Assume that $\calH$ satisfies \ref{hyp:af-0} and \ref{hyp:se}, then for every $R \geq \max\{1, \dist(\calK, 0)\}$, for every $\mu, \nu = 1, 2, 3$, we have
\begin{equation*}
	t^{-2} \nrm{r^{\frac{1}{2}} L_\mu L_\nu u}_{\ell^{\infty} L^{2}(r < R)} + t^{-1} \nrm{r^{-\frac{1}{2}} L_\mu u}_{\ell^{\infty} L^{2}(r < R)}
	+ \nrm{r^{-\frac{3}{2}} u}_{\ell^{\infty} L^{2}(r < R)} \aleq t^{-2} \nrm{r^{\frac{1}{2}} \calL u}_{\ell^{1} L^{2}(r < 2R)} + \nrm{u}_{L^{\infty}(\frac12 R < r < 2 R)},
\end{equation*}
where the constant is independent of $R$. In addition, for any $0 < \veps < 1$, we have \[
    t^{-2} \nrm{r^{\frac{1}{2}+\veps} L_\mu L_\nu u}_{L^{2}(r < R)} + t^{-1} \nrm{r^{-\frac{1}{2}+\veps} L_\mu u}_{L^{2}(r < R)}
	+ \nrm{r^{-\frac{3}{2}+\veps} u}_{L^{2}(r < R)} \aleq t^{-2} \nrm{r^{\frac{1}{2} + \veps} \calL u}_{L^{2}(r < 2R)} + R^{\veps}\nrm{u}_{L^{\infty}(\frac12 R < r < 2 R)}.
\]
\end{lemma}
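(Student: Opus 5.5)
We pass to the conjugated variable $U = t^{\frac32} e^{-i\bfPhi^\ob} u$ from \eqref{eq:relation-uU}. Since $t^{\frac32}e^{-i\bfPhi^\ob}\circ L_\mu\circ t^{-\frac32}e^{i\bfPhi^\ob} = 2it\rd_\mu$ and \eqref{eq:conj_calLu_calHU} gives $t^{\frac32}e^{-i\bfPhi^\ob}\calL u = 4t^2\calH U$, the left-hand side becomes, up to the common factor $t^{\frac32}$ (the phase has modulus one),
\begin{equation*}
	\nrm{r^{\frac12}\rd^2 U}_{\ell^\infty L^2(r<R)} + \nrm{r^{-\frac12}\rd U}_{\ell^\infty L^2(r<R)} + \nrm{r^{-\frac32}U}_{\ell^\infty L^2(r<R)} .
\end{equation*}
This is exactly the $\ell^\infty H^{2,-\frac32}(r<R)$ norm of $U$. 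On the right-hand side, $t^{-2}\calL u$ corresponds to $4\calH U$. The statement is therefore a $t$-independent localized version of Proposition~\ref{prop:w-ell-0}:
\begin{equation*}
	\nrm{U}_{\ell^\infty H^{2,-\frac32}(r<R)} \aleq \nrm{\calH U}_{\ell^1 H^{0,\frac12}(r<2R)} + \nrm{U}_{L^\infty(\frac12R<r<2R)} .
\end{equation*}
Here we use that $\nrm{r^{\frac12}g}_{\ell^1L^2}$ is comparable to $\nrm{g}_{\ell^1H^{0,\frac12}}$.

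To prove this we localize. Let $\chi_R$ be a smooth cutoff equal to $1$ on $\{r<R\}$, supported in $\{r<\frac32R\}$, with $|\rd^k\chi_R|\aleq R^{-k}$. Since $R\geq \dist(\calK,0)$, after enlarging constants the cutoff equals $1$ near $\calK$, so $w:=\chi_R U$ still satisfies the Dirichlet condition. Approximating by Schwartz functions, we apply Proposition~\ref{prop:w-ell-0} to $w$:
\begin{equation*}
	\nrm{U}_{\ell^\infty H^{2,-\frac32}(r<R)} \leq \nrm{w}_{\ell^\infty H^{2,-\frac32}} \aleq \nrm{\chi_R\calH U}_{\ell^1H^{0,\frac12}} + \nrm{[\calH,\chi_R]U}_{\ell^1H^{0,\frac12}} .
\end{equation*}
The first term is bounded by $\nrm{\calH U}_{\ell^1H^{0,\frac12}(r<2R)}$.

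The commutator $[\calH,\chi_R]U$ is supported in the single dyadic annulus $\{R<r<\frac32R\}$. It has the schematic form $R^{-1}a\,\rd U + R^{-2}U$ plus lower-order terms with coefficients $b$, whose decay under \ref{hyp:af-0} makes them harmless. Its $\ell^1H^{0,\frac12}$ norm is therefore
\begin{equation*}
	\aleq R^{\frac12}\big(R^{-1}\nrm{\rd U}_{L^2(R<r<\frac32R)} + R^{-2}\nrm{U}_{L^2(R<r<\frac32R)}\big).
\end{equation*}
To bound $\nrm{\rd U}$ we use the local elliptic regularity Proposition~\ref{prop:dyadic-ell-0} on this annulus, with $a$ chosen so that the enlarged annulus sits inside $\{\frac12R<r<2R\}$. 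This gives $R^{-1}\nrm{\rd U}_{L^2}\aleq \nrm{\calH U}_{L^2(\frac12R<r<2R)} + R^{-2}\nrm{U}_{L^2(\frac12R<r<2R)}$. After multiplying by $R^{\frac12}$:
\begin{itemize}
\item the $\calH U$ contribution is dominated by $\nrm{\calH U}_{\ell^1H^{0,\frac12}(r<2R)}$;
\item the $U$ contribution satisfies $R^{-\frac32}\nrm{U}_{L^2(\frac12R<r<2R)}\aleq \nrm{U}_{L^\infty(\frac12R<r<2R)}$, since the annulus has volume $\sim R^3$.
\end{itemize}
Undoing the conjugation gives the factor $t^{\frac32}$ on $\nrm{u}_{L^\infty}$, which matches the normalization on the left-hand side.

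For the $\veps$-weighted non-endpoint version, the same argument applies with $\ell^2$ norms and weight $H^{2,-\frac32+\veps}$. The needed global estimate $\nrm{w}_{H^{2,-\frac32+\veps}}\aleq\nrm{\calH w}_{H^{0,\frac12+\veps}}$ for $0<\veps<1$ is the non-endpoint invertibility from the Fredholm remark and Lemma~\ref{lem:w-ell-lap}, as used in Lemma~\ref{lem:varphi0}. The commutator term now carries an extra factor $R^{\veps}$, producing $R^{\veps}\nrm{U}_{L^\infty}$.

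The main obstacle is this last case, which needs a non-endpoint global estimate for $\calH$ at weight $\frac12+\veps$ under \ref{hyp:se}. I would obtain it by perturbing off $-\lap$ at large $r$, exactly as in Step 1 of the proof of Proposition~\ref{prop:w-ell-0}, and then using uniqueness from \ref{hyp:se}. A secondary check is the boundary behaviour: $w$ must vanish on $\rd\calK$, which holds because $U$ does (since $u$ does) and $\chi_R\equiv1$ there. All constants are independent of $t$ and $R$ by scaling.
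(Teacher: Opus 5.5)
Your proposal is correct and follows the paper's proof essentially step for step. Both pass to $U$, apply Proposition~\ref{prop:w-ell-0} to $\chi_R U$, and control the commutator on $\{R<r<\tfrac32 R\}$ through Proposition~\ref{prop:dyadic-ell-0} together with the $L^2\to L^\infty$ bound on the annulus; the paper likewise treats the $\veps$-version as analogous.

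One small correction: $\chi_R U$ vanishes on $\rd\calK$ simply because $U$ does. Where the requirement that $\calK$ lie well inside $\{r<R\}$ actually matters is that the commutator annulus and its enlargement in Proposition~\ref{prop:dyadic-ell-0} must avoid $\calK$.
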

\begin{proof}
    We only give a detailed proof for the first estimate; the second estimate may be proved similarly.
    By switching to the notation $U$, it suffices to show that \begin{equation}\label{eq:unit-scale-ell-U}
        \nrm{U}_{\ell^{\infty} H^{2,-\frac32}(r < R)}
	 \aleq \nrm{r^{\frac{1}{2}} \calH U}_{\ell^{1} L^{2}(r < 2R)} + \nrm{U}_{L^{\infty}(\frac12 R < r < 2 R)}.
    \end{equation}
    From here on, our proof is a standard smooth cutoff argument.
    We introduce a cut-off function $\chi_R$ such that $\chi_R(r) \equiv 1$ in $\{r < R\}$ and $\supp \chi_R \in \{r < \frac32R\}$. Applying Proposition~\ref{prop:w-ell-0} to $\chi_R U$, it yields that \begin{align*}
        \nrm{U}_{\ell^{\infty} H^{2,-\frac32}(r < R)}
	 &\aleq \nrm{r^{\frac{1}{2}} \calH U}_{\ell^{1} L^{2}(r < \frac32 R)} + R^{\frac12}\nrm{R^{-2}\chi_R''U + R^{-1}\chi_R'\p U}_{\ell^{1} L^{2}} \\
     &\lesssim \nrm{r^{\frac{1}{2}} \calH U}_{\ell^{1} L^{2}(r < \frac32 R)} + R^{-\frac32}\nrm{U}_{\ell^\infty L^{2}(R < r < \frac32 R)} + R^{-\frac12}\nrm{\nabla U}_{\ell^\infty L^{2}(R < r < \frac32 R)}.
    \end{align*}
    Applying Proposition~\ref{prop:dyadic-ell-0}, we obtain
    \[
        \|\nabla U\|_{L^2(R < r < \frac32 R)} \lesssim R\|\calH U\|_{L^2(\frac12 R < r < 2R)} + R^{-1}\|U\|_{L^2(\frac12 R < r < 2R)}.
    \]
    Thus, combining with the previous estimate, we obtain \[
        \nrm{U}_{\ell^{\infty} H^{2,-\frac32}(r < R)}
	 \lesssim \nrm{r^{\frac{1}{2}} \calH U}_{\ell^{1} L^{2}(r < 2R)} + R^{-\frac32}\nrm{U}_{ L^{2}(\frac12 R < r < 2R)}.
    \]
    Finally, estimating the last term in terms of $L^\infty$ on bounded region $\{r \aeq R\}$ concludes the proof.
\end{proof}

Next, we state an estimate that is adapted to larger spatial scales, which we refer to as the \emph{outer-scale} elliptic estimate. 

\begin{lemma}[Outer-scale elliptic estimate] \label{lem:elliptic-rad}
Assume that $\calH$ satisfies \ref{hyp:af-0}.
There exists some constant $R_0$ such that for every $R \geq R_0$, for every $\mu, \nu = 1, 2, 3$, we have
\begin{align*}
    t^{-2}\nrm{L_\mu L_\nu u}_{L^{2}(r > R)} + t^{-1}\|r^{-1}L_\mu u\|_{L^2(r > R)} + \|r^{-2}u\|_{L^2(r > R)}
	&\aleq t^{-2}\nrm{\calL u}_{L^{2}(r > \frac12 R)} + R^{-\frac12}\nrm{u}_{L^\infty(r > \frac12 R)}, 
\end{align*}
where the constant is independent of $R > R_0$.
Additionally, we have \begin{equation} \label{eq:ell-r>t-U}
    \|L_{\mu} L_{\nu} u\|_{L^2(r > R)} + \|L_\mu u\|_{L^2(r > R)} + \|u\|_{L^2(r > R)} \lesssim \|\calL u\|_{L^2(r > \frac12 R)} + (t^2R^{-2} + 1)\|u\|_{L^2(r > \frac12 R)}.
\end{equation}
\end{lemma}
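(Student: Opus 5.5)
As in Lemma~\ref{lem:elliptic-near}, I would first switch to the variable $U$ via \eqref{eq:relation-uU} and \eqref{eq:conj_calLu_calHU}. For $R \geq R_0$ larger than the obstacle scale we have $\chi^\ob \equiv 1$ in $\{r > \frac12 R\}$. There $t^{\frac32}e^{-i\bfPhi^\ob} L_\mu L_\nu u = -4t^2\rd_\mu\rd_\nu U$, $t^{\frac32}e^{-i\bfPhi^\ob}L_\mu u = 2it\rd_\mu U$, and $t^{\frac32}e^{-i\bfPhi^\ob}\calL u = 4t^2\calH U$. After multiplying by $t^{\frac32}$, the first estimate therefore reduces to
\begin{equation*}
\nrm{\rd_x^2 U}_{L^2(r>R)} + \nrm{r^{-1}\rd_x U}_{L^2(r>R)} + \nrm{r^{-2}U}_{L^2(r>R)} \aleq \nrm{\calH U}_{L^2(r>\frac12 R)} + R^{-\frac12}\nrm{U}_{L^\infty(r>\frac12 R)}.
\end{equation*}
This is an exterior weighted elliptic estimate with only the zeroth-order weight set at the endpoint.

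The plan is to run a dyadic version of Proposition~\ref{prop:dyadic-ell-0}. On each annulus $\{2^j < r < 2^{j+1}\}$ with $2^j \geq R$, take $a = 2^{1/4}$ so that the enlarged annuli stay in $\{r > \frac12 R\}$. Proposition~\ref{prop:dyadic-ell-0} then gives
\begin{equation*}
\sum_{|\alp|\leq 2} 2^{j(|\alp|-2)}\nrm{\rd^\alp U}_{L^2(2^j<r<2^{j+1})} \aleq \nrm{\calH U}_{L^2(\td A_j)} + 2^{-2j}\nrm{U}_{L^2(\td A_j)}.
\end{equation*}
Its left side dominates the $j$-th dyadic piece of $\nrm{\rd^2 U}_{L^2} + \nrm{r^{-1}\rd U}_{L^2} + \nrm{r^{-2}U}_{L^2}$, and the enlarged annuli $\td A_j$ have bounded overlap. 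Squaring and summing over $j$, the $\calH U$ terms give $\nrm{\calH U}_{L^2(r>\frac12 R)}$. The lower-order terms sum to $\nrm{r^{-2}U}_{L^2(r>\frac12 R)}$. Bounding this by the $L^\infty$ norm gives $\nrm{U}_{L^\infty}\big(\int_{R/2}^\infty r^{-4}r^2\,\ud r\big)^{1/2} \aeq R^{-\frac12}\nrm{U}_{L^\infty(r>\frac12R)}$, which is exactly the claimed error term.

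No absorption step is needed. The constants in Proposition~\ref{prop:dyadic-ell-0} are uniform under \ref{hyp:af-0}, because after rescaling an annulus of radius $2^j$ to the unit annulus the rescaled coefficients $\bfh$, $2^j b$, $2^{2j}c$ and their derivatives are uniformly bounded; the largeness of $R_0$ only serves to clear the obstacle. Hence the constant is independent of $R$.

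For \eqref{eq:ell-r>t-U}, I would use the same dyadic sum without the $r^{-2}$ weights and bound $U$ in $L^2$ instead. The first two derivatives come with weights $t^2$ and $t$. The lower-order term yields $t^2 R^{-2}\nrm{u}_{L^2}$ for the $L_\mu L_\nu$ part, and the $L_\mu$ part is handled by interpolating between $t^2\rd^2U$ and $U$ on each annulus. This gives the bound $\nrm{\calL u}_{L^2} + (t^2R^{-2}+1)\nrm{u}_{L^2}$.

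The only potential obstacle is keeping the cutoffs and annulus overlaps inside $\{r > \frac12 R\}$ and making sure no derivative of $\chi^\ob$ enters. Both are handled by the choice of $R_0$.
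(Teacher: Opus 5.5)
Your proposal is correct and follows the paper's proof essentially step for step: pass to $U$, apply Proposition~\ref{prop:dyadic-ell-0} on dyadic annuli and square-sum, then use H\"older to bound $\|r^{-2}U\|_{L^2(r>\frac12 R)}$ by $R^{-\frac12}\|U\|_{L^\infty(r>\frac12 R)}$. The only real difference is that you interpolate $\nabla U$ between $\nabla^2 U$ and $U$ annulus by annulus, whereas the paper does this once on $\{r>R\}$ via rescaling and Sobolev extension; both give the same bound. One small fix is needed: use the annuli $\{2^jR<r<2^{j+1}R\}$, $j\geq 0$, as the paper does, since your grid $\{2^j<r<2^{j+1}\}$ with $2^j\geq R$ misses the shell between $R$ and the next power of $2$.
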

\begin{proof}
    We start with the first estimate. Switching to the notation $U$ as before, it suffices to show
    \begin{equation} \label{eq:ell-rad-U-L2}
        \|\nb^{2} U\|_{L^2(r > R)} + \|r^{-1} \nabla U\|_{L^2(r > R)} + \|r^{-2}U\|_{L^2(r > R)} \lesssim \|\calH U\|_{L^2(r > a^{-1} R)} + \| r^{-2}U\|_{L^2(r > R)},
    \end{equation}
    for an arbitrary $a > 1$. 
    Indeed, from the preceding estimate with $a = 2$, the first estimate in Lemma~\ref{lem:elliptic-rad} will follow from H\"older's inequality,
    \[
    \|r^{-2}U\|_{L^2(r > \frac{1}{2} R)} \lesssim R^{-\frac12}\|U\|_{L^\infty(r > \frac{1}{2} R)}.
    \]
    By Proposition~\ref{prop:dyadic-ell-0}, for every $R' \geq R$, we have
    \begin{align*}
    &\|\nb^{2} U\|_{L^2(R' < r < 2 R')} + \|r^{-1} \nabla U\|_{L^2(R' < r < 2 R')} + \|r^{-2}U\|_{L^2(R' < r < 2 R')} \\
    \lesssim& \|\calH U\|_{L^2(a^{-1} R' < r < 2a R')} + \|r^{-2}U\|_{L^2(a^{-1} R' < r < 2a R')}.    
    \end{align*}
    Square summing this estimate for all $R' \in 2^{\bbZ_{\geq 0}} \cdot R$, the desired estimate for $U$ follows. 

    To prove the second inequality, observe that the control of $L_{\mu} L_{\nu} u$ follows from \eqref{eq:ell-rad-U-L2}, and that of $u$ is obvious. To control $L_\mu u$ in $L^{2}(r > R)$, after passing to the variable $U$, it is sufficient to establish the interpolation bound 
    \begin{equation*}
        \nrm{\nb U}_{L^{2}(r > R)} \aleq (\nrm{\nb^{2} U}_{L^{2}(r > R)} + R^{-2} \nrm{U}_{L^{2}(r > R)})^{\frac{1}{2}} \nrm{U}_{L^{2}(r > R)}^\frac{1}{2},
    \end{equation*}
    which follows from rescaling (to set $R = 1$), Sobolev extension from $H^{2}(r > 1)$ to $H^{2}(\bbR^{3})$ (similarly for $L^{2}$), and the standard interpolation on the whole space. \qedhere
\end{proof}

We record some useful corollaries of the preceding proof.
\begin{corollary} \label{cor:out-scale-ell-lap}
    Let $U_{\ageq R} := \chi_{>R}(r) U$, where  $\chi_{>R}$ is a smooth cut-off function satisfying $\chi_{>R}(r) \equiv 1$ in $\{r > R\}$ and $\supp \chi_{>R} \subset \{r > \frac34 R\}$. We have
    \begin{equation}\label{eq:ell-rad-stronger-U-cut-off}
        \|\Delta U_{\gtrsim R}\|_{L^2} \lesssim \|\calH U\|_{L^2(r > \frac12 R)} + R^{-\frac12} \|U\|_{L^\infty(r > \frac12 R)}.
    \end{equation}
\end{corollary}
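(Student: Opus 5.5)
We split $\Delta U_{\gtrsim R}$ into $\chi_{>R}\Delta U$ plus the cutoff commutator terms, and control each by rerunning the proof of Lemma~\ref{lem:elliptic-rad}. By the Leibniz rule,
\begin{equation*}
\Delta U_{\gtrsim R} = \chi_{>R}\Delta U + 2\nabla\chi_{>R}\cdot\nabla U + (\Delta\chi_{>R})U .
\end{equation*}
Since $\nabla\chi_{>R}$ and $\Delta\chi_{>R}$ are supported in $\{\frac34 R < r < R\}$ with bounds $R^{-1}$ and $R^{-2}$, the last two terms are bounded by
\begin{equation*}
R^{-1}\|\nabla U\|_{L^2(\frac34 R<r<R)} + R^{-2}\|U\|_{L^2(\frac34 R<r<R)} .
\end{equation*}
For the main term, we use $|\Delta U|\lesssim|\nabla^2 U|$. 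So all three pieces are controlled by the left-hand side of \eqref{eq:ell-rad-U-L2}, applied with $\frac34 R$ in place of $R$. We pick the dilation parameter $a=\frac32$, so that $a^{-1}\cdot\frac34 R=\frac12 R$. Here we note that the proof of \eqref{eq:ell-rad-U-L2} allows any $a>1$, and that the ratio between $\frac34 R$ and $\frac12 R$ is a fixed constant. This is valid provided $\frac34 R\ge R_0$, which we may assume after enlarging $R_0$.

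The resulting right-hand side is
\begin{equation*}
\|\calH U\|_{L^2(r>\frac12 R)} + \|r^{-2}U\|_{L^2(r>\frac12 R)} .
\end{equation*}
We then convert the weighted $L^2$ term by H\"older's inequality, exactly as in the proof of Lemma~\ref{lem:elliptic-rad}:
\begin{equation*}
\|r^{-2}U\|_{L^2(r>\frac12 R)} \le \|r^{-2}\|_{L^2(r>\frac12 R)}\,\|U\|_{L^\infty(r>\frac12 R)} \lesssim R^{-\frac12}\|U\|_{L^\infty(r>\frac12 R)},
\end{equation*}
using $\int_{R/2}^\infty r^{-4}r^2\,dr\sim R^{-1}$.

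The only delicate point is the bookkeeping of radii, namely that the square-summed dyadic local elliptic estimate (Proposition~\ref{prop:dyadic-ell-0}) uses only $\calH U$ on $\{r>\frac12 R\}$. With the choice $a=\frac32$ and the support of $\chi_{>R}$ as above, this holds.

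The constant is independent of $R$ because the dyadic estimates are scale-invariant. In the obstacle case we also need $R_0$ large enough that $\calK\subset\{r<\frac13 R\}$; this is again arranged by enlarging $R_0$.
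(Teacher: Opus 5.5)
Your proposal is correct and is essentially the paper's argument. The paper's proof is just the instruction to apply \eqref{eq:ell-rad-U-L2} with a suitable radius and dilation parameter, and you have made it explicit: the Leibniz rule for the cutoff, radius $\frac34 R$ with $a=\frac32$, and then H\"older on $\|r^{-2}U\|_{L^2(r>\frac12 R)}$ exactly as in the proof of Lemma~\ref{lem:elliptic-rad}. Your use of the weighted term on the right of \eqref{eq:ell-rad-U-L2} over $\{r>a^{-1}R\}$ is what the dyadic summation via Proposition~\ref{prop:dyadic-ell-0} actually yields, so your bookkeeping of radii is consistent.
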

\begin{proof}
    We simply estimate the left-hand side using \eqref{eq:ell-rad-U-L2} with an appropriate choice of $R$ and $a$.
\end{proof}

\begin{corollary}\label{cor:out-scale-ell-loc}
    Under the same assumptions as in Lemma~\ref{lem:elliptic-rad}, for $R_2 > R_1 > R_0$, we have 
    \begin{align*}
    &\quad t^{-2}\nrm{L_\mu L_\nu u}_{L^{2}(R_1 < r < R_2)} + t^{-1}\|r^{-1}L_\mu u\|_{L^2(R_1 < r < R_2)} + \|r^{-2}u\|_{L^2(R_1 < r < R_2)} \\
	&\aleq t^{-2}\nrm{\calL u}_{L^{2}(R_1 \lesssim r \lesssim R_2)} + R_1^{-\frac12}\nrm{u}_{L^\infty(R_1 \lesssim r \lesssim R_2)}. 
\end{align*}
\end{corollary}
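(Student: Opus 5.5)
We reduce Corollary~\ref{cor:out-scale-ell-loc} to the same dyadic argument as in the proof of Lemma~\ref{lem:elliptic-rad}, now localized to a bounded range of scales. First we pass to $U = t^{\frac32} e^{-i\bfPhi^\ob} u$. The conjugation identities give $t^{\frac32}e^{-i\bfPhi^\ob} L_\mu L_\nu u = -4t^2 \rd_\mu\rd_\nu U$, $t^{\frac32}e^{-i\bfPhi^\ob}L_\mu u = 2it\rd_\mu U$ and \eqref{eq:conj_calLu_calHU}. Therefore the claim is equivalent to
\begin{equation*}
\|\nb^2 U\|_{L^2(R_1<r<R_2)} + \|r^{-1}\nb U\|_{L^2(R_1<r<R_2)} + \|r^{-2}U\|_{L^2(R_1<r<R_2)} \aleq \|\calH U\|_{L^2(a^{-1}R_1<r<2aR_2)} + \|r^{-2}U\|_{L^2(a^{-1}R_1<r<2aR_2)}
\end{equation*}
for some fixed $a>1$, for instance $a=2$. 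Here $R_0$ is taken large enough that $\calK\subseteq\{r<a^{-1}R_0\}$, so Proposition~\ref{prop:dyadic-ell-0} applies on every annulus involved.

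To prove the displayed estimate, we cover $\{R_1<r<R_2\}$ by the dyadic annuli $\{R'<r<2R'\}$ with $R'\in 2^{\bbZ_{\geq0}}R_1$ and $R'\le R_2$. On each annulus we apply Proposition~\ref{prop:dyadic-ell-0}. Since $r\aeq R'$ there, the weighted left-hand side is controlled by $\|\calH U\|_{L^2(a^{-1}R'<r<2aR')}+\|r^{-2}U\|_{L^2(a^{-1}R'<r<2aR')}$. We then square-sum over $R'$. The enlarged annuli have bounded overlap, with a constant depending only on $a$, and all of them lie in $\{a^{-1}R_1<r<2aR_2\}$, which is the region written $R_1\lesssim r\lesssim R_2$ in the statement. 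This yields the displayed estimate, and the constants are independent of $R_1$, $R_2$ and $t$.

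It remains to convert the last term into an $L^\infty$ bound, using H\"older's inequality in three dimensions:
\begin{equation*}
\|r^{-2}U\|_{L^2(a^{-1}R_1<r<2aR_2)}\le \|U\|_{L^\infty}\Big(\int_{r>a^{-1}R_1} r^{-4}\,\ud x\Big)^{\frac12}\aleq R_1^{-\frac12}\|U\|_{L^\infty(a^{-1}R_1<r<2aR_2)}.
\end{equation*}
The integral $\int_{r>a^{-1}R_1} r^{-4}\,\ud x$ converges because $r^{-4}\cdot r^2 = r^{-2}$ is integrable at infinity. Finally, $|U|=t^{\frac32}|u|$, and undoing the conjugation restores the factors $t^{-2}$ and $t^{-1}$ in the statement. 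There is no real obstacle in this argument. The only points needing care are that the $L^\infty$ bound produces the gain $R_1^{-\frac12}$ from the lower radius rather than the upper one, and that the enlarged annuli remain inside the region where Proposition~\ref{prop:dyadic-ell-0} is valid.
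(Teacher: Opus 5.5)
Your proposal is correct and follows the paper's argument: the paper's one-line proof reruns the proof of the first estimate in Lemma~\ref{lem:elliptic-rad}, square-summing Proposition~\ref{prop:dyadic-ell-0} over the finitely many dyadic annuli covering $\{R_1<r<R_2\}$. You supply the omitted details, namely the conjugation to $U$, the bounded overlap of the enlarged annuli, and the H\"older step that yields $R_1^{-1/2}$.
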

\begin{proof}
    In the proof of the first estimate in Lemma~\ref{lem:elliptic-rad}, we sum over finitely many dyadic annuli that cover $\set{R_{1} < r < R_{2}}$. We omit the details.
\end{proof}

To end this part, we include an unweighted elliptic estimate that will become useful in the obstacle case (see for instance Lemma~\ref{lem:eng-est-w-loss}) and nonlinear applications (see Section~\ref{sec-nonlinear}). 
\begin{lemma}\label{lem:unweight-ell}
    Given $u \in L^\infty H^k(\calM)$ with $u|_{[0, \infty) \times \p\calK} = 0$ and $\p\calK \in C^k$, we have \begin{align*}
        \|\p_x^{(\leq k)} u\|_{L^2} \aleq \|\p_x^{(\leq k - 2)} \calH u\|_{L^2} + \|u\|_{L^2}.
    \end{align*}
\end{lemma}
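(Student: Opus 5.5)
The plan is to prove Lemma~\ref{lem:unweight-ell} by induction on $k$, combining interior elliptic regularity away from $\calK$ with boundary elliptic regularity for the Dirichlet problem near $\rd\calK$. The lower-order term $\nrm{u}_{L^{2}}$ absorbs the lack of invertibility, so \ref{hyp:se} is not needed.

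First I split with a smooth cutoff $\chi_{\mathrm{in}}$ equal to one on a neighborhood of $\calK$ and supported in a bounded set $\set{r < R_1}$, together with $1-\chi_{\mathrm{in}}$.

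For the exterior piece $(1-\chi_{\mathrm{in}})u$, I sum the dyadic local estimate of Proposition~\ref{prop:dyadic-ell-0} over annuli $R' \in 2^{\bbZ_{\geq 0}} R_1$, dropping the favorable factors $R'^{\abs{\alp}-2} \leq 1$ on the right side. Since I only need unweighted control, the weights are harmless. This gives
\[
\nrm{\rd_x^{(\leq 2)} u}_{L^2(r > R_1)} \aleq \nrm{\calH u}_{L^2(r > R_1/2)} + \nrm{u}_{L^2(r>R_1/2)},
\]
with constants uniform in $R'$ because the coefficients are uniformly bounded in $C^{M_c}$ by \ref{hyp:af}. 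Higher derivatives follow by commuting $\rd_x^{\alp}$ with $\calH$. The commutator $[\calH,\rd_x^\alp]$ is a second-order operator involving only $\abs{\alp}+1$ derivatives of $u$ with bounded coefficients, so it is controlled by the induction hypothesis.

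For the interior piece $w = \chi_{\mathrm{in}} u$, which vanishes on $\rd\calK$ and has compact support in $\br{\calM^3}$, I write
\[
\calH w = \chi_{\mathrm{in}}\calH u + [\calH,\chi_{\mathrm{in}}]u,
\]
where the commutator is first order in $u$. I then apply standard global $H^{2}$ regularity for uniformly elliptic operators with $C^{k}$ coefficients on a bounded $C^{2}$ domain with Dirichlet data (e.g.\ \cite[Section~6.3.2]{Evans}), or its higher-order version $H^{k}$ for $C^{k}$ boundary. This gives
\[
\nrm{w}_{H^k} \aleq \nrm{\calH w}_{H^{k-2}} + \nrm{w}_{L^2}.
\]
The commutator term $\nrm{[\calH,\chi_{\mathrm{in}}]u}_{H^{k-2}}$ involves at most $k-1$ derivatives of $u$ on a compact set. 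I handle it by the induction hypothesis at order $k-1$, or for $k=2$ by the interpolation $\nrm{\nb u}_{L^2} \leq \eps\nrm{\nb^2 u}_{L^2} + C_\eps \nrm{u}_{L^2}$ and absorption. Note that $\calH$ is not self-adjoint, but the Evans regularity statement does not require self-adjointness, only ellipticity and a solution already in $H^1_0$.

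The main obstacle is the boundary regularity step. One point is that the higher-order estimate requires the boundary to be $C^{k}$, with tangential difference quotients and recovery of normal derivatives from the equation; since this is textbook material I would cite it rather than reprove it. The other point is the a priori membership $w \in H^{k}$, which is given by the hypothesis $u \in L^\infty H^{k}$, so only the estimate itself is needed. Summing the interior and exterior bounds closes the induction.
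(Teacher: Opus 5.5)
Your proposal follows the same route as the paper. The exterior region is handled as a perturbation of $-\lap$ by summing local estimates, as in the proof of Lemma~\ref{lem:elliptic-rad}, and the region near $\calK$ by standard elliptic regularity up to the boundary.

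One small correction concerns the exterior step. Summing Proposition~\ref{prop:dyadic-ell-0} over dyadic annuli controls $\nrm{\nb^{2} u}_{L^{2}(r > R_1)}$ and $\nrm{r^{-1} \nb u}_{L^{2}(r > R_1)}$, but not the unweighted $\nrm{\nb u}_{L^{2}(r > R_1)}$. The weights $(R')^{\abs{\alp}-2}$ sit on the left-hand side and are unfavorable for $\abs{\alp} = 1$: rescaling puts a factor $R'$ in front of $\nrm{\calH u}$. You should therefore recover $\nrm{\nb u}$ by interpolating between $\nrm{\nb^{2} u}$ and $\nrm{u}$, as the paper does in Lemma~\ref{lem:elliptic-rad}. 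Alternatively, use unit-scale interior estimates, which is the setting where your justification via uniform $C^{M_c}$ bounds on the coefficients actually applies.
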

\begin{proof}
    We choose $R_0$ sufficiently large so that $\bfh^{\mu\nu}$ is perturbative. Writing $\Delta = -\calH + D_\mu \bfh^{\mu\nu} D_\nu + b^\mu D_\mu + c$, the desired estimate for $u$ on $\set{r \ageq R_{0}}$ can be proved using the same argument as in the proof of Lemma~\ref{lem:elliptic-rad}.
    To deal with the inner region $\{r \aleq R_0\}$, one directly invokes the standard elliptic regularity theory up to the boundary. Combining the two regions concludes the proof.
\end{proof}

\subsubsection{Outer-scale Morrey's inequality}

We recall the standard Morrey's inequality on $\R^3$.
\begin{lemma}[Homogeneous Morrey's inequality on $\R^3$]
    For any function $g \in \scrS(\R^3)$, we have \[
        \frac{|g(x) - g(y)|}{|x - y|^{\frac12}} \lesssim \|\Delta g\|_{L^2}.
    \]
\end{lemma}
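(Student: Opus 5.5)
The inequality is homogeneous of degree zero under the rescaling $g_\lambda(x) = g(\lambda x)$. Indeed, $|g_\lambda(x)-g_\lambda(y)|/|x-y|^{1/2} = \lambda^{1/2}\,|g(\lambda x)-g(\lambda y)|/|\lambda x-\lambda y|^{1/2}$. On the other side, $\|\Delta g_\lambda\|_{L^2} = \lambda^{2}\lambda^{-3/2}\|\Delta g\|_{L^2} = \lambda^{1/2}\|\Delta g\|_{L^2}$. We may therefore normalize $|x-y| = 1$ and must show $|g(x)-g(y)| \aleq \|\Delta g\|_{L^2}$ for such pairs. The plan is to derive this from the Hölder-$\frac12$ endpoint of Morrey's embedding applied to $\nabla g$, or more directly from Fourier analysis, since $g$ is Schwartz.

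The most direct route is Fourier. Write
\[
g(x)-g(y) = (2\pi)^{-3}\int \hat g(\xi)\,(e^{ix\cdot\xi}-e^{iy\cdot\xi})\,\ud\xi .
\]
By Cauchy--Schwarz,
\[
|g(x)-g(y)| \le \||\xi|^2\hat g\|_{L^2}\,\Big(\int |\xi|^{-4}\,|e^{ix\cdot\xi}-e^{iy\cdot\xi}|^2\,\ud\xi\Big)^{1/2}.
\]
The first factor is $\aeq \|\Delta g\|_{L^2}$ by Plancherel. It remains to show that the second integral is finite and equals $C|x-y|$. Since $|e^{ix\cdot\xi}-e^{iy\cdot\xi}| \le \min\{2, |\xi||x-y|\}$, we split at $|\xi| = |x-y|^{-1}$. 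The low-frequency part gives $|x-y|^2\int_{|\xi|<|x-y|^{-1}}|\xi|^{-2}\,\ud\xi \aeq |x-y|^2\cdot|x-y|^{-1}$. The high-frequency part gives $4\int_{|\xi|>|x-y|^{-1}}|\xi|^{-4}\,\ud\xi \aeq |x-y|$, convergent because $4>3$. Taking square roots produces the factor $|x-y|^{1/2}$, which is the claim.

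A subtlety is that the Fourier inversion formula has to be applied to the difference rather than to $g$ itself. The integral $\int|\xi|^{-4}\,\ud\xi$ diverges at the origin, so $g(x)$ alone is not controlled by $\|\Delta g\|_{L^2}$, which is why the statement is homogeneous. For $g \in \scrS$, however, $\hat g \in \scrS$ and inversion holds pointwise, so the difference formula is legitimate. The cancellation $|e^{ix\cdot\xi}-e^{iy\cdot\xi}|\aleq|\xi||x-y|$ near $\xi=0$ is exactly what makes the low-frequency integral converge in dimension three, since $|\xi|^{-2}$ is locally integrable there. This convergence check at low frequency is the only point needing care; the rest is routine.
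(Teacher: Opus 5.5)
Your proof is correct. It differs from the route the paper points to: the paper gives no argument of its own and simply cites \cite{Evans} and \cite[Lemma 3.4]{Clo20}.

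The textbook route in \cite{Evans} is in physical space and has three steps. First, $\|\nabla^2 g\|_{L^2} = \|\Delta g\|_{L^2}$ by integration by parts. Second, the Sobolev embedding gives $\|\nabla g\|_{L^6} \lesssim \|\nabla^2 g\|_{L^2}$. Third, Morrey's estimate $|g(x)-g(y)| \lesssim |x-y|^{1-\frac{3}{6}}\|\nabla g\|_{L^6}$ follows by averaging over a ball of radius $\simeq |x-y|$ containing both points.

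Your Fourier argument bypasses both the $L^6$ embedding and the ball-averaging lemma. Plancherel and Cauchy--Schwarz reduce everything to the explicit integral $\int |\xi|^{-4}|e^{i(x-y)\cdot\xi}-1|^2\,\ud\xi$. By rotation and scaling invariance this is exactly a constant times $|x-y|$, and your low/high frequency splitting shows the constant is finite. As you note, the low-frequency cancellation is what makes dimension three work.

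Your route is shorter and fully self-contained, but it is tied to the $L^2$/Plancherel structure. The physical-space chain extends to $L^p$ norms and to localized versions on balls. For the application in Lemma~\ref{lem:Morrey-out-scale}, the inequality is applied to a cut-off function on all of $\R^3$ and only its $\|\Delta\cdot\|_{L^2}$ is needed, so your version suffices there.

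Your opening reduction to $|x-y|=1$ by scaling is harmless but unnecessary. The Fourier computation already produces the factor $|x-y|^{1/2}$ for arbitrary $x,y$.
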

\begin{proof}
    One could find a proof in \cite{Evans} or \cite[Lemma 3.4]{Clo20}.
\end{proof}

\begin{lemma}[Outer-scale Morrey's inequality]\label{lem:Morrey-out-scale}
    There exists $R_0$ such that for all $|x_1|, |x_2| \geq R > R_0 $, we have \[
       t^{-\frac32}\frac{|U(t, x_1) - U(t, x_2)|}{|x_1 - x_2|^{\frac12}} \lesssim t^{-2}\|\calL u(t, \cdot)\|_{L^2(r > \frac12 R)} + R^{-\frac12} \|u\|_{L^\infty(r > \frac12 R)},
    \]
    where the relation between $u$ and $U$ is dictated by \eqref{eq:relation-uU}.
\end{lemma}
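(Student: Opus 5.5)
We apply the homogeneous Morrey inequality to the cut-off function $U_{\gtrsim R'} := \chi_{>R'}(r) U$ from Corollary~\ref{cor:out-scale-ell-lap}, with $R'$ chosen comparable to $R$ (say $R' = \frac{2}{3}R$) so that $\chi_{>R'} \equiv 1$ on $\{r > R\}$. For $|x_1|, |x_2| \geq R$ we then have $U(t,x_i) = U_{\gtrsim R'}(t,x_i)$, and Morrey's inequality on $\bbR^3$ gives
\begin{equation*}
	\frac{|U(t, x_1) - U(t, x_2)|}{|x_1 - x_2|^{\frac12}} \lesssim \|\Delta U_{\gtrsim R'}\|_{L^2}.
\end{equation*}
We can apply the inequality to $U_{\gtrsim R'}$ because $U$ is defined away from $\calK$ and the cut-off removes a neighborhood of the obstacle once $R_0$ is large. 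By density, working with $\dot H^2$-type functions rather than Schwartz ones is harmless.

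Next we invoke \eqref{eq:ell-rad-stronger-U-cut-off} with $R'$ in place of $R$:
\begin{equation*}
	\|\Delta U_{\gtrsim R'}\|_{L^2} \lesssim \|\calH U\|_{L^2(r > \frac12 R')} + (R')^{-\frac12}\|U\|_{L^\infty(r > \frac12 R')}.
\end{equation*}
The radii $\frac12 R'$ and $\frac12 R$ differ only by a constant factor. To get exactly $r > \frac12 R$ on the right-hand side, we instead choose the cut-off so that $\chi_{>R'}$ vanishes only for $r<\frac34 R'$ and apply \eqref{eq:ell-rad-U-L2} with parameter $a$ close to $1$, as in the proof of that corollary. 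In this way every region on the right lies inside $\{r > \frac12 R\}$.

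Finally we convert back to $u$ using \eqref{eq:relation-uU} and \eqref{eq:conj_calLu_calHU}. We have $\calH U = \frac14 t^{-2} t^{\frac32} e^{-i\bfPhi^\ob}\calL u$, so $\|\calH U\|_{L^2} = \frac14 t^{-\frac12}\|\calL u\|_{L^2}$, and $|U| = t^{\frac32}|u|$. Multiplying the resulting inequality by $t^{-\frac32}$ then yields exactly
\begin{equation*}
	t^{-\frac32}\frac{|U(t,x_1)-U(t,x_2)|}{|x_1-x_2|^{\frac12}} \lesssim t^{-2}\|\calL u\|_{L^2(r>\frac12 R)} + R^{-\frac12}\|u\|_{L^\infty(r>\frac12 R)}.
\end{equation*}

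The only step that needs care is the bookkeeping of cut-off radii against the region $r > \frac12 R$, and in particular checking that the lower-order terms created by $\nabla\chi_{>R'}$ are already absorbed in Corollary~\ref{cor:out-scale-ell-lap}. They are: they are supported where $r \simeq R$ and are bounded by $R^{-1}\|\nabla U\| + R^{-2}\|U\|$ there, which is controlled via Proposition~\ref{prop:dyadic-ell-0} exactly as in Lemma~\ref{lem:elliptic-rad}.
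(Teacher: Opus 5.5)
Your proposal is correct and is essentially the paper's proof. You localize to $U_{\gtrsim R}$, apply the homogeneous Morrey inequality, bound $\|\Delta U_{\gtrsim R}\|_{L^2}$ by \eqref{eq:ell-rad-stronger-U-cut-off}, and undo the conjugation via \eqref{eq:conj_calLu_calHU}. The one blemish is the detour through $R'=\frac23 R$: it is unnecessary, and your stated fix still pushes $\calH U$ onto $\{r>\frac{1}{2a}R\}$, slightly below $r=\frac12 R$. Taking the corollary's cut-off at $R$ itself already gives $U_{\gtrsim R}=U$ on $\{r\ge R\}$ by continuity, and yields the right-hand side on exactly $\{r>\frac12 R\}$.
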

\begin{proof}
    Rewriting everything in the $U$-notation, it suffices to show \[
        \frac{|U(t, x_1) - U(t, x_2)|}{|x_1 - x_2|^{\frac12}} \lesssim \|\Delta U\|_{L^2(r > \frac12 R)} + R^{-\frac12} \|U\|_{L^\infty(r > \frac12 R)}.
    \]
    Using the same cut-off function $\chi_{\gtrsim R}$ and $U_{\gtrsim R}$ as in the proof of Lemma~\ref{lem:elliptic-rad}, we write \[
       \frac{|U(t, x_1) - U(t, x_2)|}{|x_1 - x_2|^{\frac12}} = \frac{|U_{\gtrsim R}(t, x_1) - U_{\gtrsim R}(t, x_2)|}{|x_1 - x_2|^{\frac12}} \lesssim \|\Delta U_{\gtrsim R}\|_{L^2}
    \]
    and hence the result follows from \eqref{eq:ell-rad-stronger-U-cut-off}.
\end{proof}

\subsection{Higher-order statements}
Our next goal is to extend the inner- and outer-scale elliptic estimates (Lemmas~\ref{lem:elliptic-near} and \ref{lem:elliptic-rad}, respectively) to higher-order derivatives. We will also formulate the results in the way they are directly applied; we thus begin with a discussion of terminology.

\begin{remark}[Near, intermediate, and far-away zones] \label{rem:zones}
    It is useful to divide $\calM^{1+3}$ into three zones: 
    \begin{itemize}[leftmargin=*]
        \item the \emph{near zone} $\set{r \aleq t^\frac{1}{2}}$, where it is optimal to use the inner-scale elliptic estimate (assuming $\nrm{\calL u}_{L^{2}} \aleq t^{\frac{1}{4}-\eta}$ and $\nrm{u}_{L^{\infty}} \aleq t^{-\frac{3}{2}}$, which we retrieve via iteration); 
        \item the \emph{far-away zone} $\set{r \ageq t}$, where the outer-scale elliptic estimate allows us to simply control the $L^{2}$-norm of $L^{(2)} u$ with that of $\calL u$ and the standard energy $\nrm{u}_{L^2}$ (see \eqref{eq:ell-r>t-U}); and
        \item the \emph{intermediate zone} $\set{t^\frac{1}{2} \aleq r \aleq t}$, which sits in-between.
    \end{itemize}
\end{remark}

\begin{proposition}[Higher order near- \& intermediate-zone elliptic estimates with \ref{hyp:af-0}]\label{prop:elliptic-near}
    Assume that $\calH$ satisfies \ref{hyp:af-0}, then for every $\delta \in [0, \frac12]$, we have \[\begin{aligned}
        &t^{-2} \nrm{r^{\frac{1}{2}} \p_{x}^{(\leq N)} L^{(2)} u}_{\ell^{\infty} L^{2}(r < t^{\frac12 + \delta})} + t^{-1} \nrm{r^{-\frac{1}{2}} \p_{x}^{(\leq N)} L^{(1)} u}_{\ell^{\infty} L^{2}(r < t^{\frac12 + \delta})} + \nrm{r^{-\frac{3}{2}} \p_{x}^{(\leq N)} u}_{\ell^{\infty} L^{2}(r < t^{\frac12 + \delta})} \\ \lesssim&\,  t^{-2}\|r^{\frac12}\p_x^{(\leq N)} \calL u\|_{\ell^1 L^2(r \lesssim t^{\frac12 + \delta})} + \| u\|_{L^\infty}, \hbox{ for all } 0 \leq N \leq M_c.
    \end{aligned}
    \]
\end{proposition}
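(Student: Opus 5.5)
We first pass to the conjugated variable $U$ of \eqref{eq:relation-uU}. By the conjugation identities $t^{\frac32}e^{-i\bfPhi^\ob}L_\mu u = 2it\,\p_\mu U$ and \eqref{eq:conj_calLu_calHU}, the factors $t^{-2}L^{(2)}$, $t^{-1}L^{(1)}$ and $t^{-2}\calL$ become $\nabla^2$, $\nabla$ and $\calH$ acting on $t^{\frac32}e^{-i\bfPhi^\ob}u$. The only new feature compared with Lemma~\ref{lem:elliptic-near} is that $\p_x$ does not commute with $e^{-i\bfPhi^\ob}$: we have $e^{-i\bfPhi^\ob}\p_x u = (\p_x - i\nabla\bfPhi^\ob) (e^{-i\bfPhi^\ob}u)$, and $\nabla\bfPhi^\ob = O(r/t)$. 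Within the region $r < t^{\frac12+\dlt}$ with $\dlt \le \frac12$, this factor is $O(t^{-\frac12+\dlt}) \lesssim 1$, and its derivatives are $O(t^{-1})$. Consequently $\p_x^{(\le N)}u$ is controlled by $t^{-\frac32}\p_x^{(\le N)}U$ plus lower-order terms with bounded coefficients, and conversely. The same holds for $\p_x^{(\le N)}L^{(k)}u$ and $\p_x^{(\le N)}\calL u$. We therefore reduce the statement to the $U$-estimate
\[
\nrm{\p_x^{(\le N)}U}_{\ell^\infty H^{2,-\frac32}(r<R)} \lesssim \nrm{r^{\frac12}\p_x^{(\le N)}\calH U}_{\ell^1L^2(r\lesssim R)} + t^{\frac32}\nrm{u}_{L^\infty}, \qquad R = t^{\frac12+\dlt}.
\]
Since $|U| = t^{\frac32}|u|$, the last term is $t^{\frac32}\|u\|_{L^\infty}$.

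We prove this by induction on $N$. The case $N=0$ is exactly \eqref{eq:unit-scale-ell-U}, i.e.\ Lemma~\ref{lem:elliptic-near} with $R = t^{\frac12+\dlt}$.

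For the inductive step (in the case $\calK=\0$), we apply the $N=0$ estimate to $\p_x^\alp U$. We write
\[
\calH\p_x^\alp U = \p_x^\alp\calH U + [\calH,\p_x^\alp]U.
\]
By \ref{hyp:af-0}, the commutator has the form $\sum \p_x^{(\le |\alp|)}(\bfh,b,c)\cdot \p_x^{(\le|\alp|+1)}U$ with coefficients decaying like $\brk{r}^{-2\eta}$, $\brk{r}^{-1-2\eta}$ and $\brk{r}^{-2-2\eta}$ respectively, and each derivative falling on a coefficient gains an extra $\brk{r}^{-1}$. The key point is that every term in $[\calH,\p_x^\alp]U$ has at most $|\alp|+1$ derivatives on $U$ and carries a coefficient with an extra factor $\brk{r}^{-1-2\eta}$ relative to the top order. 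Hence
\[
\nrm{r^{\frac12}[\calH,\p_x^\alp]U}_{\ell^1L^2} \lesssim \sum_j 2^{-2\eta j}\nrm{\p_x^{(\le|\alp|-1)}U}_{\ell^\infty H^{2,-\frac32}}.
\]
The sum over $j$ converges because of the $2^{-2\eta j}$ gain, which converts $\ell^\infty$ into $\ell^1$. The right-hand side is then bounded by the induction hypothesis.

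The remaining issue is the boundary term produced by Lemma~\ref{lem:elliptic-near}. For $\p_x^\alp U$, this term is $\nrm{\p^\alp U}_{L^\infty(r\approx R)}$, or more precisely the $R^{-\frac32}L^2$ norm there before Hölder, and it must be bounded by $t^{\frac32}\|u\|_{L^\infty}$ alone. We handle this by using the outer-scale estimates on the annulus $r\approx R$ instead of the cutoff argument. Corollary~\ref{cor:out-scale-ell-loc}, iterated with the local regularity of Proposition~\ref{prop:dyadic-ell-0} applied to $\p_x^\alp U$, gives
\[
R^{-\frac32}\nrm{R^{|\alp|}\p^\alp U}_{L^2(r\approx R)} \lesssim R^{\frac12}\nrm{\p_x^{(\le|\alp|)}\calH U}_{L^2(r\approx R)} + \nrm{U}_{L^\infty}.
\]
Since $R\gtrsim 1$, the factor $R^{|\alp|}$ dominates the unweighted derivative, and the right-hand side is absorbed into the right-hand side of the target estimate. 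This is where the main care is needed. Because the cutoff scale $R=t^{\frac12+\dlt}$ is large, weighted and unweighted derivatives must be kept consistent: $\p_x$ on $U$ gains $r^{-1}$ in the $H^{2,-\frac32}$ norm only up to the stated weights. This mismatch is why the estimate carries plain $\p_x^{(\le N)}$ and not $(r\p_x)^{(\le N)}$.

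The obstacle case is handled the same way. The interior region near $\calK$ uses elliptic regularity up to the boundary, as in Lemma~\ref{lem:unweight-ell}, with the Dirichlet condition on $U$. The $L^{(1)}$ and $L^{(2)}$ terms on the left-hand side then follow from the $H^{2,-\frac32}$ control of $\p^{(\le N)}U$ by the conjugation identities from the first step.
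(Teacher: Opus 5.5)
Your reduction to the $U$-estimate is valid: on $r\lesssim t^{\frac12+\delta}\le t$ one has $|\nabla\bfPhi^{\ob}|\lesssim r/t\lesssim 1$. The commutator bound in the inductive step is also sound. Note only that \ref{hyp:af-0} gives the extra $\brk{r}^{-1}$ for at most three (resp.\ two, one) derivatives of $\bfh$ (resp.\ $b$, $c$), not for every derivative; a single gain is all the estimate uses. Two other steps, however, do not hold as written, though both can be repaired.

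The first is your treatment of the boundary term. The inequality
\[
R^{-\frac32}\nrm{R^{|\alpha|}\p^\alpha U}_{L^2(r\approx R)}\lesssim R^{\frac12}\nrm{\p_x^{(\le|\alpha|)}\calH U}_{L^2(r\approx R)}+\nrm{U}_{L^\infty}
\]
is false for $|\alpha|\ge 3$, because its left side is fully scale-weighted and its right side is not. Iterating the two-derivative estimates of Corollary~\ref{cor:out-scale-ell-loc} and Proposition~\ref{prop:dyadic-ell-0} produces $R^{\frac12}\sum_j R^{j}\nrm{\p^j\calH U}$ on the right, and the factors $R^j$ cannot be discarded. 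For instance, take $\calH=-\lap$ and $U=\phi(x/R)\sin x_1$ with $\phi$ a bump on the unit annulus. For $|\alpha|=3$ the left side is $\approx R^{3}$ while the right side is $\approx R^{2}$. The remedy is simply not to insert $R^{|\alpha|}$. For $|\alpha|\ge1$ write $\p^\alpha=\p\,\p^{\alpha'}$. Then $\lambda^{-\frac32}\nrm{\p^\alpha U}_{L^2(r\approx\lambda)}\lesssim\lambda^{-1}\nrm{r^{-\frac12}\nabla\p^{\alpha'}U}_{\ell^\infty L^2(r\lesssim R)}$, and the induction hypothesis at level $|\alpha|-1$, applied on a slightly larger ball, already controls this. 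No outer-scale input is needed.

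The second is the base case. Lemma~\ref{lem:elliptic-near} (equivalently \eqref{eq:unit-scale-ell-U}) rests on Proposition~\ref{prop:w-ell-0}, hence on \ref{hyp:se}. The proposition assumes only \ref{hyp:af-0}, and the paper stresses this. Because the right side allows the non-localized $\nrm{u}_{L^\infty}$, no global inverse is needed. On each annulus $r\approx\lambda\lesssim R$, Proposition~\ref{prop:dyadic-ell-0} multiplied by $\lambda^{\frac12}$ bounds the three $H^{2,-\frac32}$ pieces by $\lambda^{\frac12}\nrm{\calH U}+\lambda^{-\frac32}\nrm{U}_{L^2(r\approx\lambda)}$, and the last term is $\lesssim\nrm{U}_{L^\infty}$. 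A fixed compact region around the obstacle is handled by boundary regularity.

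This purely local route is what the paper takes. It also settles the obstacle case cleanly: there $\p^\alpha U$ does not vanish on $\p\calK$, so the Dirichlet-based Lemma~\ref{lem:elliptic-near} cannot be applied to it directly. That is why the paper cuts off near $\calK$. The paper also bypasses your commutator induction at top order. It writes $\p_x^kL^{(2)}u$ as $t^{-k}L^{(k+2)}u$ plus lower-order terms, and bounds $\nrm{r^{\frac12}\p^{k+2}U}_{\ell^\infty L^2(r<R)}$ directly by dyadic higher interior regularity. The third term then follows from $\p_x=(2it)^{-1}(L-X)$, and the middle one by interpolation.
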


\begin{corollary}[Higher order near- \& intermediate-zone elliptic estimates with \ref{hyp:af}]\label{cor:elliptic-near}
    Assume that $\calH$ satisfies \ref{hyp:af}, then for every $\delta \in [0, \frac12]$, $2i + j \leq N$, for every solution $u$ to \eqref{eq:sch-gen}, we have \begin{align*}
        & t^{-2} \nrm{r^{\frac{1}{2}} \p_t^{(\leq i)} \p_{x}^{(\leq j)} L^{(2)} u}_{\ell^{\infty} L^{2}(r < t^{\frac12 + \delta})} + t^{-1} \nrm{r^{-\frac{1}{2}} \p_t^{(\leq i)} \p_{x}^{(\leq j)} L^{(1)} u}_{\ell^{\infty} L^{2}(r < t^{\frac12 + \delta})} + \nrm{r^{-\frac{3}{2}} \p_t^{(\leq i)} \p_{x}^{(\leq j)} u}_{\ell^{\infty} L^{2}(r < t^{\frac12 + \delta})} \\ \lesssim&\,  t^{-2} \|r^{\frac12}\p_x^{(\leq N)} \calL u\|_{\ell^1 L^2(r \lesssim t^{\frac12 + \delta})} + \|\p_t^{(\leq i)} u\|_{L^\infty} + \|r^{\frac12}\p_t^{(\leq i-1)}\p_x^{(\leq j)} \calL f\|_{\ell^1 L^2(r \aleq t^{\frac12+\dlt})}, \hbox{ for all } 0 \leq N \leq M_c.
    \end{align*}
\end{corollary}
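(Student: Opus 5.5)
We derive Corollary~\ref{cor:elliptic-near} from Proposition~\ref{prop:elliptic-near} by converting the mixed derivatives $\p_t^{(\leq i)}\p_x^{(\leq j)}$ into purely spatial ones through the equation, with induction on $i$. For $i = 0$ the statement is Proposition~\ref{prop:elliptic-near} with $N = j$. When $i \geq 1$, the key point is that $\p_t$ is a good operation on the $U$-side. Indeed, by \eqref{eq:conj_calLu_calHU} we have $\calL u = 4t^{\frac12} e^{i\bfPhi^\ob} \calH U$, and Proposition~\ref{prop:conj_calH} gives $t^{\frac32}e^{-i\bfPhi^\ob}\p_t(t^{-\frac32}e^{i\bfPhi^\ob}\cdot) = \p_t - \tfrac{3}{2t} - i\chi^\ob\tfrac{|x|^2}{4t^2}$. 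So I will apply Proposition~\ref{prop:elliptic-near} not to $u$ but to $w := \p_t u$, or more precisely to $\p_t^{i} u$, regarded at the fixed time $t$ simply as a function. The elliptic estimate is applied at each fixed time and does not use the equation.

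\textbf{Step 1.} This step identifies $\calL(\p_t^{i}u)$. We write
\begin{equation*}
\calL \p_t^{i} u = \p_t^{i}\calL u - [\p_t^{i},\calL]u .
\end{equation*}
The commutator $[\p_t,\calL]$ has three sources. The first is the explicit $t$ in $L_\mu$, which contributes $[\p_t,L_\mu] = 2i\p_\mu = t^{-1}(L_\mu - X_\mu)$. The second is the factor $t$ in front of $b$ and $t^2$ in front of $c$. The third is the time derivatives of $a,b,c$, which are controlled by \ref{hyp:af}. Each such term is schematically $t^{-1}$ times a good-commutator-type expression in $L^{(\le 2)}u$ with coefficients obeying \ref{hyp:af} bounds. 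On the left-hand side of Proposition~\ref{prop:elliptic-near}, the quantities $t^{-2}r^{\frac12}L^{(2)}$, $t^{-1}r^{-\frac12}L^{(1)}$ and $r^{-\frac32}$ are scaled so that $t^{-1}\cdot t^{2}$-weighted terms are lower order. These commutator terms will therefore be absorbed or handled inductively, using $r\lesssim t^{\frac12+\delta}\le t$, so that, for instance, $t^{-2}\cdot t^{-1}|X|\,r^{\frac12}|L u|\lesssim t^{-1}r^{-\frac12}|Lu|\cdot t^{-1}r$ with $t^{-1}r\le 1$.

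\textbf{Step 2.} This step removes $\p_t \calL u$ using the equation. I will use Proposition~\ref{prop:good-comm}:
\begin{equation*}
i\p_t\calL u = \calH\calL u + \calL f + [\calL,i\p_t-\calH]u .
\end{equation*}
The term $\calH\calL u$ costs two spatial derivatives. This explains the exchange rate $2i+j\le N$ and why the right-hand side carries $\p_x^{(\leq N)}\calL u$. The term $\calL f$, together with its further $\p_t$-derivatives up to order $i-1$ (obtained by iterating), produces the $\calL f$ term in the statement. The commutator $t^{-1}a_{\rm gc}L L u + b_{\rm gc}Lu + t c_{\rm gc}u$ has decaying coefficients. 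In the weighted norm $t^{-2}r^{\frac12}\ell^1L^2(r\lesssim t^{\frac12+\delta})$ it is bounded by the already-controlled quantities $t^{-1}r^{-\frac12}L u$, $r^{-\frac32}u$ and $t^{-2}r^{\frac12}LLu$ at lower $t$-order, times $\ell^1$-summable factors $r^{-2\eta}$. This is where the $\ell^1$ summation is used: the decay $\brk{r}^{-2\eta}$ converts the $\ell^\infty$ left-hand-side norms into $\ell^1$ ones. Iterating in $i$ reduces everything to the $i=0$ case.

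\textbf{Step 3.} The $L^\infty$ term $\|u\|_{L^\infty}$ in Proposition~\ref{prop:elliptic-near}, applied to $\p_t^{i}u$, becomes $\|\p_t^{(\le i)}u\|_{L^\infty}$. This term is not reduced further and is kept as it appears in the statement. In the obstacle case, $\p_t^{i}u$ still vanishes on $\p\calK$, which is exactly why we commute with $\p_t$ rather than $\p_x$. Hence the inner estimate, which relies on Proposition~\ref{prop:w-ell-0} with Dirichlet data, applies to $\p_t^{i}u$.

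\textbf{Main obstacle.} I expect the main obstacle to be Step 2. There I must verify that, after conjugation, the commutator and $t$-derivative terms are genuinely lower order in the $t$-weights across the whole range $r\lesssim t^{\frac12+\delta}$ with $\delta\le\frac12$. I must also check that the coefficient time-derivative bounds in \ref{hyp:af}, which are limited to $\lceil M_c/2\rceil$ derivatives, suffice for the derivative count $2i+j\le N\le M_c$.
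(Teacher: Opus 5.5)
Your proposal is correct and follows essentially the same route as the paper: first apply Proposition~\ref{prop:elliptic-near} at fixed time to $\p_t^{i}u$ (which keeps the Dirichlet condition) and treat $[\p_t^{i},\calL]$, and likewise $[\p_t^{i},L^{(2)}]$ on the left-hand side, inductively; then trade $\p_t\calL u$ for $\calH\calL u+\calL f+[i\p_t-\calH,\calL]u$ using Proposition~\ref{prop:good-comm}, and induct on $i$. Two harmless slips: the commutator in your Step~2 should be $[i\p_t-\calH,\calL]$ rather than $[\calL,i\p_t-\calH]$, and in Step~1 the $\ell^\infty\to\ell^1$ conversion for the non-decaying coefficients rests on the geometric bound $\|r/t\|_{\ell^1(r\lesssim t^{\frac12+\delta})}\lesssim 1$, not merely on the pointwise bound $r/t\le 1$ (the factor in your sample term is actually $(r/t)^2$).
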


\begin{proposition}[Higher order intermediate-zone elliptic estimates with \ref{hyp:af-0}]\label{prop:elliptic-int}
    Assume that $\calH$ satisfies \ref{hyp:af-0}, then for every $\delta \in [0, \frac12]$, there exists $T_0$ such that for all $t > T_0$, we have \[\begin{aligned}
        &t^{-2} \nrm{\p_{x}^{(\leq N)} L^{(2)} u}_{L^{2}(t^{\frac12 + \delta} < r < t)} + t^{-1} \nrm{r^{-1}\p_{x}^{(\leq N)} L^{(1)} u}_{ L^{2}(t^{\frac12 + \delta} < r < t)} + \nrm{r^{-2}\p_{x}^{(\leq N)} u}_{ L^{2}(t^{\frac12 + \delta} < r < t)} \\  \lesssim&\,  t^{-2}\|\p_x^{(\leq N)} \calL u\|_{L^2( t^{\frac12 + \delta} \aleq r \aleq t)} + t^{-\frac14-\frac{\delta}{2}}\|\p_x^{(\leq N)} u\|_{L^\infty(t^{\frac12 + \delta} \aleq r \aleq t)}, \hbox{ for all } 0 \leq N \leq M_c.
    \end{aligned}
    \]
\end{proposition}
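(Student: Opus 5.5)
We prove Proposition~\ref{prop:elliptic-int} by induction on $N$, passing as in Lemma~\ref{lem:elliptic-rad} to the conjugated variable $U = t^{\frac32} e^{-i\bfPhi^\ob} u$. For $t$ large, the region $\set{t^{\frac12+\delta} < r < t}$ lies far from $\calK$, so $\chi^\ob \equiv 1$ there, and hence $L_\mu$ conjugates exactly to $2it\p_\mu$ and $\calL$ to $4t^2\calH$ by \eqref{eq:conj_calLu_calHU}. The left-hand side is then comparable to $t^{\frac32}$ times
\begin{equation*}
\|\p_x^{(\le 2)}(e^{i\mathring{\bfPhi}}U)\text{-type terms}\|,
\end{equation*}
except that $\p_x$ does not commute with the phase. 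Rather than converting the derivatives into $U$-language, we commute $\p_x^{(\le N)}$ directly with the operators on the $u$ side. Since $[\p_\mu, L_\nu] = \delta_{\mu\nu}$ (with $\chi^\ob = 1$), $\p_x^{\alpha} L^{(2)} u$ equals $L^{(2)}\p_x^\alpha u$ plus lower-order terms $L^{(1)}\p_x^{(\le N-1)}u$ and $\p_x^{(\le N-2)}u$. These carry weights $t^{-2}$ instead of $t^{-1}$ (resp.\ $t^{0}$), and since $r < t$, $t^{-2} \le t^{-1}r^{-1}$ and $t^{-2} \le r^{-2}$, so they are dominated by the lower-order terms of the left-hand side at level $N$.

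The base case $N=0$ is Corollary~\ref{cor:out-scale-ell-loc} with $R_1 = t^{\frac12+\delta}$ and $R_2 = t$; the error term $R_1^{-\frac12}\|u\|_{L^\infty}$ is exactly $t^{-\frac14-\frac{\delta}{2}}\|u\|_{L^\infty}$. For the inductive step we apply the same corollary to $w = \p_x^\alpha u$ with $|\alpha| = N$, which is legitimate because Corollary~\ref{cor:out-scale-ell-loc} is a purely elliptic statement valid for any function. This requires controlling $\calL \p_x^\alpha u = \p_x^\alpha \calL u - [\p_x^\alpha, \calL] u$. From \eqref{eq:good-comm}, $[\p_\lambda, \calL]$ consists of the terms
\begin{align*}
&L_\mu (\p_\lambda a^{\mu\nu}) L_\nu, \quad \delta_\lambda^\mu a^{\mu\nu} L_\nu + L_\mu a^{\mu\lambda},\\
&-2t\bigl((\p_\lambda b^\mu) L_\mu + L_\mu(\p_\lambda b^\mu)\bigr) - 4t b^\lambda, \quad 4t^2 \p_\lambda c.
\end{align*}
Using \ref{hyp:af-0} ($\p a = O(r^{-1-2\eta})$, $b = O(r^{-1-2\eta})$, $\p c = O(r^{-3-2\eta})$), each term, after multiplication by $t^{-2}$, is bounded by
\begin{equation*}
r^{-1-2\eta}\, t^{-2}|L^{(2)}\p_x^{(\le N-1)}u| + t^{-2}|L^{(1)}\p_x^{(\le N-1)}u| + t^{-1}r^{-1}|L^{(1)} \p_x^{(\le N-1)} u| + r^{-2}|\p_x^{(\le N-1)}u|,
\end{equation*}
up to factors $r^{-2\eta}$ (plus similar iterated terms for higher $\alpha$). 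In the region $r \gtrsim t^{\frac12+\delta}$ all of these are bounded by the level-$(N-1)$ left-hand side, so they are absorbed by the induction hypothesis.

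It remains to handle the $L^\infty$ term and the enlarged regions. The induction hypothesis produces $t^{-\frac14-\frac\delta2}\|\p_x^{(\le N-1)}u\|_{L^\infty}$, which is fine. The regions grow slightly, $\set{t^{\frac12+\delta} \lesssim r \lesssim t}$ with some fixed constant enlargement, and since the statement already uses $\lesssim$ regions on the right, finitely many enlargements are harmless; alternatively we rerun with nested dyadic ranges.

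The main obstacle is the step where $[\p_x^\alpha, \calL]$ must be absorbed. One must check that no commutator term yields $t^{-1}|L u|$ without a compensating $r^{-1}$ factor. The term $\delta a L$ coming from $[\p, L]$ gives $t^{-2}|L\,\p^{(\le N-1)}u|$, which is good because $r < t$; the $-4tb$ term gives $t^{-1}r^{-1-2\eta}|\p^{(\le N-1)} u|$, which is at most $r^{-2}$ only when $r \lesssim t$. This is precisely why the upper cutoff $r < t$ is imposed in the statement, and we verify these borderline weights using $r \le t$ in each case.
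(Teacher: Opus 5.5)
Your argument is correct, but it handles the commutation step differently from the paper.

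\textbf{The paper's route.} As in Proposition~\ref{prop:elliptic-near}, it converts $\p_x^k$ into $(2it)^{-k}L^{(k)}$ up to lower-order terms. It then applies the $N=0$ estimate to $L^{(k)}u$ and commutes $\calL$ with $L^{(k)}$. Since $[L_\mu,L_\nu]=0$, the commutator $[\calL,L^{(k)}]$ only involves derivatives of $\bfh$, $b$, $c$, and so carries decaying weights automatically. The restriction $r\lesssim t$ enters when converting $t^{-k}L^{(k)}\calL u$ and $t^{-k}L^{(k)}u$ back into $\p_x^{(\le k)}\calL u$ and $\p_x^{(\le k)}u$, where the factors $|x|/t$ are bounded.

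\textbf{Your route.} You apply the same base estimate (Corollary~\ref{cor:out-scale-ell-loc}) to $\p_x^\alpha u$ and commute $\p_x^\alpha$ with $\calL$. This is the mechanism the paper itself uses for Propositions~\ref{prop:elliptic-rad} and~\ref{prop:elliptic-int-rad}. It produces the non-decaying pieces $a^{\lambda\nu}L_\nu+L_\mu a^{\mu\lambda}$ and $-4tb^\lambda$. You correctly absorb them into the level-$(N-1)$ left-hand side via $t^{-2}\le t^{-1}r^{-1}$ and $t^{-1}r^{-1}\le r^{-2}$ on $r\lesssim t$. The same inequalities handle the $[\p_\mu,L_\nu]=\delta_{\mu\nu}$ terms when you reorder $\p_x^\alpha L^{(2)}$ as $L^{(2)}\p_x^\alpha$.

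\textbf{Comparison.} Your route is somewhat more direct: the right-hand side appears immediately as $\p_x^{(\le N)}\calL u$ and $\p_x^{(\le N)}u$, with no back-conversion from $L^{(k)}$. The paper's route gives a structurally better commutator (only coefficient derivatives, reflecting the Klainerman-type commutation of $L$ with $\delta^{\mu\nu}L_\mu L_\nu$). The cost is the $L\leftrightarrow t\p_x$ bookkeeping. Both arguments need $r\lesssim t$, just at different points.

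Both also treat the enlargement of the annulus in the induction at the same level of rigor. As you note, this is harmless because $N\le M_c$ is finite and $T_0$ can be increased so that $c\,t^{1/2+\delta}>R_0$.
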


\begin{corollary}
    [Higher order intermediate-zone elliptic estimates with \ref{hyp:af}]\label{cor:elliptic-int}
    Assume that $\calH$ satisfies \ref{hyp:af}, then for every $\delta \in [0, \frac12]$, $0 \leq 2i + j \leq N \leq M_c$,  there exists $T_0$ such that for all $t > T_0$, for every solution $u$ to \eqref{eq:sch-gen}, we have \[\begin{aligned}
        &t^{-2} \nrm{\p_t^{(\leq i)}\p_{x}^{(\leq j)} L^{(2)} u}_{L^{2}(t^{\frac12 + \delta} < r < t)} + t^{-1} \nrm{r^{-1}\p_t^{(\leq i)}\p_{x}^{(\leq j)} L^{(1)} u}_{ L^{2}(t^{\frac12 + \delta} < r < t)} + \nrm{r^{-2}\p_t^{(\leq i)}\p_{x}^{(\leq j)} u}_{ L^{2}(t^{\frac12 + \delta} < r < t)} \\  \lesssim&\,  t^{-2}\|\p_x^{(\leq N)} \calL u\|_{L^2(t^{\frac12 + \delta} \aleq r \aleq t)} + t^{-\frac14-\frac{\delta}{2}}\|\p_t^{(\leq i)}\p_x^{(\leq j)} u\|_{L^\infty(t^{\frac12 + \delta} \aleq r \aleq t)} + t^{-2} \|\p_t^{(\leq i-1)} \p_x^{(\leq j)} \calL f\|_{L^2(t^{\frac12+\dlt} \aleq r \aleq t)}.
    \end{aligned}
    \]
\end{corollary}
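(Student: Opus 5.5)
The plan is to reduce the statement to Proposition~\ref{prop:elliptic-int} by induction on the number $i$ of time derivatives, using the equation to trade each $\rd_{t}$ for $\calH$, i.e.\ for two spatial derivatives. This trade is the reason the constraint reads $2i+j \le N$.

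\textbf{Base case and setup.} For $i=0$ the claim is Proposition~\ref{prop:elliptic-int} itself. Under \ref{hyp:af} the coefficients of $\calH$ obey the bounds of \ref{hyp:af-0} uniformly in $t$, so the proposition applies verbatim at each fixed time. Moreover, Proposition~\ref{prop:elliptic-int} is purely an elliptic statement at a fixed time. It therefore applies to any function $w(t,\cdot)$, in particular to $w = \rd_{t}^{i'} u$ with $i' \le i$; I take this fact for granted.

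\textbf{Applying the elliptic estimate to $\rd_t^{i'} u$.} With $w = \rd_{t}^{i'} u$, the proposition gives the left-hand side in terms of $t^{-2}\nrm{\rd_{x}^{(\le j)} \calL \rd_{t}^{i'} u}_{L^{2}}$ and $t^{-\frac14-\frac{\dlt}{2}}\nrm{\rd_{x}^{(\le j)}\rd_{t}^{i'}u}_{L^{\infty}}$, both over $t^{\frac12+\dlt} \aleq r \aleq t$. The $L^{\infty}$ term is already of the desired form. It remains to convert $\calL \rd_{t}^{i'} u$ into spatial derivatives of $\calL u$.

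\textbf{Converting $\calL \rd_t^{i'} u$.} I commute one $\rd_{t}$ at a time:
\begin{equation*}
\calL \rd_{t} u = \rd_{t}\calL u - [\rd_{t},\calL]u .
\end{equation*}
Then I use
\begin{equation*}
i\rd_{t}\calL u = \calL(i\rd_{t}-\calH)u + [i\rd_{t}-\calH,\calL]u + \calH\calL u .
\end{equation*}
Since $(i\rd_{t}-\calH)u = f$, this gives
\begin{equation*}
\rd_{t}\calL u = -i\big(\calH\calL u + \calL f - [\calL, i\rd_{t}-\calH]u\big).
\end{equation*}
The term $\calH\calL u$ costs two spatial derivatives of $\calL u$, which is consistent with $\rd_{x}^{(\le N)}$, $N \ge 2i+j$. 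The term $\calL f$ produces $t^{-2}\nrm{\rd_{t}^{(\le i-1)}\rd_{x}^{(\le j)}\calL f}_{L^{2}}$ after iterating, since only $i-1$ time derivatives ever fall on $f$.

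The remaining terms are error terms:
\begin{itemize}
\item $[\calL,i\rd_{t}-\calH]u$, which Proposition~\ref{prop:good-comm} writes as $t^{-1}a_{\mathrm{gc}}L L u + b_{\mathrm{gc}} L u + t c_{\mathrm{gc}} u$;
\item $[\rd_{t},\calL]u$, which equals $t^{-1}(\calL-4t^2 c \text{ adjusted})$-type terms. These consist of $t^{-1}$ times operators of the form $L a L$, $t\, b L$, $t^{2} c$, plus $\rd_{t}$-derivatives of the coefficients.
\end{itemize}
In both errors every weight is one power of $t$ (or of $r$) better than the main term. In the region $r \ge t^{\frac12+\dlt}$, the coefficient decay $\brk{r}^{-2\eta}$ and the $t^{-1}$ gain allow these errors to be bounded by a small multiple of the left-hand side for fewer time derivatives. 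That quantity is controlled by the induction hypothesis: the terms $LLu$, $r^{-1}Lu$, $r^{-2}u$ with one fewer $\rd_{t}$ and two more $\rd_{x}$. Iterating $i$ times closes the induction.

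\textbf{Main obstacle.} I expect the main difficulty to be the bookkeeping of these commutator errors. After repeated commutation the errors involve $\rd_{t}^{i''}\rd_{x}^{j''}$ of $L^{(2)}u$ in the intermediate zone, and one must check that the weights close. Concretely, $t^{-2}\cdot t^{-1}a_{\mathrm{gc}}LL$ must be dominated by $t^{-2}LL$ with a gain. One must also check that $t^{-2}\cdot t\,c_{\mathrm{gc}}u \aleq t^{-1}r^{-2-2\eta}u$ is dominated by $r^{-2}u$. Both hold since $t\ge1$ and $r\ge1$.

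A secondary point is that the $\rd_{t}$-derivatives of the coefficients must remain within the budget of $\lceil M_c/2\rceil$ time derivatives allowed by \ref{hyp:af}. This is guaranteed by the restriction $2i \le N \le M_c$.
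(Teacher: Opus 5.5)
Your proposal is correct and follows the paper's route; the paper proves this corollary along the lines of Corollary~\ref{cor:elliptic-near}. That route applies the fixed-time estimate of Proposition~\ref{prop:elliptic-int} to $\rd_t^{k}u$, moves the time derivatives past $\calL$, and uses $i\rd_t\calL u = \calH\calL u + \calL f + [i\rd_t-\calH,\calL]u$ together with Proposition~\ref{prop:good-comm} and induction on $i$. The one step you skip is commuting $\rd_t^{k}$ past $L^{(2)}$ on the left-hand side via $[\rd_t, L_\mu] = 2i\rd_\mu$; this commutator, like the $X_\mu$-part of $[\rd_t,\calL]$ coming from $2i\rd_\mu = t^{-1}(L_\mu - X_\mu)$, is absorbed by the induction hypothesis only because $r \aleq t$ in this zone (e.g.\ $t^{-2} \leq t^{-1}r^{-1}$), not merely because $t, r \geq 1$, and this upper bound on $r$ is precisely the input the paper singles out.
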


\begin{proposition}[Higher order far-away-zone elliptic estimates with \ref{hyp:af-0}]\label{prop:elliptic-rad}
    Assume that $\calH$ satisfies \ref{hyp:af-0}, then for every $\delta \in [0, \frac12]$, there exists $T_0$ such that for all $t > T_0$, we have \[\begin{aligned}
        &t^{-2} \nrm{\p_{x}^{(\leq N)} L^{(2)} u}_{L^{2}(r > t^{\frac12 + \delta})} + t^{-2} \nrm{\p_{x}^{(\leq N)} L^{(1)} u}_{ L^{2}(r > t^{\frac12 + \delta})} \\  \lesssim&\,  t^{-2}\|\p_x^{(\leq N)} \calL u\|_{L^2(r \gtrsim t^{\frac12 + \delta})} + t^{-1-2\delta}\|\p_x^{(\leq N)} u\|_{L^2(r \gtrsim t^{\frac12 + \delta})}, \hbox{ for all } 0 \leq N \leq M_c.
    \end{aligned}
    \]
\end{proposition}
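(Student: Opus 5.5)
The plan is to reduce to the second estimate \eqref{eq:ell-r>t-U} of Lemma~\ref{lem:elliptic-rad}, applied with $R = t^{\frac12+\dlt}$, and then to commute $\p_x^{(\leq N)}$ through. First the zeroth order case $N=0$. Take \eqref{eq:ell-r>t-U} with $R = t^{\frac12+\dlt} \geq R_0$; this is where $t > T_0$ is needed. Multiplying by $t^{-2}$ gives
\begin{equation*}
t^{-2}\|L^{(2)}u\|_{L^2(r>R)} + t^{-2}\|L^{(1)}u\|_{L^2(r>R)} \aleq t^{-2}\|\calL u\|_{L^2(r>\frac12 R)} + t^{-2}(t^2R^{-2}+1)\|u\|_{L^2(r>\frac12 R)}.
\end{equation*}
Since $\dlt \leq \frac12$, we have $t^2R^{-2} = t^{1-2\dlt} \geq 1$. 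Hence $t^{-2}(t^2 R^{-2}+1) \aeq t^{-1-2\dlt}$, which is exactly the claimed weight. This step is immediate.

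For higher order I will induct on $N$, working with $U$. The identity \eqref{eq:conj_calLu_calHU} together with $t^{\frac32}e^{-i\bfPhi^\ob}L_\mu u = 2it\p_\mu U$ turns the claim into an estimate of the form
\begin{equation*}
\|\p_x^{(\leq N)}\nb^{(\leq 2)}U\|_{L^2(r>R)} \aleq \|\p_x^{(\leq N)}\calH U\|_{L^2(r\ageq R)} + R^{-2}\|\p_x^{(\leq N)}U\|_{L^2(r\ageq R)},
\end{equation*}
but with one caveat. For $r > R \geq R_0$ we have $\chi^\ob = 1$, so $\bfPhi^\ob = \mathring{\bfPhi}$ there. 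The derivative $\p_x$ does not commute with the conjugation, because $e^{-i\mathring{\bfPhi}}\p_\mu u$ produces the extra term $\frac{i x_\mu}{2t}\cdot$. It is therefore cleaner to stay with $u$ itself. Note that $[\p_\mu, L_\nu] = \dlt_{\mu\nu}$, and that
\begin{equation*}
[\p_\mu,\calL] = \p_\mu a^{\alpha\beta}\,L_\alpha L_\beta + O(1)L + O(t)
\end{equation*}
(schematically, with the coefficient derivatives of $a$, $b$, $c$ together with the $\dlt$-terms). Applying the $N=0$ bound to $\p_x^{\alpha}u$ with $|\alpha| = N$, on slightly shrinking annuli, gives
\begin{equation*}
\|\p_x^{(\leq N)}\calL u\| + \|[\p_x^{\alpha},\calL]u\| + t^{1-2\dlt}\|\p_x^{(\leq N)}u\|
\end{equation*}
on the right-hand side. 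Expanding the commutator, its terms are as follows. The terms $\p^{k}\bfh\cdot\p^{(\leq N-k)}L^{(2)}u$ with $k\geq 1$ carry a factor $\aleq A_c R^{-2\eta-k}\ll 1$, and are handled by the induction hypothesis. The terms $\dlt$-type $\p^{(\leq N-1)}L^{(1)}u$ and $\p^{(\leq N-1)}u$ are likewise lower order. The terms with $t b\,L$ and $t^2 c$ have coefficients $t R^{-1-2\eta}$ and $t^2R^{-2-2\eta}$ respectively.

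The main obstacle is exactly these large weights, $t^2 \p^{k}c \cdot u$ and $t\,\p^k b\cdot L u$ in the region $r>R$. For the first I use $t^2 R^{-2-2\eta-k}\|\p^{(\leq N-1)}u\| \leq t^{1-2\dlt}\|\p^{(\leq N)}u\|$, valid since $t^2R^{-2} = t^{1-2\dlt}$. For the second I use $t R^{-1}\|L\p^{(\leq N-1)}u\|$ and interpolate, as in the proof of Lemma~\ref{lem:elliptic-rad}:
\begin{equation*}
\|L v\| \aleq \|L^{(2)}v\|^{\frac12}\|v\|^{\frac12}\,t^{\frac12}\cdot(\text{lower}) \leq \eps\|L^{(2)}v\| + \eps^{-1}t^2R^{-2}\|v\|.
\end{equation*}
The $L^{(2)}$ piece is absorbed by induction, and the rest is bounded by $t^{1-2\dlt}\|\p^{(\leq N-1)}u\|$. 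Terms on which derivatives fall on $L$ are rewritten through $\p L = L\p + \dlt$, so every term stays in the form "lower-order $L$ acting on $\p$-derivatives". Finally, the shrinking cutoffs over $N \leq M_c$ steps cost only constant factors in $R$. Summing over $|\alpha|\leq N$ closes the induction.
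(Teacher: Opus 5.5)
Your proposal is correct and is essentially the paper's argument: the base case is \eqref{eq:ell-r>t-U} with $R=t^{\frac12+\dlt}$, and the induction commutes $\p_x$ through $L^{(2)}$ (via $[\p_\mu,L_\nu]=\dlt_{\mu\nu}$) and through $\calL$, with the commutator terms absorbed by the induction hypothesis. The only differences are cosmetic: the paper peels off one $\p_x$ at a time using $[\calL,\p_x]$ rather than expanding $[\calL,\p_x^\alpha]$, and your interpolation step is unnecessary. It is unnecessary because every $t\,b$-term in the commutator carries a derivative on $b$, so its coefficient is $\aleq tR^{-2}\leq 1$; moreover, as you displayed it (with $\eps^{-1}t^2R^{-2}$) it would lose a factor $t^{1-2\dlt}$, and the scale-correct form is $\nrm{Lv}\aleq\nrm{L^{(2)}v}^{\frac12}\nrm{v}^{\frac12}+tR^{-1}\nrm{v}$.
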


\begin{corollary}[Higher order far-away-zone elliptic estimates with \ref{hyp:af}]\label{cor:elliptic-rad}
    Assume that $\calH$ satisfies \ref{hyp:af}, then for all $i, j, N$ such that $0 \leq 2i + j \leq N \leq M_c$, there exists $T_0$ such that for all $t > T_0$, for every solution $u$ to \eqref{eq:sch-gen}, we have \[\begin{aligned}
        &t^{-2} \nrm{\p_t^{(\leq i)}\p_{x}^{(\leq j)} L^{(2)} u}_{L^{2}(r > t)} + t^{-2} \nrm{\p_t^{(\leq i)}\p_{x}^{(\leq j)} L^{(1)} u}_{ L^{2}(r > t)} + t^{-2} \nrm{\p_t^{(\leq i)}\p_{x}^{(\leq j)} u}_{ L^{2}(r > t)} \\  \lesssim&\,  t^{-2}\|\p_x^{(\leq N)} \calL u\|_{L^2(r \gtrsim t)} + t^{-2}\|\p_x^{(\leq N)} u\|_{L^2(r \gtrsim t)} + t^{-2}\|\p_t^{(\leq i-1)}\p_x^{(\leq j)} \calL f\|_{L^2(r > t)}.
    \end{aligned}
    \]
\end{corollary}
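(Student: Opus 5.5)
We prove Corollary~\ref{cor:elliptic-rad} by reducing to the $\rd_x$-only estimate of Proposition~\ref{prop:elliptic-rad} (taken with $\dlt = \frac12$, so that $r > t^{\frac12+\dlt} = t$). Time derivatives are then traded for spatial derivatives through the equation, induction on $i$. The case $i=0$ is immediate. With $\dlt = \frac12$, the weight $t^{-1-2\dlt}$ in Proposition~\ref{prop:elliptic-rad} becomes $t^{-2}$. Control of $t^{-2}\nrm{\rd_x^{(\leq j)} u}_{L^2(r>t)}$ itself is trivial, since it appears on the right.

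For $i \geq 1$ we use the commutation of $\calL$ with $i\rd_t - \calH$. Consider first $\rd_t$ acting on $\calL u$. In the far-away zone $r > t$ we have $\chi^{\ob} \equiv 1$, so $L_\mu = x_\mu + 2it\rd_\mu$ there. Moreover $[\rd_t, L_\mu] = 2i\rd_\mu = t^{-1}(L_\mu - x_\mu)$, and $\rd_t$ hitting the coefficients $a,b,c$ produces $O_{t,x}$ terms by \ref{hyp:af}.

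The equation gives $i\rd_t u = \calH u + f$. Applying $\calL$, we can write
\[
\rd_t \calL u = -i\calL \calH u - i\calL f + [\rd_t,\calL]u .
\]
In $r>t$, the leading part of $\calL\calH u$ is controlled by $\rd_x^{(\leq 2)}\calL u$ plus commutator terms $[\calL,\calH]u$. These commutator terms are lower order with coefficients bounded by $t\brk{r}^{-1} \aleq 1$ times $L^{(\leq 2)}$ and spatial derivatives of $u$.

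Hence $t^{-2}\nrm{\rd_t^{(\leq i)}\rd_x^{(\leq j)}L^{(2)}u}_{L^2(r>t)}$ is obtained as follows. First apply Proposition~\ref{prop:elliptic-rad} to $\rd_t^{i}u$, which is legitimate because the localized estimate does not use the equation. Then replace $\calL\rd_t^i u$ by $\rd_t^i\calL u$ modulo $[\rd_t^i,\calL]$. Next substitute the equation to convert each $\rd_t$ into $\calH$, that is, into two spatial derivatives, which gives the bookkeeping $2i+j\leq N$. Each substitution also produces $\calL f$ terms with one fewer $\rd_t$; these yield the last term $t^{-2}\nrm{\rd_t^{(\leq i-1)}\rd_x^{(\leq j)}\calL f}$.

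The remaining commutator terms involve $L^{(\leq 2)}$ applied to $\rd_t^{(\leq i-1)}\rd_x^{(\leq j+2)} u$, with coefficient gains of $t^{-1}$ or $\brk{r}^{-2\eta}$. They are absorbed via the inductive hypothesis. For $\eta$-small coefficient errors, they are absorbed after choosing $T_0$ large so that $\brk{r}^{-2\eta} \leq t^{-2\eta}$ is small.

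The $L^{(1)}$ and undifferentiated terms on the left are handled identically, using the interpolation in the second part of Lemma~\ref{lem:elliptic-rad}, namely \eqref{eq:ell-r>t-U}.

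The main obstacle will be the commutator $[\rd_t^i, \calL]$ combined with the conversion of $\rd_t$ into $\calH$ inside weighted norms. The weights $x_\mu$ in $L_\mu$ are of size $r \geq t$ in this region, so one must verify that each $\rd_t$ costs at most a factor $t^{-1}\cdot r \cdot \rd_x \aleq L$-type operator plus $\rd_x^{2}$. Our first attempt is to express everything in the conjugated variable $U$. By \eqref{eq:conj_calLu_calHU} we have $\calL u \leftrightarrow 4t^2\calH U$, and by Lemma~\ref{lem:phase-conj}, $\rd_t U = -i\calH U + (x/t)\cdot\nb U + \text{l.o.t.}$. The term $(x/t)\cdot\nb U$ is exactly an $L^{(1)}$-type term, which is then controlled at the same order.
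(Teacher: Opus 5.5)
Your argument is correct, and it differs from the paper at exactly the step where the paper uses its key structural identity. Both arguments begin with the same auxiliary step: apply Proposition~\ref{prop:elliptic-rad} to $\rd_t^k u$ and commute $\rd_t$ past the $L$'s and $\calL$.

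The paper then converts $\rd_t\calL u$ through $(i\rd_t-\calH)\calL u=\calL f+[i\rd_t-\calH,\calL]u$. It bounds the commutator by Proposition~\ref{prop:good-comm}, i.e.\ by $t^{-1}a_{\mathrm{gc}}L^{(2)}+b_{\mathrm{gc}}L+tc_{\mathrm{gc}}$. These are terms of the same type as the left-hand side, with decaying coefficients and no additional spatial derivatives.

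You instead split this into $[\rd_t,\calL]$ and $[\calL,\calH]$ and bound each separately, giving up the cancellation between them. Individually they are not small. In $\{r>t\}$, already for $\calH=-\lap$, one has $[\rd_t,L_\mu L_\mu]=4i\,\rd\cdot L-6i=i[L_\mu L_\mu,-\lap]$. The perturbative parts contain terms such as $t(\rd_x a)\rd_x^2 L$.

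This is nevertheless harmless in the far zone, for three reasons:
\begin{itemize}
\item If you keep each $L_\mu$ as a unit rather than expanding it into $x_\mu+2it\rd_\mu$, every coefficient is bounded by a power of $t\brk{r}^{-1}\aleq 1$.
\item All three terms of the far-zone estimate carry the same weight $t^{-2}$.
\item The extra spatial derivatives land at level $(i-1,j+1)$ or $(i-1,j+2)$, within the budget $2i+j\le N$.
\end{itemize}
Plain induction on $i$ therefore closes. The absorption via large $T_0$ that you mention is not needed.

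Your route is more elementary. It avoids Proposition~\ref{prop:good-comm}, and hence the eikonal-type condition \eqref{eq:af-extra1}, which enters that proposition only through $[\calH,\bfh^{\mu\nu}x_\mu x_\nu]$. The price is that it relies on the crude uniform weight of the far zone: for instance, $t^{-2}\rd_x L u$ is not dominated by $t^{-1}r^{-1}\rd_x L u$ when $r>t$. The paper's commutator structure works with the natural weights in every zone, and it is the same structure reused in the energy estimate of Proposition~\ref{prop:est-calLu}.

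Drop your last paragraph: in $\{r>t\}$ the conjugated variable is the wrong one for time derivatives.
\begin{itemize}
\item There $\rd_t u=t^{-\frac32}e^{i\bfPhi^{\ob}}\bigl(\rd_t-\tfrac{3}{2t}-i\tfrac{\abs{x}^2}{4t^2}\bigr)U$, whose last coefficient grows like $(r/t)^2$.
\item The term $\tfrac{x}{t}\cdot\nb U$ is $\tfrac{x}{t}$ times the $L^{(1)}$-type quantity $\nb U\leftrightarrow (2it)^{-1}Lu$. Since $\abs{x}/t$ is unbounded in $\{r>t\}$, it is not an $L^{(1)}$-type term at the same order.
\end{itemize}
In the $u$ variable the obstacle you worry about does not arise: $[\rd_t,L_\mu]=2i\rd_\mu$ carries no weight, and $\rd_t u=-i\calH u-if$ exactly.

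One small correction, which applies equally to the paper's own sketch: the conversions produce more than the displayed $\calL f$ term. The undifferentiated term yields $\rd_t^{(\le i-1)}f$, and the iterated conversion yields $\calL f$ with more than $j$ spatial derivatives. Such $f$-terms are carried explicitly in Proposition~\ref{prop:est-calLu}.
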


\begin{proposition}[Higher order intermediate- \& far-away-zone elliptic estimates]\label{prop:elliptic-int-rad}
    Assume that $\calH$ satisfies \ref{hyp:af-0}, then for every $\delta \in [0, \frac12]$, there exists $T_0$ such that for all $t > T_0$, we have \[\begin{aligned}
        &t^{-2} \nrm{L^{(2)} \p_{x}^{(\leq N)} u}_{L^{2}(r > t^{\frac12 + \delta})} + t^{-1} \nrm{r^{-1} L^{(1)} \p_{x}^{(\leq N)} u}_{ L^{2}(r > t^{\frac12 + \delta})} + \|r^{-2}\p_x^{(\leq N)}u\|_{L^2(r > t^{\frac12 + \delta})}  \\  \lesssim&\,  t^{-2}\|\p_x^{(\leq N)} \calL u\|_{L^2(r \gtrsim t^{\frac12 + \delta})} + t^{-\frac14-\frac{\delta}{2}}\|\p_x^{(\leq N)} u\|_{L^\infty} + t^{-2}\|\p_x^{(\leq N)} u\|_{L^2}, \hbox{ for all } 0 \leq N \leq M_c.
    \end{aligned}
    \]
\end{proposition}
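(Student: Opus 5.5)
We plan to prove Proposition~\ref{prop:elliptic-int-rad} by passing to the variable $U$ via \eqref{eq:relation-uU}, after which the left-hand side is comparable (for $t > T_0$, so that $\chi^\ob \equiv 1$ in the region $r > t^{\frac12+\dlt}$) to
\begin{equation*}
t^{-\frac32}\Big(\|\nb^{2}\p_x^{(\leq N)}U\|_{L^2(r>t^{\frac12+\dlt})} + \|r^{-1}\nb\p_x^{(\leq N)}U\|_{L^2(r>t^{\frac12+\dlt})} + \|r^{-2}\p_x^{(\leq N)} U\|_{L^2(r>t^{\frac12+\dlt})}\Big),
\end{equation*}
modulo commutators between $\p_x$ and the phase. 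The key difference from Propositions~\ref{prop:elliptic-int} and \ref{prop:elliptic-rad} is the ordering $L^{(2)}\p_x^{(\leq N)}u$: here the derivatives are applied before the $L$'s. We use $[L_\mu,\p_\nu] = -\dlt_{\mu\nu}$ (up to $\p\chi^\ob$ terms, supported in a fixed compact set and hence absent for $r > t^{\frac12+\dlt}$) to write $L^{(2)}\p_x^{k}u = \p_x^{k}L^{(2)}u + \p_x^{k-1}L^{(1)}u + \p_x^{k-2}u$ schematically. The first term is exactly what the left-hand sides of Propositions~\ref{prop:elliptic-int} and \ref{prop:elliptic-rad} control, and it suffices to treat the lower-order terms with the correct weights.

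Concretely, we split $\{r > t^{\frac12+\dlt}\}$ into the intermediate zone $\{t^{\frac12+\dlt}<r<t\}$ and the far-away zone $\{r>t\}$. In the intermediate zone, Proposition~\ref{prop:elliptic-int} bounds $t^{-2}\|\p_x^{(\leq N)}L^{(2)}u\|$, $t^{-1}\|r^{-1}\p_x^{(\leq N)}L^{(1)}u\|$ and $\|r^{-2}\p_x^{(\leq N)}u\|$ by $t^{-2}\|\p_x^{(\leq N)}\calL u\| + t^{-\frac14-\frac\dlt2}\|\p_x^{(\leq N)}u\|_{L^\infty}$. The commutator terms $t^{-2}\p_x^{(\leq N-1)}L^{(1)}u$ and $t^{-2}\p_x^{(\leq N-2)}u$ are then dominated since $t^{-2}\leq t^{-1}r^{-1}$ and $t^{-2}\leq r^{-2}$ when $r<t$. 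For the middle term on the left, $t^{-1}r^{-1}L^{(1)}\p_x^{k}u = t^{-1}r^{-1}(\p_x^kL^{(1)}u + \p_x^{k-1}u)$, and $t^{-1}r^{-1}\leq r^{-2}$ again since $r<t$. In the far-away zone, Proposition~\ref{prop:elliptic-rad} (with its larger cutoff) controls $t^{-2}\|\p_x^{(\leq N)}L^{(\leq 2)}u\|_{L^2(r>t)}$ by $t^{-2}\|\p_x^{(\leq N)}\calL u\| + t^{-1-2\dlt}\|\p_x^{(\leq N)}u\|_{L^2}$. The commutator terms carry weight $t^{-2}$ and are bounded by $t^{-2}\|\p_x^{(\leq N)} u\|_{L^2}$. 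For $r>t$ we have $t^{-1}r^{-1}\leq t^{-2}$ and $r^{-2}\leq t^{-2}$, so the weighted terms on the left are also controlled by $t^{-2}\|\p_x^{(\leq N)}L^{(\leq1)}u\|+t^{-2}\|\p_x^{(\leq N)}u\|_{L^2}$. This accounts for the extra term $t^{-2}\|\p_x^{(\leq N)}u\|_{L^2}$ in the statement; the $t^{-1-2\dlt}$ term should be replaced by it. To handle that replacement we apply Proposition~\ref{prop:elliptic-rad} with the region $r>t$ only, or directly use \eqref{eq:ell-r>t-U} with $R=t$, where $(t^2R^{-2}+1)\aeq 1$, commuted with $\p_x^{(\leq N)}$.

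The main obstacle is making sure the commutator terms in the far zone are closed. Commuting $\p_x^{(\leq N)}$ through $\calL$ in \eqref{eq:ell-r>t-U} produces terms like $t\,\p\bfh\, L^{(1)}\p^{N-1}u$ and $t^2\p^{N}c\,u$. These are treated by induction on $N$ using the decay $\brk{x}^{-2\eta}$, $\brk{x}^{-1-2\eta}$, $\brk{x}^{-2-2\eta}$ from \ref{hyp:af-0}: at $r>t$ we get $t\,r^{-1-2\eta}\leq t^{-2\eta}$ and $t^2r^{-2-2\eta}\leq t^{-2\eta}$, so for $t>T_0$ they are absorbed into lower-order pieces of the left-hand side. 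We expect the analogous absorption in the intermediate zone to already be contained in Proposition~\ref{prop:elliptic-int}.
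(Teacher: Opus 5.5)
Your argument is correct, but it is organized differently from the paper's.

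The paper argues by induction on $N$. It applies the statement at order $N-1$ to the function $\p_x u$, which produces $t^{-2}\|\p_x^{(\leq N-1)}\calL \p_x u\|$, and then commutes $\p_x$ through $\calL$. The commutator $[\calL,\p_x]$ involves derivatives of the variable coefficients. These terms are bounded by splitting into $\{t^{\frac12+\dlt}\aleq r\aleq t\}$ and $\{r\ageq t\}$ and invoking Propositions~\ref{prop:elliptic-int} and \ref{prop:elliptic-rad}.

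You instead commute $L^{(2)}$ past $\p_x^{k}$. Away from the obstacle, $[L_\mu,\p_\nu]=-\dlt_{\mu\nu}$ has constant coefficients, so the statement reduces in one step, with no induction, to the left-hand sides of the same two propositions. The lower-order commutator terms are absorbed purely by the weight comparisons: $t^{-2}\le t^{-1}r^{-1}\le r^{-2}$ for $r<t$, and the reverse ordering for $r>t$.

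Your observation that Proposition~\ref{prop:elliptic-rad} must be applied only on $\{r>t\}$ (i.e.\ with $\dlt=\frac12$) is exactly what is needed. That way its error $t^{-1-2\dlt}\|\p_x^{(\leq N)}u\|_{L^2}$ becomes $t^{-2}\|\p_x^{(\leq N)}u\|_{L^2}$. The paper makes the same split at $r\aeq t$.

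What your route buys is transparency: no coefficient derivatives appear at this stage, since all variable-coefficient commutators are already packaged inside the two propositions. Consequently, your final paragraph on absorbing terms like $t\,\p\bfh\,L^{(1)}$ and $t^{2}\p^{N}c$ is unnecessary once you invoke Proposition~\ref{prop:elliptic-rad} with $\dlt=\frac12$. It would only matter if you re-derived that proposition from \eqref{eq:ell-r>t-U}. The paper's route keeps the inductive structure used throughout Section~\ref{subsec-two-scale-higher-order}, at the cost of having to control $[\calL,\p_x]$.

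One small imprecision: in the far zone, the commutator term $t^{-2}\p_x^{(\leq N-1)}L^{(1)}u$ is controlled by the $t^{-2}\|\p_x^{(\leq N)}L^{(1)}u\|_{L^2(r>t)}$ part of Proposition~\ref{prop:elliptic-rad}. It is not controlled directly by $t^{-2}\|\p_x^{(\leq N)}u\|_{L^2}$, though your next sentence already covers this.
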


\begin{remark} Some comments concerning the above set of results are in order.
\begin{enumerate}[leftmargin=*]
  \item Propositions~\ref{prop:elliptic-near},~\ref{prop:elliptic-int},~\ref{prop:elliptic-rad}~\ref{prop:elliptic-int-rad} and their corollaries are higher order generalizations of Lemma~\ref{lem:elliptic-near} and \ref{lem:elliptic-rad}; their proofs are given in Section~\ref{subsec-two-scale-higher-order}. We choose to state them in a form with $\dlt \in [0, \frac12]$, which is convenient for future applications. 
        \item The ordering of coordinate derivatives relative to the operator $L$ is formulated differently in Propositions and Corollaries \ref{prop:elliptic-near}–\ref{cor:elliptic-rad}, compared to Proposition~\ref{prop:elliptic-int-rad}. The former formulation is adapted to the form of \ref{hyp:iled*}, which will be usd to establish Proposition~\ref{prop:est-calLu}. On the other hand, the form of Proposition~\ref{prop:elliptic-int-rad} aligns with Morrey's inequality, and is therefore well suited for establishing Propositions~\ref{prop:diff-u-gmmvarphi0-wo-1/tdv-involved} and \ref{prop:dotgmm_decay}.
        \item We state Propositions and Corollaries~\ref{prop:elliptic-near}–\ref{prop:elliptic-int-rad} without assuming \ref{hyp:se}, in contrast to the zeroth-order estimates. For Propositions~\ref{prop:elliptic-int}–\ref{prop:elliptic-int-rad}, the assumption \ref{hyp:se} is in fact automatically satisfied in the region of interest when $T_0$ is sufficiently large, since the operator there is effectively a small perturbation of the free Laplacian.
        We use  Lemma~\ref{lem:elliptic-near} to prove Proposition~\ref{prop:elliptic-near}, while the error term in Proposition~\ref{prop:elliptic-near} is now non-localized. Meanwhile, the non-localized error term can be chosen to depend only on $u$ itself rather than higher order derivatives of $u$. This localization can, however, be recovered under the assumption \ref{hyp:se}, namely \begin{align*}
            &t^{-2} \nrm{r^{\frac{1}{2}} \p_{x}^{(\leq N)} L^{(2)} u}_{\ell^{\infty} L^{2}(r < t^{\frac12 + \delta})} + t^{-1} \nrm{r^{-\frac{1}{2}} \p_{x}^{(\leq N)} L^{(1)} u}_{\ell^{\infty} L^{2}(r < t^{\frac12 + \delta})} + \nrm{r^{-\frac{3}{2}} \p_{x}^{(\leq N)} u}_{\ell^{\infty} L^{2}(r < t^{\frac12 + \delta})} \\ \lesssim&\,  t^{-2}\|r^{\frac12}\p_x^{(\leq N)} \calL u\|_{\ell^1 L^2(r \lesssim t^{\frac12 + \delta})} + t^{-\frac32(\frac12 + \dlt)}\| \p_x^{(\leq N)}u\|_{L^2(r \aeq t^{\frac12+\dlt})}, \hbox{ for all } 0 \leq N \leq M_c.
        \end{align*}
        This improvement, either in the form of a localized $L^2$-error of $\p_x^{(\leq N)} u$ or a non-localized $L^\infty$-error of $u$, will be crucial in the proof of Corollary~\ref{cor-ptwise-est-higher-order}.
\end{enumerate}
\end{remark}

\subsection{Higher order two-scale elliptic estimates}\label{subsec-two-scale-higher-order}

\subsubsection{Proof of Proposition~\ref{prop:elliptic-near}}
Now we make an induction on $N \leq M_c$. Assume that it holds for $N-1$ and we prove it for $1 \leq k = N$. It suffices to prove for the first and the third term since the second one follows from an interpolation.

{\it Step 1: The first term.}
We claim that it suffices to show \begin{align}\label{eq:claim-proof-prop-ell-near}
    \|r^{\frac12} \p_x^{(\leq k+2)}U\|_{\ell^\infty L^2(r < R)} \aleq \|r^{\frac12} \p_x^{(\leq k)} \calH U\|_{\ell^1 L^2(r \aleq R)} + \|U\|_{L^\infty}.
\end{align}
Indeed, by assuming this, we select $R \aeq t^{\frac12 + \delta}$ and compute
\begin{align*}
    &t^{-2} \nrm{r^{\frac{1}{2}} \p_{x}^k L^{(2)} u}_{\ell^{\infty} L^{2}(r < R)}
    \lesssim t^{-2-k}\|r^{\frac12} L^{(k+2)}u\|_{\ell^\infty L^2(r < R)} + t^{-2}\|r^{\frac12}\p_x^{(\leq k-1)} L^{(2)}u\|_{\ell^\infty L^2(r < R)} \\
    \lesssim&\, t^{-2-k}\|r^{\frac12} L^{(k)}\calL u\|_{\ell^1 L^2(r \lesssim R)} + \|u\|_{L^\infty} + t^{-2}\|r^{\frac12}\p_x^{(\leq k-1)}\calL u\|_{\ell^1 L^2(r \lesssim R)} + \| u\|_{L^\infty} \\
    \lesssim&\, t^{-2}\|r^{\frac12}\p_x^{(\leq k)} \calL u\|_{\ell^1 L^2(r \lesssim R)} + \| u\|_{L^\infty},
\end{align*}
where the definition of $L$ is used in the first and last inequalities. Moreover, in the second inequality, we apply the claim \eqref{eq:claim-proof-prop-ell-near} to the first term after conjugation \eqref{eq:relation-uU} and we apply the induction hypothesis to the second term, respectively. This concludes the proof in Step 1.

Now we prove the claim \eqref{eq:claim-proof-prop-ell-near}. We introduce an appropriate cutoff $\chi$ such that $\chi \equiv 1$ near the obstacle and $\supp \p\chi \subset \set{x: \dist(x, \p\calK) > 0, |x| < 2R}$, where we modify the requirement correspondingly when $\calK = \0$. We control $\|r^{\frac12} \p_x^{(k+2)} (\chi U)\|_{\ell^\infty L^2(r < R)}$ by the unweighted boundary regularity theorem \cite{Evans}. For the component $\|r^{\frac12} \p_x^{(k+2)} ((1 - \chi) U)\|_{\ell^\infty L^2}$, we perform unweighted estimates (\cite[Theorem 6.3.2 on higher interior regularity]{Evans}) on each dyadic piece, which leads to \begin{align*}
    \|r^{\frac12} \p_x^{(\leq k+2)}U\|_{L^2(\lmb < r < 2\lmb)} &\aleq \|r^{\frac12} \p_x^{(\leq k)} \calH U\|_{L^2(r \aeq \lmb) } + \lmb^{-\frac32}\|U\|_{L^2(r \aeq \lmb)} \\
   &\aleq \|r^{\frac12} \p_x^{(\leq k)} \calH U\|_{L^2(r \aeq \lmb) } + \|U\|_{L^\infty(r \aeq \lmb)}
\end{align*}
for each dyadic $\lmb \aleq R$. Hence, from $\ell^1 \subset \ell^\infty$, we have \[
    \|r^{\frac12} \p_x^{(\leq k+2)}U\|_{\ell^\infty L^2(r < R)} \aleq \|r^{\frac12} \p_x^{(\leq k)} \calH U\|_{\ell^1 L^2 (r \aleq R)} + \|U\|_{L^\infty(r \aleq R)}.
\]

{\it Step 2: The third term. }
For the third term in the main proposition, we write \[
    \nrm{r^{-\frac{3}{2}} \p_{x}^k u}_{\ell^{\infty} L^{2}(r < R)}
    \lesssim t^{-1}\|r^{-\frac32}\p_{x}^{k-1}L^{(1)}u\|_{\ell^\infty L^2(r < R)} + \|r^{-\frac32}\p_x^{(\leq k-1)} u\|_{\ell^\infty L^2(r < R)},
\]
where we apply the induction hypothesis on both terms to conclude the proof.

\subsubsection{Proof of Proposition~\ref{prop:elliptic-int}}
The case $N = 0$ is established in Lemma~\ref{lem:elliptic-rad}. We assume that it holds for $N - 1$ and prove it for $1 \leq k = N$.
We prove by converting $\p_x$ to $L$ as in the proof of Proposition~\ref{prop:elliptic-near} before. To keep track of the growth rate generated by $x$ in $L$, we need to employ Corollary~\ref{cor:out-scale-ell-loc}.
We write
\begin{equation}\label{eq:proof-prop-ell-int}
    \begin{split}
        &t^{-2} \|\p_x^k L^{(2)} u\|_{L^2(t^{\frac12 + \delta} \leq r \leq t)} \lesssim t^{-2-k} \|L^{(k+2)} u\|_{L^2(t^{\frac12 + \delta} \leq r \leq t)} + t^{-2} \|\p_x^{(\leq k-1)}L^{(2)} u\|_{L^2(t^{\frac12 + \delta} \leq r \leq t)} \\
        \lesssim&\, t^{-2-k}\|\calL L^{(k)}u\|_{L^2(t^{\frac12 + \delta} \aleq r \aleq t)} + t^{-2} \|\p_x^{(\leq k-1)}\calL u\|_{L^2(t^{\frac12 + \delta} \aleq r \aleq t)} + t^{-\frac14-\frac{\delta}{2}}\|\p_x^{(\leq k)} u\|_{L^\infty(t^{\frac12 + \delta} \aleq r \aleq t)}.
    \end{split}
\end{equation}
Since $[L^{(1)}, L^{(1)}] = 0$, an explicit computation reveals that \[\begin{aligned}
    [\calL, L^{(k)}] &= L_\mu [a^{\mu\nu}, L^{(k)}]L_\nu - 2t([b^\mu, L^{(k)}]L_\mu + L_\mu [b^\mu, L^{(k)}]) + 4t^2 [c, L^{(k)}] \\
    &= (2it)^k \left(L_\mu [\bfh^{\mu\nu}, \p_x^{(k)}]L_\nu - 2t([b^\mu, \p_x^{(k)}]L_\mu + L_\mu [b^\mu, \p_x^{(k)}]) + 4t^2 [c, \p_x^{(k)}] \right) + \cdots,
\end{aligned}
\]
where we isolate the term which sees the highest order derivative of coefficients. To handle the commutator term, it is worth noting that the spatial decay needs to be exploited first so that $\ell^1$ can be replaced by $\ell^\infty$ before applying the induction hypothesis.
Applying \ref{hyp:af-0},
we obtain that \[\begin{aligned}
    t^{-2-k}\| [\calL, L^{(k)}] u\|_{\ell^1 L^2(t^{\frac12 + \delta} \aleq r \aleq t)} &\lesssim_{O^{M_c - k}(1)} t^{-2}\|  \p_x^{(\leq k-1)} L^{(2)} u\|_{\ell^\infty L^2(t^{\frac12 + \delta} \aleq r \aleq t)} \\
    &+ t^{-1}\|r^{-1} \p_x^{(\leq k-1)} L^{(1)} u\|_{\ell^\infty L^2(t^{\frac12 + \delta} \aleq r \aleq t)}
    + \|r^{-2} \p_x^{(\leq k-1)} u\|_{\ell^\infty L^2(t^{\frac12} \aleq r \aleq t)}.
\end{aligned}
\]
Applying the induction hypothesis and combining it with \eqref{eq:proof-prop-ell-int}, we conclude the proof for the first term. One can see from this estimate why we require $N \leq M_c$  and we do not keep track of it in the similar proofs below.

\subsubsection{Proof of Proposition~\ref{prop:elliptic-rad}}
The case $N = 0$ is established in Lemma~\ref{lem:elliptic-rad}. We assume that it holds for $N - 1$ and prove it for $1 \leq k = N$.
Thanks to \[
    [\p_\mu, L_\nu] = \delta_{\mu\nu}, \quad [\p_x, L^{(2)}] = O^\infty(1) L^{(1)},
\]
we compute \[
    t^{-2} \|\p_x^k L^{(2)} u\|_{L^2(r > R)} \lesssim t^{-2} \|\p_x^{k-1} L^{(2)} \p_x u\|_{L^2(r > R)} + t^{-2}\|\p_x^{k-1} L^{(1)} u\|_{L^2(r > R)}.
\]
For $R \aeq t^{\frac12 + \delta}$, applying the induction hypothesis, we obtain \[\begin{aligned}
    t^{-2} \|\p_x^k L^{(2)} u\|_{L^2(r > R)} &\lesssim t^{-2} \|\p_x^{(\leq k-1)} \calL \p_x u\|_{L^2(r \gtrsim R)} + t^{-2}\|\p_x^{(\leq k-1)} \calL u\|_{L^2(r \gtrsim R)} + t^{-1-2\delta} \|\p_x^{(\leq k)} u\|_{L^2(r \gtrsim R)} \\
    &\lesssim t^{-2} \|\p_x^{(\leq k)} \calL u\|_{L^2(r \gtrsim R)} + t^{-2}\|\p_x^{(\leq k-1)} [\calL,\p_x] u\|_{L^2(r \gtrsim R)} + t^{-1-2\delta} \|\p_x^{(\leq k)} u\|_{L^2(r \gtrsim R)},
\end{aligned}
\]
where the commutator term can be estimated using the induction hypothesis as well. On the other hand, the term $t^{-2} \|\p_x^k L^{(1)} u\|_{L^2(r > R)}$ can be estimated in a similar fashion, and we omit the details.

\subsubsection{Proof of Corollaries~\ref{cor:elliptic-near},~\ref{cor:elliptic-int} and \ref{cor:elliptic-rad}}

We group the proofs of these three corollaries together. The proof of Corollary~\ref{cor:elliptic-int} proceeds along the same lines as that of Corollary~\ref{cor:elliptic-near}, where both results rely on the condition ${r \aleq t}$. Therefore, we only present the proof of Corollary~\ref{cor:elliptic-near} and \ref{cor:elliptic-rad} in detail in the following.

\begin{proof}[Proof of Corollary~\ref{cor:elliptic-near}]
    We first establish an auxiliary estimate with both temporal and spatial derivatives present on the right-hand side: \begin{align*}
        & t^{-2} \nrm{r^{\frac{1}{2}} \p_t^{(\leq i)} \p_{x}^{(\leq j)} L^{(2)} u}_{\ell^{\infty} L^{2}(r < t^{\frac12 + \delta})} + t^{-1} \nrm{r^{-\frac{1}{2}} \p_t^{(\leq i)} \p_{x}^{(\leq j)} L^{(1)} u}_{\ell^{\infty} L^{2}(r < t^{\frac12 + \delta})} + \nrm{r^{-\frac{3}{2}} \p_t^{(\leq i)} \p_{x}^{(\leq j)} u}_{\ell^{\infty} L^{2}(r < t^{\frac12 + \delta})} \\ \lesssim&\,  t^{-2} \|r^{\frac12}\p_t^{(\leq i)} \p_x^{(\leq j)} \calL u\|_{\ell^1 L^2(r \lesssim t^{\frac12 + \delta})} + \|\p_t^{(\leq i)} u\|_{L^\infty}, \hbox{ for all } 0 \leq 2i + j \leq N \leq M_c
    \end{align*}
    We prove by an induction on $i$. First, the case $i = 0$ was established in Proposition~\ref{prop:elliptic-near}. We assume that it holds true for $i = k-1$.

    Since $[\p_t^k, L^{(1)}] = 2ik\p_t^{k-1}\p_x$, \[
    [\p_t^k, L^{(2)}] = 2ik\p_t^{k-1}\p_x L^{(1)} + 2ik L^{(1)} \p_t^{k-1}\p_x
    = 2ik\p_t^{k-1}\p_x L^{(1)} + 4k^2 \p_t^{k-2}\p_x^2 L^{(1)} - 4k^2\p_t^{k-2}\p_x
    \]
    and the region of interests on the left satisfies $r \leq t$, one could easily see that the commutator bound follows from the induction hypothesis. It then suffices to establish \[\begin{aligned}
            & t^{-2} \nrm{r^{\frac{1}{2}} \p_{x}^{(\leq j)} L^{(2)} \p_t^{k} u}_{\ell^{\infty} L^{2}(r < t^{\frac12 + \delta})} + t^{-1} \nrm{r^{-\frac{1}{2}} \p_{x}^{(\leq j)} L^{(1)} \p_t^{ k}u}_{\ell^{\infty} L^{2}(r < t^{\frac12 + \delta})} + \nrm{r^{-\frac{3}{2}} \p_{x}^{(\leq j)} \p_t^{k}u }_{\ell^{\infty} L^{2}(r < t^{\frac12 + \delta})} \\ \lesssim&\,  t^{-2}\|r^{\frac12}\p_t^{(\leq k)}\p_x^{(\leq j)} \calL u\|_{\ell^1 L^2(r \lesssim t^{\frac12 + \delta})} + \| \p_t^{(\leq k)} u\|_{L^\infty}, \quad\forall 0 \leq j \leq 2N - k.
        \end{aligned}
    \]
    By Proposition~\ref{prop:elliptic-near}, the left-hand side can be controlled by \[
        t^{-2}\|r^{\frac12}\p_x^{(\leq j)} \calL \p_t^{k} u\|_{\ell^1 L^2(r \lesssim t^{\frac12 + \delta})} + \| \p_t^{k} u\|_{L^\infty},
    \]
    where the first term can be in turn bounded by \[
        t^{-2}\|r^{\frac12}\p_x^{(\leq j)}\p_t^{(\leq k)} \calL u\|_{\ell^1 L^2(r \lesssim t^{\frac12 + \delta})} + \| \p_t^{(\leq k)} u\|_{L^\infty}
    \]
    plus error terms with control via induction hypothesis. Note that no $\log t$-loss appears due to $\|r/t\|_{\ell^1(r \aleq t^{\frac12 + \dlt})} \aleq 1$. This finishes the proof of the auxiliary result.

    In view of this, it suffices to estimate $\|r^{\frac12} \p_t^{(\leq i)} \p_x^{(\leq j)} \calL u\|_{\ell^1 L^2(r \aleq t^{\frac12 + \dlt})}$.

    We make an induction on $i$ and assume that the result holds for $i \leq k-1$, $j \leq N - 2i$. The result holds for $i = 0$ thanks to Proposition~\ref{prop:elliptic-rad}.

    Now we prove for $i = k$, $j \leq N - 2k$. We write \begin{align*}
        i\p_t^i \p_x^j\calL u &= \p_t^{i-1}\p_x^j\calH \calL u + \p_t^{i-1}\p_x^j [i\p_t - \calH, \calL]u + \p_t^{i-1}\p_x^j \calL f,
    \end{align*}
    where the commutator term can be handled using Proposition~\ref{prop:good-comm} together with the induction hypothesis.
    Hence, the proof follows.
\end{proof}

\begin{proof}[Proof of Corollary~\ref{cor:elliptic-rad}]
    The strategy is roughly the same, where one first establishes an auxiliary result by commuting temporal derivatives with $L$'s. Then one formulates a suitable induction and it suffices to bound the commutator term by Proposition~\ref{prop:good-comm} \begin{align*}
        t^{-2}\|\p_t^{i-1}\p_x^j [i\p_t - \calH, \calL]u\|_{L^2(r > t)} \aleq t^{-3}\|r^{-2\eta}\p_t^{i-1}\p_x^j L^{(2)}u\|_{L^2(r > t)} &+ t^{-2}\|r^{-1-2\eta}\p_t^{i-1}\p_x^j L^{(1)}u\|_{L^2(r > t)} \\&+ t^{-1}\|r^{-2-2\eta}\p_t^{i-1}\p_x^j u\|_{L^2(r > t)}.
    \end{align*}
    Due to the localization $\{r > t\}$, the right-hand side can be bounded by the induction hypothesis.
\end{proof}

\subsubsection{Proof of Proposition~\ref{prop:elliptic-int-rad}}
First, this holds for $N = 0$ by Lemma~\ref{lem:elliptic-rad}. We make an induction as before. Applying the induction hypothesis for $k = N - 1$, the left-hand side of the estimate for $k = N$ can be bounded by \[\begin{aligned}
    &\lesssim t^{-2}\|\p_x^{(\leq k-1)}\calL\p_x u\|_{L^2(r \ageq t^{\frac12 + \delta})} + t^{-\frac14 - \frac{\delta}{2}}\|\p_x^{(\leq k)} u\|_{L^\infty} + t^{-2}\|\p_x^{(\leq k)} u\|_{L^2} \\
    &\lesssim t^{-2}\|\p_x^{(\leq k)}\calL u\|_{L^2(r \ageq t^{\frac12 + \delta})} + t^{-\frac14 - \frac{\delta}{2}}\|\p_x^{(\leq k)} u\|_{L^\infty} + t^{-2}\|\p_x^{(\leq k)} u\|_{L^2} + t^{-2}\|\p_x^{(\leq k-1)} L u\|_{L^2(r \ageq t^{\frac12 + \delta})},
\end{aligned}
\]
where we use Proposition~\ref{prop:elliptic-int} and Proposition~\ref{prop:elliptic-rad} to estimate the commutator in the last step. The last term can be estimated by decomposing the region into $\{t^{\frac12+\delta}\aleq r \aleq t\} \cup \{r \ageq t\}$ and applying these two propositions, respectively.

\section{Energy estimate for $\calL u$}\label{sec:en-est-calLu}
The goal of this section is to prove Proposition~\ref{prop:est-calLu}. It requires a stronger result than \ref{hyp:iled*}, in which the weight in the $L^2L^2$-based norm is relaxed and the energy norm is also included on the right-hand side.

{
\renewcommand\thetheorem{ILEDapp}
\begin{proposition}\label{prop:iled*-L1L2+LE*}
    Assume that $\calH$ satisfies \ref{hyp:af} and \ref{hyp:iled*} with $M_c^* = M_c$. Set $s_c' = s_c^* + \dlt_s + 1$. Suppose $u$ is a solution to \eqref{eq:sch-gen}. We have the following:
    \begin{itemize}[leftmargin=*]
        \item When $\calK = \0$, \begin{equation*}
    	   \nrm{\rd_{x}^{(\leq N)} u}_{L^{\infty}[t_0, t_0 + T] L^{2}} \aleq \|\p_x^{(\leq N+s_c')} u(t_0)\|_{L^2} + \|\p_x^{(\leq N + s_c')} f\|_{(\brk{x}^{-1}L^2L^2 + L^1L^2)[t_0, t_0 + T]}.
        \end{equation*}
        \item When $\calK \neq \0$, \begin{equation*}
    	   \nrm{\rd_{x}^{(\leq N)} u}_{L^{\infty}[t_0, t_0 + T] L^{2}} \aleq \|\p_x^{(\leq N+s_c')} u(t_0)\|_{L^2} +  \sum_{2i + j \leq N + s_c'}\|\p_t^{(\leq i)}\p_x^{(\leq j)} f\|_{(\brk{x}^{-1}L^2L^2 + L^1L^2)[t_0, t_0 + T]}.
        \end{equation*}
    \end{itemize}
    The implicit constants are uniform for $T > 1$ for any $0 \leq N \leq M_c^*$.
\end{proposition}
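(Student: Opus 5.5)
The plan is to split the forcing $f = f_1 + f_2$ with $f_1 \in \brk{x}^{-1}L^2L^2$ and $f_2 \in L^1L^2$, solve separately by linearity, and treat each piece with a different tool. For $f_1$, hypothesis \ref{hyp:iled*} only controls $\brk{x}^{\beta_c+1}L^2L^2$, so the weight must be relaxed from $\brk{x}^{\beta_c+1}$ to $\brk{x}$. For $f_2$, we need a Duhamel/energy argument that costs no power of $T$.

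\emph{Step 1: the $L^1L^2$ part.} Write the solution with source $f_2$ via Duhamel, $u_2(t)=\int_{t_0}^t S(t,s)(-i f_2(s))\,\ud s$, where $S(t,s)$ is the homogeneous propagator. By Minkowski's inequality it suffices to have a uniform bound $\nrm{\p_x^{(\le N)}S(t,s)\psi_0}_{L^2}\aleq \nrm{\p_x^{(\le N+s_c^*)}\psi_0}_{L^2}$. This is exactly \ref{hyp:iled*} with zero forcing; in the obstacle case, $\psi_0=f_2(s)$ must be compatible with the boundary condition, and the $\p_t$-derivatives enter through Lemma~\ref{lem:unweight-ell} as in Lemma~\ref{lem:eng-est-w-loss}. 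The homogeneous data $u(t_0)$ is handled the same way.

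\emph{Step 2: relaxing the weight for $f_1$.} I will split space at a large radius $R_0$. On $\{r<2R_0\}$ the weights $\brk{x}^{\beta_c+1}$ and $\brk{x}$ are comparable, so \ref{hyp:iled*} applies directly to $\chi_{<R_0}f_1$. For the exterior piece $(1-\chi_{<R_0})f_1$, the operator agrees there with the small perturbation $\calH_{\mathrm{out}}$ of $-\lap$, and the global local smoothing estimate (Proposition~\ref{prop:kato-small}, Kato smoothing) holds for it with the sharp weight $\brk{x}^{-1}$ and half-derivative gain. That gives $\tilde u_{\mathrm{out}}$ solving $(i\p_t-\calH_{\mathrm{out}})\tilde u_{\mathrm{out}} = (1-\chi_{<R_0})f_1$, with bounds in $L^\infty L^2$ and in the local energy norm $\brk{x}^{1/2+}L^2L^2$ (one derivative). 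Then I set $u = \chi_{>R_0}\tilde u_{\mathrm{out}} + w$. The correction satisfies
\[
(i\p_t-\calH)w = [\calH,\chi_{>R_0}]\tilde u_{\mathrm{out}} + \chi_{>R_0}(\calH_{\mathrm{out}}-\calH)\tilde u_{\mathrm{out}} + \chi_{<R_0}f_1 + f_2 .
\]
The commutator term is compactly supported and first order, so it is bounded in $L^2L^2$ by the local energy of $\tilde u_{\mathrm{out}}$; the derivative loss from this first-order term is the source of the extra $+1$ in $s_c'$. I will apply \ref{hyp:iled*} (compact support makes the weight harmless) together with Step 1 to $w$.

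\emph{Step 3: the $\delta_s$ loss and the main obstacle.} The main difficulty is that when $\bfh\not\equiv0$ the exterior operator is second-order variable-coefficient. Kato smoothing for it, and the commutation of $\p_x^{(\le N)}$ through it, cost an additional derivative; this is recorded as $\delta_s=1$. When $\bfh\equiv 0$ the exterior operator is $-\lap$ plus lower order and there is no loss. To obtain higher-order estimates, I will commute $\p_x^{(\le N)}$ (or $\p_t$ in the obstacle case) and absorb the commutators $[\p_x,\calH]$, which carry decaying coefficients, by induction on $N$. Finally, I will check that all constants are uniform in $t_0$ and $T$; this follows from the uniformity in \ref{hyp:iled*} and in Proposition~\ref{prop:kato-small}.
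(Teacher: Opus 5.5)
Your proposal is correct and is essentially the paper's argument. The common ingredients are Kato smoothing for the small exterior perturbation $\calH_{\mathrm{out}}$ (Proposition~\ref{prop:kato-small}, i.e.\ \eqref{eq:iled*-Hout}), a compactly supported correction handled by \ref{hyp:iled*} where the weight $\brk{x}^{\bt_c+1}$ is harmless, and Duhamel with the homogeneous \ref{hyp:iled*} bound for the $L^1L^2$ part. The paper routes all of $f$ and $u(t_0)$ through $\calH_{\mathrm{out}}$ first, but that difference is only bookkeeping.

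One correction to your Step~3: \eqref{eq:iled*-Hout} loses no derivatives. In the paper, $\dlt_s$ comes from the order of the compactly supported error $\eta_0(\calH-\calH_{\mathrm{out}})v$, not from the smoothing estimate. If you place your cutoff where $\calH=\calH_{\mathrm{out}}$, your term $\chi_{>R_0}(\calH_{\mathrm{out}}-\calH)\tilde u_{\mathrm{out}}$ vanishes and only the first-order commutator $[\calH,\chi_{>R_0}]\tilde u_{\mathrm{out}}$ remains. You then lose at most $s_c^*+1\le s_c'$ derivatives.
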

}
\begin{proof}
    The proof resembles that of Proposition~\ref{prop:iled2iled-star}. For simplicity, we assume that $\calK \neq \0$ and $t_0 = 0$. Define $v$ and $\eta_0$ as in the proof of Proposition~\ref{prop:iled2iled-star}. We know that $v$ satisfies the same estimate as in \eqref{eq:iled*-Hout}.

    On the other hand, for any decomposition $f = f_1 + f_2$, we modify the decomposition of $w = u - \eta_0 v = \td w_1 + \td w_2$, where \begin{align*}
        (i\p_t - \calH) \td w_1 &= (1 - \eta_0)f_1, \quad \td w_1(0) = (1 - \eta_0)u_0, \quad \td w_1|_{[0, \infty) \times \p \calK} = 0,\\
        (i\p_t - \calH) \td w_2 &= (1 - \eta_0)f_2 + \eta_0(\calH - \calH_{\mathrm{out}})v + [\calH, \eta_0] v, \quad \td w_2(0) = 0, \quad \td w_2|_{[0, \infty) \times \p \calK} = 0,
    \end{align*}
    where the existence and uniqueness of $\td w_1, \td w_2$ follow from \ref{hyp:iled*}.

    For the first equation, we write out the solution via the Duhamel principle, then it follows from \ref{hyp:iled*} that \begin{align*}
        \|\p_x^{(\leq N)} \td w_1\|_{L^\infty L^2} \aleq \|\p_x^{(\leq N + s_c^*)} u_0\|_{L^2} + \|\p_x^{(\leq N + s_c^*)} f_1\|_{L^1 L^2}.
    \end{align*}

    Regarding $\td w_2$, we apply \ref{hyp:iled*}, \eqref{eq:iled*-Hout} and the equation to convert temporal derivatives: for any $0 \leq N \leq M_c^*$, \begin{align*}
        &\quad\|\p_x^{(\leq N)} \td w_2\|_{L^\infty L^2} \aleq \|\p_x^{(\leq N + s_c^*)} u_0\|_{L^2} + \sum_{2i + j \leq N + s_c^*} \|\brk{x}^{\beta_c +1} \p_t^{(\leq i)}\p_x^{(\leq j)} \left((\calH - \calH_{\mathrm{out}}) v + [\calH, \eta_0]v + (1 - \eta_0)f_2\right)\|_{L^2 L^2} \\
        &\aleq \|\p_x^{(\leq N + s_c')} u_0\|_{L^2} + \sum_{2i + j \leq N + s_c'} \left(\|\p_t^{(\leq i)}\p_x^{(\leq j)} f\|_{\brk{x}^{-1}L^2L^2 + L^1L^2} + \|\p_t^{(\leq i)}\p_x^{(\leq j)} f_2\|_{\brk{x}^{-1}L^2L^2}\right),
    \end{align*}
    where the estimate is indifferent to $\beta_c \geq 0$ due to the compact support.

    We finish the proof by combining the estimates for $v, \td w_1$ and $\td w_2$ and taking $\inf_{f_1 + f_2 = f}$.
\end{proof}
\begin{remark}
    The proof follows from a simple Duhamel argument if one does not seek to strengthen the statement to $\bt_c = 0$.
\end{remark}

\smallskip
Now we are ready to prove Proposition~\ref{prop:est-calLu}.
Choose $T_0$ to be sufficiently large so that Propositions and Corollaries~\ref{prop:elliptic-int}~and~\ref{cor:elliptic-rad} hold for $t > T_0$.
On the time interval $[T_0, t]$, we apply Proposition~\ref{prop:iled*-L1L2+LE*} to \[
    (i\p_t - \calH) \calL u = -[\calL, i\p_t - \calH]u + \calL f.
\]
We discuss the two cases.

{\it Case 1: $\calK = \0$. }
Applying Proposition~\ref{prop:iled*-L1L2+LE*},
\[\begin{aligned}
    \|\p_x^{(\leq N)} \calL u\|_{L^\infty L^2[T_0, t]} \aleq \|\p_x^{(\leq N + s_c')} \calL u(T_0)\|_{L^\infty L^2[T_0, t]} + \|\p_x^{(\leq N + s_c^\prime)} [\calL, i\p_t - \calH] u\|_{(\brk{x}^{-1}L^2L^2+L^1L^2)[T_0, t]} \\
    + \|\p_x^{(\leq N + s_c^\prime)}\calL f\|_{(\brk{x}^{-1}L^2L^2+L^1L^2)[T_0, t]}
\end{aligned}
\]
for any $0 \leq N \leq M_c$.
Then we would use Proposition~\ref{prop:good-comm} to unpack the second term on the right and decompose its integration domain according to Remark~\ref{rem:zones}, i.e., the near zone $\{r \lesssim t^{\frac12}\}$, the intermediate zone $\{t^{\frac12} \lesssim r \lesssim t\}$, and the far-away zone, which is their complement. The estimates below work for $N+s_c^\prime \leq M_c$.
In the near zone, we apply Proposition~\ref{prop:elliptic-near} to the $L^2L^2$-based norm, and, therefore, \[\begin{aligned}
    \|\brk{x}\p_x^{(\leq N + s_c^\prime)} [\calL, i\p_t - \calH] u\|&_{L^2 [T_0, t]L^2(r \lesssim t^{\frac12})}  \lesssim \left\|t^{\frac12(\frac12 - 2\eta)} \left( t^{-1}\|r^\frac12 \p_x^{(\leq N + s_c^\prime)} \calL u\|_{L^2(r \lesssim t^{\frac12})} \right.\right.\\
    &\qquad\qquad\qquad\qquad\qquad\qquad\left.\left.+ t\|\p_x^{(\leq N + s_c^\prime)} u\|_{L^\infty(r \aeq t^{\frac12})}\right)\right\|_{L^2_t[T_0, t]} \\
    &\qquad\ \ \lesssim \|t^{-\frac12 - \eta}\p_x^{(\leq N + s_c^\prime)} \calL u\|_{L^2L^2[T_0, t]} + \|t^{-\frac12 - \eta}\|t^{\frac32}\p_x^{(\leq N + s_c^\prime)} u\|_{L^\infty_x}\|_{L^2_t[T_0, t]}.
\end{aligned}
\]
In the intermediate zone, we apply Proposition~\ref{prop:elliptic-int} to the $L^1L^2$-based norm, and, therefore, \[
    \|\p_x^{(\leq N + s_c^\prime)}[\calL, i\p_t - \calH] u\|_{L^1[T_0, t]L^2(t^{\frac12} \lesssim r \lesssim t)}
    \lesssim \|t^{-1 - \eta} \|\p_x^{(\leq N + s_c^\prime)}\calL u\|_{L^2_x}\|_{L^1[T_0, t]} + \|t^{\frac14-\eta-1}\|t^{\frac32}\p_x^{(\leq N + s_c^\prime)} u\|_{L^\infty_x}\|_{L^1_t[T_0, t]}.
\]
In the far-away zone, we apply Proposition~\ref{prop:elliptic-rad} to the $L^1L^2$-based norm. Thus, \[
    \|\p_x^{(\leq N + s_c^\prime)}[\calL, i\p_t - \calH] u\|_{L^1[T_0, t]L^2(r \gtrsim t)}
    \lesssim \|t^{-1 - 2\eta} \|\p_x^{(\leq N + s_c^\prime)}\calL u\|_{L^2_x}\|_{L^1[T_0, t]} + \|t^{-1 - 2\eta}\|\p_x^{(\leq N + s_c^\prime)} u\|_{L^2_x}\|_{L^1_t[T_0, t]}.
\]
This finishes the proof.

{\it Case 2: $\calK \neq \0$. }
Our strategy is still to apply Proposition~\ref{prop:iled*-L1L2+LE*}. In view of the extra derivatives in \ref{hyp:iled*} in this case, we need to invoke Corollaries~ \ref{cor:elliptic-near}, \ref{cor:elliptic-int}, and \ref{cor:elliptic-rad}, respectively, in the three regions above. Therefore, extra terms of the following form will be produced \begin{align*}
    \sum_{2i + j \leq N + s_c'}&\left(\|t^{-\frac12-\eta} \p_t^{(\leq i)} \p_x^{(\leq j)} \calL f\|_{L^2L^2[T_0, t]} +  \|t^{-1 - \eta} \|\p_t^{(\leq i)}\p_x^{(\leq j)}\calL f\|_{L^2_x}\|_{L^1[T_0, t]} \right.\\
    & \left.+ \|t^{1 - \eta}\|\p_t^{(\leq i)}\p_x^{(\leq j)} f\|_{L^\infty_x}\|_{L^2_t[T_0, t]} + \|t^{1 - 2\eta}\|\p_t^{(\leq i)}\p_x^{(\leq j)} f\|_{L^2_x}\|_{L^2_t[T_0, t]}\right),
\end{align*}
where we bound terms of the form $\p_t^i \p_x^j u$ via \begin{align*}
    i\p_t^i\p_x^j u &= \p_t^{i-1}\p_x^j\calH u + \p_t^{i-1}\p_x^j f.
\end{align*}
This completes the proof of Proposition~\ref{prop:est-calLu}. \hfill \qedsymbol

\begin{remark} \label{rem:est-calLu}
In the special case that no loss of derivatives is assumed, i.e. $s_c^\prime = 0$, one could circumvent the iteration process by absorbing the terms involving $\|\p_x^{(\leq N)} \calL u\|_{L^2}$ to the left-hand side since their coefficients become small when $T_0$ is sufficiently large. This observation will be useful later in the proof of Main Theorem~\ref{thm:nonlinear-2}.
\end{remark}

\section{Testing by wave packets}
\label{sec:test-by-wp}

The goal of this section is to prove Propositions~\ref{prop:diff-u-gmmvarphi0-wo-1/tdv-involved} and \ref{prop:dotgmm_decay}.

We will sometimes work with the variable $U$ in this section but the results might be stated in terms of $u$, where in particular, $\|\calL u\|_{L^2} = 4t^\frac12\|\calH U\|_{L^2}$ (see \eqref{eq:conj_calLu_calHU}). In this section, all the results hold under the assumptions in Main~Theorem~\ref{thm:linear}.

To simplify the discussion of the obstacle case, we introduce a shorthand \begin{equation}\label{def:X_mu}
    X_\mu(x) := \chi^\ob(x)x_\mu + \frac12(\p_\mu \chi^\ob) |x|^2
\end{equation}
and hence $L_\mu = X_\mu + 2it\p_\mu$. Recall that $\p_x \Phi^\ob(t, x) = \frac1{2t}X(t, x)$ and we would adopt the convention omitting indices occasionally.

For any multi-index $\alpha$, we write \begin{equation}\label{defn:U-higher-order}
    \p^\alpha u =: t^{-\frac32} e^{i\Phi^\ob} U^{(\alpha)}.
\end{equation}
We also use $U^{(k)}$ notation when $|\alpha| = k$.

\subsection{Auxiliary results for wave packet analysis}

We introduce a few auxiliary results to prepare for the proof.
\begin{lemma}\label{lem:L6-H1-embedding}
    The following Sobolev embedding result holds: \[
        t^{-1}\|Lu\|_{L^6(r \ageq R)} \lesssim t^{-2}\|\calL u\|_{L^2(r \ageq R)} + R^{-\frac12} \|u\|_{L^\infty}.
    \]
\end{lemma}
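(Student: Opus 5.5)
Pass to the conjugated variable $U$ through \eqref{eq:relation-uU}, under which $L_\mu u = t^{-\frac32}e^{i\bfPhi^\ob}\,2it\,\p_\mu U$ and $\calL u = t^{-\frac32}e^{i\bfPhi^\ob}\,4t^2\calH U$. After multiplying through by $t^{\frac32}$, the claim becomes
\begin{equation*}
	\|\nabla U\|_{L^6(r\ageq R)} \aleq \|\calH U\|_{L^2(r\ageq R)} + R^{-\frac12}\|U\|_{L^\infty(r \ageq R)},
\end{equation*}
where the implicit regions on the right are slightly enlarged as usual. Note that $\|t^{-\frac32}U\|_{L^\infty}=\|u\|_{L^\infty}$, so the last term is exactly $R^{-\frac12}\|u\|_{L^\infty}$.

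First I would localize with the cutoff of Corollary~\ref{cor:out-scale-ell-lap}: set $U_{\ageq R}=\chi_{>R}U$, which agrees with $U$ on $\{r>R\}$ and is supported in $\{r>\frac34 R\}$, away from the obstacle once $R>R_0$. Since $\chi_{>R}U$ is compactly supported away from the origin and decays (we work with Schwartz/energy-class data at fixed $t$), it lies in the homogeneous space $\dot H^2$. The homogeneous Sobolev embedding $\dot H^1(\bbR^3)\hookrightarrow L^6$ applied to $\nabla U_{\ageq R}$, followed by the Calder\'on--Zygmund bound $\|\nabla^2 g\|_{L^2}=\|\Delta g\|_{L^2}$ on $\bbR^3$, gives
\[
\|\nabla U\|_{L^6(r>R)}\le\|\nabla U_{\ageq R}\|_{L^6}\aleq\|\nabla^2U_{\ageq R}\|_{L^2}\aleq\|\Delta U_{\ageq R}\|_{L^2}.
\]

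Then \eqref{eq:ell-rad-stronger-U-cut-off} bounds $\|\Delta U_{\ageq R}\|_{L^2}$ by $\|\calH U\|_{L^2(r>\frac12R)}+R^{-\frac12}\|U\|_{L^\infty(r>\frac12 R)}$. Converting back via $t^{\frac12}\|\calH U\|_{L^2}=\tfrac14\|\calL u\|_{L^2}$ and $t^{-1}\|Lu\|_{L^6}=2t^{-\frac32}\|\nabla U\|_{L^6}$ matches the $t$-powers exactly: after multiplying by $t^{-\frac32}$, both sides scale correctly.

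The only step needing care is justifying the use of the homogeneous embedding for $U_{\ageq R}$. This requires $\nabla U_{\ageq R}\in L^6$ a priori, or equivalently that $U_{\ageq R}$ is the $\dot H^2$-limit of test functions. I would handle it by density: approximate by Schwartz functions and use that the right-hand side controls $\|\Delta U_{\ageq R}\|_{L^2}$. Everything else is a direct citation of Corollary~\ref{cor:out-scale-ell-lap}.
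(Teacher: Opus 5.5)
Your proposal is correct and follows the paper's proof essentially verbatim. Both arguments conjugate to $U$, cut off with $\chi_{>R}$ as in Corollary~\ref{cor:out-scale-ell-lap}, apply $\dot H^1\hookrightarrow L^6$ together with $\|\nabla^2 g\|_{L^2}=\|\Delta g\|_{L^2}$, and conclude with \eqref{eq:ell-rad-stronger-U-cut-off}. One wording slip: $\chi_{>R}U$ is supported away from the origin, but it is not compactly supported.
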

\begin{proof}
    It suffices to show that \[
        \|\nabla U\|_{L^6(r \ageq R)} \lesssim \|\calH U\|_{L^2(r \ageq R)} + R^{-\frac12} \|U\|_{L^\infty}.
    \]
    Introducing $U_{\ageq R}$ as in Corollary~\ref{cor:out-scale-ell-lap}, then \[
        \|\nabla U\|_{L^6(r \ageq R)} \lesssim \|\nabla U_{\ageq R}\|_{L^6} \lesssim \|\Delta U_{\ageq R}\|_{L^2},
    \]
    where the rest follows exactly as how we manage the term $\|\Delta U_{\ageq R}\|_{L^2}$ in Lemma~\ref{lem:elliptic-rad}.
\end{proof}

\begin{lemma}[Higher order Sobolev embedding]\label{lem:weighted-embedding-Linfty}
    For any $R \aleq t$, for each $0 \leq N \leq M_c$, the following holds for
    any function $\psi$ :
    \begin{align*}
        \|\p_x^{(\leq N)} \psi\|_{L^\infty(r < R)} \aleq&\, t^{-2}\|r^{\frac12}\p_x^{(\leq N)} L^{(2)}\psi\|_{\ell^\infty L^2(r \aleq R)} + t^{-1}\|r^{-\frac12}\p_x^{(\leq N)} L^{(1)}\psi\|_{\ell^\infty L^2(r \aleq R)} + \|r^{-\frac32}\p_x^{(\leq N)}\psi\|_{\ell^\infty L^2(r \aleq R)}.
    \end{align*}
\end{lemma}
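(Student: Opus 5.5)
The plan is to remove the phase and reduce the estimate to the standard Sobolev embedding $H^{2}\subset L^{\infty}$ in three dimensions, applied on each dyadic annulus with the correct scaling. The starting point is the conjugation identity already used in defining $L_\mu$. Set $W:=e^{-i\bfPhi^\ob}\psi$, with no $t^{\frac32}$ factor. Then $|W|=|\psi|$, and
\begin{equation*}
e^{-i\bfPhi^\ob}L_\mu\psi=2it\,\rd_\mu W,\qquad e^{-i\bfPhi^\ob}L_\mu L_\nu\psi=-4t^{2}\rd_\mu\rd_\nu W .
\end{equation*}
In particular, $t^{-1}|L^{(1)}\psi|\simeq|\nb W|$ and $t^{-2}|L^{(2)}\psi|\simeq|\nb^{2}W|$ pointwise. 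For $N=0$ the claim therefore becomes
\begin{equation*}
\nrm{W}_{L^\infty(r<R)}\aleq \nrm{r^{\frac12}\nb^2W}_{\ell^\infty L^2(r\aleq R)}+\nrm{r^{-\frac12}\nb W}_{\ell^\infty L^2(r\aleq R)}+\nrm{r^{-\frac32}W}_{\ell^\infty L^2(r\aleq R)} .
\end{equation*}

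To prove this, fix a dyadic annulus $\set{\lmb<r<2\lmb}$ with $\lmb\aleq R$. Rescaling it to the unit annulus and applying $H^2\subset L^\infty$ on a slightly enlarged annulus gives
\begin{equation*}
\nrm{W}_{L^\infty(\lmb<r<2\lmb)}\aleq \lmb^{-\frac32}\nrm{W}_{L^2}+\lmb^{-\frac12}\nrm{\nb W}_{L^2}+\lmb^{\frac12}\nrm{\nb^2W}_{L^2},
\end{equation*}
where the $L^2$ norms on the right are taken over $\set{\frac12\lmb<r<4\lmb}$. The powers of $\lmb$ are exactly the weights $r^{-\frac32},r^{-\frac12},r^{\frac12}$, and since the left-hand side is a supremum over annuli, only $\ell^\infty$ in the dyadic parameter is needed. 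The unit ball $\set{r<1}$ is handled by the unscaled embedding. In the obstacle case, the annuli meeting $\rd\calK$ are bounded in number and of bounded size, so on them I would use Sobolev embedding on the Lipschitz (indeed $C^{M_c+2}$) domain $\set{r\aleq 1}\setminus\calK$ via an extension operator; the constants there are fixed.

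For $N\geq1$, I would expand $\rd_x^\alpha\psi=\rd_x^\alpha(e^{i\bfPhi^\ob}W)$ by the Leibniz rule. Each derivative landing on the exponential produces a factor $\frac{i}{2t}X$ or a derivative of such a factor. Since $r\aleq R\aleq t$ on the region considered, we have $|X/t|\aleq1$, and higher derivatives are $O(t^{-1})$. Hence $|\rd_x^{(\leq N)}\psi|\aleq|\rd_x^{(\leq N)}W|$ pointwise on $\set{r\aleq R}$. The $N=0$ argument applied to $\rd^\beta W$ then bounds $\nrm{\rd_x^{(\leq N)}\psi}_{L^\infty(r<R)}$ by the weighted $\ell^\infty L^2$ norms of $\rd_x^{(\leq N)}\nb^{k}W$ for $k=0,1,2$.

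The last step, and the only one needing care, is to return from $W$ to $\psi$. This means expressing $\rd_x^{\beta}\nb^2W$ through $t^{-2}\rd_x^{(\leq N)}L^{(2)}\psi$, and similarly for the lower orders. I would write $\nb^2W=-\tfrac14 t^{-2}e^{-i\bfPhi^\ob}L^{(2)}\psi$ and differentiate: derivatives falling on $e^{-i\bfPhi^\ob}$ again yield bounded factors because $r\aleq t$. This gives
\begin{equation*}
|\rd_x^{(\leq N)}\nb^2W|\aleq t^{-2}|\rd_x^{(\leq N)}L^{(2)}\psi|,
\end{equation*}
and the analogous bound for $\nb W$ in terms of $t^{-1}\rd_x^{(\leq N)}L^{(1)}\psi$. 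Because these are pointwise inequalities with constants uniform for $r\aleq t$, the weights and the $\ell^\infty$ structure pass through unchanged, which completes the argument. The only real point to check is that the assumption $R\aleq t$ is used precisely to keep these phase-derivative factors bounded.
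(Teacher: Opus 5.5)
Your argument is correct and is essentially the paper's proof: remove the phase, apply the dyadic weighted embedding $\ell^\infty H^{2,-\frac32}\subset L^\infty$, and use $r\aleq R\aleq t$ to keep the lower-order terms bounded. The differences are only in bookkeeping. You localize with annulus-wise embeddings instead of the cutoff $\chi_{<R}$, so no cutoff commutators appear. You also move $\p_x$ through the phase $e^{\pm i\bfPhi^\ob}$ rather than through $L$ via $L\p_x^k=\p_x^kL-k\p_x^{k-1}$; this is the same commutator $[L_\mu,\p_\nu]=-\p_\nu X_\mu$ seen from the other side.
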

\begin{proof}
    Choose $\chi_{<R}$ such that $\chi_{<R} \equiv 1$ in $\{r < R\}$ and is supported in $\{r < 2R\}$. Set $\psi_R := \chi_{<R}\psi$. Translating the standard Sobolev embedding $\ell^\infty H^{2, -\frac32} \subset L^\infty$ into conjugated derivatives, it implies that
    \begin{align*}
        \|\p_x^{(\leq N)} \psi_R\|_{L^\infty} \aleq t^{-2}\|r^{\frac12}L^{(2)}\p_x^{(\leq N)} \psi_R\|_{\ell^\infty L^2} + t^{-1}\|r^{-\frac12}L^{(1)}\p_x^{(\leq N)} \psi_R\|_{\ell^\infty L^2} + \|r^{-\frac32}\p_x^{(\leq N)}\psi_R\|_{\ell^\infty L^2}.
    \end{align*}
    Noting the commutator structure \begin{equation}\label{eq:comm-L-k-der}
    L\p_x^k = \p_x L\p_x^{k-1}  - \p_x^{k-1} = \cdots = \p_x^k L - k\p_x^{k-1},
    \end{equation}
    we have \[
        [L^{(2)}, \p_x^k] = O(1)\p_x^{(\leq k-1)}L + O(1)\p_x^{(\leq k-1)}.
    \]
    Thus, noting our support assumption of $\psi_R$, which allows us to trade $t$-decay with $r$-decay, we have \[
       \|\p_x^{(\leq N)} \psi_R\|_{L^\infty} \aleq t^{-2}\|r^{\frac12}\p_x^{(\leq N)} L^{(2)}\psi_R\|_{\ell^\infty L^2} + t^{-1}\|r^{-\frac12}\p_x^{(\leq N)} L^{(1)}\psi_R\|_{\ell^\infty L^2} + \|r^{-\frac32}\p_x^{(\leq N)}\psi_R\|_{\ell^\infty L^2}.
    \]
    Then we seek to commute the $\chi_{<R}$. It leads to
    \begin{align*}
       &\|r^{-\frac32}\p_x^{(\leq N)}\psi_R\|_{\ell^\infty L^2} \aleq \|r^{-\frac32}\p_x^{(\leq N)}\psi\|_{\ell^\infty L^2(r \aleq R)}, \\
       &t^{-1}\|r^{-\frac12}\p_x^{(\leq N)} L^{(1)}\psi_R\|_{\ell^\infty L^2} \aleq t^{-1}\|r^{-\frac12}\p_x^{(\leq N)} (\chi_{<R} L^{(1)}\psi)\|_{\ell^\infty L^2} + R^{-\frac32}\|\p_x^{(\leq N)} \psi\|_{\ell^\infty L^2(r \aeq R)} \\
       \aleq&\, t^{-1}\|r^{-\frac12}\p_x^{(\leq N)}  L^{(1)}\psi\|_{\ell^\infty L^2(r \aleq R)} + t^{-1}R^{-\frac32}\|\p_x^{(\leq N-1)} L^{(1)}\psi\|_{\ell^\infty L^2(r \aeq R)} + R^{-\frac32}\|\p_x^{(\leq N)} \psi\|_{L^2(r \aeq R)},
    \end{align*}
    where the last term in the second estimate can be controlled in terms of the first estimate.
    Moreover,
    \begin{align*}
        &t^{-2}\|r^{\frac12}\p_x^{(\leq N)} L^{(2)}\psi_R\|_{\ell^\infty L^2} \aleq t^{-2}\|r^{\frac12}\p_x^{(\leq N)} L^{(1)} (\chi_{<R}L^{(1)}\psi)\|_{\ell^\infty L^2} + t^{-1}R^{-\frac12}\|\p_x^{(\leq N)} L^{(1)}\psi\|_{L^2(r \aeq R)} \\
        \aleq&\, t^{-2}\|r^{\frac12}\p_x^{(\leq N)} (\chi_{<R}L^{(2)}\psi)\|_{\ell^\infty L^2} + t^{-1}R^{-\frac12}\|\p_x^{(\leq N)} L^{(1)}\psi\|_{L^2(r \aeq R)} \\
        \aleq&\, t^{-2}\|r^{\frac12}\p_x^{(\leq N)} L^{(2)}\psi\|_{\ell^\infty L^2(r \aleq R)} + t^{-2}R^{-\frac12} \|\p_x^{(\leq N-1)} L^{(2)}\psi\|_{L^2(r \aeq R)} + t^{-1}R^{-\frac12}\|\p_x^{(\leq N)} L^{(1)}\psi\|_{L^2(r \aeq R)},
    \end{align*}
    where the last term can be controlled in terms of the previous estimate. Therefore, combining the three estimates, it leads to the desired estimate.
\end{proof}

\begin{lemma}\label{lem:near-zone-asymp-zero-order}
    There exists $T_0$ such that for all $t > T_0$, for $0 < \delta < \delta_1 \ll 1$, we have \begin{align*}
        t^{-\frac32}\|U - c\varphi_0\|_{L^\infty_x(r \aleq t^{\frac12 + \dlt})} \aleq t^{-2}t^{\frac14+\frac{\dlt_1}{2}}\log t\|\calL u\|_{L^2}
        + (t^{-(\dlt_{1} - \dlt)} + t^{-\eta (\frac{1}{2}+\dlt)})\|u\|_{L^\infty},
    \end{align*}
    where $c = c[U]$ is defined as \begin{equation}\label{eq:const_c}
        c = c[U](t) := \frac1{4\pi} \int |x|^{-1} [\calH, \chi_{<1}(t^{-\frac12 - \dlt_1}x)] U(t, x)\, \ud x
    \end{equation}
    and $\chi_{<1}$ is a cut-off function adapted to unit balls in the sense that $\chi_{<1}(t^{-\frac12-\dlt_1}) = 1$ on the region $\{r \aleq t^{\frac12+\dlt}\}$ in which the left-hand side is restricted.
\end{lemma}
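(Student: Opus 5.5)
We plan to localize $U$ at scale $\rho := t^{\frac12+\dlt_1}$ and apply the asymptotic formula of Proposition~\ref{prop:ell-asymp}. Set $\chi_\rho := \chi_{<1}(\rho^{-1}x)$, $U_\rho := \chi_\rho U$, and write
\begin{equation*}
	\calH U_\rho = \chi_\rho \calH U + [\calH, \chi_\rho] U =: F_1 + F_2 .
\end{equation*}
Since $U_\rho$ is compactly supported (and vanishes on $\rd\calK$), it lies in the domain where $\calH^{-1}$ from Proposition~\ref{prop:w-ell-0} acts as a left inverse. Hence $U_\rho = \calH^{-1}F_1 + \calH^{-1}F_2$. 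On $\{r \aleq t^{\frac12+\dlt}\}$ we have $U = U_\rho$, so it suffices to estimate $\calH^{-1}F_1$ and $\calH^{-1}F_2 - c\varphi_0$ in $L^\infty(r\aleq t^{\frac12+\dlt})$. We will use the embedding $\ell^\infty H^{2,-\frac32}\subset L^\infty$, the same one used in Lemma~\ref{lem:weighted-embedding-Linfty} at $N=0$.

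\emph{Term $F_2$.} The commutator $[\calH,\chi_\rho]$ is supported in the annulus $\{r\aeq\rho\}$, so $F_2$ plays the role of $f_R$ in Proposition~\ref{prop:ell-asymp} with $R=\rho$. Indeed, $\frkc_{(0)}[F_2]$ is exactly the constant $c$ in \eqref{eq:const_c}. We apply the proposition with $\lmb := \rho/t^{\frac12+\dlt} = t^{\dlt_1-\dlt}$, which gives
\begin{equation*}
	\nrm{\calH^{-1}F_2 - c\varphi_0}_{\ell^\infty H^{2,-\frac32}(r<t^{\frac12+\dlt})} \aleq \bigl(t^{-(\dlt_1-\dlt)} + t^{-\eta(\frac12+\dlt)}\bigr)\nrm{F_2}_{H^{0,\frac12}} .
\end{equation*}
To bound $\nrm{F_2}_{H^{0,\frac12}}$, note that $F_2 = O(\rho^{-1})\nabla U + O(\rho^{-2})U$ on $r\aeq\rho$, so
\begin{equation*}
	\nrm{F_2}_{H^{0,\frac12}} \aleq \rho^{-\frac12}\nrm{\nabla U}_{L^2(r\aeq\rho)} + \rho^{-\frac32}\nrm{U}_{L^2(r\aeq\rho)} .
\end{equation*}
We control $\nabla U$ on this annulus by Proposition~\ref{prop:dyadic-ell-0} (equivalently Corollary~\ref{cor:out-scale-ell-loc}), which yields the bound
\begin{equation*}
	\nrm{F_2}_{H^{0,\frac12}} \aleq \rho^{\frac12}\nrm{\calH U}_{L^2(r\aeq\rho)} + \nrm{U}_{L^\infty} .
\end{equation*}
After multiplying by $t^{-\frac32}$, the $\nrm{U}_{L^\infty}$ contribution becomes $\nrm{u}_{L^\infty}$, giving the second term of the claim. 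The $\calH U$ contribution is $t^{-\frac32}\rho^{\frac12}\nrm{\calH U}_{L^2} = t^{-2}\rho^{\frac12}\nrm{\calL u}_{L^2}/4$, by $\nrm{\calL u}_{L^2} = 4t^{\frac12}\nrm{\calH U}_{L^2}$. This is even smaller than needed, since it carries the extra decay factor.

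\emph{Term $F_1$.} By Proposition~\ref{prop:w-ell-0},
\begin{equation*}
	\nrm{\calH^{-1}F_1}_{\ell^\infty H^{2,-\frac32}} \aleq \nrm{r^{\frac12}\calH U}_{\ell^1 L^2(r\aleq\rho)} \aleq (\log t)\,\rho^{\frac12}\nrm{\calH U}_{L^2},
\end{equation*}
where the second step uses Cauchy--Schwarz over the $O(\log t)$ dyadic annuli, with $r^{\frac12}\le\rho^{\frac12}$ on each. Converting back to $u$, this is $t^{-\frac32}\cdot t^{-\frac12}(\log t)\, t^{\frac14+\frac{\dlt_1}2}\nrm{\calL u}_{L^2}$, which is precisely the first term on the right-hand side. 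Combining the two terms and applying the $L^\infty$ embedding finishes the proof.

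\emph{Main obstacle.} The step needing the most care is the application of Proposition~\ref{prop:ell-asymp}. First, $F_2$ has no mean-zero structure, so its leading part must be captured through $\frkc_{(0)}$. Second, the compatibility condition $\lmb^{-1}R > 3\dist(\rd\calK,0)$ must be checked in the obstacle case. Third, we must make sure $F_2$ is only $L^2$-regular on the annulus, with no boundary issue; this holds because $\supp\nabla\chi_\rho$ lies far from $\calK$ once $T_0$ is large.
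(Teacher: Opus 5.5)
Your proposal is correct and follows essentially the same route as the paper's proof: localize with $\chi_{<1}(t^{-\frac12-\dlt_1}x)$, split $\calH(\chi_{<1}U)$ into $\chi_{<1}\calH U$ plus the commutator, invert $\calH$, and then bound the two pieces separately. The first piece is handled by Proposition~\ref{prop:w-ell-0}, the commutator piece by Proposition~\ref{prop:ell-asymp} with $\lmb = t^{\dlt_1-\dlt}$, and the annular norms at scale $t^{\frac12+\dlt_1}$ by Proposition~\ref{prop:dyadic-ell-0}. (As a side remark, your Cauchy--Schwarz step over the dyadic annuli actually yields $\rho^{\frac12}\nrm{\calH U}_{L^2}$ with no logarithm, since $\sum_{2^j\aleq\rho}2^{j}\aleq\rho$, so the $\log t$ in the statement is just slack.)
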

\begin{proof}
    We consider the formula \begin{equation}\label{eq:identity-to-invert-calH}
        \calH (\chi_{<1}(t^{-\frac{1}{2}-\dlt_{1}} x) U(t, x)) = \chi_{<1}(t^{-\frac{1}{2}-\dlt_{1}} x) \calH U(t, x) + [\calH, \chi_{<1}(t^{-\frac{1}{2}-\dlt_{1}} x)] U(t, x).
    \end{equation}
    Then we could obtain the following refinement in the smaller (original) region :
\begin{equation*}
    \begin{aligned}
        \|U - c\varphi_0\|_{L^\infty_x({r < t^{\frac12 + \delta})}} &\aleq \|\chi_{<1}(t^{-\frac12-\dlt_1}x)U - c\varphi_0\|_{L^\infty_x({r < t^{\frac12 + \delta})}} \aleq \|\chi_{<1}(t^{-\frac12-\dlt_1}x) U - c\varphi_0\|_{\ell^\infty_r H^{2, -\frac{3}{2}}(r \aleq t^{\frac12 + \delta})} \\
        &\aleq \nrm{\calH U}_{\ell^1_r H^{0,\frac12}(r \aleq t^{\frac12 + \dlt_1})} + (t^{-(\dlt_{1} - \dlt)} + t^{- \eta (\frac{1}{2}+\dlt)}) \nrm{U(t, \cdot)}_{H^{2, -\frac{3}{2}}(\set{r \aeq t^{\frac{1}{2}+\dlt_{1}}})},
    \end{aligned}
\end{equation*}
where the first two inequalities are due to the support property $\set{\abs{x} < t^{\frac{1}{2}+\dlt}} \subset \{ \chi_{<1}(t^{-\frac12-\dlt_1}x) = 1\}$ and Lemma~\ref{lem:weighted-embedding-Linfty}. To achieve the third inequality, we first invert $\calH$ in \eqref{eq:identity-to-invert-calH} and apply Propositions~\ref{prop:w-ell-0} and \ref{prop:ell-asymp} to the first and second terms on the right-hand side, respectively.
Rewriting the last term into $u$-notation and applying Proposition~\ref{prop:dyadic-ell-0}, we obtain
\begin{equation}\label{eq:est-U-wt-Sob-norm-aux}
    t^{-\frac32}\nrm{U(t, \cdot)}_{H^{2, -\frac{3}{2}}(\set{r \aeq t^{\frac{1}{2}+\dlt_{1}}})} \lesssim t^{-2} t^{\frac14 + \frac{\dlt_1}{2}}\|\calL u\|_{L^2} + \|u\|_{L^\infty}.
\end{equation}
Combining everything, the proof follows.
\end{proof}
\begin{remark}
    It might be tempting to state that one can directly replace $U$ by $U^{(k)}$ and obtain a similar result. It indeed is the case, but what we would get on the right-hand side is $\|\calL \p_x^{(\leq N)}u\|_{L^2}$ instead of $\|\p_x^{(\leq N)} \calL u\|_{L^2}$. In this setting, Proposition~\ref{prop:elliptic-near} appears to be indispensable for controlling the commutator. However, the primary purpose of exploiting the additional structure in Proposition~\ref{prop:ell-asymp} is to avoid such a loss, rendering the resulting estimate ineffective for our purposes. Instead, we keep track of the function $U - c\varphi_0$, then Proposition~\ref{prop:elliptic-near} is applicable since we already have a gain of decay at the zeroth order for it.
\end{remark}

\begin{corollary}\label{cor-ptwise-est-higher-order}
    Given any $\dlt \in [0, \frac12]$, for each $1 \leq N \leq M_c$, the following holds for
    any function $\psi$:
    \begin{align*}
        \|\p_x^{(\leq N)} \psi\|_{L^\infty(r \aleq t^{\frac12+\dlt})} \aleq t^{-2}\|r^{\frac12}\p_x^{(\leq N)}\calL \psi\|_{\ell^1 L^2(r \aleq t^{\frac12+\dlt})} + \|\psi\|_{L^\infty}.
    \end{align*}
\end{corollary}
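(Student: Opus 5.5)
The plan is to combine the higher order Sobolev embedding Lemma~\ref{lem:weighted-embedding-Linfty} with the near-zone elliptic estimate Proposition~\ref{prop:elliptic-near}. A direct combination loses, because Proposition~\ref{prop:elliptic-near} as stated is fine but the embedding needs control on a slightly larger region. We therefore work on $r \aleq t^{\frac12+\dlt}$ with a fixed enlargement constant. First apply Lemma~\ref{lem:weighted-embedding-Linfty} with $R \aeq t^{\frac12+\dlt}$ (admissible since $\dlt \le \frac12$ gives $R\aleq t$). This yields
\begin{equation*}
\|\p_x^{(\leq N)} \psi\|_{L^\infty(r < R)} \aleq t^{-2}\|r^{\frac12}\p_x^{(\leq N)} L^{(2)}\psi\|_{\ell^\infty L^2(r \aleq R)} + t^{-1}\|r^{-\frac12}\p_x^{(\leq N)} L^{(1)}\psi\|_{\ell^\infty L^2(r \aleq R)} + \|r^{-\frac32}\p_x^{(\leq N)}\psi\|_{\ell^\infty L^2(r \aleq R)}.
\end{equation*}

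The right-hand side is exactly the left-hand side of Proposition~\ref{prop:elliptic-near}, up to a harmless change of the constant in front of $t^{\frac12+\dlt}$. To handle this, I would rerun the proposition with $\dlt$ replaced by $\dlt' = \dlt + \frac{\log C}{\log t}$, or simply observe that its proof (dyadic cutoffs plus induction) is insensitive to fixed multiplicative enlargements of the radius. Invoking Proposition~\ref{prop:elliptic-near} then bounds everything by
\begin{equation*}
t^{-2}\|r^{\frac12}\p_x^{(\leq N)}\calL \psi\|_{\ell^1 L^2(r \aleq t^{\frac12+\dlt})} + \|\psi\|_{L^\infty},
\end{equation*}
which is the claim for $N\le M_c$.

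The point deserving care is that Proposition~\ref{prop:elliptic-near} is stated for solutions, but its proof only uses the elliptic structure at a fixed time and \ref{hyp:af-0}, so it applies to an arbitrary $\psi$. Concretely, the ingredients are the conjugation \eqref{eq:conj_calLu_calHU}, the dyadic interior regularity (Proposition~\ref{prop:dyadic-ell-0}), boundary regularity near $\calK$, and the commutator expansion trading $\p_x$ for $t^{-1}L$. The non-localized error $\|\psi\|_{L^\infty}$ arises at each dyadic scale from $\lmb^{-\frac32}\|\Psi\|_{L^2(r\aeq\lmb)}\aleq \|\Psi\|_{L^\infty}$. Since only $\ell^\infty$ is needed on the left, no summation loss occurs.

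I expect the only real obstacle to be the inductive step in $N$. Commuting $\p_x^k$ past $L^{(2)}$ (via \eqref{eq:comm-L-k-der}) and past $\calL$ produces lower order terms, and these must again be reducible to $\|\psi\|_{L^\infty}$ rather than to $\|\p_x^{(\le N)}\psi\|_{L^\infty}$. This is exactly what the induction in Proposition~\ref{prop:elliptic-near} achieves. In that induction the third term is handled through $t^{-1}L^{(1)}$ and the induction hypothesis, using $r\aleq t$ to convert the weights.
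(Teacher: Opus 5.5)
Your proposal is correct and matches the paper's proof: the paper takes $R \aeq t^{\frac12+\dlt}$ in Lemma~\ref{lem:weighted-embedding-Linfty} and then bounds the resulting weighted norms using Proposition~\ref{prop:elliptic-near}. One minor point is that Proposition~\ref{prop:elliptic-near} is already stated for arbitrary functions at a fixed time (only Corollary~\ref{cor:elliptic-near} refers to solutions), so your extra justification that it applies to a general $\psi$ is not needed.
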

\begin{proof}
    Setting $R \aeq t^{\frac12 + \dlt}$, the result follows from Lemma~\ref{lem:weighted-embedding-Linfty} and Proposition~\ref{prop:elliptic-near}.
\end{proof}
\begin{remark}
    Note that if Proposition~\ref{prop:elliptic-near} instead has a localized higher order $L^2$-error, then one could estimate higher order errors by \begin{align*}
        R^{-\frac32}\|\p_x^{(1\leq \cdot\leq N)} \psi\|_{L^2(r \aeq R)} &\aleq R^{-\frac32}t^{-1}\|\p_x^{(\leq N-1)} L^{(1)} \psi\|_{L^2(r \aeq R)} + R^{-\frac32}t^{-1}R\|\p_x^{(\leq N-1)} \psi\|_{L^2(r \aeq R)} \\
        &\aleq (t^{-1}R + R^{-1})(t^{-2}\|r^{\frac12}\p_x^{(\leq N-1)}\calL \psi\|_{L^2(r\aeq R)} + \|\p_x^{(\leq N-1)}\psi\|_{L^\infty(r \aeq R)})
    \end{align*}
    and iteratively estimate the rest.
\end{remark}

\begin{corollary}\label{cor:near-zone-asymp-higher-order}
    There exists $T_0$ such that for any $0 < \delta < \delta_1 \ll 1$, and $1 \leq N \leq M_c$, the following estimate holds for all $t > T_0$, \begin{align*}
        \|\p_x^{(\leq N)}(u - ce^{i\bfPhi^\ob}\varphi_0)\|_{L^\infty_x(r \aleq t^{\frac12 + \dlt})} \aleq t^{-2}t^{\frac14+\frac{\dlt_1}{2}}\log t\|\p_x^{(\leq N)}\calL u\|_{L^2}
        + (t^{-(\dlt_{1} - \dlt)} + t^{- \eta (\frac{1}{2}+\dlt)})\|u\|_{L^\infty},
    \end{align*}
    where $c$ is defined as in Lemma~\ref{lem:near-zone-asymp-zero-order}.
\end{corollary}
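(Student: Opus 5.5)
The plan is to apply Corollary~\ref{cor-ptwise-est-higher-order} to the function $\psi := u - c\, t^{-\frac32} e^{i\bfPhi^\ob}\varphi_0$, with the constant $c = c[U](t)$ frozen at the fixed time $t$. I would work with the normalization $\psi = t^{-\frac32}e^{i\bfPhi^\ob}(U - c\varphi_0)$, so that the prefactor $t^{-\frac32}$ matches Lemma~\ref{lem:near-zone-asymp-zero-order}. Corollary~\ref{cor-ptwise-est-higher-order} (with $\dlt$ replaced by $\dlt_1$, or by $\dlt$ and then enlarging the region) gives
\begin{equation*}
\|\p_x^{(\leq N)}\psi\|_{L^\infty(r\aleq t^{\frac12+\dlt})} \aleq t^{-2}\|r^{\frac12}\p_x^{(\leq N)}\calL\psi\|_{\ell^1L^2(r\aleq t^{\frac12+\dlt})} + \|\psi\|_{L^\infty(r \aleq t^{\frac12+\dlt})}.
\end{equation*}
The $L^\infty$ term is exactly the zeroth-order quantity controlled by Lemma~\ref{lem:near-zone-asymp-zero-order}. (Strictly, the corollary has a non-localized $\|\psi\|_{L^\infty}$; I would use the localized version from the remark after Proposition~\ref{prop:elliptic-near}, or cut off $\psi$ with $\chi_{<1}(t^{-\frac12-\dlt_1}x)$ first so that only the near region enters. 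This localization is where I expect the main bookkeeping difficulty.)

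For the $\calL\psi$ term, I would use the conjugation identity \eqref{eq:conj_calLu_calHU}, which gives $\calL(t^{-\frac32}e^{i\bfPhi^\ob}\varphi_0) = 4t^{\frac12}e^{i\bfPhi^\ob}\calH\varphi_0 = 0$. Thus $\calL\psi = \calL u$, and the spatial derivatives pass through unchanged. It remains to convert $t^{-2}\|r^{\frac12}\p_x^{(\leq N)}\calL u\|_{\ell^1L^2(r\aleq t^{\frac12+\dlt})}$ into an unweighted $L^2$ norm. By Cauchy--Schwarz over dyadic shells, with $r^{\frac12}\leq t^{\frac14+\frac\dlt2}$ and about $\log t$ shells, this is $\aleq t^{-2}t^{\frac14+\frac{\dlt_1}{2}}\log t\,\|\p_x^{(\leq N)}\calL u\|_{L^2}$, matching the claim.

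Finally, I would bound the zeroth-order term $\|t^{-\frac32}(U-c\varphi_0)\|_{L^\infty}$ by Lemma~\ref{lem:near-zone-asymp-zero-order}, which yields the $t^{-2}t^{\frac14+\frac{\dlt_1}{2}}\log t\|\calL u\|_{L^2}$ and $(t^{-(\dlt_1-\dlt)}+t^{-\eta(\frac12+\dlt)})\|u\|_{L^\infty}$ contributions. The potential obstacle is that Corollary~\ref{cor-ptwise-est-higher-order} needs $\psi$ on a slightly larger region (of size $t^{\frac12+\dlt'}$ with $\dlt<\dlt'<\dlt_1$) than where Lemma~\ref{lem:near-zone-asymp-zero-order} is stated. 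I would handle this by applying the lemma with $\dlt$ replaced by $\dlt'$ and noting that the rates $t^{-(\dlt_1-\dlt')}$ remain positive, absorbing the small loss. If a genuinely non-localized $\|\psi\|_{L^\infty}$ term appears instead, I would bound $|c|\aleq t^{\frac32}\|u\|_{L^\infty}$ directly from \eqref{eq:const_c}, since the commutator $[\calH, \chi_{<1}(t^{-\frac12-\dlt_1}\cdot)]$ is supported where $r\aeq t^{\frac12+\dlt_1}$ and has size $t^{-1-2\dlt_1}$, and then control the far region $r \ageq t^{\frac12+\dlt_1}$ of $\psi$ by the decay of $\tilde\varphi_0$ from Corollary~\ref{cor:decay_varphi0}.
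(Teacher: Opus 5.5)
Your proposal is correct and is the paper's argument: apply Corollary~\ref{cor-ptwise-est-higher-order} to $\psi = t^{-\frac32}e^{i\bfPhi^\ob}(U - c\varphi_0)$ (your $t^{-\frac32}$ normalization is the right reading of the statement), use $\calL\psi = \calL u$ from \eqref{eq:conj_calLu_calHU} together with $\calH\varphi_0 = 0$, and bound the $L^\infty$ error term by Lemma~\ref{lem:near-zone-asymp-zero-order}; the dyadic proof of Proposition~\ref{prop:elliptic-near} does produce that error localized to $\{r \aleq t^{\frac12+\dlt}\}$, which is all the argument needs. Your final fallback would not work, though: on $\{r \ageq t^{\frac12+\dlt_1}\}$ the function $\psi$ contains $u$ itself (and $c\,t^{-\frac32}\varphi_0$ with $\varphi_0 \to 1$, not decaying), so no small factor in front of $\|u\|_{L^\infty}$ can be gained there, which is why the localization is essential rather than optional.
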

\begin{proof}
    Take $\psi := u - ce^{i\bfPhi^\ob}\varphi_0$ in the previous result and apply Lemma~\ref{lem:near-zone-asymp-zero-order}. \qedhere
\end{proof}

\subsection{Proof of Lemma~\ref{lem:wp-aux}}\label{sec:proof_lem:wp-aux}
First, \eqref{eq:dv-gmm} follows directly from the convolution form of $\gmm$ in \eqref{eq:defn_gmm}; the proof is standard.

Next, we prove \eqref{eq:relation-der-0th-w-higher-order-gmm}. 
We consider the case of $\calK \neq \0$ in detail; the case $\calK = \0$ is simpler. Given $u|_{[0, \infty) \times \p\calK} = 0$, we compute
\begin{equation}\label{eq:prop-diff-u-gmmvarphi0-w-1/tdv-involved-eq-0th}
    \begin{split}
    \frac1{t}\p_v (\gmm(t, v)e^{i\bfPhi^\ob(t, tv)}) &= \left\langle u(t, x), e^{i\bfPhi^\ob(t, x)} (-\tfrac{iX(tv)}{2t} + \tfrac1{t}\p_v)\chi^v\right\rangle_x e^{i\bfPhi^\ob(t, tv)} \\
    &= \left\langle (\tfrac{iX(tv)}{2t}+\p_x) (u(t, x)e^{-i\bfPhi^\ob(t, x)}), \chi^v(t, x)\right\rangle_x e^{i\bfPhi^\ob(t, tv)} \\
    &= \left\langle \tfrac{i(X(tv) - X(x))}{2t} u(t,x) + \p_x u(t, x), \Psi_v(t, x)\right\rangle_x e^{i\bfPhi^\ob(t, tv)} \\
    &= \left\langle \tfrac{i(X(tv) - X(x))}{2t} u(t,x), \Psi_v(t, x)\right\rangle_x e^{i\bfPhi^\ob(t, tv)} + \gmm[\p_x u](t, v) e^{i\bfPhi^\ob(t, tv)}.
    \end{split}
\end{equation}
Though higher derivatives do not satisfy the Dirichlet boundary condition as the zeroth order, one could just keep track of the boundary terms and put them into $L^\infty$-norms due to the compactness of $\p\calK$. Note that the last term in \eqref{eq:prop-diff-u-gmmvarphi0-w-1/tdv-involved-eq-0th} has exactly the same structure as the one to start with, and the penultimate term also has a similar structure with a mild dependence on $v$. Therefore, it is then not hard to see from an induction that \begin{equation*}
\begin{aligned}
    &\left|(\tfrac1{t}\p_v)^k (\gmm(t, v)e^{i\bfPhi^\ob(t, tv)}) - \gmm[\p_x^k u](t, v) e^{i\bfPhi^\ob(t, tv)}\right| \\
    \aleq&\, t\|\p_x^{(\leq k)} u\|_{L^\infty_x} + \|\p_x^{(\leq k)}u(t, \cdot)\|_{L^\infty(\p\calK)} \aleq t\|\p_x^{(\leq k)}u(t, \cdot)\|_{L^\infty_x},
\end{aligned}
\end{equation*}
where we note that $\p_x^{(\leq M_0)} u \in C^0(\overline{\R^3\setminus\calK})$ thanks to \eqref{eq:E-u}. This proves \eqref{eq:relation-der-0th-w-higher-order-gmm}.

\subsection{Proof of Proposition~\ref{prop:diff-u-gmmvarphi0-wo-1/tdv-involved}}\label{sec:proof_prop:diff-u-gmmvarphi0-wo-1/tdv-involved}

We claim that it suffices to prove the following result, and then Proposition~\ref{prop:diff-u-gmmvarphi0-wo-1/tdv-involved} will follow as a corollary:

\begin{proposition}\label{prop:diff-u-gmmvarphi0-w-1/tdv-involved}
    Assume that $\calH$ satisfies \ref{hyp:af-0} and \ref{hyp:se}, then there exists $T_0$ such that for all $t > T_0$, the following pointwise bound holds \begin{align*}
    &\left\|t^{\frac32}(\p_x^k u)(t, tv) - (\tfrac1{t}\p_v)^k(\gmm(t, v)\varphi_0(t, tv)e^{i\bfPhi^\ob(t, tv)})\right\|_{L^\infty_v} \\
    \aleq&\, t^{-\frac14+\frac12\eta} \|\p_x^{(\leq k)}\calL u(t, \cdot)\|_{L^2} + t^{\frac32-\frac{\eta}{4}} \|\p_x^{(\leq k)} u(t, \cdot)\|_{L^\infty} + t^{-\frac14}\|\p_x^{(\leq k)}u(t , \cdot)\|_{L^2}, \hbox{ for all } 0 \leq k \leq M_0.
\end{align*}
\end{proposition}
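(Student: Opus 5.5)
We first treat $k=0$ and then reduce general $k$ to it by an induction in $k$, driven by the identity \eqref{eq:prop-diff-u-gmmvarphi0-w-1/tdv-involved-eq-0th}.

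\emph{The case $k=0$.} Fix $v$ and set $x_0 = tv$. The claim is that $U(t,x_0)$ and $\gmm(t,v)\varphi_0(t,x_0)$ differ by the stated error, where $\gmm(t,v) = t^{-3/2}\int U(t,x)\chi((x-x_0)/\sqrt t)\,\ud x$ is an average of $U$ over the ball $B(x_0,\sqrt t)$ with weight $t^{-3/2}\chi$ of unit mass. We split into two regimes, according to whether $|x_0| \gg t^{1/2+\delta}$ or $|x_0| \aleq t^{1/2+\delta}$, with $\delta$ small, to be tuned against $\eta$.

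\emph{Outer regime.} Here the whole ball $B(x_0,\sqrt t)$ lies in $\{r\gtrsim t^{1/2+\delta}\}$. By Corollary~\ref{cor:decay_varphi0}, $\varphi_0 = 1 + O(r^{-\delta'})$ there. We apply the outer-scale Morrey inequality, Lemma~\ref{lem:Morrey-out-scale}, with $R\aeq t^{1/2+\delta}$:
\[
|U(x)-U(x_0)| \aleq t^{1/4}\bigl(t^{-1/2}\|\calL u\|_{L^2} + t^{3/2}R^{-1/2}\|u\|_{L^\infty}\bigr) \quad \hbox{for } x\in B(x_0,\sqrt t).
\]
Averaging this bound gives $|t^{3/2}u(t,x_0)e^{-i\bfPhi^\ob}-\gmm| \aleq t^{-1/4}\|\calL u\|_{L^2} + t^{3/2-\delta/2}\|u\|_{L^\infty}$. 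The replacement $\varphi_0 \to 1$ costs $|\gmm|\, t^{-\delta'(1/2+\delta)} \aleq t^{3/2-\eta/4}\|u\|_{L^\infty}$ once $\delta'$ is chosen close to $2\eta$. We expect the $L^2$ error term $t^{-1/4}\|u\|_{L^2}$ to enter only through the far region, via \eqref{eq:ell-r>t-U}, if $x_0$ is very far out and the interpolation there needs it.

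\emph{Inner regime.} For $|x_0|\aleq t^{1/2+\delta}$ we use Lemma~\ref{lem:near-zone-asymp-zero-order}, which gives $U = c\varphi_0 + E$ on $\{r\aleq t^{1/2+\delta}\}$ with
\[
t^{-3/2}|E| \aleq t^{-7/4+\delta_1/2}\log t\,\|\calL u\|_{L^2} + \bigl(t^{-(\delta_1-\delta)} + t^{-\eta/2}\bigr)\|u\|_{L^\infty}.
\]
Then $\gmm = c\,\langle \varphi_0,\,t^{-3/2}\chi^v\rangle + \langle E,\,t^{-3/2}\chi^v\rangle$, with the second term controlled by $|E|$. In the first term, $\varphi_0$ varies over the ball. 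When $|x_0| \gg \sqrt t$, $\varphi_0 = 1 + O(t^{-\delta'/2})$ on the ball, so the average differs from $\varphi_0(x_0)$ by an admissible amount. When $|x_0|\aleq\sqrt t$, the ball contains the region where $\varphi_0$ is not close to $1$; in that case we bound $\int |\tilde\varphi_0|\,t^{-3/2}\chi^v \aleq t^{-\delta'/2}$ using the decay $|\tilde\varphi_0|\aleq\brk{x}^{-\delta'}$ integrated over a ball of radius $\sqrt t$. Thus $\gmm = c + O(t^{-\delta'/2}|c|) + O(|E|)$, and $|c| \aleq \|U\|_{L^\infty}$, so $\gmm\varphi_0(x_0) - U(x_0)$ is admissible. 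Matching the exponents fixes $\delta_1 \approx \eta/2$ and $\delta<\delta_1$ with $\delta_1-\delta\ge\eta/4$, which produces the powers $t^{-1/4+\eta/2}$ and $t^{3/2-\eta/4}$ in the statement.

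\emph{Higher $k$.} We induct using \eqref{eq:prop-diff-u-gmmvarphi0-w-1/tdv-involved-eq-0th}. Applying $(t^{-1}\rd_v)^k$ to $\gmm\varphi_0e^{i\bfPhi^\ob}$ and expanding by Leibniz, the terms in which $\rd_v$ hits $\varphi_0(t,tv)$ produce $\rd_x^{\beta}\varphi_0$. The terms in which $\rd_v$ falls on $\gmm e^{i\bfPhi}$ give $\gmm[\rd_x^{\beta_1}u]e^{i\bfPhi}$ up to the error in Lemma~\ref{lem:wp-aux}. Comparing the result with $t^{3/2}\rd_x^k u$, written as $U^{(k)}$, reduces the claim to the $k=0$ argument applied with $\rd_x^{\beta}u$ in place of $u$, using Corollary~\ref{cor:near-zone-asymp-higher-order} in the near zone and Proposition~\ref{prop:elliptic-int-rad} together with Morrey in the outer zone.

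\emph{Main obstacle.} We expect the hardest part to be the inner regime at $|x_0|\aeq\sqrt t$, where the wave packet straddles the transition between the elliptic profile $\varphi_0$ and the free behaviour. There the error bookkeeping must balance $\delta$, $\delta_1$, $\eta$ and the $\log t$ loss simultaneously. We will first try to handle it with the single choice $\delta_1 = \eta/2$, $\delta = \eta/4$.
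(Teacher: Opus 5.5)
Your $k=0$ argument is the paper's argument. In the outer regime, Morrey plus the decay of $\tilde\varphi_0$ are its $D_1$ and $D_2$. In the inner regime, $U=c\varphi_0+E$ from Lemma~\ref{lem:near-zone-asymp-zero-order} reproduces its $D_3+D_4+D_5$. Your bound $\int|\tilde\varphi_0|\,t^{-\frac32}\chi^v\,\ud x\aleq t^{-\delta'/2}$ is a fine substitute for the paper's H\"older step, and $|c|\aleq\|U\|_{L^\infty}$ does hold after integrating by parts in \eqref{eq:const_c}.

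Your parameters, however, are inconsistent. The outer Morrey term is $t^{\frac32-\frac{\dlt}{2}}\|u\|_{L^\infty}$, so $\dlt=\eta/4$ only gives $t^{\frac32-\frac{\eta}{8}}$. You need $\dlt\ge\eta/2$, and hence $\dlt_1\ge 3\eta/4$ to keep $\dlt_1-\dlt\ge\eta/4$. The paper takes $\dlt=\eta/2$ and $\dlt_1=3\eta/4$, and then $t^{-\frac14+\frac{3\eta}{8}}\log t\aleq t^{-\frac14+\frac{\eta}{2}}$.

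The genuine gap is the higher-order step in the near zone $|tv|\aleq t^{\frac12+\dlt}$. There, Corollary~\ref{cor:near-zone-asymp-higher-order} gives the profile of $U^{(\beta)}$ as $c[U]\,(\tfrac{iX}{2t}+\p_x)^{\beta}\varphi_0$. This is the same scalar $c[U]$ times a function that is not a multiple of $\varphi_0$. The inner-regime $k=0$ argument uses exactly that the profile is a multiple of $\varphi_0$. Run with $\p_x^{\beta}u$ in place of $u$, it would assert $U^{(k)}(t,tv)\approx\gmm[\p_x^k u](t,v)\,\varphi_0(t,tv)$, which is false. For $|tv|\aleq 1$ the left side equals $c\,\p_x^k\varphi_0(tv)$ up to admissible errors, generically of size $|c|$. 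Meanwhile \eqref{eq:relation-der-0th-w-higher-order-gmm}, \eqref{eq:dv-gmm} and $|X(tv)|/t\aleq t^{-\frac12+\dlt}$ give $|\gmm[\p_x^{\beta_1}u]|\aleq t^{-\frac12+\dlt}\|U\|_{L^\infty}+t\|\p_x^{(\le |\beta_1|)}u\|_{L^\infty}$ for every $|\beta_1|\ge1$. So in the near zone the whole leading contribution sits in the $\beta_1=0$ Leibniz term $\gmm[u]\,\p_x^k\varphi_0(tv)$, which your reduction never compares with $U^{(k)}$.

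The paper avoids this by not moving derivatives onto $\gmm$ in this zone. It writes $U^{(k)}(tv)-\gmm\,((\tfrac{iX}{2t}+\p_x)^k\varphi_0)(tv)$ as the wave-packet average of $U^{(k)}(tv)-U(t(v-y))\,((\tfrac{iX}{2t}+\p_x)^k\varphi_0)(tv)$. It then splits off three pieces:
\begin{itemize}
\item $[U^{(k)}-c(\tfrac{iX}{2t}+\p_x)^k\varphi_0](tv)$, handled by Corollary~\ref{cor:near-zone-asymp-higher-order};
\item $c\,((\tfrac{iX}{2t}+\p_x)^k\varphi_0)(tv)\,(\varphi_0-1)(t(v-y))$;
\item $((\tfrac{iX}{2t}+\p_x)^k\varphi_0)(tv)\,[U-c\varphi_0](t(v-y))$.
\end{itemize}
The last two are handled by the zeroth-order bounds. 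Finally, every term of $(\tfrac1t\p_v)^k(\gmm\varphi_0e^{i\bfPhi^{\ob}})$ in which some $\tfrac1t\p_v$ falls on $\gmm$ is controlled by \eqref{eq:dv-gmm}, i.e.\ by $t\|u\|_{L^\infty}$.

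Your Leibniz-first route can be repaired with the same ingredients: $\gmm[u]\approx c$, the smallness of $\gmm[\p_x^{\beta_1}u]$ for $|\beta_1|\ge1$ noted above, and $(\tfrac{iX}{2t}+\p_x)^k\varphi_0=\p_x^k\varphi_0+O(t^{-\frac12+\dlt})$. That is, however, a different argument from the one you state.

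In the outer zone your reduction is fine and matches the paper's Case~1. Note that the term $t^{-\frac14}\|\p_x^{(\le k)}u\|_{L^2}$ enters there, through Proposition~\ref{prop:elliptic-int-rad} when converting $\calL\p_x^k u$ into $\p_x^{(\le k)}\calL u$, and not through \eqref{eq:ell-r>t-U} at $k=0$.
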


Indeed, combining Proposition~\ref{prop:diff-u-gmmvarphi0-w-1/tdv-involved} and \eqref{eq:relation-der-0th-w-higher-order-gmm}, for any multi-index $\alpha$ with $|\alpha| = N$, it follows that \begin{align*}
    &\left\|t^{\frac32}(\p_x^\alpha u)(t, tv) - \sum_{|\beta_1| + |\beta_2| = N} \gmm[\p_x^{\beta_1} u](t, v) e^{i\bfPhi^\ob(t, tv)} (\p_x^{\beta_2} \varphi_0)(t, tv) \right\|_{L^\infty_v} \\
    \aleq\, &t^{-\frac14+\frac12\eta} \|\p_x^{(\leq N)}\calL u(t, \cdot)\|_{L^2_x} + t^{\frac32-\frac{\eta}{4}} \|\p_x^{(\leq N)} u(t, \cdot)\|_{L^\infty_x} + t^{-\frac14}\|\p_x^{(\leq N)}u(t, \cdot)\|_{L^2_x}.
\end{align*}
This would complete the proof of Proposition~\ref{prop:diff-u-gmmvarphi0-wo-1/tdv-involved}.

In the following, we prove Proposition~\ref{prop:diff-u-gmmvarphi0-w-1/tdv-involved}.

\subsubsection{The case $N = 0$ with no obstacle}
Recall from \eqref{eq:defn_gmm} that \[
    \gmm(t, v) = U(t, tv) *_v t^{\frac32}\chi(\sqrt{t}v),
\]
we write
\[
    U(t, vt) - \gmm(t, v)\varphi_0(t, tv) = \int \big(U(t, tv) - U(t, t(v-y))\varphi_0(t, tv)\big) t^{\frac32}\chi(\sqrt{t}y)\, \ud y =: \int D[U](t, v, y) t^{\frac32} \chi(\sqrt{t}y)\, \ud y.
\]

We fix $0 < \delta \ll 1$ to be determined in the proof process below. We distinguish two regions for $v$, where a specific constant $2$ is chosen based on the support condition $\supp \chi \subset B(0, 1)$ (see Definition~\ref{defn_wp}). In the second case, such a constant would then lead to the precise choice of the support condition for $\chi_{<1}$, which we used to define $c$.

\noindent \textit{Case 1: $|v| \geq 2t^{-\frac12 + \delta}$. }
We write \[
    D[U](t, v, y) = D_1 + D_2,
\]
where \[
    D_1 := U(t, tv) - U(t, t(v-y)), \quad D_2 := -U(t, t(v-y)) (\varphi_0(t, tv) - 1).
\]
Thanks to the support condition of $\chi$, when testing against $\chi(\sqrt{t}y)$, we have $|t(v-y)| \geq t^{\frac12}(2t^\delta - 1) \geq t^{\frac12+\delta}$ for $t \geq T_0 > 1$. Therefore, Morrey's inequality (Lemma~\ref{lem:Morrey-out-scale}) is applicable to $D_1$, which gives\[
    \begin{split}
        \int t^{\frac32} \chi(\sqrt{t} y) |D_1|\, \ud y &\aleq \int t^{\frac32} \chi(\sqrt{t} y) |ty|^{\frac12} \left(t^{-\frac12}\|\calL u\|_{L^2} + t^{\frac32}t^{-\frac12(\frac12+\dlt)}\|u\|_{L^\infty}\right)\, \ud y \\
        &\aleq t^{-\frac14}\|\calL u(t, \cdot)\|_{L^2} + t^{\frac32-\frac{\delta}{2}} \|u(t, \cdot)\|_{L^\infty}.
    \end{split}
\]
For $D_2$, we choose $\delta' = \eta$ in Corollary~\ref{cor:decay_varphi0} and it yields \[
    \begin{aligned}
        &\left|\int t^{\frac32}\chi(\sqrt{t}y) D_2\, \ud y\right|
        \lesssim \|U\|_{L^\infty} \brk{tv}^{-\eta} \lesssim t^{\frac32-\frac{\eta}{2}}\|u\|_{L^\infty}.
    \end{aligned}
    \]

\noindent \textit{Case 2: $|v| \leq 2t^{-\frac12 + \delta}$. }
In this case, we decompose $D[U](t, v, y)$ into $D = D_3 + D_4 + D_5$, where \[\begin{aligned}
    D_3 &:= - c\varphi_0(t, tv)(\varphi_0(t, t(v-y)) - 1), \quad D_4 := U(t, tv) - c\varphi_0(t, tv), \\ D_5 &:= - \varphi_0(t, tv)(U(t, t(v-y)) - c\varphi_0(t, t(v-y))).
\end{aligned}
\]
Here, we choose $\delta_1 > \delta$ to be determined, and the constant (independent of $x$) $c = c[U](t)$ above is defined by \eqref{eq:const_c}.
In particular, we have
\begin{equation*}
    \abs{c} \aleq \nrm{U(t, \cdot)}_{H^{2, -\frac{3}{2}}(\set{\abs{x} \aeq t^{\frac{1}{2}+\dlt_{1}}} )}.
\end{equation*}
Choose $\delta' = \frac{3\eta}{3-\eta} \in (0, 2\eta)$ in Corollary~\ref{cor:decay_varphi0}, in view of $\brk{x}^{-\delta'} \in L^{\frac{3+\delta'}{\delta'}}$, we estimate $D_3$ as follows:  \[
\begin{aligned}
    &\left|\int t^{\frac32}\chi(\sqrt{t}y) D_3\, \ud y\right|
    \lesssim  |c| \int \frac1{\brk{t(v-y)}^{\delta'}} t^{\frac32}\chi(\sqrt{t}y)\, \ud y \\
    \lesssim&\, \abs{c} \|\brk{ty}^{-\delta'}\|_{L^{\frac{3+\delta'}{\delta'}}_y} \|t^{\frac32}\chi(\sqrt{t}y)\|_{L^{\frac{3+\delta'}{3}}_y} \lesssim t^{-\frac{3\delta'}{2(3+\delta')}} |c|
    \lesssim t^{-\frac{\eta}{2}} \nrm{U(t, \cdot)}_{H^{2, -\frac{3}{2}}(\set{\abs{x} \aeq t^{\frac{1}{2}+\dlt_{1}}} )}.
\end{aligned}
\]
To estimate $D_4$ and $D_5$, it suffices to estimate $|U(t, x) - c\varphi_0(t, x)|$ in the region \begin{equation}\label{eq:supp_varphi_case2}
    \{|x| \leq 3t^{\frac12 + \delta}\} \subset \{x: \chi_{<1}(t^{-\frac{1}{2}-\dlt_{1}} x)  = 1\}, \quad \text{ for } t \geq T_0 > 1,
\end{equation}
where we remark that this actually motivates the precise choice of $\chi_{<1}$ in \eqref{eq:const_c}.
\begin{equation*}
    \begin{aligned}
        t^{-\frac32}\|U - c\varphi_0\|_{L^\infty_x({r \leq 3 t^{\frac12 + \delta})}}
        &\aleq t^{-2}t^{\frac{1}{4}+\frac{\dlt_1}{2}}\log t \nrm{\calL u}_{L^{2}} + (t^{-(\dlt_{1} - \dlt)} + t^{- \eta (\frac{1}{2}+\dlt)}) \|u\|_{L^\infty}.
    \end{aligned}
\end{equation*}
This gives rise to the estimate of terms involving $D_4$ and $D_5$.
By choosing $\delta = \frac12 \eta$ and $\delta_1 = \frac34 \eta$, we conclude that for any $v \in \R^3$, \[
    |U(t, tv) - \gmm(t, v) \varphi_0(t, tv)| \lesssim t^{-\frac14 + \frac12\eta} \|\calL u(t, \cdot)\|_{L^2} + t^{\frac32-\frac{\eta}{4}} \|u(t, \cdot)\|_{L^\infty} + t^{-\frac{\eta}{4}} \|U(t, \cdot)\|_{H^{2,-\frac32}(|x| \simeq t^{\frac12 + \delta_1})},
\]
where the last term can be further estimated by \eqref{eq:est-U-wt-Sob-norm-aux}.
This finishes the proof of Proposition~\ref{prop:diff-u-gmmvarphi0-w-1/tdv-involved} in the case $N = 0$.

\begin{remark}
    In the region $\set{|v| \gtrsim t^{-\frac12 + \delta}}$, $|U(t, vt) - \gmm(t,v)|$ shares the same estimate due to the decay property of $\tilde\varphi_0$ (see Corollary~\ref{cor:decay_varphi0}). However, introducing $\varphi_0(t, tv)$ is crucial in the region $\set{|v| \lesssim t^{-\frac12 + \delta}}$ where the perturbation of coefficients most significantly influences dispersion. Heuristically, the flat wave packet would fail to provide a good approximation in this regime.
\end{remark}

\subsubsection{The case $N = 0$ with obstacle}

With $\calK \neq \0$, \eqref{eq:defn_gmm} gives rise to \[
    \gmm(t, v) = t^{\frac32}\int_{\R^3\setminus (v-t^{-1}\calK)} U(t, t(v-y))\chi(\sqrt{t}y)\, \ud y.
\]
We notice that \[
    U(t, tv) - \gmm(t, v)\varphi_0(t, tv) = \int_{\R^3\setminus (v-t^{-1}\calK)} D[U](t,v,y) t^{\frac32}\chi(\sqrt{t}y)\, \ud y + U(t, tv) \int_{t^{\frac12}v-t^{-\frac12}\calK} \chi(y)\, \ud y,
\]
where a trivial bound can be obtained for the additional error: \begin{align*}
    \left|\int_{t^{\frac12}v-t^{-\frac12}\calK} \chi(y)\, \ud y\right| \aleq \mu(t^{\frac12}v-t^{-\frac12}\calK) \aleq t^{-\frac32} \mu(K).
\end{align*}
The rest of proof stays exactly the same and hence we only focus on the case $\calK = \0$ in the higher order case below.

\subsubsection{Higher order cases}

As above, we consider two cases.
We record a conjugation identity first to motivate our computation below \[
    (\p_x^k u)(t, tv) - (\tfrac1{t}\p_v)^k(\gmm(t, v)e^{i\bfPhi^\ob(t, tv)}\varphi_0(t, tv)) = t^{-\frac32} e^{i\bfPhi^\ob(t, tv)} \left( U^{(k)}(t, tv) - (\tfrac{i}{2t}X(vt) + \tfrac1{t}\p_v)^k \big(\gmm(t,v)\varphi_0(t, tv)\big)\right).
\]

\noindent \textit{Case 1: $|v| \geq 2t^{-\frac12 + \delta}$. }

We write \[
    U^{(k)}(t, tv) - \left(\big(\tfrac{i}{2t}X(vt) + \tfrac1{t}\p_v)^k \gmm(t, v)\right) \varphi_0(t, tv) = \int \left(U^{(k)}(t, tv) -  U^{(k)}(t, t(v-y)) \varphi_0(t, tv)\right) t^{\frac32}\chi(\sqrt{t}y)\, \ud y.
\]
The same analysis as in the zeroth order case would imply \begin{align*}
    |U^{(k)}(t, tv) - \left(\big(\tfrac{i}{2t}X(vt) + \tfrac1{t}\p_v)^k \gmm(t, v)\right) \varphi_0(t, tv)| \aleq t^{-\frac14} \|\calL \p_x^k u(t, \cdot)\|_{L^2(r \ageq t^{\frac12+\frac12\eta})} + t^{\frac32-\frac{\eta}{4}} \|\p_x^k u(t, \cdot)\|_{L^\infty}.
\end{align*}
Applying Proposition~\ref{prop:elliptic-int-rad}, we derive \begin{align*}
    \|\calL \p_x^k u(t, \cdot)\|_{L^2(r \ageq t^{\frac12+\frac12\eta})} \aleq \|\p_x^{(\leq k)}\calL u(t, \cdot)\|_{L^2(r \ageq t^{\frac12+\frac12\eta})} + t^{\frac74-\frac14\eta}\|\p_x^{(\leq k)}u\|_{L^\infty} + \|\p_x^{(\leq k)} u\|_{L^2}
\end{align*}
and hence, \begin{align*}
    &|t^{\frac32}(\p_x^k u)(t, tv) - (\tfrac1{t}\p_v)^k(\gmm(t, v)e^{i\bfPhi^\ob(t, tv)})\varphi_0(t, tv)| \\
    \aleq&\, t^{-\frac14} \|\p_x^{(\leq k)}\calL u(t, \cdot)\|_{L^2} + t^{\frac32-\frac{\eta}{4}} \|\p_x^{(\leq k)} u(t, \cdot)\|_{L^\infty} + t^{-\frac14}\|\p_x^{(\leq k)}u\|_{L^2}.
\end{align*}
Performing an induction on $k$, one could then estimate the following difference using Corollary~\ref{cor:decay_varphi0}.
\begin{align*}
    &\left|(\tfrac1{t}\p_v)^k(\gmm(t, v)e^{i\bfPhi^\ob(t, tv)})\varphi_0(t, tv) - (\tfrac1{t}\p_v)^k(\gmm(t, v)e^{i\bfPhi^\ob(t, tv)}\varphi_0(t, tv))\right| \\
    \aleq&\, \left|(\tfrac1{t}\p_v)^{(\leq k-1)}(\gmm(t, v)e^{i\bfPhi^\ob(t, tv)})\right| \cdot |\p_x^{(\geq 1)}\varphi_0(t, tv)| \\
    \aleq&\, t^{-\frac{\eta}{2}}|\p_x^{(\leq k-1)}u| + \sum_{j \leq k-1}|\p_x^{j}u(t, tv) - (\tfrac1{t}\p_v)^j(\gmm(t, v)e^{i\bfPhi^\ob(t, tv)})|.
\end{align*}
Then the result follows.

\noindent \textit{Case 2: $|v| \leq 2t^{-\frac12 + \delta}$. }

We write \[
    U^{(k)}(t, tv) - \gmm(t, v) \big(\tfrac{i}{2t}X(vt) + \tfrac1{t}\p_v)^k\varphi_0(t, tv) = \int \left(U^{(k)}(t, tv) -  U(t, t(v-y)) ((\tfrac{iX}{2t}+\p_x)^k\varphi_0)(t, tv)\right) t^{\frac32}\chi(\sqrt{t}y)\, \ud y.
\]
Then the integrand satisfies the following decomposition: \begin{align*}
    &U^{(k)}(t, tv) -  U(t, t(v-y)) ((\tfrac{iX}{2t}+\p_x)^k\varphi_0)(t, tv) \\
    =&\, U^{(k)}(t, tv) - c((\tfrac{iX}{2t}+\p_x)^k\varphi_0)(t, tv) - c((\tfrac{iX}{2t}+\p_x)^k\varphi_0)(t, tv) (\varphi_0(t, t(v-y)) - 1) \\
    &\qquad\qquad\qquad\qquad\qquad\qquad\qquad\qquad\qquad - ((\tfrac{iX}{2t}+\p_x)^k\varphi_0)(t, tv) (U(t, t(v-y)) - c\varphi_0(t, t(v-y))).
\end{align*}
Applying Corollary~\ref{cor:near-zone-asymp-higher-order} with the same $\dlt, \dlt_1, \dlt'$ as in the zeroth order case, one would obtain the following:
\begin{align*}
    |U^{(k)}(t, tv) - \gmm(t, v) \big(\tfrac{i}{2t}X(vt) + \tfrac1{t}\p_v\big)^k\varphi_0(t, tv)| \aleq t^{-\frac14 + \frac12\eta}\|\p_x^{(\leq k)}\calL u\|_{L^2} + t^{\frac32 - \frac{\eta}{4}} \|\p_x^{(\leq k)}u\|_{L^\infty}.
\end{align*}
Switching to the $u$-notation, \begin{align*}
    &|t^{\frac32}(\p_x^k u)(t, tv) - \gmm(t, v)(\tfrac1{t}\p_v)^k(\varphi_0(t, tv)e^{i\bfPhi^\ob(t, tv)})| \\
    \aleq&\, t^{-\frac14+\frac12\eta} \|\p_x^{(\leq k)}\calL u(t, \cdot)\|_{L^2} + t^{\frac32-\frac{\eta}{4}} \|\p_x^{(\leq k)} u(t, \cdot)\|_{L^\infty}.
\end{align*}
To derive the desired estimate, we consider the difference \begin{align*}
    &\left|\gmm(t, v)(\tfrac1{t}\p_v)^k(\varphi_0(t, tv)e^{i\bfPhi^\ob(t, tv)}) - (\tfrac1{t}\p_v)^k(\gmm(t, v)\varphi_0(t, tv)e^{i\bfPhi^\ob(t, tv)})\right| \\
    \aleq&\, \left|(\tfrac1{t}\p_v)^{(\leq k-1)}(\varphi_0(t, tv)e^{i\bfPhi^\ob(t, tv)})\right| \cdot |(\tfrac1{t}\p_v)^{(\geq 1)}\gmm(t, v)| \aleq t\|u\|_{L^\infty},
\end{align*}
where we apply Corollary~\ref{cor:decay_varphi0} and \eqref{eq:dv-gmm} in the last step.

\subsection{Proof of Proposition~\ref{prop:dotgmm_decay}}\label{sec-proof_prop:dotgmm_decay}

We write \[
    \frac{d}{dt} \gmm[\p_{x}^k u](t, v) = -i\brk{\p_{x}^k \calH u + \p_{x}^k f, \Psi_v} + \brk{\p_{x}^k u, \p_t \Psi_v},
\]
We expand \[
    \calL u = 4t^2\calH u + 2it(\p_\mu (a^{\mu\nu}X_\nu u) + X_\mu a^{\mu\nu}\p_\nu u) + a^{\mu\nu} X_\mu X_\nu u - 4tb^\mu X_\mu u.
\]
Therefore, \begin{align*}
\frac{d}{dt} \gmm[\p_{x}^k u](t, v)
= &\underbrace{-\frac{i}{4t^2}\brk{\p_x^k\calL u, \Psi_v}}_{:= R_{1, k}} \underbrace{ -i\brk{\p_x^k f, \Psi_v}}_{:= R_{f, k}} - \frac1{2t} \brk{\p_x^k(\p_\mu( a^{\mu\nu}X_\nu u) + X_\mu a^{\mu\nu}\p_\nu u), \Psi_v} \\
&+ \frac{i}{4t^2}\brk{\p_x^k(a^{\mu\nu} X_\mu X_\nu u - 4tb^\mu X_\mu u), \Psi_v} + \brk{\p_x^k u, \p_t \Psi_v} \\
= &R_{1, k} + R_{f, k} - \frac1{2t}\brk{\p_x^k\big((\p_\mu a^{\mu\nu} + 2ib^\nu) X_\nu u\big), \Psi_v} - \frac1{2t}\brk{\p_x^k\big(a^{\mu\nu} (\p_\mu X_\nu) u\big), \Psi_v} \\
& - \frac1{t}\brk{\p_x^k(X_\mu a^{\mu\nu}\p_\nu u), \Psi_v} + \frac{i}{4t^2}\brk{\p_x^k(a^{\mu\nu}X_\mu X_\nu u), \Psi_v} + \brk{\p_x^k u, \p_t \Psi_v}.\\
\end{align*}
To expand the last term, we first record an integration-by-parts formula: \[
    \brk{g, (\p_\mu h)e^{i\bfPhi^\ob}} = -\frac1{2it}\brk{g, L_\mu(h e^{i\bfPhi^\ob})} = -\frac1{2it}\brk{L_\mu g, h e^{i\bfPhi^\ob}} + \int_{\p\calK} gh e^{i\bfPhi^\ob}\, \ud\sigma, \hbox{ for all } g, h \in C^0(\overline{\calM^3}).
\]
Now we compute \begin{align*}
    \brk{\p_x^k u, \p_t \Psi_v} &= i\brk{\p_x^k u, \frac{|x|^2}{4t^2}\Psi_v} + \brk{\p_x^k u, t^{-\frac32}(\nabla \chi)(\tfrac{x-vt}{\sqrt{t}})(-\frac12 x - \frac12 vt) e^{i\bfPhi^\ob}} \\
    &= i\brk{\p_x^k u, \frac{|x|^2}{4t^2}\Psi_v} + \frac1{2t}\brk{\p_x^k u, \tfrac{x-vt}{\sqrt{t}} \cdot (\nabla \chi)(\tfrac{x-vt}{\sqrt{t}}) e^{i\bfPhi^\ob}} - \frac1{t\sqrt{t}}\brk{\p_x^k u, x^\mu (\p_\mu\chi)(\tfrac{x-vt}{\sqrt{t}})e^{i\bfPhi^\ob}} \\
    &= i\brk{\p_x^k u, \frac{|x|^2}{4t^2}\Psi_v} - \frac1{4it\sqrt{t}} \brk{L \p_x^k u, \tfrac{x-vt}{\sqrt{t}}\Psi_v} - \frac{3}{2t}\brk{\p_x^k u, \Psi_v} + \frac1{2it^2}\brk{L_\mu(x^\mu\p_x^k u), \Psi_v} + B.T. \\
    &= i\brk{\p_x^k u, \frac{|x|^2}{4t^2}\Psi_v} - \frac1{4it\sqrt{t}} \brk{L \p_x^k u, \tfrac{x-vt}{\sqrt{t}}\Psi_v} + \frac{3}{2t}\brk{\p_x^k u, \Psi_v} + \frac1{2it^2}\brk{x^\mu L_\mu\p_x^k u, \Psi_v} + B.T.,
\end{align*}
where \begin{align*}
    B.T. := \frac1{2\sqrt{t}} \int_{\p\calK}\p_x^k u \tfrac{x-vt}{\sqrt{t}}\Psi_v n\, \ud \sigma - \frac1{t}\int_{\p\calK} \p_x^k u (x \cdot n)\Psi_v\, \ud\sigma.
\end{align*}
Therefore, \begin{align*}
    \frac{d}{dt} \gmm[\p_{x}^k u](t, v) =&\, R_{1, k} + R_{f, k} - \frac1{2t}\brk{\p_x^k\big((\p_\mu a^{\mu\nu} + 2ib^\nu) X_\nu u\big), \Psi_v} - \frac1{2t}\brk{\p_x^k\big(a^{\mu\nu} (\p_\mu X_\nu) u\big), \Psi_v} \\
    & - \frac1{2it^2}\brk{\p_x^k(X_\mu a^{\mu\nu}L_\nu u), \Psi_v} - \frac{i}{4t^2}\brk{\p_x^k(a^{\mu\nu}X_\mu X_\nu u), \Psi_v} + \brk{\p_x^k u, \p_t \Psi_v} \\
    =&\, R_{1, k} + R_{f, k} \underbrace{- \frac1{2t}\brk{\p_x^k\big((\p_\mu a^{\mu\nu} + 2ib^\nu) X_\nu u\big), \Psi_v} - \frac1{2t}\brk{\p_x^k ((a^{\mu\nu} \p_\mu X_\nu - 3)u), \Psi_v}}_{:= R_{2, k}} \\
    &+ \underbrace{\frac{i}{4t^2}\brk{\p_x^k u, (|x|^2 - a^{\mu\nu}X_\mu X_\nu) \Psi_v}}_{:= R_{3, k}} - \frac{i}{4t^2}\brk{[\p_x^k, a^{\mu\nu}X_\mu X_\nu]u, \Psi_v} + B.T. \\
    & - \frac1{2it^2}\brk{[\p_x^k,X_\mu a^{\mu\nu}L_\nu] u, \Psi_v} \underbrace{- \frac1{2it^2}\brk{(X_\mu a^{\mu\nu}- x^\nu) L_\nu \p_x^k u, \Psi_v}}_{:= R_{4, k}} \underbrace{- \frac1{4it\sqrt{t}} \brk{L \p_x^k u, \tfrac{x-vt}{\sqrt{t}}\Psi_v}}_{:= R_{5,k}}.
\end{align*}
Here, thanks to \eqref{eq:E-u}, $\p_x^{(\leq M_0)} u \in C^0(\overline{\calM^3})$ and hence \[
    |B.T.| \aleq t^{-\frac12}\|\p_x^{(\leq N)}u\|_{L^\infty_x}.
\]
In addition, we combine the two extra terms and notice that they add up to \begin{align*}
    &-\frac{i}{4t^2}\brk{[\p_x^k, a^{\mu\nu} X_\mu X_\nu - 2X_\mu a^{\mu\nu} L_\nu]u , \Psi_v} = -\frac{i}{4t^2}\brk{[\p_x^k, -a^{\mu\nu}L_\mu L_\nu + 2ita^{\mu\nu}(\p_\mu X_\nu)]u , \Psi_v} \\
    =&\, \underbrace{\frac{i}{4t^2}\brk{[\p_x^k, a^{\mu\nu}L_\mu L_\nu]u, \Psi_v}}_{:= R_{6, k}} + \underbrace{\frac1{2t}\brk{[\p_x^k, (a^{\mu\nu}\p_\mu X_\nu - 3)]u, \Psi_v}}_{:= R_{7, k}}.
\end{align*}

Note that $R_{6, k}$ and $R_{7,k}$ vanish trivially when $k = 0$. We estimate these error terms respectively in the following.
The first term is the simplest to handle: \[
    |R_{1,k}| \lesssim t^{-\frac54}\|\p_x^k \calL u\|_{L^2}.
\]
Moreover,
\begin{align*}
    |R_{2,k}| &\lesssim t^{-1-\eta} t^{\frac32}\|\p_x^{(\leq k)}u\|_{L^\infty(r \ageq t^{\frac12})} + t^{-\frac14}\|\brk{x}^{-2\eta}\p_x^{(\leq k)}u\|_{L^2(r \aleq t^{\frac12})}\\
    &\lesssim t^{-\frac54 - \eta} \|\p_x^{(\leq k)} \calL u\|_{L^2} + t^{-1-\eta} t^{\frac32}\|\p_x^{(\leq k)}u\|_{L^\infty},
\end{align*}
where, in the second inequality, we apply Proposition~\ref{prop:elliptic-near} together with Cauchy--Schwarz inequalities in $\ell^p$-based spaces. Indeed, \[
    \begin{split}
    \|\brk{x}^{-2\eta}\p_x^{(\leq k)}u\|_{L^2(r \aleq t^{\frac12})}
    &\aleq t^{\frac12(\frac32 - 2\eta)} \|r^{-\frac32} \p_x^{(\leq k)}u\|_{\ell^\infty L^2(r \aleq t^{\frac12})} \aleq t^{\frac12(\frac32 - 2\eta)} \left( t^{-2}\|r^{\frac12} \p_x^{(\leq k)} \calL u\|_{\ell^1 L^2(r \aleq t^{\frac12})} + \|u\|_{L^\infty} \right) \\
    &\aleq t^{-1-\eta} \| \p_x^{(\leq k)} \calL u\|_{L^2(r \aleq t^{\frac12})} + t^{\frac34 - \eta} \|u\|_{L^\infty}.
    \end{split}
\]
\begin{remark}
    To control the last term in the last inequality, one can also simply use the non-endpoint case of Proposition~\ref{prop:elliptic-near} to avoid the use of the $\ell^p$ spaces.
\end{remark}

Separating the integral in $R_{3, k}$ into two regions $\{r \aleq t^{\frac12}\}$ and $\{r \ageq t^{\frac12}\}$, and one obtains \begin{align*}
    |R_{3, k}| \lesssim t^{\frac12 - \eta}\|\p_x^k u\|_{L^\infty}
\end{align*}
by applying \eqref{eq:af-extra1}. One then needs to exploit \eqref{eq:af-extra2} to take care of $R_{4, k}$. For $R_{4, k}$, applying \eqref{eq:comm-L-k-der} to commute derivatives, \begin{align*}
    |R_{4, k}| &\lesssim t^{-\frac54}\|\brk{x}^{1-2\eta}L\p_x^k u\|_{L^2(r \aleq t^{\frac12})} + t^{-\frac14-\eta}\|L\p_x^k u\|_{L^6(r \ageq t^{\frac12})} \\
    &\lesssim t^{-1}\|\brk{x}^{\frac12-2\eta}\p_x^k L u\|_{L^2(r \aleq t^{\frac12})} + t^{-\frac12}\|\brk{x}^{-\frac12-2\eta}\p_x^{(\leq k-1)} u\|_{L^2(r \aleq t^{\frac12})} + t^{-\frac54-\eta}\|\calL\p_x^k u\|_{L^2(r \ageq t^{\frac12})} + t^{\frac12-\eta}\|\p_x^k u\|_{L^\infty} \\
    &\lesssim t^{-\frac54-\eta}\|\p_x^{(\leq k)} \calL u\|_{L^2} + t^{\frac12 - \eta}\|\p_x^{(\leq k)}u\|_{L^\infty} + t^{-\frac54-\eta} \|\p_x^{(\leq k)} u\|_{L^2},
\end{align*}
where we apply Lemma~\ref{lem:L6-H1-embedding} in the second line and Proposition~\ref{prop:elliptic-int-rad} in the last line.

The term $R_{5, k}$ is one of the most difficult to treat since we need to take advantage of \ref{hyp:se}, or more precisely, Proposition~\ref{prop:ell-asymp} and Corollary~\ref{cor:near-zone-asymp-higher-order}. Fix $\dlt > 0$ small enough to be determined in the computation below.
\begin{enumerate}[ wide = 0pt, align = left ]
    \item[Case 1: $|v| \geq 2 t^{-\frac12 + \delta}$.]
    Due to the support condition of $\chi$, the integrand is nontrivial when $|x| \geq |vt| - |x-vt| \geq t^{\frac12 + \delta}$. Therefore, thanks to Lemma~\ref{lem:L6-H1-embedding} and Proposition~\ref{prop:elliptic-int-rad}, we have \begin{align*}
        |R_{5, k}| &\lesssim t^{-\frac14}\|L\p_x^k u\|_{L^6(r \ageq t^{\frac12 + \delta})} \lesssim t^{-\frac54}\|\calL\p_x^k u\|_{L^2(r \ageq t^{\frac12 + \delta})} + t^{\frac12 - \frac12\delta}\|\p_x^k u\|_{L^\infty} \\
        &\lesssim t^{-\frac54}\|\p_x^{(\leq k)} \calL u\|_{L^2(r \ageq t^{\frac12 + \delta})} + t^{\frac12 - \frac12\delta}\|\p_x^{(\leq k)} u\|_{L^\infty} + t^{-\frac54}\|\p_x^{(\leq k)} u\|_{L^2}.
    \end{align*}
    \item[Case 2: $|v| \leq 2 t^{-\frac12 + \delta}$.] In this case, according to the localization scale, the integrand is nontrivial when $|x| \leq 3t^{\frac12 + \delta}$. For simplicity, we denote $\tilde\chi^v := \tfrac{x-vt}{\sqrt{t}} \chi^v$.

    \item[Case 2.1: $k = 0$.]
    Recall that $c$ is defined by \eqref{eq:const_c}, we handle the case $k = 0$ separately first. \begin{align*}
        |R_{5, 0}| &\lesssim t^{-2}|\brk{\nabla_x U, \tilde\chi^v}| \lesssim t^{-2}|\brk{ U - c\varphi_0, \div_x \tilde\chi^v}| + |c|t^{-2}|\brk{\tilde\varphi_0, \div_x \tilde\chi^v}| + |c|t^{-2}|\brk{\eta_0, \div_x \tilde\chi^v}| \\
        &\lesssim t^{-1}\|U - c\varphi_0\|_{L^\infty(r \aleq t^{\frac12 + \delta})} + |c| t^{-\frac52} \|\tilde\varphi_0\|_{L^{\frac{3+\delta'}{\delta'}}} \|(\div\tilde\chi^v)(\tfrac{x-vt}{\sqrt{t}})\|_{L^{\frac{3+\delta'}{3}}} + |c|t^{-2}\|\nabla \eta_0\|_{L^1} \|\tilde\chi^v\|_{L^\infty},
    \end{align*}
    where we turn $\varphi_0$ into $\tilde\varphi_0$ in the last step to exploit the spatial decay of $\tilde\varphi_0$. (In the special case $\calK = \0$, the last term is vanishing due to the divergence structure.)
    Choose $\delta = \frac12\eta$, $\delta_1 = \frac34\eta$ in Lemma~\ref{lem:near-zone-asymp-zero-order} and choose $\delta' = \frac{3\eta}{3-\eta}\in(0, 2\eta)$ in Corollary~\ref{cor:decay_varphi0}, we obtain \begin{align*}
        |R_{5, 0}| &\lesssim t^{-\frac54+\frac{\eta}{2}}\|\calL u\|_{L^2} + t^{\frac12-\frac{\eta}{4}}\|u\|_{L^\infty} + |c|t^{-1-\frac{\eta}{2}} + |c|t^{-2}.
    \end{align*}
    One can read off from \eqref{eq:const_c} that \[
        |c| \aleq \|U\|_{\ell^\infty H^{2,-\frac32}(r \aeq t^{\frac12+\dlt_1})} \aleq t^{\frac32}(t^{\frac14+\frac{\dlt_1}{2}}t^{-2}\|\calL u\|_{L^2} + \|u\|_{L^\infty}),
    \]
    where we apply \eqref{eq:est-U-wt-Sob-norm-aux} in the last step. Thus, \[
        |R_{5, 0}| \lesssim t^{-\frac54+\frac{\eta}{2}}\|\calL u\|_{L^2} + t^{\frac12-\frac{\eta}{4}}\|u\|_{L^\infty}.
    \]

    \item[Case 2.2: $k \geq 1$.]
    For general $k$, we have
    \begin{align*}
        |R_{5, k}| &\lesssim t^{-2}|\brk{\nabla_x U^{(k)}, \tilde\chi^v}| \lesssim t^{-2}|\brk{ U^{(k)} - c(\tfrac{iX}{2t} + \p)^k\varphi_0, \div_x \tilde\chi^v}| + |c|t^{-2}|\brk{(\tfrac{iX}{2t} + \p)^k\varphi_0, \div_x \tilde\chi^v}| \\
        &\lesssim t^{\frac12}\|\p_x^k (u - c e^{i\bfPhi^\ob}\varphi_0)\|_{L^\infty(r \aleq t^{\frac12 + \delta})} + |c| t^{-3+\dlt} \|\p_x^{(\leq k)}\varphi_0\|_{L^\infty} \|(\div\tilde\chi^v)(\tfrac{x-vt}{\sqrt{t}})\|_{L^1} \\
        &\qquad\qquad\qquad\qquad\qquad\qquad\qquad\qquad\qquad + |c|t^{-\frac52}\|\p_x^{(1\leq \cdot\leq k)}\varphi_0\|_{L^{\frac{3+\delta'}{\delta'}}} \|(\div\tilde\chi^v)(\tfrac{x-vt}{\sqrt{t}})\|_{L^{\frac{3+\delta'}{3}}}.
    \end{align*}
    Taking $\dlt, \dlt_1, \dlt'$ to be exactly the same as the case $k = 0$ in Corollary~\ref{cor:near-zone-asymp-higher-order}, it follows that
    \[
    |R_{5, k}| \lesssim t^{-\frac54+\frac{\eta}{2}}\|\p_x^{(\leq k)}\calL u\|_{L^2} + t^{\frac12-\frac{\eta}{4}}\|\p_x^k u\|_{L^\infty}.
    \]
\end{enumerate}

\smallskip
Then we come back to discuss $R_6$.
Thanks to \eqref{eq:comm-L-k-der}, we have \begin{align*}
    [\p_x^k, a^{\mu\nu}L_\mu L_\nu] &= \brk{x}^{-1-2\eta} \p_x^{(\leq k-1)}L^{(2)} + O(1) \p_x^{(\leq k-1)} L^{(1)} + O(1) \p_x^{(\leq k-1)} \\
    &= t\brk{x}^{-1-2\eta} \p_x^{(\leq k)}L^{(1)} + O(1) \p_x^{(\leq k-1)} L^{(1)} + O(1) \p_x^{(\leq k-1)}.
\end{align*}
Thus, \begin{align*}
    |R_{6, k}| \lesssim t^{-2} |\brk{L \p_x^{(\leq k-1)} u, \Psi_v}| + t^{-\frac54}\|\p_x^{(\leq k-1)}u\|_{L^2} + t^{-1}|\brk{\brk{x}^{-1-2\eta} L \p_x^{(\leq k)} u, \Psi_v}|.
\end{align*}
The first term can be handled exactly the same way as for $R_{5, k}$ and has better decay. We now deal with the last term, which requires an observation of the separation of scales in three ways: \begin{align*}
    &t^{-1}|\brk{\brk{x}^{-1-2\eta} L \p_x^{(\leq k)} u, \Psi_v}| \aleq t^{-\frac32}|\brk{U^{(k)}, \brk{x}^{-1-2\eta} \div_x \chi^v + O(1)\brk{x}^{-2-2\eta} \chi^v}| \\
    \aleq&\,  t^{-2}\|U^{(k)}\|_{L^\infty} \|\brk{x}^{-1-2\eta}\|_{L^{\frac{3}{1+\eta}}} \|(\div\chi^v)(\tfrac{x-vt}{\sqrt{t}})\|_{L^{\frac{3}{2-\eta}}} \\
    &\qquad\qquad+ t^{-\frac32}\|U^{(k)}\|_{L^\infty} \|\brk{x}^{-2-2\eta}\|_{L^{\frac{3}{2+\eta}}} \|(\div\chi^v)(\tfrac{x-vt}{\sqrt{t}})\|_{L^{\frac{3}{1-\eta}}} \\
    \aleq&\,  t^{-1-\eta}\|U^{(k)}\|_{L^\infty} + t^{-1-\frac{\eta}{2}}\|U^{(k)}\|_{L^\infty} \aleq t^{\frac12-\frac{\eta}2}\|\p_x^{(\leq k)} u\|_{L^\infty}.
\end{align*}

Finally, the last term can be easily reduced to previous estimates: \begin{align*}
    |R_{7, k}| \lesssim t^{-\frac14}\|\brk{x}^{-1-2\eta} \p_x^{(\leq k-1)} u\|_{L^2(r \aleq t^{\frac12})} + t^{-\eta} \|\p_x^{(\leq k-1)} u\|_{L^\infty(r \ageq t^{\frac12})},
\end{align*}
where the right-hand side is weaker than the estimate for $|R_{2, k}|$.

Combining all the discussion above, \begin{align*}
    \left|\tfrac{d}{dt}\gmm[\p^{k} u](t, v) - R_{f, k}\right| \aleq t^{-\frac54 + \frac{\eta}{2}}\|\p_x^{(\leq k)} \calL u\|_{L^2} + t^{\frac12 - \frac{\eta}{4}}\|\p_x^{(\leq k)} u\|_{L^\infty} + t^{-\frac54}\|\p_x^{(\leq k)}u\|_{L^2},
\end{align*}
which completes the proof.

\section{Nonlinear applications}\label{sec-nonlinear}

\subsection{Proof of Main~Theorem~\ref{thm:nonlinear-1}}
Set $c_0 = \lceil\frac{8}{\eta_\sharp}\rceil$. Given $T_f > 0$, we assume that the following assumptions hold for $t \in [0, T_f)$:  \begin{align*}
    \|\p_x^{(\leq N)}u(t, \cdot)\|_{L^\infty} &\leq 2C \eps \brk{t}^{-\frac32}, \hbox{ for all } 0 \leq N \leq M_0 - 3 - c_0 s_c',\tag{BA1-NL1}\label{eq:BA1-NL1}\\
    \|\p_x^{(\leq N)} \calL u(t, \cdot)\|_{L^2} &\leq 2C \eps \brk{t}^{\frac14-\eta}, \hbox{ for all } 0 \leq N \leq M_0 - 3 - c_0 s_c', \tag{BA2-NL1}\label{eq:BA2-NL1}\\
    \|\p_x^{(\leq N)} u(t, \cdot)\|_{L^2} &\leq 2C \eps, \hbox{ for all } 0 \leq N \leq M_0 + 2. \tag{BA3-NL1}\label{eq:BA3-NL1}
\end{align*}

\subsubsection{Step 1: Verifying the assumption \eqref{eq:E-u} with $A_0 \aleq \eps$ under bootstrap assumptions, i.e., improving \eqref{eq:BA3-NL1}}

For any $N \in 2\bbZ_{\geq 0}$ such that $N < M_0 + 2$ non-top order, we could simply commute the equation with $\p_t^{(\leq N/2)}$. Then by invoking \ref{hyp:en}, we can easily obtain $\|\p_t^{(\leq N/2)} u\|_{L^2} \aleq \eps$, where the derivative loss does not pose any difficulty in non-top order estimates. We remark that one could directly commute coordinate derivatives when $\calK = \0$, where the use of $\p_t$ is crucial to the obstacle case since it preserves the Dirichlet boundary condition. To pass control of $\p_t^{(\leq \frac{N}{2})} u$ to that of $\p_x^{(\leq N)} u$, we introduce the following lemma.
\begin{lemma}\label{lem:pass-pt-to-px}
    Given a solution $u$ to \eqref{eq:nls} with \ref{hyp:nl-1} such that $u \in L^\infty H^{M_0 + 2}(\calM)$ and $u|_{[0, \infty) \times \calK} = 0$. Suppose that $u$ satisfies \eqref{eq:BA1-NL1}, \eqref{eq:BA2-NL1} and \eqref{eq:BA3-NL1}, then the following holds for $t \in [0, T_f)$ \begin{align*}
        \|\p_x^{(\leq N)} u(t, \cdot)\|_{L^2} \aleq \|\p_t^{(\leq \frac{N}{2})}u(t, \cdot)\|_{L^2} + \eps, \quad N \in 2\bbZ_{\geq 0}, \quad N \leq M_0 + 2.
    \end{align*}
\end{lemma}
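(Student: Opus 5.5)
We argue by induction on even $N$, using the equation to trade one $\p_t$ for two spatial derivatives via the unweighted elliptic estimate (Lemma~\ref{lem:unweight-ell}). The case $N=0$ is trivial. Suppose the bound holds for $N-2$, and write the equation \eqref{eq:nls} with \ref{hyp:nl-1} as
\begin{equation*}
\calH u - D_\mu (G^{\mu\nu}|u|^2 D_\nu u) = i\p_t u - G^{\mu\nu} u D_\mu \br{u} D_\nu u - F(D\br{u}, Du, \br{u}, u).
\end{equation*}
Since $u$ vanishes on $\p\calK$, Lemma~\ref{lem:unweight-ell} applied to $u$ at fixed time gives
\begin{equation*}
\|\p_x^{(\leq N)} u\|_{L^2} \aleq \|\p_x^{(\leq N-2)}\calH u\|_{L^2} + \|u\|_{L^2}.
\end{equation*}
The linear part of the right-hand side is $\|\p_x^{(\leq N-2)}\p_t u\|_{L^2}$. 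We want to apply the induction hypothesis to $\p_t u$. To justify this, we commute $\p_t$ through the equation: $\p_t u$ still vanishes on $\p\calK$ and satisfies the same equation, up to terms involving $\p_t$ of the coefficients (controlled by \ref{hyp:af}) and a linearized nonlinearity. Running the same elliptic step inductively in the number of time derivatives, i.e. proving
\begin{equation*}
\|\p_t^{(\leq i)}\p_x^{(\leq j)}u\|_{L^2}\aleq \|\p_t^{(\leq i+j/2)}u\|_{L^2}+\eps \qquad (2i+j\le N,\ j \text{ even}),
\end{equation*}
by downward induction on $j$, produces $\|\p_x^{(\leq N-2)}\p_t u\|_{L^2}\aleq \|\p_t^{(\leq N/2)}u\|_{L^2}+\eps$.

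The nonlinear terms are cubic. We distribute derivatives and put all but the highest-order factor in $L^\infty$. The low factors are controlled by \eqref{eq:BA1-NL1} when they carry at most $M_0-3-c_0s_c'$ derivatives, and otherwise by Sobolev embedding from \eqref{eq:BA3-NL1} (losing two derivatives, which is admissible since that factor is then not the top one). The highest factor is put in $L^2$. Each such product therefore contributes $\eps^2\|\p_x^{(\leq N)}u\|_{L^2}$ or $\eps^2\cdot\eps$. The first type can be absorbed into the left-hand side for $\eps$ small; the second is bounded by $\eps$.

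The main obstacle is the quasilinear top-order term $D_\mu(G^{\mu\nu}|u|^2D_\nu u)$. After $N-2$ derivatives it carries $N$ derivatives on $u$, the same order as the left-hand side. We handle it by moving it to the left: the operator $\calH - D_\mu G^{\mu\nu}|u|^2 D_\nu$ is a small perturbation of $\calH$, with coefficient perturbation of size $\aleq \eps^2$ in $W^{k,\infty}$ for $k$ up to the non-top range. Since the proof of Lemma~\ref{lem:unweight-ell} only uses ellipticity and boundedness of the coefficients, it applies to this perturbed operator with uniform constants. Equivalently, we keep the term on the right and absorb $\eps^2\|\p_x^{(\leq N)}u\|_{L^2}$ into the left-hand side.

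At the top order $N=M_0+2$, the commutator terms involve $\p_x^{\le N-1}(|u|^2)$, and these need $L^\infty$ control of $\p_x^{\le N-1}u$. We avoid this by always placing the factor with the most derivatives in $L^2$ and the others in $L^\infty$; because the equation is cubic, at most one factor can have more than $N/2+1$ derivatives. This is where the argument is most delicate, but it goes through because $M_0$ is large and the low factors carry at most about $N/2+2\le M_0-3-c_0s_c'$ derivatives. This requires $M_0$ sufficiently large relative to $c_0s_c'$, which we assume.
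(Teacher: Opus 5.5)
Your proposal is correct and follows essentially the paper's route: Lemma~\ref{lem:unweight-ell} plus the equation replaces $\calH u$ by $i\p_t u$ minus the nonlinearity, and this step is iterated on $\p_t^k u$, which still vanishes on $\p\calK$. The cubic terms are controlled by the bootstrap assumptions; the paper simply bounds them by $\eps$, which is equivalent to your absorption.

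The one thing to drop is the extra largeness condition $N/2+2\le M_0-3-c_0s_c'$ in your last paragraph. It is not implied by the hypotheses, which only guarantee $M_0\ge 3+c_0s_c'$. It is also not needed: the non-top factors carry at most $N/2\le M_0$ derivatives, so Sobolev embedding from \eqref{eq:BA3-NL1} already controls them in $L^\infty$, as you note earlier.
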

\begin{proof}
    The case $N = 0$ is trivial. Suppose that it holds for $N - 2$ and we would like to prove this for $N$. It follows from Lemma~\ref{lem:unweight-ell} and bootstrap assumptions that \begin{align*}
        \|\p_x^{(\leq N)} u\|_{L^2} \aleq \|\p_x^{(\leq N - 2)}\calH u\|_{L^2} + \|u\|_{L^2} \aleq \|\p_x^{(\leq N - 2)}\p_t u\|_{L^2} + \|u\|_{L^2} + \eps.
    \end{align*}
    Iterating the previous step by applying Lemma~\ref{lem:unweight-ell} to $\p_t^k u$, we arrive at \[
        \|\p_x^{(\leq N)} u\|_{L^2} \aleq \|\p_t^{(\leq \frac{N}{2})} u\|_{L^2} + \|u\|_{L^2} + \eps.
    \]
    Finally, estimating $\|u\|_{L^2}$ by \ref{hyp:en} and bootstrap assumptions, we finish the proof.
\end{proof}

Now we discuss how to close the estimate for the top order derivative ($N = M_0 + 2 \in 2\bbZ_{\geq 0}$). Set $\td{\calH} := \calH + D_\mu G^{\mu\nu} |u|^2 D_\nu$. We commute the equation with $\p_t^{\frac{N}{2}}$.
\begin{align*}
    (i\p_t - \td{\calH})\p_t^{\frac{N}{2}}u + G^{\mu\nu}D_\mu \br u D_\nu u \p_t^{\frac{N}{2}}u  = [i\p_t - \td\calH+ G^{\mu\nu}D_\mu \br u D_\nu u, \p_t^{\frac{N}{2}}]u + \p_t^{\frac{N}{2}} F.
\end{align*}
We expand the right-hand side further to exploit the structure :
\begin{align*}
    [-\td\calH, \p_t^{\frac{N}{2}}]u = D_\mu G^{\mu\nu} (\p_t^{\frac{N}{2}}u \br u + u \p_t^{\frac{N}{2}} \br u) D_\nu u
    &+ O\big((\p_t^{(\leq N/2-1)} D^{(\leq 1)}u) (\p_t^{(\leq \frac{M_0 - 5 - c_0 s_c'}{2})}D^{(\leq 1)}u)^2\big) \\
    &+ O\big((\p_t^{(\leq \frac{N}{2})} D^{(\leq 1)} (a^{\mu\nu} + b^\mu) + \p_t^{(\leq \frac{N}{2})}c) \p_t^{(\leq N/2-1)} D^{(\leq 1)} u\big) \\
    = G^{\mu\nu} (D_\mu\p_t^{\frac{N}{2}}u \br u + u D_\mu\p_t^{\frac{N}{2}} \br u) D_\nu u
    &+ O\big((D^{(\leq N - 1)}u) (D^{(\leq M_0 - 3 - c_0 s_c')}u)^2\big) \\
    &+ O\big((\p_t^{(\leq \frac{N}{2})} D^{(\leq N-1)} (a^{\mu\nu} + b^\mu) + \p_t^{(\leq \frac{N}{2})}D^{(\leq N-2)} c) D^{(\leq N-1)} u\big),
\end{align*}
where we substitute $\p_t u$ by the equation \eqref{eq:nls} in the last step.

Moving on to the last term in the commutator, we write \begin{align*}
    [G^{\mu\nu}D_\mu \br u D_\nu u, \p_t^{\frac{N}{2}}]u = -G^{\mu\nu}D_\mu \p_t^{\frac{N}{2}}\br u u D_\nu u &- G^{\mu\nu} u D_\mu \br{u} D_\nu \p_t^{\frac{N}{2}} u + O\big((D^{(\leq N)}u)(D^{(\leq M_0 - 3 - c_0 s_c')}u)^2\big) \\
    &+ O\big((\p_t^{(\leq \frac{N}{2})} D^{(\leq N-1)} (a^{\mu\nu} + b^\mu) + \p_t^{(\leq \frac{N}{2})}D^{(\leq N-2)} c) D^{(\leq N-1)} u\big).
\end{align*}
By writing the last term into the following form \begin{align*}
    \p_t^{\frac{N}{2}} F(D\br u, Du, \br u, u) =\, \ul{B}^\mu(D\br u, Du, \br u, u) D_\mu\br{\p_t^{\frac{N}{2}}u} &+ B^\mu(D\br u, Du, \br u, u) D_\mu\p_t^{\frac{N}{2}} u
    + O\big((D^{(\leq N)}u)(D^{(\leq M_0 - 3 - c_0 s_c')}u)^2\big) \\&+ O\big((\p_t^{(\leq \frac{N}{2})} D^{(\leq N-1)} (a^{\mu\nu} + b^\mu) + \p_t^{(\leq \frac{N}{2})}D^{(\leq N-2)} c) D^{(\leq N-1)} u\big),
\end{align*}
where $B^\mu$ and $\ul B^\mu$ are defined in \ref{hyp:nl-1}.
We regroup all the terms and derive the following \begin{align*}
    &(i\p_t - \td{\calH})\p_t^{\frac{N}{2}}u + 2 G^{\mu\nu} \Im(\bar u \p_\nu u) D_\mu \p_t^{\frac{N}{2}}u - \ul B^\mu D_\mu\br{\p_t^{\frac{N}{2}}u} - B^\mu D_\mu\p_t^{\frac{N}{2}}u \\
    =&\, O\big((D^{(\leq N)}u)(D^{(\leq M_0 - 3 - c_0 s_c')}u)^2\big) + O\big((\p_t^{(\leq \frac{N}{2})} D^{(\leq N-1)} (a^{\mu\nu} + b^\mu) + \p_t^{(\leq \frac{N}{2})}D^{(\leq N-2)} c) D^{(\leq N-1)} u\big).
\end{align*}
Under the assumptions \ref{hyp:nl-1} and \ref{hyp:ult-stat}, the energy estimate \ref{hyp:en} is applicable to the equation above in $\p_t^{\frac{N}{2}}u$ with $\td A^{\mu\nu} = G^{\mu\nu}|u|^2, \td B^{\mu} = B^{\mu} + 2G^{\mu\nu} \Im(\bar u \p_\nu u), \ul{\td{B}}^{\mu} = \ul{B}^{\mu}$. Thus, it yields control on $\|\p_t^{\frac{N}{2}}u(t, \cdot)\|_{L^2}$ with $N = M_0 + 2$ and therefore, by Lemma~\ref{lem:pass-pt-to-px}, \eqref{eq:E-u} holds with $A_0 \aleq \eps$.

\subsubsection{Step 2: Base case and improving bootstrap assumptions when $t < T_0$}
Based on the assumptions, all the results in Section~\ref{sec:good-comm} still hold without modification with $f$ replaced by $f + \calN(u)$. We now establish the base case in the nonlinear setting. Note that, due to a loss of derivatives, the second estimate is only valid up to the order $M_0 - 1$, in contrast to the linear case.

\begin{lemma}[Base case for the iteration]\label{lem:base_iteration_nl1}
    Under the assumptions of Main~Theorem~\ref{thm:nonlinear-1} and bootstrap assumptions \eqref{eq:BA1-NL1}, we have \[
        \|\p_{x}^{(\leq M_0)} u(t, \cdot)\|_{L^\infty} \lesssim \eps, \quad \|\p_{x}^{(\leq M_0 - 1)} \calL u(t, \cdot)\|_{L^2} \lesssim \eps\brk{t}^2, \hbox{  for all } 0 < t < T_f.
    \]
\end{lemma}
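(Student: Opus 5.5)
The plan is to mimic the proof of Lemma~\ref{lem:base_iteration}, treating $\calN(u)$ as part of the forcing, with Step~1 as the main input. The $L^\infty$ bound will come from Sobolev embedding. Step~1 established \eqref{eq:E-u} with $A_0 \aleq \eps$, i.e. $\|\p_x^{(\leq M_0+2)} u(t,\cdot)\|_{L^2} \aleq \eps$ on $[0,T_f)$. Hence $\|\p_x^{(\leq M_0)} u\|_{L^\infty} \aleq \|\p_x^{(\leq M_0+2)}u\|_{L^2} \aleq \eps$. In the obstacle case we use a Sobolev extension across $\p\calK$; boundedness of the exterior domain's boundary makes this harmless.

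For the $\calL u$ bound, we follow Lemma~\ref{lem:base_iteration}. We commute $x$ and then $x^2$ into $(i\p_t-\calH)u = \calN(u)$ and apply the energy estimate with derivative loss, Lemma~\ref{lem:eng-est-w-loss}, at each step. The linear commutator terms $[x,\calH]u$ and $[x^2,\calH]u$ produce $t$ and $t^2$ growth, exactly as in the linear case. The new terms are $x\calN(u)$ and $x^2\calN(u)$, together with the lower-order pieces $\p_x(xu)$ and $x\p_x u$. We estimate $\|\p_x^{(\leq k)}(x^j \calN(u))\|_{L^1L^2[0,t]}$ by placing all the weight and one factor of $u$ in $L^2$, for example $\|x^j\p_x^{(\leq k+2)}u\|_{L^2}$, and the two remaining factors in $L^\infty$. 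By \eqref{eq:BA1-NL1}, the $L^\infty$ factors give $\eps^2\brk{s}^{-3}$, which is integrable in time. A Gr\"onwall-type argument on the quantities $\|\p_x^{(\leq M_0+2-j)}(x^j u)\|_{L^2}$ then yields $\|\p_x^{(\leq M_0+2-2)}(x^2u)\|_{L^2}\aleq\eps\brk{t}^2$.

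Because $\calN(u)$ is quasilinear, each commutation costs derivatives. This is why the final order drops to $M_0-1$, as announced just before the lemma. The source $f = \calN(u)$ does not vanish on $\p\calK$, but by \ref{hyp:nl-1} every summand contains an undifferentiated $u$ or $\br u$, so the boundary condition is preserved. In the obstacle case, Lemma~\ref{lem:eng-est-w-loss} additionally requires $\p_t$-derivatives of $f$. We convert these to spatial derivatives using the equation, staying within the $M_0+2$ energy bound of Step~1.

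Finally, we express $\calL u$ as a combination of $x^2 u$, $t\,x\p_x u$, and $t^2\p_x^2 u$ with bounded coefficients, using \ref{hyp:af}. The second-order piece is bounded by $t^2\|\p_x^{(\leq M_0+1)}u\|_{L^2}$, and this requires one derivative above $M_0$. The first-order piece is controlled by interpolation between the first two. Altogether this gives $\|\p_x^{(\leq M_0-1)}\calL u\|_{L^2}\aleq \eps\brk{t}^2$.

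The main obstacle is the derivative loss in Lemma~\ref{lem:eng-est-w-loss} combined with the quasilinear second-order term $D_\mu(G^{\mu\nu}|u|^2D_\nu u)$. With the weights $x^j$ present, this term costs two derivatives unless it is absorbed. I would first try to treat it crudely as a source, spending the available derivatives (top order $M_0+2$) so that the result lands at order $M_0-1$. If this proves too lossy, the fallback is to use \ref{hyp:en}, with the quasilinear part moved into $\td A^{\mu\nu}$ as in Step~1, in place of Lemma~\ref{lem:eng-est-w-loss} for the weighted functions.
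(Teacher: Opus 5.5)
Your $L^\infty$ bound (Sobolev embedding from the Step~1 energy $\|\p_x^{(\leq M_0+2)}u\|_{L^2}\aleq\eps$) is exactly the paper's argument. The gap is in your estimate of $\p_x^{(\leq k)}(x^j\calN(u))$. You attach $x^j$ to the factor carrying the most derivatives, which produces $\|x^j\p_x^{(\leq k+2)}u\|_{L^2}$, essentially $\|\p_x^{(\leq k+2)}(x^ju)\|_{L^2}$. Because $\calN(u)$ is second order, the source at level $k$ then requires the \emph{weighted} quantity two derivatives above the one you are estimating. So the Gr\"onwall argument on $\|\p_x^{(\leq M_0+2-j)}(x^ju)\|_{L^2}$ does not close: at top order you would need $x^ju$ at order $M_0+4-j$. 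Lowering $k$ does not help either, since every level again calls for the weighted norm two orders higher. The unweighted Step~1 bound on $\p_x^{(\leq M_0+2)}u$ gives no control of $x^j\p_x^{k+2}u$, so ``spending derivatives'' never terminates this regress.

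The missing idea is what the paper calls grouping $x$ with the entry without the highest order of derivatives. After distributing $\p_x^{(\leq N)}$ over each cubic monomial, keep the factor with the most derivatives (up to $N+2$) \emph{unweighted} in $L^2$. Step~1 bounds it by $\eps$ provided $N\leq M_0$. Put the weight on a lower-order factor, which is then controlled by the weighted Gr\"onwall quantity itself (via Sobolev if it is placed in $L^\infty$). The remaining factor supplies time-integrable decay from \eqref{eq:BA1-NL1}. With this placement Gr\"onwall closes and yields $\|\p_x^{(\leq M_0)}(xu)\|_{L^2}\aleq\eps\brk{t}$. This is order $M_0$, not $M_0+1$, precisely because the unweighted top factor needs $N+2\leq M_0+2$. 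In the equation for $x^2u$, the linear source $\nabla(xu)$ then costs one further derivative, giving $\|\p_x^{(\leq M_0-1)}(x^2u)\|_{L^2}\aleq\eps\brk{t}^2$. This is where the order $M_0-1$ comes from, not the quasilinear loss in your own count (which ends at $\|\p_x^{(\leq M_0)}(x^2u)\|_{L^2}$). Your final assembly of $\calL u$ from $x^2u$, $t\,x\p_xu$ and $t^2\p_x^2u$ then goes through. The fallback through \ref{hyp:en} is unnecessary once the weight is placed correctly.
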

\begin{proof}
    The first estimate follows from a simple Sobolev embedding \[
        \|\p_{x}^{(\leq N)}u\|_{L^\infty} \lesssim \|\p_{x}^{(\leq N+2)}u\|_{L^2} \aleq A_0 \aleq \eps, \hbox{ for all } 0 \leq N \leq M_0.
    \]
    Regarding the second estimate, we proceed as follows according to the structure of $\calL$.

    We commute $x$ into the equation and obtain \begin{align*}
       (i\p_t - \calH)x u = \nabla u +[x, D_\mu \bfh^{\mu\nu} D_\nu + b^\mu D_\mu + D_\mu b^\mu + c]u + x f + x \calN(u).
    \end{align*}
    In view of the linear case, it suffices to estimate $\|\p_x^{(\leq N)} x\calN(u)\|_{L^2}$. We group $x$ with the entry without the highest order of derivatives, one could commute $\p_x^{(\leq N)}$ with $N \leq M_0$ into the equation and then apply Step 1. Therefore,
    it follows from \eqref{eq:BA1-NL1} that \begin{align*}
        \|\p_{x}^{(\leq M_0)}(xu)(t, \cdot)\|_{L^2} \aleq D_0 + A_0 \brk{t} + D_f + \eps \aleq \eps \brk{t},
    \end{align*}
    where we remark that the reason for discarding the treatment in Step~1 involving commutation with $\td{\calH}^{\frac{N}{2}}$ is that $M_0+1 \notin 2\bbZ_{\geq 0}$, and for simplicity we choose to accept this derivative loss rather than pursue a more delicate analysis.

    In addition, commuting $x$ into the equation again, it follows by handling $x^2\calN(u)$ term in the same way that \begin{align*}
        \|\p_{x}^{(\leq M_0-1)}(x^2u)(t, \cdot)\|_{L^2} \aleq \eps \brk{t}^2.
    \end{align*}
    In particular, the second estimate in the lemma follows readily.
\end{proof}

\subsubsection{Step 3: Nonlinear counterparts of Propositions~\ref{prop:est-calLu}~and~\ref{prop:dotgmm_decay}}
The goal of this step is to formulate and establish the nonlinear counterparts of Propositions~\ref{prop:est-calLu} and~\ref{prop:dotgmm_decay}. To prepare for the proof, we first introduce the following lemma.

\begin{lemma}\label{lem:L3L6-est}
    Assume that $\calH$ satisfies \ref{hyp:af-0}, for all $0 \leq N \leq M_c$, we have  \[
        \begin{split}
            t^{-1}\|L\p_x^{(\leq N)}u\|_{\ell^\infty L^3} + t^{-1}\|L\p_x^{(\leq N)} u\|_{\ell^\infty L^6} \aleq t^{-\frac74}\|\p_x^{(\leq N)}\calL u\|_{\ell^1 L^2} +  \|\p_x^{(\leq N)} u\|_{L^\infty} + t^{-\frac32}\|\p_x^{(\leq N)} u\|_{L^2}.
        \end{split}
    \]
\end{lemma}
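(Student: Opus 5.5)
The plan is to pass to the conjugated variable and reduce to dyadic Sobolev embeddings, combined with the two-scale elliptic estimates already established. By \eqref{eq:comm-L-k-der} we have $L\p_x^{k} = \p_x^k L - k\p_x^{k-1}$, so it suffices to control $t^{-1}\|\p_x^{(\leq N)} L u\|_{\ell^\infty L^p}$ for $p=3,6$. The lower-order remainder $t^{-1}\p_x^{(\leq N-1)}u$ is harmless: on each dyadic annulus its $L^3$ and $L^6$ norms are bounded by $\|\p_x^{(\leq N)}u\|_{L^\infty}$ together with a Sobolev embedding of $L^2$, with spare powers of $t^{-1}$.

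We then split space at $r\aeq t^{1/2}$. In the near zone $\{r\aleq t^{1/2}\}$, fix a dyadic annulus $A_\lambda=\{\lambda<r<2\lambda\}$ with $\lambda\aleq t^{1/2}$. Write $L u = -2t\,t^{-3/2}e^{i\bfPhi^\ob}\,i^{-1}\nabla U$ up to the harmless factor. Rescaled Sobolev on $A_\lambda$ gives
\[
\|t^{-1}Lw\|_{L^6(A_\lambda)}\aleq \lambda^{-1}\|t^{-1}Lw\|_{L^2(\tilde A_\lambda)}+\|t^{-2}L^{(2)}w\|_{L^2(\tilde A_\lambda)},
\]
and similarly for $L^3$ via interpolation, with the weights $\lambda^{-1/2}$ and $\lambda^{1/2}$. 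These weighted quantities are exactly the left-hand side of Proposition~\ref{prop:elliptic-near} with $\dlt=0$, up to factors $\lambda^{1/2}$, $\lambda^{-1/2}$, which are at worst $t^{1/4}$ when $\lambda\aeq t^{1/2}$. Proposition~\ref{prop:elliptic-near} then bounds them by $t^{-2}\|r^{1/2}\p_x^{(\leq N)}\calL u\|_{\ell^1L^2(r\aleq t^{1/2})}+\|u\|_{L^\infty}$. Since $r^{1/2}\aleq t^{1/4}$ there, the first term becomes $t^{-7/4}\|\p_x^{(\leq N)}\calL u\|_{\ell^1L^2}$.

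The main obstacle is the bookkeeping of the $\lambda^{\pm1/2}$ weights. For the $L^6$ norm the natural scaling yields $\lambda^{-1/2}$ times the $\ell^\infty H^{2,-3/2}$-type quantity, which is fine. For $L^3$ the loss is $\lambda^{0}$, so one should check that the $\ell^\infty$ (rather than $\ell^1$) output suffices. I expect it does, because the statement only asks for $\ell^\infty L^p$.

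In the intermediate and far zones, $r\ageq t^{1/2}$, I would instead apply Lemma~\ref{lem:L6-H1-embedding} with $R\aeq t^{1/2}$, after commuting $\p_x^{(\leq N)}$. That lemma gives $t^{-1}\|L\p_x^{(\leq N)}u\|_{L^6}\aleq t^{-2}\|\calL\p_x^{(\leq N)}u\|_{L^2(r\ageq t^{1/2})}+t^{-1/4}\|\p_x^{(\leq N)}u\|_{L^\infty}$. Proposition~\ref{prop:elliptic-int-rad} with $\dlt=0$ replaces $\calL\p_x^{(\leq N)}$ by $\p_x^{(\leq N)}\calL$, at the cost of $t^{-1/4}\|\p_x^{(\leq N)}u\|_{L^\infty}+t^{-2}\|\p_x^{(\leq N)}u\|_{L^2}$. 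For $L^3$ on each dyadic annulus I would interpolate between this $L^6$ bound and the $L^2$ bound $t^{-1}\|\p_x^{(\leq N)}Lu\|_{L^2}$. That $L^2$ bound is controlled by the outer elliptic estimate, and in particular by \eqref{eq:ell-r>t-U} for $r\ageq t$. This produces at worst the $t^{-3/2}\|\p_x^{(\leq N)}u\|_{L^2}$ term.

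Collecting the near-zone and outer-zone bounds, all of which are dominated by the stated right-hand side, completes the argument.
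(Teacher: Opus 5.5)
Your near-zone argument and your outer-zone $L^6$ argument are correct. In the near zone you use rescaled Sobolev on annuli of radius $\lambda\aleq t^{1/2}$ in the conjugated variable, then Proposition~\ref{prop:elliptic-near} and $r^{1/2}\aleq t^{1/4}$. In the outer zone, for $L^6$, you use Lemma~\ref{lem:L6-H1-embedding} followed by Proposition~\ref{prop:elliptic-int-rad}. This is in substance what the paper does: dyadic Sobolev embeddings on both sides of $r\aeq t^{1/2}$, Lemma~\ref{lem:elliptic-near} with an induction in $N$, and Proposition~\ref{prop:elliptic-int-rad}. For the commutator term $t^{-1}\p_x^{(\leq N-1)}u$, what you need is $t^{-1}\|f\|_{L^3}\le \|f\|_{L^\infty}^{1/3}(t^{-3/2}\|f\|_{L^2})^{2/3}$; it is exactly balanced, with no spare powers of $t$.

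The genuine gap is the $L^3$ bound in $\{r\ageq t^{1/2}\}$. On an annulus of radius $\lambda$, $L^3$ costs a factor $\lambda^{1/2}$ relative to $L^6$ and to the $\dot H^1$-type quantities controlling it, and your interpolation inherits this loss. The two available $L^2$ inputs are:
\begin{itemize}
\item Lemma~\ref{lem:elliptic-rad}, which gives only $t^{-1}\|Lu\|_{L^2(r\aeq\lambda)}\aleq \lambda t^{-2}\|\calL u\|_{L^2}+\lambda^{1/2}\|u\|_{L^\infty}$;
\item \eqref{eq:ell-r>t-U} with $R=t$, which gives $t^{-1}\|Lu\|_{L^2(r>t)}\aleq t^{-1}\|\calL u\|_{L^2}+t^{-1}\|u\|_{L^2}$.
\end{itemize}
In either case, the geometric mean with your $L^6$ bound contains $\lambda^{1/2}t^{-2}\|\calL u\|_{L^2}$; for $\lambda\aeq t$ this is $t^{-3/2}\|\calL u\|_{L^2}$. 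That is worse than the allowed $t^{-7/4}\|\p_x^{(\leq N)}\calL u\|_{\ell^1L^2}$ by a factor $(\lambda/t^{1/2})^{1/2}$. So the obstruction is the $\calL u$ term, not the $\|u\|_{L^2}$ term you singled out.

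This cannot be repaired: with a $t$-independent constant, the $L^3$ half of the estimate is false for $r\gg t^{1/2}$. Take $\calH=-\lap$ and $N=0$. At a fixed large $t$, let $u=t^{-3/2}e^{i|x|^2/4t}U$ with $U(x)=\psi(x/t)\cos(x_1/t^{7/8})$, where $\psi\in C^\infty_c(\{1<|y|<2\})$ equals $1$ on $\{5/4<|y|<7/4\}$. Then the left-hand side is large:
\begin{itemize}
\item $t^{-1}\|Lu\|_{\ell^\infty L^3}=2t^{-3/2}\|\nabla U\|_{\ell^\infty L^3}\ageq t^{-3/2}\,t^{1/8}$.
\end{itemize}
Each term on the right-hand side is at most of order $t^{-3/2}$:
\begin{itemize}
\item $t^{-7/4}\|\calL u\|_{\ell^1L^2}=4t^{-5/4}\|\lap U\|_{\ell^1L^2}\aleq t^{-3/2}$;
\item $\|u\|_{L^\infty}\le t^{-3/2}$;
\item $t^{-3/2}\|u\|_{L^2}=t^{-3}\|U\|_{L^2}\aleq t^{-3/2}$.
\end{itemize}

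The paper's proof does not supply the missing step either. In the outer zone it asserts that dyadic Sobolev bounds $t^{-1}\|L\p_x^{(\leq N)}u\|_{\ell^\infty L^3(r\ageq t^{1/2})}$ by $t^{-2}\|L^{(2)}\p_x^{(\leq N)}u\|_{\ell^\infty L^2}+t^{-1}\|r^{-1}L\p_x^{(\leq N)}u\|_{\ell^\infty L^2}$. That is correct for $L^6$ but off by the same factor $\lambda^{1/2}$ for $L^3$. What dyadic Sobolev, or your interpolation fed with Lemma~\ref{lem:elliptic-rad}, actually yields in the outer region is the weighted bound for $t^{-1}\|r^{-1/2}L\p_x^{(\leq N)}u\|_{\ell^\infty L^3}$. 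The unweighted $L^3$ estimate holds only for $r\aleq t^{1/2}$.
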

\begin{proof}
    By applying standard Sobolev embeddings in dyadic regions, we arrive at \begin{align*}
        &t^{-1} \nrm{L^{(1)} \p_x^{(\leq N)}u}_{\ell^{\infty} L^{3}(r \ageq t^{\frac12})}
        + t^{-1} \nrm{L^{(1)} \p_x^{(\leq N)} u}_{\ell^{\infty} L^{6}(r \ageq t^{\frac12})} \\
        \aleq&\, t^{-2} \nrm{L^{(2)} \p_x^{(\leq N)} u}_{\ell^{\infty} L^{2}(r \ageq t^{\frac12})}
        + t^{-1} \nrm{r^{-1} L^{(1)} \p_x^{(\leq N)} u}_{\ell^{\infty} L^{2}(r \ageq t^{\frac12})},
    \end{align*}
    where the right-hand side can be handled using Proposition~\ref{prop:elliptic-int-rad}.

    For the region $\set{r \aleq t^{\frac12}}$, we start with the case $N = 0$.
    Applying standard Sobolev embeddings in dyadic regions as above,
    \begin{equation*}
        t^{-1} \nrm{L^{(1)} u}_{\ell^{\infty} L^{3}(r \aleq t^{\frac12})}
        + t^{-1} \nrm{r^{\frac{1}{2}} L^{(1)} u}_{\ell^{\infty} L^{6}(r \aleq t^{\frac12})} \aleq t^{-2} \nrm{r^{\frac{1}{2}} L^{(2)} u}_{\ell^{\infty} L^{2}(r \aleq t^{\frac12})}
        + t^{-1} \nrm{r^{-\frac{1}{2}} L^{(1)} u}_{\ell^{\infty} L^{2}(r \aleq t^{\frac12})}.
    \end{equation*}
    Thus, combining with Lemma~\ref{lem:elliptic-near}, it finishes the proof for the case $N = 0$. To handle the commutator arising during the induction, we have \[
        t^{-1}\left(\|\p_x^{(\leq N)} u\|_{\ell^\infty L^3} + \|\p_x^{(\leq N)} u\|_{\ell^\infty L^6}\right) \aleq t^{-1}\left(\|\p_x^{(\leq N)} u\|_{L^\infty}^{\frac13}\|\p_x^{(\leq N)} u\|_{L^2}^{\frac23} + \|\p_x^{(\leq N)} u\|_{L^\infty}^{\frac23}\|\p_x^{(\leq N)} u\|_{L^2}^{\frac13}\right),
    \]
    which completes the proof.
\end{proof}

\begin{proposition}
    Under the assumptions of Main~Theorem~\ref{thm:nonlinear-1} and bootstrap assumptions \eqref{eq:BA1-NL1}, \eqref{eq:BA2-NL1}, there exists $T_0$ (independent of $T_f$) such that for all $T_0 + 1 < t < T_f$, for all $s_c' \leq N \leq M_0 - 3 \leq M_c - 3$, for any solution $u$ to \eqref{eq:nls}, \begin{equation*}
    \begin{aligned}
        \|\p_x^{(\leq N - s_c')} \calL u\|_{L^\infty L^2[T_0, t]} \aleq\, &\|\p_x^{(\leq N)} \calL u(T_0, \cdot)\|_{L^2} + \|t^{-\frac12 - \eta}\p_x^{(\leq N)} \calL u\|_{L^2L^2[T_0, t]} \\
        &+ \|t^{-\frac14 - \eta}\|t^{\frac32}\p_x^{(\leq N)} u\|_{L^\infty_x}\|_{\ell^1 L^2_t[T_0, t]} + \|t^{-1 - \eta} \|\p_x^{(\leq N)}\calL u\|_{L^2_x}\|_{L^1[T_0, t]} \\
        &+ \|t^{-1 - 2\eta}\|\p_x^{(\leq N)} u\|_{L^2_x}\|_{L^1_t[T_0, t]} + \eps \log t.
    \end{aligned}
\end{equation*}
\end{proposition}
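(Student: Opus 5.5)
The plan is to run the proof of Proposition~\ref{prop:est-calLu} again, treating the nonlinearity as an extra source term. On $[T_0,t]$ we apply Proposition~\ref{prop:iled*-L1L2+LE*} to
\begin{equation*}
(i\p_t-\calH)\calL u = -[\calL,i\p_t-\calH]u + \calL\calN(u),
\end{equation*}
with $N$ replaced by $N-s_c'$ on the left. The linear commutator term $[\calL,i\p_t-\calH]u$ is handled exactly as in Section~\ref{sec:en-est-calLu}. We use Proposition~\ref{prop:good-comm} and split into the near, intermediate and far-away zones of Remark~\ref{rem:zones}. In each zone we invoke Propositions~\ref{prop:elliptic-near}, \ref{prop:elliptic-int}, \ref{prop:elliptic-rad}, or their corollaries in the obstacle case. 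This produces precisely the first five terms on the right-hand side. In the obstacle case the boundary condition for $\calL u$ is still available: each monomial of $\calN(u)$ contains an undifferentiated $u$ or $\br u$, so Remark~\ref{rem:Lu-BC} applies with $f$ replaced by $\calN(u)$. It then remains to show
\begin{equation*}
\|\p_x^{(\leq N)}\calL\calN(u)\|_{(\brk{x}^{-1}L^2L^2+L^1L^2)[T_0,t]}\aleq \eps\log t,
\end{equation*}
together with the analogous $\p_t$-versions, which reduce to this via the equation.

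To estimate $\calL\calN(u)$ we use the gauge covariance \ref{hyp:nl}. Since $\calN(\psi e^{i\theta})=\calN(\psi)e^{i\theta}$, the operators $L_\mu$ satisfy a Leibniz rule on cubic gauge-covariant monomials. Concretely, $L_\mu$ acting on $u\br u u$-type products distributes as $L_\mu u$ on the $u$ factors and $-\overline{L_\mu u}$ on the $\br u$ factors, while $\p_\mu$ passes through $L_\mu$ up to $O(1)$ commutators (cf.~\eqref{eq:comm-L-k-der}). This is cleanest in the $U$-variables, where $L=2it\p$ and $\calL\approx 4t^2\calH$. Expanding $\calL\calN(u)$ yields three types of terms. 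The first type is $L^{(2)}$ falling on one factor, paired with $(\p^{(\leq N+2)}u)^2$ on the other two factors. The second type is $L^{(1)}$ on two factors and $\p^{(\leq N+2)}u$ on the third. The third type consists of terms with coefficients $t^2\cdot(\text{coefficients of }\calH)$ multiplying cubic expressions.

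Each type is estimated in $L^1_tL^2_x$.
\begin{enumerate}
\item For $L^{(2)}$ on one factor, we rewrite via $\calL$ and the two-scale elliptic estimates as $t^{2}\times\bigl(t^{-2}\|\p^{(\leq N+2)}\calL u\|_{L^2}+\|\p^{(\leq N+2)}u\|_{L^\infty}\bigr)$. By \eqref{eq:BA1-NL1} and \eqref{eq:BA2-NL1} this is $\aleq t^{\frac14}\eps$. Two $L^\infty$ factors then give $\eps^2 t^{-3}$, so the product is $\eps^3 t^{-\frac{11}4}$, which is integrable.
\item For $L^{(1)}$ on two factors, we use Lemma~\ref{lem:L3L6-est} in $L^3\cdot L^6$ (or $L^3\cdot L^6\cdot L^\infty$). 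Each factor is $t^{-1}\|L u\|_{L^3\cap L^6}\aleq \eps t^{-\frac32}$, so $\|L u\|\aleq \eps t^{-\frac12}$. Together with one $L^\infty$ factor, the product is $\eps^3 t^{-\frac52}$, which is integrable.
\end{enumerate}
The quasilinear terms $D_\mu(G^{\mu\nu}|u|^2D_\nu u)$ contain up to two derivatives more. These are absorbed into the derivative count, which is where the restriction $N\leq M_0-3$ comes from. The bootstrap assumptions control $\p^{(\leq M_0-3-c_0s_c')}$ only in weighted form, so in higher norms we use \eqref{eq:BA3-NL1} and interpolation. Under this interpolation, the worst terms, namely those in which the top derivative falls on $L^{(2)}u$ and is controlled only by $\|\p_x^{(\leq N)}\calL u\|_{L^2}$ at order $N$ rather than by the bootstrap, can be shifted back into the five linear terms on the right-hand side.

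\textbf{The main obstacle} is the borderline term in which the good-commutator weight $t^2$ meets a cubic term with only the energy bound available. Bounding it by $\eps\cdot\|\p^{(\leq N)}u\|_{L^\infty}^2\, t^{2}\cdot t^{-2}\cdot t$ yields $\eps^3 t^{-1}$, whose time integral gives the $\eps\log t$. We will track this term carefully, splitting $r\lessgtr t^{1/2}$ as in the linear iteration, and place its near-zone portion in the $\brk{x}^{-1}L^2L^2$ norm and its far portion in $L^1L^2$, so that at worst a logarithm is lost.
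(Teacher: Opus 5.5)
The overall reduction matches the paper. You apply Proposition~\ref{prop:est-calLu} with $f$ replaced by $\calN(u)$, check the Dirichlet condition, trade $\p_t$ for $\calH$, and reduce to
\[
\|\p_x^{(\leq N)}\calL\calN(u)\|_{(\brk{x}^{-1}L^2L^2+L^1L^2)[T_0,t]}\aleq\eps\log t .
\]

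\textbf{The gap} is in how you bound the factor carrying the most derivatives. As you note, the quasilinear part puts two extra derivatives on one factor, so that factor carries up to $N+2\le M_0-1$ derivatives. But \eqref{eq:BA1-NL1}--\eqref{eq:BA2-NL1} hold only up to order $N_0:=M_0-3-c_0s_c'$.
\begin{itemize}
\item Your item (1) bound ($\aleq \eps t^{1/4}$, hence $\eps^3 t^{-11/4}$) is therefore unjustified.
\item Your item (2) rate $\eps^3t^{-5/2}$ likewise assumes bootstrap decay on all three factors, which fails for whichever factor carries the top-order derivatives. If that factor sits in $L^\infty$, you only have $\|\p_x^{(\leq M_0)}u\|_{L^\infty}\aleq\eps$, with no decay.
\item The fallback does not repair this. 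The five linear terms contain only $\p_x^{(\leq N)}\calL u$ and $\p_x^{(\leq N)}u$, so nothing involving $\p_x^{(\leq N+2)}\calL u$ can be shifted into them.
\item \eqref{eq:BA3-NL1} plus interpolation cannot control $\calL u$ at all. Indeed $\calL u$ contains $a^{\mu\nu}x_\mu x_\nu u$, and unweighted energies give no control of spatial weights.
\end{itemize}

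\textbf{The missing ingredient} is the crude base-case Lemma~\ref{lem:base_iteration_nl1}:
\[
\|\p_x^{(\leq M_0-1)}\calL u\|_{L^2}\aleq\eps\brk{t}^2, \qquad \|\p_x^{(\leq M_0)}u\|_{L^\infty}\aleq\eps .
\]
It comes from weighted energy estimates using the $\brk{x}^2$-weighted data.

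The paper splits $\calL=a^{\mu\nu}L_\mu L_\nu+(\calL-a^{\mu\nu}L_\mu L_\nu)$. The second piece is handled by \eqref{eq:nl-1st-order-est1}--\eqref{eq:nl-1st-order-est2}. In the first piece, the top-order factor is bounded by the base-case estimates; for $L^{(2)}\p_x^{(\leq N+2)}u$ this is done after rewriting it through $\p_x^{(\leq N+2)}\calL u$. The two low-order factors supply the decay, via \eqref{eq:BA1-NL1}, \eqref{eq:BA2-NL1} and Lemma~\ref{lem:L3L6-est}. A typical term is
\[
\|L^{(2)}\p_x^{(\leq N+2)}u\|_{L^2}\,\|\p_x^{(\leq N_0)}u\|_{L^\infty}^2\aleq \eps t^2\cdot\eps^2 t^{-3}.
\]
Each such term is exactly $O(\eps^3 t^{-1})$. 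This borderline pairing of top-order growth against low-order decay is the actual source of the $\eps\log t$. It is not only the $t^2c$-type term you single out, and without the base-case input your argument gives no bound on these top-order terms.
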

\begin{proof}
    Based on Proposition~\ref{prop:est-calLu} and bootstrap assumptions, it suffices to establish that \begin{align*}
        \sum_{2i + j \leq N}\|\p_t^{(\leq i)}\p_x^{(\leq j)}\calL \calN(u)\|_{\brk{x}^{-1}L^2L^2+L^1L^2[T_0, t]} \aleq \eps \log t.
    \end{align*}
    In view of the bootstrap assumptions and the equation, $\p_t$ can be replaced by $\calH$ at the expense of more favorable error terms. Hence, it suffices to establish that
    \begin{align}\label{eq:est-logt-nl1}
        \|\p_x^{(\leq N)}\calL \calN(u)\|_{\brk{x}^{-1}L^2L^2+L^1L^2[T_0, t]} \aleq \eps \log t, \hbox{ for all } 0 \leq  N \leq M_0 - 3,
    \end{align}
    which will be proved in the sequel.

    Under the assumption \ref{hyp:nl-1}, we first write
    \begin{align*}
        L \calN(u) = &\, [L, D_{\mu}] (G^{\mu\nu}|u|^2 D_{\nu} u) + D_{\mu} G^{\mu \nu} (\br u Lu - u \br{Lu}) D_{\nu} u + D_{\mu} G^{\mu\nu}|u|^2 LD_{\nu} u \\
        &- G^{\mu\nu}Lu D_\mu \br u D_\nu u - G^{\mu\nu} u \br{LD_\mu u} D_\nu u - G^{\mu\nu}u D_\mu \br u LD_\nu u + LF.
    \end{align*}
    Therefore,
    \begin{align*}
        \|\p_x^{(\leq N)} (\calL - a^{\mu\nu} L_\mu L_\nu) \calN(u)\|_{L^2} \aleq &\, t^2\|\p_x^{(\leq N)}u\|_{L^2} \|\p_x^{(\leq M_0 - 3 - c_0 s_c')}u\|_{L^\infty}^2 + t\|r^{-1}\p_x^{(\leq N+1)}L u\|_{L^2} \|\p_x^{(\leq M_0 - 3 - c_0 s_c')}u\|_{L^\infty}^2 \\
        &+ t\|r^{-1}\p_x^{(\leq M_0 - 3 - c_0 s_c')} Lu\|_{L^2} \|\p_x^{(\leq N+1)}u\|_{L^\infty} \|\p_x^{(\leq M_0 - 3 - c_0 s_c')} u\|_{L^\infty}.
    \end{align*}
    In view of Step 1 and the form of $L$, we obtain \begin{equation}\label{eq:nl-1st-order-est1}
        \|r^{-1}\p_x^{(\leq N+1)}Lu\|_{L^2} \aleq \|\p_x^{(\leq N+1)} u\|_{L^2} + t\|\p_x^{(\leq N+2)}u\|_{L^2} \aleq C\eps t.
    \end{equation}
    On the other hand, Propositions~\ref{prop:elliptic-near},~\ref{prop:elliptic-int}~and~\ref{prop:elliptic-rad} imply that, for $N_0 = M_0 - 3 - c_0 s_c'$, \begin{equation}\label{eq:nl-1st-order-est2}
        \|r^{-1}\p_x^{(\leq N_0)}L u\|_{L^2} \aleq t^{-\frac34}\|\p_x^{(\leq N_0)}\calL u\|_{L^2} + t^{\frac34}\|\p_x^{(\leq N_0)}u\|_{L^\infty} + t^{-1}\|\p_x^{(\leq N_0)}u\|_{L^2} \aleq C\eps t^{-\frac12},
    \end{equation}
    where $C$ is the constant in bootstrap assumptions.
    Combining these preceding estimates, we have \begin{align*}
        \|\p_x^{(\leq N)} (\calL - a^{\mu\nu}L_\mu L_\nu) \calN(u)\|_{L^2} \aleq \eps t^{-1}, \hbox{ for all } 0 \leq N \leq M_0 - 1 \leq M_c - 1.
    \end{align*}

    Using the covariant trilinear structure of $\calN(u)$, we arrive at \begin{align*}
        &\|\p_x^{(\leq N)} (a^{\mu\nu}L_\mu L_\nu) \calN(u)\|_{L^2} \aleq \sum_{k = 0}^2\|L^{(2-k)} \p_x^{(\leq N-k)}\calN(u)\|_{L^2}\\
        \aleq&\, \|L^{(2)} \p_x^{(\leq N + 2)}u\|_{L^2}\|\p_x^{(\leq M_0 - 3 - c_0s_c')} u\|_{L^\infty}^2 + \|\p_x^{(\leq N + 2)}u\|_{L^\infty}\|L^{(2)}\p_x^{(\leq M_0 - 3 - c_0s_c')} u\|_{L^2}\|\p_x^{(\leq M_0 - 3 - c_0s_c')} u\|_{L^\infty} \\
        &+ \|L\p_x^{(\leq N+2)} u\|_{L^3} \|L\p_x^{(\leq M_0 - 5 - c_0s_c')}u\|_{L^6} \|\p_x^{(\leq M_0 - 3 - c_0s_c')}u\|_{L^\infty} \\&+ \|L\p_x^{(\leq M_0 - 5 - c_0s_c')}u\|_{L^6}\|L\p_x^{(\leq M_0 - 5 - c_0s_c')}u\|_{L^3}\|\p_x^{(\leq N+2)} u\|_{L^\infty} \\
        \aleq&\, \eps t^{-1} + \eps t^{-3}\|L^{(2)} \p_x^{(\leq N + 2)}u\|_{L^2} + \eps t^{-\frac32} \|L^{(2)}\p_x^{(\leq M_0 - 3 - c_0s_c')} u\|_{L^2} \hbox{ for all } 0 \leq N \leq M_0  - 2,
    \end{align*}
    where we apply \eqref{eq:BA1-NL1}, Lemma~\ref{lem:base_iteration_nl1} and Lemma~\ref{lem:L3L6-est} in the last inequality. Furthermore, for $N_0 = N + 2$ (with $N \leq M_0 - 3$) or $N_0 = M_0 - 3 - c_0 s_c'$, we have \begin{align*}
        &\|L^{(2)} \p_x^{(\leq N_0)}u\|_{L^2} \aleq \|a^{\mu\nu} L_\mu L_\nu \p_x^{(\leq N_0)}u\|_{L^2} \\
        \aleq&\,  \|\p_x^{(\leq N_0)}\calL u\|_{L^2} + \|r^{-1}\p_x^{(\leq N_0)} Lu\|_{L^2} + \|\p_x^{(\leq N_0)} u\|_{L^2} + \|L\p_x^{(N_0 - 1)}u\|_{L^2},
    \end{align*}
    where we invoke \eqref{eq:nl-1st-order-est1} or \eqref{eq:nl-1st-order-est2} to treat the second term and we use interpolation to treat the last term $\|L\p_x^{(N_0 - 1)}u\|_{L^2} \aleq \eps \|L^2\p_x^{(N_0 - 1)}u\|_{L^2} + \eps^{-1}\|\p_x^{(N_0 - 1)}u\|_{L^2}$.

    Finally, combining everything, $\|\p_x^{(\leq N)} \calL \calN(u)\|_{L^1L^2} \aleq \eps \log t$ for $N \leq M_0 - 3$.
\end{proof}

\begin{proposition}
    Under the assumptions of Main~Theorem~\ref{thm:nonlinear-1} and bootstrap assumptions \eqref{eq:BA1-NL1}, \eqref{eq:BA2-NL1}, then there exists $T_0$ (independent of $T_f$) such that for all $T_0 < t < T_f$, the time derivative of $\gmm$ satisfies the following bound uniformly in $v$ for any $0 \leq N \leq M_0$: \begin{align*}
        \|\tfrac{d}{dt}\gmm[\p^N u](t, v)\|_{L^\infty_v} \lesssim t^{-\frac54 + \frac{\eta}2} \|\p_x^{(\leq N)}\calL u(t, \cdot)\|_{L^2} + t^{\frac12-\frac{\eta}{4}}\|\p_x^{(\leq N)} u(t, \cdot)\|_{L^\infty} + \eps t^{-\frac94}.
    \end{align*}
\end{proposition}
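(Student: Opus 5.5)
The plan is to reduce this to the linear Proposition~\ref{prop:dotgmm_decay}, applied with forcing $f + \calN(u)$, since that proposition was proved for an arbitrary source. It gives
\[
\Big|\tfrac{d}{dt}\gmm[\p^N u](t,v) + i\int \p_x^N(f+\calN(u))\,\overline{\Psi_v}\,\ud x\Big| \lesssim t^{-\frac54+\frac{\eta}{2}}\|\p_x^{(\leq N)}\calL u\|_{L^2} + t^{\frac12-\frac{\eta}{4}}\|\p_x^{(\leq N)}u\|_{L^\infty},
\]
uniformly in $v$. (In the setting of Main Theorem~\ref{thm:nonlinear-1} we have $f=0$, or $f$ is absorbed in the same way.) The only new task is therefore to show that the nonlinear source term is bounded by $\eps t^{-\frac94}$. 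The assumptions of Proposition~\ref{prop:dotgmm_decay} are available: \ref{hyp:af-0} and \ref{hyp:se} hold, and \eqref{eq:E-u} with $A_0\aleq\eps$ was verified in Step~1.

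For the source term, use the H\"older bound $|\brk{g,\Psi_v}| \le \|g\|_{L^\infty(\supp\chi^v)}\,\|\chi^v\|_{L^1}$. The normalization $\|\chi\|_{L^1}=1$ gives $\|\chi^v\|_{L^1} = t^{\frac32}$. It then suffices to show
\[
\|\p_x^{(\leq N)}\calN(u)(t)\|_{L^\infty}\lesssim \eps\, t^{-\frac{15}{4}},
\]
or any bound that yields $\eps t^{-\frac94}$ after multiplying by $t^{3/2}$. Since $\calN$ is cubic with at most two derivatives falling on a single factor (quasilinear term), Leibniz gives $\|\p_x^{(\leq N)}\calN(u)\|_{L^\infty}\lesssim \|\p_x^{(\leq N+2)}u\|_{L^\infty}\|\p_x^{(\leq N+1)}u\|_{L^\infty}^2$. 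With \eqref{eq:BA1-NL1} this is $\eps^3 t^{-\frac92}$, which gives $\eps^3 t^{-3}$ and is better than needed.

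The main obstacle is derivative counting. Bootstrap decay \eqref{eq:BA1-NL1} only holds up to order $M_0-3-c_0s_c'$, while $N$ ranges up to $M_0$. To handle this, I would distribute derivatives so that at most one factor carries more than $M_0-3-c_0s_c'$ derivatives, and put that factor in $L^2$ instead. For the high factor, $\|\p_x^{(\leq N+2)}u\|_{L^2}\lesssim \eps$ by \eqref{eq:BA3-NL1}, since $N+2\le M_0+2$. The two low factors go in $L^\infty$ with $t^{-3}$ decay. Then pair with $\Psi_v$ using Cauchy--Schwarz: $\|\chi^v\|_{L^2} = t^{\frac34}$. This yields $\eps\cdot\eps^2 t^{-3}\cdot t^{\frac34} = \eps^3 t^{-\frac94}$, which is exactly the claimed rate and explains the exponent $-\frac94$.

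In the obstacle case, integrations by parts are avoided entirely, because we only pair $\p_x^N\calN(u)$ directly with $\Psi_v$. So no boundary terms arise beyond those already handled in Proposition~\ref{prop:dotgmm_decay}. $T_0$ is taken as in that proposition, which is independent of $T_f$.
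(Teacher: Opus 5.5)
Your proposal is correct and follows the paper's proof. You apply Proposition~\ref{prop:dotgmm_decay} with source $\calN(u)$, and then bound the nonlinear pairing by Cauchy--Schwarz against $\Psi_v$, using $\|\chi^v\|_{L^2}\sim t^{\frac34}$ and placing the top-order factor in $L^2$ via \eqref{eq:BA3-NL1} and the other two in $L^\infty$ via \eqref{eq:BA1-NL1}. This gives exactly the paper's chain $|\brk{\p_x^{(\leq N)}\calN(u),\Psi_v}|\lesssim t^{\frac34}\|\p_x^{(\leq N+2)}u\|_{L^2}\|\p_x^{(\leq M_0-3-c_0s_c')}u\|_{L^\infty}^2\lesssim \eps t^{-\frac94}$.

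One point that both you and the paper leave implicit: you cannot choose how the derivatives are distributed, since the Leibniz rule fixes it. Putting the two lower-order factors under \eqref{eq:BA1-NL1} therefore requires $\lfloor (N+2)/2\rfloor \le M_0-3-c_0s_c'$, i.e.\ $M_0$ sufficiently large relative to $c_0s_c'$.
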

\begin{proof}
    Using \eqref{eq:BA1-NL1} and \eqref{eq:BA3-NL1}, we have \begin{align*}
    \left|\brk{\p_x^{(\leq N)} \calN(u), \Psi_v}\right| \aleq t^{\frac34}\|\p_x^{(\leq N)} \calN(u)\|_{L^2} \aleq t^{\frac34} \|\p_x^{(\leq N+2)} u\|_{L^2} \|\p_x^{(\leq M_0 - 3 - c_0s_c')}u\|_{L^\infty}^2 \aleq \eps t^{-\frac94}.
\end{align*}
\end{proof}

\subsubsection{Step 4: Closing the bootstrap}
From the previous steps, we could run the main iteration lemma (Lemma~\ref{lem:main_iteration}) to derive that \eqref{eq:BA1-NL1} and \eqref{eq:BA2-NL1} are open conditions. In addition, Steps~1-2 reveal that \eqref{eq:BA2-NL1} holds on a nonempty set $[0, t_0]$ for some $t_0$ provided that \eqref{eq:BA1-NL1} holds true, which in turn reduces to \eqref{eq:BA3-NL1} when $t_0 \ll 1$.

Finally, we require local well-posedness in $H^{M_0 + 2}$ in order to establish the openness of the bootstrap assumptions. This is a standard result in the case $\calK = \0$ (see, for instance, \cite{MMT14}), and we will derive the corresponding statement for the general setting below. With this ingredient in place, we complete the proof of global existence and sharp decay. The scattering result then following by replicating the corresponding steps in the proof of Main~Theorem~\ref{thm:linear} in Section~\ref{subsec-main-iter}. Therefore, we finish the proof by establishing the local well-posedness as follows.

\begin{proposition}
    Under the assumptions \ref{hyp:en} and \ref{hyp:nl-1},
    the equation \eqref{eq:nls} is wellposed in $H^{M_0 + 2}$.
\end{proposition}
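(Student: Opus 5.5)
The plan is to prove local well-posedness in $H^{M_0+2}$ by the standard quasilinear scheme: a priori energy estimates, a regularized (or iterative) construction of solutions, a weak-norm difference estimate for uniqueness, and a frequency-envelope or Bona--Smith argument for continuous dependence. Throughout, regularity is measured in the time-derivative form $\sum_{k \le (M_0+2)/2}\|\p_t^{k} u\|_{L^2}$, which is equivalent to the $H^{M_0+2}$ norm by Lemma~\ref{lem:pass-pt-to-px}. We use $\p_t$ rather than $\p_x$ because only $\p_t$ preserves the Dirichlet condition on $\p\calK$ (Remark~\ref{rem:comm_pt}).

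\textbf{Step 1 (a priori estimate).} For a smooth solution on $[0,T]$ we commute the equation with $\p_t^{N/2}$, $N=M_0+2$, exactly as in Step~1 of the proof of Main~Theorem~\ref{thm:nonlinear-1}. This puts $w=\p_t^{N/2}u$ into the form required by \ref{hyp:en}, with
\[
\td A^{\mu\nu}=G^{\mu\nu}|u|^2,\qquad \td B^\mu=B^\mu+2G^{\mu\nu}\Im(\br u\p_\nu u),\qquad \ul{\td B}^\mu=\ul B^\mu,
\]
and with a source that is cubic and involves at most $N$ derivatives, together with linear terms controlled through \ref{hyp:ult-stat}. The coefficient bound $\|\p_\mu\td B^\mu\|_{L^1L^\infty}+\|\p_\mu\ul{\td B}^\mu\|_{L^1L^\infty}\lesssim T\|u\|_{W^{2,\infty}}^2$ follows from Sobolev embedding since $M_0\ge 2$. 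The source is bounded by $\|u\|_{W^{k,\infty}}^2\|u\|_{H^N}$ plus linear terms. Applying \ref{hyp:en}, then Lemma~\ref{lem:pass-pt-to-px} (with Lemma~\ref{lem:unweight-ell}) to return to $\p_x$, and then Gr\"onwall, we obtain
\[
\sup_{[0,T]}\|u\|_{H^N}\lesssim \|u_0\|_{H^N}
\]
for $T\lesssim \|u_0\|_{H^N}^{-2}$ in the small-data regime, and for $T$ depending on $\|u_0\|_{H^N}$ in general. The time-derivative data $\p_t^k u(0)$ are computed from the equation and satisfy the corresponding compatibility conditions on $\p\calK$.

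\textbf{Step 2 (existence).} Positivity of the principal symbol $a^{\mu\nu}+G^{\mu\nu}|u|^2$ holds for small $\|u\|_{L^\infty}$. We construct solutions by iteration: $u^{(n+1)}$ solves the linear equation whose coefficients $\td A,\td B,\ul{\td B}$ are frozen at $u^{(n)}$, with Dirichlet boundary condition. The linear problem is solved by a Galerkin or parabolic regularization $(i\p_t-\calH)w-\eps\Delta_D w$, where the $\eps$-uniform bounds come from \ref{hyp:en}. Uniform $H^N$ bounds are obtained as in Step 1. Convergence is shown in $L^2$: the difference $u^{(n+1)}-u^{(n)}$ satisfies an equation of the same type whose source is controlled by $\|u^{(n)}-u^{(n-1)}\|_{H^2}$. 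Interpolating with the uniform $H^N$ bound, and shrinking $T$ if necessary, gives a contraction in $L^\infty L^2$. The limit then solves the equation in $L^\infty H^N\cap C H^{N-}$.

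\textbf{Step 3 (uniqueness, continuity, and the main obstacle).} Uniqueness comes from an $L^2$ difference estimate for two solutions. The difference equation has principal part $D_\mu(G|u_1|^2D_\nu w)$ plus terms like $D_\mu(G(|u_1|^2-|u_2|^2)D_\nu u_2)$, which lose derivatives on $u_2$ but not on $w$. This is acceptable because $u_2\in H^N$ with $N$ large, so \ref{hyp:en} applies directly. Continuity in $H^N$ and continuous dependence on data follow from the Bona--Smith approach: regularize the data with the Dirichlet heat semigroup, which respects the compatibility conditions, and compare using the difference estimate together with high-norm bounds on the regularized solutions.

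I expect the main obstacle to be the top-order structure in Step 1. There one must verify that every term with $N+1$ derivatives on $\p_t^{N/2}u$ falls into the form $D_\mu\td A D_\nu+\td B D+\ul{\td B}D\br{\cdot}$ with $\td B$ real, so that \ref{hyp:en} applies; this is precisely the Hamiltonian cancellation from \ref{hyp:nl-1}. A second difficulty is that Step 2 requires the same structure for the frozen-coefficient iterates. I would handle this by freezing only the coefficients while keeping the Hamiltonian pairing exact, so that \ref{hyp:en} applies uniformly in $n$.
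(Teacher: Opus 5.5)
There is a genuine gap in your existence step, Step~2. Step~1 is exactly the paper's Step~1 computation. Your $L^2$ uniqueness argument can also be made to work, although, contrary to what you write, the difference equation does contain first-order terms in $w$, e.g.\ $G^{\mu\nu}(\br u_1 D_\mu w + u_2 D_\mu \br w) D_\nu u_2$. These again need the Hamiltonian pairing with the semilinear difference, plus a complex $\ul{\td B}$, before \ref{hyp:en} applies.

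\textbf{The frozen-coefficient scheme.} The a priori bound of Step~1 rests on a cancellation that needs the \emph{same} function in the quasilinear coefficient and in the unknown. With $w=\p_t^{N/2}u$, the terms of $\p_t^{N/2}\bigl(D_\mu(G^{\mu\nu}|u|^2D_\nu u)-G^{\mu\nu}uD_\mu\br uD_\nu u\bigr)$ carrying $N+1$ derivatives are
$$G^{\mu\nu}(\br u D_\mu w+uD_\mu\br w)D_\nu u-G^{\mu\nu}u(D_\mu\br w D_\nu u+D_\mu\br u D_\nu w).$$
The $D\br w$ terms cancel, and the $Dw$ terms combine into $2G^{\mu\nu}\Im(\br u\p_\nu u)D_\mu w$, which has a real coefficient.

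If instead $u^{(n+1)}$ solves a linear equation with coefficients frozen at $u^{(n)}$, commuting $\p_t^{N/2}$ through $D_\mu(G^{\mu\nu}|u^{(n)}|^2D_\nu u^{(n+1)})$ produces
$$G^{\mu\nu}(\br u^{(n)}D_\mu\p_t^{N/2}u^{(n)}+u^{(n)}D_\mu\p_t^{N/2}\br u^{(n)})D_\nu u^{(n+1)}.$$
This is a source with $N+1$ derivatives on the previous iterate. However you split the semilinear first-order terms between the two iterates, either a non-real first-order coefficient lands on $u^{(n+1)}$ (excluded by \ref{hyp:en}), or a term $D\p_t^{N/2}u^{(n)}$ lands in the source. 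So the uniform $H^N$ bounds do not follow ``as in Step~1''. Likewise, ``keeping the Hamiltonian pairing exact'' does not define a linear problem for $u^{(n+1)}$.

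\textbf{The paper's device.} The paper iterates on $v=\p_t u$ instead. The new iterate $v^{n+1}$ solves the time-differentiated equation, which is linear in $v^{n+1}$ with coefficients built from $u^n$: principal part $D_\mu G^{\mu\nu}|u^n|^2D_\nu$, first-order terms $B^\mu$, $\ul B^\mu$. It carries the Dirichlet condition and the initial datum $iv^{n+1}(0)=\calH u_0+\calN(u_0)$, and then $u^{n+1}:=u_0+\int_0^t v^{n+1}$. In this formulation both members of the Hamiltonian pair are linear in $v$, with coefficients depending only on $u$. They can therefore be frozen at $u^n$ while acting on $v^{n+1}$ without breaking the cancellation. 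Moreover, a time derivative hitting a coefficient produces $\p_t u^n=v^n$, so the coefficients are effectively two spatial derivatives smoother than the unknown. The initial condition on $v$ is what ensures that the limit, which a priori only satisfies $\p_t\bigl[(i\p_t-\calH)u-\calN(u)\bigr]=0$, actually solves \eqref{eq:nls}.

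\textbf{The convergence step.} This fails too. The equation for $w^{n+1}:=u^{(n+1)}-u^{(n)}$ has a source containing derivatives of $w^n$. Interpolating $\|w^n\|_{H^2}\aleq\|w^n\|_{L^2}^{1-2/N}\|w^n\|_{H^N}^{2/N}$ against the uniform bound $M$ only yields
$$\|w^{n+1}\|_{L^\infty L^2}\aleq TM^{2/N}\|w^n\|_{L^\infty L^2}^{1-2/N}.$$
This recursion is sublinear: it keeps $w^n$ bounded but does not force $w^n\to0$, however small $T$ is, so it is not a contraction. The paper instead measures differences in $\|\delta v\|_{L^2}+\|\p_t\delta v\|_{L^2}$. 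Since $\p_t\delta v$ equals $-i\calH\delta v$ up to lower-order and small terms, and $\delta v$ vanishes on $\p\calK$, the elliptic estimate of Lemma~\ref{lem:unweight-ell} upgrades this to control of $\|\p_x^{(\leq2)}\delta v\|_{L^2}$. That two-derivative gain pays for the derivative loss in the source, and the smallness comes from the length of the time interval. A compactness argument combined with your uniqueness estimate would be an alternative, but it still requires a structure-preserving approximation scheme to produce uniform bounds.

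\textbf{Bona--Smith.} Regularizing $u_0$ with the Dirichlet heat semigroup preserves $u_0|_{\p\calK}=0$. It does not, in general, preserve the higher compatibility conditions $\p_t^k u(0)|_{\p\calK}=0$, which involve $\calH$ and, from second order on, the nonlinearity.
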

\begin{proof}
    Set \begin{align*}
    F^1(D\br u, Du, \br u, u, \p_t\br u, \p_t u) := \p_t F(D\br u, Du, \br u, u) - \ul{B}^\mu(D\br u, Du, \br u, u) D_\mu \br{\p_t u} - B^\mu(D\br u, Du, \br u, u) D_\mu \p_t u.
    \end{align*}
    We perform an iteration argument. To initiate, we set $u^0 = v^0 = 0$. Suppose we are given $(u^n, v^n)$ satisfying \[
        \p_t u^n = v^n, \quad u^n|_{t = 0} = u_0, \quad v^n|_{\p\calK} = 0, \quad \|u^n\|_{H^{M_0}} + \|v^n\|_{H^{M_0}} \aleq \eps, \quad \|v^n - v^{n-1}\|_{L^2} \aleq \kappa \|v^{n-1} - v^{n-2}\|_{L^2}.
    \]
    Now we aim to construct $(u^{n+1}, v^{n+1})$ with the same properties as above. We consider \begin{align*}
       (i\p_t - \calH)v^{n+1} =\, & \ul{B}^\mu(D\br u^n, Du^n, \br u^n, u^n) D_\mu \br{v^{n+1}} - B^\mu(D\br u^n, Du^n, \br u^n, u^n) D_\mu v^{n+1} \\
       &+ D_\mu G^{\mu\nu} |u^n|^2 D_\nu v^{n+1} - G^{\mu\nu} v^{n+1} D_\mu \br u^n  D_\nu u^n - [\calH, \p_t] u^{n} \\
       &- [D_\mu G^{\mu\nu} |u^n|^2 D_\nu - G^{\mu\nu} D_\mu \br u^n  D_\nu u^n, \p_t] u^n + F^1(D\br u^n, Du^n, \br u^n, u^n, \p_t\br u^n, \p_t u^n)
    \end{align*}
    with initial-boundary condition \[
    iv^{n+1}(t = 0) = \calH|_{t = 0} u_0 + D_\mu G^{\mu\nu} |u_0|^2 D_\nu u_0 - G^{\mu\nu} u_0 D_\mu \br u_0 D_\nu \br u + F(D\br u_0, D u_0, \br u_0, u_0), \quad v^{n+1}|_{\p\calK} \equiv 0.
    \]
    This equation is linear in $v^{n+1}$, which hence ensures the existence of such $v^{n+1}$ with $\|v^{n+1}\|_{L^2} \aleq \eps$. Moreover,  one can obtain control of $\|\p_t^{(\leq \frac{M_0}{2})} v^{n+1}\|_{L^2}$ by commuting the equation with $\p_t$'s.
    In view of the equation, following the idea of Lemma~\ref{lem:pass-pt-to-px}, one could derive an a priori estimate and hence conclude that  \begin{align*}
       \|v^{n+1}\|_{H^{M_0}} \aleq \eps.
    \end{align*}
    Now we solve $u^{n+1}$ from \[
        \p_t u^{n+1} = v^{n+1}, \quad u^{n+1}|_{t = 0} = u_0,
    \]
    which satisfies $\|u^{n+1}\|_{H^{M_0}} \aleq \eps$.

    Moreover, provided that $M_0 \geq 3$, the difference could be bounded by \begin{align*}
        \|v^{n+1} - v^n\|_{L^2} + \|\p_t v^{n+1} - \p_t v^n\|_{L^2} \aleq T_0 \|\p_x^{(\leq 1)}(v^{n+1} -  v^n)\|_{L^2}.
    \end{align*}
    In view of the equation, the Dirichlet boundary condition for $v^{n+1} - v^n$ and an elliptic estimate as in the proof of Lemma~\ref{lem:unweight-ell}, we have  \begin{align*}
        \|\p^{(\leq 2)}(v^{n+1} - v^n)\|_{L^2} &\aleq
        \|\calH(v^{n+1} - v^n)\|_{L^2} + \|v^{n+1} - v^n\|_{L^2} \\&\aleq \|v^{n+1} - v^n\|_{L^2} + \|\p_t v^{n+1} - \p_t v^n\|_{L^2} + \eps \|\p_x^{(\leq 2)}(v^{n+1} -  v^n)\|_{L^2},
    \end{align*}
    which is a contraction mapping by choosing $T_0$ small enough. Thus, there exists a limit $v := \lim_{n \to \infty} v^n \in H^{M_0}$ and $\p_t u = v \in H^{M_0}$. From the equation of $v$, we read off \[
        \p_t ((i\p_t - \calH)u) = \p_t (D_\mu G^{\mu\nu}|u|^2 D_\nu u - G^{\mu\nu} u D_\mu \br u D_\nu u + F).
    \]
    Since $iv|_{t = 0} = (\calH u + (D_\mu G^{\mu\nu}|u|^2 D_\nu u - G^{\mu\nu} u D_\mu \br u D_\nu u + F)|_{t = 0}$, we know that $u$ is indeed a solution to \eqref{eq:nls}. This then allows us to upgrade the regularity to $H^{M_0 + 2}$ by repeating the argument in Step 1.
\end{proof}

\subsection{Proof of Main~Theorem~\ref{thm:nonlinear-2}}
We first review the basics of Lorentz spaces, to be used below.

\subsubsection{Lorentz spaces}
For completeness, we record a direct proof of the interpolation of Lorentz spaces without referring to the real interpolation theory.
\begin{lemma}
    For $\theta \in (0,1)$, $p_1 < p < p_2$ and \[
        \frac1{p} = \frac{\theta}{p_1} + \frac{1-\theta}{p_2},
    \]
    we have \[
        \|g\|_{L^{p,q}} \lesssim \|g\|_{L^{p_1, \infty}}^{\theta} \|g\|_{L^{p_2,\infty}}^{1-\theta}.
    \]
\end{lemma}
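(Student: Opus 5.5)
The plan is to work directly with the distribution function and a two-regime split of the dyadic sum defining the Lorentz quasinorm. Write $d_g(\lambda) := |\{|g| > \lambda\}|$ and use the equivalent discrete form
\[
\|g\|_{L^{p,q}}^q \simeq \sum_{k \in \bbZ} \big(2^k d_g(2^k)^{1/p}\big)^q,
\]
with the usual modification when $q = \infty$. The weak-type hypotheses give
\[
d_g(\lambda) \leq \lambda^{-p_1} A_1^{p_1}, \qquad d_g(\lambda) \leq \lambda^{-p_2} A_2^{p_2},
\]
where $A_i := \|g\|_{L^{p_i,\infty}}$.

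Since $p_1 < p < p_2$, we get
\[
2^k d_g(2^k)^{1/p} \leq \min\big\{ 2^{k(1 - p_1/p)} A_1^{p_1/p},\ 2^{k(1 - p_2/p)} A_2^{p_2/p} \big\}.
\]
The first bound increases geometrically in $k$, because $1 - p_1/p > 0$. The second decays geometrically, because $1 - p_2/p < 0$.

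Split the sum at the crossover level $\lambda_0 := 2^{k_0}$, chosen so the two bounds balance: $\lambda_0^{p_2 - p_1} \simeq A_2^{p_2}/A_1^{p_1}$. Each half is a geometric series dominated by its term at $k_0$, with constants depending only on $p, p_1, p_2, q$. This yields
\[
\|g\|_{L^{p,q}} \lesssim \lambda_0^{1 - p_1/p} A_1^{p_1/p}.
\]

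It then remains to substitute $\lambda_0$ and check the exponents. The exponent of $A_1$ is
\[
\frac{p_1}{p} - \Big(1 - \frac{p_1}{p}\Big)\frac{p_1}{p_2 - p_1}.
\]
Using $\frac1p = \frac{\theta}{p_1} + \frac{1-\theta}{p_2}$, one computes $1 - p_1/p = (1-\theta)(p_2 - p_1)/p_2$. Hence the exponent of $A_1$ equals $p_1/p - (1-\theta)p_1/p_2 = \theta$, and the exponent of $A_2$ equals $(1-\theta)(p_2-p_1)/p_2 \cdot p_2/(p_2-p_1) = 1-\theta$.

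Two degenerate cases need a remark. If $A_1$ or $A_2$ is $0$ or $\infty$, either $g = 0$ or the claim is trivial. Also, $k_0$ must be an integer, so rounding $\lambda_0$ to a power of two costs only a constant.

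The main obstacle is this exponent bookkeeping together with uniformity of constants in the geometric sums; the rest is routine. For $q = \infty$ the same argument works with sup in place of sum, and it is in fact immediate from the pointwise minimum bound.
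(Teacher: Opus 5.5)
Your argument is the paper's proof in dyadic form. The paper also bounds $d_g$ by the minimum of the two weak-type bounds, splits $\int_0^\infty \lambda^{q} d_g(\lambda)^{q/p}\,\frac{d\lambda}{\lambda}$ at a level $B = \|g\|_{L^{p_2,\infty}}^{p_2/(p_2-p_1)}/\|g\|_{L^{p_1,\infty}}^{p_1/(p_2-p_1)}$, which is exactly your $\lambda_0$, and your exponent bookkeeping is correct.

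The one omission is the endpoint $p_2=\infty$, which the statement allows and the paper treats separately. It is also the case the paper actually uses later, e.g.\ $\|g\|_{L^{3,2}}\lesssim \|g\|_{L^2}^{2/3}\|g\|_{L^\infty}^{1/3}$ in the Hartree estimates. There your bounds $\lambda^{-p_2}A_2^{p_2}$ and $\lambda_0^{p_2-p_1}\simeq A_2^{p_2}/A_1^{p_1}$ are meaningless. The fix is immediate: $d_g(\lambda)=0$ for $\lambda\ge A_2=\|g\|_{L^\infty}$, so taking $\lambda_0\simeq A_2$, only the increasing geometric sum survives. It gives $A_1^{p_1/p}A_2^{1-p_1/p}=A_1^{\theta}A_2^{1-\theta}$, since $p_1/p=\theta$ when $p_2=\infty$.
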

\begin{proof}
    Let $d(\lambda) = \mu(|g| \geq \lambda)$. We start with the case $p_2 < \infty$ and hence $d(\lambda)$ satisfies \[
        d(\lambda) \leq \min \left(\frac{\|g\|_{L^{p_1,\infty}}^{p_1}}{\lambda^{p_1}}, \frac{\|g\|_{L^{p_2,\infty}}^{p_2}}{\lambda^{p_2}} \right).
    \]
    It then follows that \[\begin{aligned}
        \|g\|_{L^{p,q}}^q
        &= \int_0^\infty \lambda^{q} d(\lambda)^{\frac{q}{p}}\frac{\, \ud\lambda}{\lambda}
        \leq \int_0^B \lambda^{q(1 -\tfrac{p_1}{p})-1} \|g\|_{L^{p_1, \infty}}^{q\tfrac{p_1}{p}} \, \ud\lambda + \int_B^\infty \lambda^{q(1 -\tfrac{p_2}{p})-1} \|g\|_{L^{p_2, \infty}}^{q\tfrac{p_2}{p}} \, \ud\lambda \\&\lesssim B^{q(1 -\tfrac{p_1}{p})}\|g\|_{L^{p_1,\infty}}^{q\tfrac{p_1}{p}} + B^{q(1 -\tfrac{p_2}{p})}\|g\|_{L^{p_2,\infty}}^{q\tfrac{p_2}{p}}.
    \end{aligned}
    \]
    Optimizing the right-hand side by choosing $B = \|g\|_{L^{p_2, \infty}}^{\frac{p_2}{p_2 - p_1}} / \|g\|_{L^{p_1, \infty}}^{\frac{p_1}{p_2 - p_1}}$, it finishes the proof of the lemma when $p_2 \neq \infty$.

    The case $p_2 = \infty$ is easier since $d(\lambda) = 0$ when $\lambda > \|g\|_{L^\infty}$.
\end{proof}

\begin{lemma}{\cite[Proposition 1.4.10]{Gr08}}
    Suppose $0 < p \leq \infty$ and $0 < q < r \leq \infty$. Then we have $L^{p, q} \subset L^{p, r}$ and \[
        \|f\|_{L^{p, r}} \lesssim_{p, q, r} \|f\|_{L^{p, q}} .
    \]
\end{lemma}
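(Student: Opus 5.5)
The plan is to compare both quasi-norms through the distribution function, and to do this directly from the definitions rather than through real interpolation. Recall $\|f\|_{L^{p,q}} = p^{1/q}\big(\int_0^\infty (\lambda\, d_f(\lambda)^{1/p})^q \,\frac{\ud\lambda}{\lambda}\big)^{1/q}$ for $q<\infty$, and $\|f\|_{L^{p,\infty}} = \sup_{\lambda>0}\lambda\, d_f(\lambda)^{1/p}$. Set $F(\lambda) := \lambda\, d_f(\lambda)^{1/p}$. The target then reads $\|F\|_{L^\infty(\ud\lambda/\lambda)} \lesssim \|F\|_{L^q(\ud\lambda/\lambda)}$.

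The key observation is that $d_f$ is nonincreasing. Fix $\lambda>0$. For every $s\in[\lambda/2,\lambda]$ we have $d_f(s)\ge d_f(\lambda)$, and therefore
\[
F(s) \ge \tfrac12 \lambda\, d_f(\lambda)^{1/p} = \tfrac12 F(\lambda).
\]
Raising this to the power $q$ and integrating over $[\lambda/2,\lambda]$ against $\ud s/s$, whose total mass is $\log 2$, gives
\[
\log 2\, \bigl(\tfrac12 F(\lambda)\bigr)^q \le \int_{\lambda/2}^{\lambda} F(s)^q\, \frac{\ud s}{s} \le p^{-1}\|f\|_{L^{p,q}}^q .
\]
Taking the supremum over $\lambda$ yields $\|f\|_{L^{p,\infty}}\lesssim_{p,q}\|f\|_{L^{p,q}}$. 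This settles the case $r=\infty$.

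For $r<\infty$, I will use the pointwise bound $F\le C\|f\|_{L^{p,q}}$ from the previous step to interpolate:
\[
\int_0^\infty F^r\,\frac{\ud\lambda}{\lambda} \le \|F\|_{L^\infty}^{r-q}\int_0^\infty F^q\,\frac{\ud\lambda}{\lambda} \lesssim \|f\|_{L^{p,q}}^{r-q}\,\|f\|_{L^{p,q}}^q .
\]
Taking $r$-th roots, and tracking the factors $p^{1/q}$ and $p^{1/r}$, gives the claim.

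The case $p=\infty$ follows from the same argument under the paper's conventions for $L^{\infty,q}$; when the defining integral diverges, the statement is vacuous.

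The only step with any content is the monotonicity argument. Everything after it is Hölder's inequality, so I expect no real obstacle here.
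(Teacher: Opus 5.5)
Your argument is correct and is essentially the standard proof behind the cited \cite[Proposition 1.4.10]{Gr08}; the paper itself gives no proof. In both, monotonicity gives the $L^{p,\infty}$ bound, and the case $r<\infty$ follows from $\int F^r \le \|F\|_\infty^{r-q}\int F^q$. Grafakos works with $t^{1/p}f^*(t)$ and integrates over all of $(0,t]$ rather than a dyadic interval, which only changes constants.

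The one loose point is $p=\infty$, where the paper fixes no convention. There, use the rearrangement definition: the same monotonicity argument applied to $f^*$ shows $L^{\infty,q}=\{0\}$ for $q<\infty$, which settles that case.
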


\begin{corollary}[Interpolation of Lorentz spaces]\label{cor:Lorentz_interpolation}
    Given $\theta \in (0,1)$, $p_1 < p < p_2$,  and \[
        \frac1{p} = \frac{\theta}{p_1} + \frac{1-\theta}{p_2}.
    \]
    For any $0 < q_1, q_2 \leq \infty$, we have \[
        \|g\|_{L^{p,q}} \lesssim \|g\|_{L^{p_1, q_1}}^{\theta} \|g\|_{L^{p_2, q_2}}^{1-\theta}.
    \]
\end{corollary}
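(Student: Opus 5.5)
The plan is to combine the two preceding results. The previous lemma bounds $\|g\|_{L^{p,q}}$ by $\|g\|_{L^{p_1,\infty}}^{\theta}\|g\|_{L^{p_2,\infty}}^{1-\theta}$ for every $0<q\le\infty$. The cited embedding from \cite[Proposition 1.4.10]{Gr08} lets us replace each weak-type norm by a stronger Lorentz norm with arbitrary second index.

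\emph{Step 1.} Apply the interpolation lemma with the given $\theta$, $p_1<p<p_2$ and the target index $q$:
\begin{equation*}
\|g\|_{L^{p,q}} \lesssim \|g\|_{L^{p_1,\infty}}^{\theta}\,\|g\|_{L^{p_2,\infty}}^{1-\theta}.
\end{equation*}

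\emph{Step 2.} For $i=1,2$, use the embedding $L^{p_i,q_i}\subset L^{p_i,\infty}$. If $q_i<\infty$ this is the cited Proposition with $r=\infty$. If $q_i=\infty$ it is trivial. This gives $\|g\|_{L^{p_i,\infty}}\lesssim_{p_i,q_i}\|g\|_{L^{p_i,q_i}}$. Raising these bounds to the powers $\theta$ and $1-\theta$ and substituting into Step 1 yields the claim.

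The only point requiring care is the endpoint $p_2=\infty$. There $L^{\infty,q_2}$ is meaningful only for $q_2=\infty$, where it coincides with $L^\infty$, and the lemma already covers this case. So the main, albeit mild, obstacle is the bookkeeping of these degenerate indices.
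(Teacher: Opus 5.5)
Your proposal is correct and is exactly the argument the paper intends: the corollary is stated without a separate proof, as the direct combination of the preceding weak-type interpolation lemma with the embedding $L^{p_i,q_i}\subset L^{p_i,\infty}$ from \cite[Proposition 1.4.10]{Gr08}. Your endpoint discussion is harmless but unnecessary, since that proposition is stated for $0<p\le\infty$; moreover, for $q_2<\infty$ the space $L^{\infty,q_2}$ is trivial, so the bound holds vacuously.
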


\begin{lemma}[H\"{o}lder's inequality]{\cite[Theorem 2.6]{ON63}}\label{lem:Holder_Lorentz}
    Given \[
        \frac1{p} = \frac{1}{p_1} + \frac{1}{p_2}, \quad \frac1{q} = \frac{1}{q_1} + \frac{1}{q_2},
    \]
    we have \[
        \|gh\|_{L^{p,q}} \lesssim \|g\|_{L^{p_1, q_1}} \|h\|_{L^{p_2,q_2}}.
    \]
\end{lemma}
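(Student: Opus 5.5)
The statement is O'Neil's Hölder inequality for Lorentz spaces. I would prove it with the decreasing rearrangement and Hardy's inequality, avoiding the full real-interpolation machinery. Recall the quasi-norm
\[
\|g\|_{L^{p,q}} \simeq \Big(\int_0^\infty \big(s^{1/p} g^*(s)\big)^q \,\frac{\ud s}{s}\Big)^{1/q},
\]
where $g^*$ is the decreasing rearrangement. This is equivalent to the distribution-function formula used in the previous lemma, and the equivalence is a routine layer-cake/change-of-variables argument.

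The first key step is the elementary pointwise rearrangement inequality $(gh)^*(s_1+s_2)\le g^*(s_1)h^*(s_2)$. It follows from the inclusion of level sets $\{|gh|>g^*(s_1)h^*(s_2)\}\subset\{|g|>g^*(s_1)\}\cup\{|h|>h^*(s_2)\}$ together with subadditivity of measure. Taking $s_1=s_2=s/2$ gives $(gh)^*(s)\le g^*(s/2)h^*(s/2)$.

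Inserting this into the quasi-norm yields
\[
\|gh\|_{L^{p,q}}^q \lesssim \int_0^\infty \big(s^{1/p_1}g^*(s/2)\big)^q\big(s^{1/p_2}h^*(s/2)\big)^q\,\frac{\ud s}{s},
\]
since $1/p = 1/p_1 + 1/p_2$. Rescale $s\mapsto 2s$, which only costs a constant. Then apply the ordinary Hölder inequality with respect to the measure $\ud s/s$ with exponents $q_1/q$ and $q_2/q$; these are conjugate because $1/q = 1/q_1 + 1/q_2$. This gives exactly $\|g\|_{L^{p_1,q_1}}\|h\|_{L^{p_2,q_2}}$.

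The cases with $q_i=\infty$ are handled the same way, using the sup in place of the integral: for example, when $q_2=\infty$, bound $s^{1/p_2}h^*(s)\le\|h\|_{L^{p_2,\infty}}$ pointwise, and then $q=q_1$. When $p_2=\infty$, use $h^*\le\|h\|_{L^\infty}$.

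The only genuinely delicate point is the equivalence between the rearrangement definition and the distribution-function definition of $\|\cdot\|_{L^{p,q}}$, including its constants and the $q<1$ quasi-norm issues. I would cite this from \cite{Gr08} rather than reprove it.
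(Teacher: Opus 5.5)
Your proof is correct, and it is complete modulo standard facts about rearrangements. The paper gives no proof of this lemma; it simply cites O'Neil. You therefore supply a self-contained argument in place of the citation:
\begin{itemize}
\item the pointwise bound $(gh)^*(s)\le g^*(s/2)\,h^*(s/2)$;
\item the splitting $s^{1/p}=s^{1/p_1}s^{1/p_2}$;
\item the dilation $s\mapsto 2s$;
\item H\"older in $L^{q_1/q}\times L^{q_2/q}$ with respect to $\ud s/s$.
\end{itemize}
This yields the explicit constant $2^{1/p}$ for the rearrangement form of the quasi-norm. Compared with the citation, your route is transparent and covers the full range $0<p_i,q_i\le\infty$ at once. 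That includes both instances the paper uses in Lemma~\ref{lem:est-Hartree-Nu}: the bound $\|gh\|_{L^{3/2,1}}\lesssim\|g\|_{L^{3,2}}\|h\|_{L^{3,2}}$, and the case $p=q=1$ (where $L^{1,1}=L^1$), which pairs $|x|^{-1}\in L^{3,\infty}$ with $L^{3/2,1}$.

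A few corrections to your write-up.
\begin{itemize}
\item Hardy's inequality, announced in your first sentence, is never used and is not needed. It would only enter if you worked with the $g^{**}$-based norm, which is equivalent to the $g^*$ version only for $p>1$.
\item The point you flag as delicate is not delicate. The quantity $\big(\int_0^\infty (s^{1/p}g^*(s))^q\,\ud s/s\big)^{1/q}$ coincides with $p^{1/q}$ times the distribution-function expression used in the paper's interpolation lemma \cite[Proposition~1.4.9]{Gr08}. Your argument never uses a triangle inequality, so no quasi-norm issues arise for $q<1$.
\item In the rearrangement step you need, besides subadditivity of the measure, the fact $\mu\{|g|>g^*(s)\}\le s$. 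This follows from right-continuity of the distribution function.
\end{itemize}
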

For more classical results on Lorentz spaces, see for instance \cite{Gr08, Tao17}.

\subsubsection{Preliminaries}
Next, we present several preliminary results pertaining to the Hartree model.

\begin{lemma}[An $L^2$-based difference bound]\label{lemma_L2_control_Uminusgmm}
    The $L^2$ distance between $U$ and $\gmm$ can be characterized by:  \[
    \|U(t, vt) - \gmm(t, v)\|_{L^2_v} \lesssim t^{-1} \|\nabla_x U\|_{L^2_x} \lesssim t^{-\frac{1}{2}} \|\calL u(t,\cdot)\|_{L^2} + t \|u(t,\cdot)\|_{L^\infty} + t^{-\frac{1}{2}} \|u(t,\cdot)\|_{L^2}.
\]
\end{lemma}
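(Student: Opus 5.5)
The plan is to write $\gmm$ as an average of $U(t,t\cdot)$ and bound the difference by a gradient via the standard mollifier estimate. By \eqref{eq:defn_gmm} (no obstacle in Main~Theorem~\ref{thm:nonlinear-2}),
\[
\gmm(t,v) = \int U(t, t(v-y))\, t^{\frac32}\chi(\sqrt t y)\,\ud y ,
\]
and $\int t^{\frac32}\chi(\sqrt t y)\,\ud y = 1$. Writing $W(v) := U(t,tv)$, we get
\[
W(v) - \gmm(t,v) = \int \big(W(v) - W(v-y)\big)\, t^{\frac32}\chi(\sqrt t y)\,\ud y .
\]
Using $W(v)-W(v-y) = \int_0^1 y\cdot \nabla W(v-sy)\,\ud s$ and Minkowski's inequality gives
\[
\|W - \gmm\|_{L^2_v} \le \int |y|\, t^{\frac32}\chi(\sqrt t y)\,\ud y \cdot \|\nabla_v W\|_{L^2_v} \aleq t^{-\frac12} \|\nabla_v W\|_{L^2_v}.
\]
Change of variables gives $\nabla_v W = t(\nabla_x U)(t,tv)$ and $\|F(t\cdot)\|_{L^2_v} = t^{-\frac32}\|F\|_{L^2_x}$. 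Hence $\|\nabla_v W\|_{L^2_v} = t^{-\frac12}\|\nabla_x U\|_{L^2_x}$, and the first inequality, with factor $t^{-1}$, follows.

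For the second inequality, note that $\nabla_x U = (2it)^{-1} t^{\frac32} e^{-i\bfPhi^\ob} L u$ by the conjugation identity. Hence
\[
t^{-1}\|\nabla_x U\|_{L^2} \aeq t^{-\frac12}\|Lu\|_{L^2}
\]
(using $\calK=\0$, $\bfPhi^\ob = \mathring\bfPhi$). It therefore suffices to show
\[
\|Lu\|_{L^2} \aleq \|\calL u\|_{L^2} + t^{\frac32}\|u\|_{L^\infty} + \|u\|_{L^2}.
\]
I would split into the zones of Remark~\ref{rem:zones}.

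For $r \ageq t^{\frac12}$, use Proposition~\ref{prop:elliptic-int-rad} with $N=0$, $\dlt=0$: it controls $t^{-1}\|r^{-1}Lu\|_{L^2(r>t^{1/2})}$ in the intermediate zone, giving
\[
\|Lu\|_{L^2(t^{1/2}<r<t)} \le t\cdot t\,\|r^{-1}Lu\| \aleq \|\calL u\| + t^{\frac74}\|u\|_{L^\infty} + \|u\|_{L^2},
\]
which is too crude. So instead I would interpolate: $\|\nabla U\|_{L^2} \aleq \|\nabla^2 U\|_{L^2}^{1/2}\|U\|_{L^2}^{1/2}$ away from the core, as in the proof of \eqref{eq:ell-r>t-U}. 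For the near zone $r\aleq t^{\frac12}$, use Lemma~\ref{lem:elliptic-near}: $t^{-1}\|r^{-\frac12}Lu\|_{\ell^\infty L^2(r<R)}$ is bounded by $t^{-2}\|r^{\frac12}\calL u\|_{\ell^1L^2} + \|u\|_{L^\infty}$. Summing the $O(\log t)$ dyadic pieces with the weight $r^{\frac12}\le t^{\frac14}$ gives
\[
\|Lu\|_{L^2(r\aleq t^{1/2})} \aleq t^{-\frac34}\|\calL u\|_{L^2} + t^{\frac54}\|u\|_{L^\infty},
\]
which suffices, since $t^{\frac54}\le t^{\frac32}$ and the logarithm is absorbed into the $t^{\frac14}$ weight gain.

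The main obstacle is the outer region: getting $\|Lu\|_{L^2(r\ageq t^{1/2})}$ with only $t^{\frac32}\|u\|_{L^\infty}$ rather than $t^{\frac74}$. My first attempt is a weighted integration by parts. Pair $\calH U$ with $\chi_{>t^{1/2}}\bar U$ to get $\int \chi |\nabla U|^2 \aleq |\langle \calH U, \chi U\rangle| + \int |\nabla\chi||\nabla U||U| + \ldots$. Then $\|\calH U\|\|U\|_{L^2} = t^{-\frac12}\|\calL u\|\cdot t^{\frac32}\|u\|_{L^2}$. Multiplying through by $t^{-1}$, the cross term is $t\|\calL u\|\|u\|_{L^2}$, which by AM–GM splits into $\|\calL u\|^2 + t^2\|u\|^2$ — this balances, giving $\|Lu\| \aleq \|\calL u\| + \ldots$. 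The cutoff terms are bounded by $t^{-\frac12}\|\nabla U\|_{L^2(r\approx t^{1/2})}\|U\|_{L^\infty}t^{\frac34}$, handled via the near-zone estimate. The lower-order and coefficient terms are perturbative by \ref{hyp:af}. If the exact powers fail, I would accept a weaker power of $t$ on the $L^\infty$ term.
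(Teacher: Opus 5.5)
Your first inequality is exactly the paper's argument. Your reduction of the second one to $\|Lu\|_{L^2}\aleq\|\calL u\|_{L^2}+t^{3/2}\|u\|_{L^\infty}+\|u\|_{L^2}$ is fine, and so is your near-zone bound; it actually gives $t^{-1/2}\|\calL u\|_{L^2}+t^{5/4}\|u\|_{L^\infty}$, and since the dyadic sums are geometric, no logarithm appears.

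The gap is the region $r\ageq t^{1/2}$, which the proposal never closes.
\begin{itemize}
\item Interpolating on $\{r>R\}$ as in the proof of \eqref{eq:ell-r>t-U} leaves the lower-order term $R^{-1}\|U\|_{L^2}$; this is where the factor $t^2R^{-2}+1$ in \eqref{eq:ell-r>t-U} comes from. That term is compatible with the target $t^{-1}\|U\|_{L^2}$ only for $R\ageq t$; at $R=t^{1/2}$ it is off by $t^{1/2}$.
\item Your integration by parts is mis-scaled as written. Since $\|Lu\|_{L^2}^2=4t^{-1}\|\nabla U\|_{L^2}^2$ and $\|\calH U\|_{L^2}\|U\|_{L^2}=\tfrac{1}{4}t\|\calL u\|_{L^2}\|u\|_{L^2}$, the main term contributes only $\|\calL u\|_{L^2}\|u\|_{L^2}$ to $\|Lu\|_{L^2}^2$. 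Your ``$t\|\calL u\|\|u\|$, split into $\|\calL u\|^2+t^2\|u\|^2$'' would instead leave $t\|u\|_{L^2}$ on the right.
\item The potential term is not perturbative in the naive sense. Bounding $\int\chi|c||U|^2\aleq t^{-1-\eta}\|U\|_{L^2}^2$ on $r\ageq t^{1/2}$ produces $t^{-\eta/2}\|u\|_{L^2}$ in the lemma instead of $t^{-1/2}\|u\|_{L^2}$.
\item Accepting a weaker power of $t$, as you suggest at the end, would not prove the statement.
\end{itemize}

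The missing observation is that Lemma~\ref{lem:elliptic-near} holds with constants independent of $R$, so there is no reason to stop the inner estimate at $t^{1/2}$. The same computation with general $R$ gives $\|Lu\|_{L^2(r<R)}\aleq t^{-1}R\|\calL u\|_{L^2}+tR^{1/2}\|u\|_{L^\infty}$. Taking $R=t$ yields exactly $\|\calL u\|_{L^2}+t^{3/2}\|u\|_{L^\infty}$. On $\{r>t\}$, the bound \eqref{eq:ell-r>t-U} with $R=t$ gives $\|\calL u\|_{L^2}+\|u\|_{L^2}$. This is the paper's proof: it invokes \eqref{eq:unit-scale-ell-U} and \eqref{eq:ell-r>t-U} with $R=t$, so the only split is at $r\aeq t$, not at $t^{1/2}$.

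Alternatively, your energy identity does close when done carefully, and gives a more elementary route. For $\calH=-\lap+c$ with $c$ real (the setting of Main~Theorem~\ref{thm:nonlinear-2}), one has $\int\chi|\nabla U|^2=\Re\langle\calH U,\chi U\rangle-\int\chi c|U|^2+\tfrac{1}{2}\int(\lap\chi)|U|^2$. Then:
\begin{itemize}
\item the cutoff term is $\aleq t^{1/2}\|U\|_{L^\infty}^2$;
\item splitting the potential term at $r=t$ bounds it by $t^{1-2\eta}\|U\|_{L^\infty}^2+t^{-2-2\eta}\|U\|_{L^2}^2$;
\item the main term is handled by $(\|\calH U\|_{L^2}\|U\|_{L^2})^{1/2}\le t\|\calH U\|_{L^2}+t^{-1}\|U\|_{L^2}$.
\end{itemize}
This works even with $\chi\equiv 1$. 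So for this lemma neither the near-zone estimate nor \ref{hyp:se} is actually needed, whereas the paper's route simply reuses the two-scale elliptic estimates it already has.
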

\begin{proof}
    Recall that \[
        U(t, vt) - \gmm(t, v) = \int \big(U(t, tv) - U(t, t(v-y))\big)t^{\frac32} \chi(\sqrt{t}y)\, \ud y,
    \]
    where the integrand can be expanded in terms of \[
        U(t, tv) - U(t, t(v-y)) = \int_0^1 \nabla_x U(t, t(v-y + s y)) \cdot t y\, \ud s.
    \]
    Hence, \[
    \|U(t, vt) - \gmm(t, v)\|_{L^2_v} \lesssim t^{\frac12}\|\nabla_x U(t, tv)\|_{L^2_v} \lesssim t^{-1} \|\nabla_x U\|_{L^2_x}.
    \]
    Moreover, it follows from \eqref{eq:unit-scale-ell-U} and \eqref{eq:ell-r>t-U} (with $R = t$, and passing to the variable $U$) that \[
        \|\nabla_x U\|_{L^2_x} \lesssim \|r\calH U\|_{L^2(r\aleq t)} + t^{\frac12}\|U\|_{L^\infty} + t\|\calH U\|_{L^2(r \ageq t)} + t^{-1} \|U\|_{L^2},
    \]
    which concludes the proof.
\end{proof}

\begin{lemma}[Estimates on Hartree-type nonlinearity]\label{lem:est-Hartree-Nu}
    Given any $g, h \in \scrS(\R^3_x)$, we have \begin{align*}
        \||x|^{-1}*(gh)\|_{L^\infty} &\aleq \|g\|_{L^2}^{\frac23}\|g\|_{L^\infty}^{\frac13}\|h\|_{L^2}^{\frac23}\|h\|_{L^\infty}^{\frac13}.
    \end{align*}
\end{lemma}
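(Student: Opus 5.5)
We bound the Newtonian potential pointwise by splitting the kernel into near and far pieces. Fix $x \in \bbR^3$ and a radius $R > 0$ to be chosen, and write
\begin{equation*}
	\abs{(|\cdot|^{-1} * (gh))(x)} \leq \int_{|y| < R} \frac{\abs{g h}(x-y)}{|y|} \, \ud y + \int_{|y| \geq R} \frac{\abs{g h}(x-y)}{|y|} \, \ud y =: I_{<R} + I_{\geq R}.
\end{equation*}
For the near part we use the $L^\infty$ bounds, $I_{<R} \leq \nrm{g}_{L^\infty}\nrm{h}_{L^\infty} \int_{|y|<R} |y|^{-1}\,\ud y \aleq R^2 \nrm{g}_{L^\infty}\nrm{h}_{L^\infty}$. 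For the far part we use Cauchy--Schwarz, $\nrm{gh}_{L^1} \leq \nrm{g}_{L^2}\nrm{h}_{L^2}$, together with $|y|^{-1} \leq R^{-1}$, which gives $I_{\geq R} \leq R^{-1}\nrm{g}_{L^2}\nrm{h}_{L^2}$.

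Next we optimize in $R$. Setting $P := \nrm{g}_{L^\infty}\nrm{h}_{L^\infty}$ and $Q := \nrm{g}_{L^2}\nrm{h}_{L^2}$, we minimize $R^2 P + R^{-1} Q$ by taking $R = (Q/P)^{1/3}$. This yields the bound $\aleq P^{1/3} Q^{2/3}$, which is exactly $\nrm{g}_{L^2}^{2/3}\nrm{g}_{L^\infty}^{1/3}\nrm{h}_{L^2}^{2/3}\nrm{h}_{L^\infty}^{1/3}$. The degenerate cases $P=0$ or $Q=0$ are trivial, since then $gh \equiv 0$.

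The estimate is uniform in $x$, so taking the supremum over $x$ finishes the proof. An alternative route is to write $|x|^{-1} \in L^{3,\infty}$ and apply H\"older in Lorentz spaces (Lemma~\ref{lem:Holder_Lorentz}) with $gh \in L^{3/2,1}$, then interpolate via Corollary~\ref{cor:Lorentz_interpolation} between $L^1$ and $L^\infty$ bounds on $gh$. That route gives the same exponents, but the direct splitting avoids any endpoint subtlety. No step is a real obstacle here; the only point needing care is choosing $R$ to balance the two terms, which is precisely what produces the $\tfrac23,\tfrac13$ exponents.
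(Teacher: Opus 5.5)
Your proof is correct, but it takes a more elementary route than the paper's.

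The paper works entirely in Lorentz spaces:
\begin{itemize}
\item it uses $|x|^{-1}\in L^{3,\infty}$ and the $L^{3,\infty}$--$L^{3/2,1}$ pairing to get $\||x|^{-1}*(gh)\|_{L^\infty}\lesssim\|gh\|_{L^{3/2,1}}$;
\item it then applies H\"older in Lorentz spaces (Lemma~\ref{lem:Holder_Lorentz}) to bound this by $\|g\|_{L^{3,2}}\|h\|_{L^{3,2}}$;
\item finally it uses Corollary~\ref{cor:Lorentz_interpolation} to interpolate each factor separately between $L^2$ and $L^\infty$.
\end{itemize}
Your sketched alternative, which interpolates $gh$ itself between $L^1$ and $L^\infty$, is a slight variant of this.

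Your near/far splitting of the kernel, with the balancing radius $R=(Q/P)^{1/3}$, replaces all of this by two one-line estimates. The computations check out: $\int_{|y|<R}|y|^{-1}\,\mathrm{d}y=2\pi R^2$, both pieces equal $P^{1/3}Q^{2/3}$ at the chosen $R$, and the degenerate cases are trivial as you say. What you actually prove is the general bound $\||x|^{-1}*F\|_{L^\infty}\lesssim\|F\|_{L^1}^{2/3}\|F\|_{L^\infty}^{1/3}$, applied with $F=gh$ via $\|gh\|_{L^1}\le\|g\|_{L^2}\|h\|_{L^2}$ and $\|gh\|_{L^\infty}\le\|g\|_{L^\infty}\|h\|_{L^\infty}$. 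This needs no Lorentz theory at all.

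What the paper's route buys is a finer, scale-invariant intermediate bound, $\|g\|_{L^{3,2}}\|h\|_{L^{3,2}}$. That would matter if one had Lorentz control of $g,h$ directly. The paper also reuses the same Lorentz chain in the proof of Proposition~\ref{prop:hartree-gamma-evol}. There, too, only $L^2$ and $L^\infty$ norms of the factors are used in the end, so your elementary argument would serve equally well.
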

\begin{proof}
    Since $|x|^{-1} \in L^{3, \infty}$, applying Lemma~\ref{lem:Holder_Lorentz} and Corollary~\ref{cor:Lorentz_interpolation} gives \[
       \||x|^{-1}*(gh)\|_{L^\infty} \aleq \|gh\|_{L^{\frac32, 1}} \aleq \|g\|_{L^{3,2}}\|h\|_{L^{3,2}} \aleq \|g\|_{L^2}^{\frac23}\|g\|_{L^\infty}^{\frac13}\|h\|_{L^2}^{\frac23}\|h\|_{L^\infty}^{\frac13}.
    \]
\end{proof}

\begin{lemma}[Conservation of mass]\label{lem:L2-conservation-Hartree}
    Under the assumptions of Main~Theorem~\ref{thm:nonlinear-2}, given any solution $u \in \scrS$ to \eqref{eq:nls}, we have \[
        \|u\|_{L^2} = \|u_0\|_{L^2} \aleq \eps.
    \]
\end{lemma}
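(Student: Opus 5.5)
The plan is the standard mass-conservation computation: pair the equation with $\br u$, integrate over $\calM^3 = \bbR^3$ (recall $\calK = \0$ in Main~Theorem~\ref{thm:nonlinear-2}), and take imaginary parts. Concretely, I will compute
\begin{equation*}
	\frac{d}{dt} \nrm{u(t)}_{L^2}^2 = 2 \Re \brk{\rd_t u, u} = 2 \Im \brk{i \rd_t u, u} = 2 \Im \brk{\calH u, u} + 2 \Im \brk{\calN(u), u},
\end{equation*}
which is legitimate since $u \in \scrS$, so every integration by parts below produces no boundary terms.

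Both terms on the right vanish. For the linear term, the hypotheses of Main~Theorem~\ref{thm:nonlinear-2} give $\calH = -\lap + c$ with $c$ real. Hence $\brk{-\lap u, u} = \nrm{\nb u}_{L^2}^2$ and $\brk{c u, u} = \int c \abs{u}^2 \, \ud x$ are both real, so $\Im \brk{\calH u, u} = 0$. For the Hartree term, \ref{hyp:nl-2} gives $\brk{\calN(u), u} = \int (\abs{\cdot}^{-1} * \abs{u}^2) \abs{u}^2 \, \ud x$. This is real because the convolution of the real kernel $\abs{x}^{-1}$ with the real function $\abs{u}^2$ is real, and the integral is finite by Lemma~\ref{lem:est-Hartree-Nu}. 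So $\Im \brk{\calN(u), u} = 0$ as well, and therefore $\nrm{u(t)}_{L^2} = \nrm{u_0}_{L^2}$ for all $t$.

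For the smallness, \eqref{eq:D-u} with $M_0 = 2$ gives $\nrm{\brk{x}^2 u_0}_{L^2} \leq D_0 \leq \eps$. Since $\brk{x}^2 \geq 1$, this implies $\nrm{u_0}_{L^2} \leq \eps$, which completes the argument.

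I expect no real obstacle. The only points to check are the reality of $c$, which is assumed in the statement, and the absence of an obstacle, so that no boundary terms arise. If one wants the identity for less regular solutions, it follows by approximating with Schwartz data using the local theory.
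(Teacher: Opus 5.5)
Your proposal is correct and is exactly the standard argument the paper invokes; the paper's proof simply states that conservation of mass is standard. You pair the equation with $\br u$ and use that both $c$ and the Hartree potential $\abs{\cdot}^{-1} * \abs{u}^2$ are real, so $\frac{d}{dt}\nrm{u}_{L^2}^2 = 0$, and you read off $\nrm{u_0}_{L^2} \leq \eps$ from \eqref{eq:D-u}, since $\brk{x}^2 \geq 1$.
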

\begin{proof}
    The conservation of mass is standard.
\end{proof}

\subsubsection{Main proof}
Without loss of generality, we assume that $0 < \eta \leq \frac{1}{10}$; this will simplify the numerology later on (see, e.g., the proof of Proposition~\ref{prop:hartree-gamma-evol}).

The main proof relies on the following bootstrap assumptions, which are assumed  to hold for $t \in [0, T_f)$. \begin{align*}
    \|u\|_{L^\infty} &\leq 2C \eps \brk{t}^{-\frac32}, \tag{BA1-NL2}\label{eq:BA1-NL2}\\
    \|\calL u\|_{L^2} &\leq 2C \eps \brk{t}^{\frac14-\frac{\eta}4}. \tag{BA2-NL2}\label{eq:BA2-NL2}
\end{align*}
Note that the minor loss in the second estimate, compared to the linear ones, arises from the nonlinear estimate in Proposition~\ref{prop:est-LN(u)-Hartree}.

We first comment on the absence of an analogue of \eqref{eq:BA3-NL1}. In the setting of Main~Theorem~\ref{thm:nonlinear-1}, this assumption was specifically introduced as a substitute for \eqref{eq:E-u} in the linear case, which had been employed in the proof of the base iteration lemma (Lemma~\ref{lem:base_iteration}). In the present setting, one has access to an energy estimate without loss of derivatives easily by commuting $\p_x$ into the equation; see the proof of Lemmas~\ref{lem:LWP-Hartree} and \ref{lem:base-iter-main-thm-2} .

Before proceeding to the proof of Main~Theorem~\ref{thm:nonlinear-2},
we state the nonlinear counterparts of Propositions~\ref{prop:est-calLu}~and~\ref{prop:dotgmm_decay} under the assumption \ref{hyp:nl-2}.

\begin{proposition}\label{prop:est-LN(u)-Hartree}
    Under the assumptions of Main~Theorem~\ref{thm:nonlinear-2} and bootstrap assumptions \eqref{eq:BA1-NL2}, \eqref{eq:BA2-NL2}, there exists $T_0$ (independent of $T_f$) such that for all $T_0 < t < T_f$, for any solution $u$ to \eqref{eq:nls}, \begin{equation*}
    \begin{aligned}
        \|\calL u\|_{L^\infty L^2[T_0, t]} \aleq\, &\|\calL u(T_0, \cdot)\|_{L^2} + \|t^{-\frac14 - \eta}\|t^{\frac32} u\|_{L^\infty_x}\|_{L^2_t[T_0, t]} + \eps t^{\frac14 - \frac{\eta}{4}}.
    \end{aligned}
\end{equation*}
\end{proposition}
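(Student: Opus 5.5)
The plan is to run the argument of Proposition~\ref{prop:est-calLu} in the case $\calK = \0$, $N = 0$, $s_c' = 0$, with $f$ replaced by $\calN(u) = (|x|^{-1} * |u|^2) u$. The linear terms are handled as before, and the new work is the bound on $\calL \calN(u)$ in $\brk{x}^{-1}L^2L^2 + L^1L^2$.

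\textbf{Step 1: absorbing the linear errors.} Applying Proposition~\ref{prop:iled*-L1L2+LE*} to $(i\p_t - \calH)\calL u = -[\calL, i\p_t - \calH]u + \calL\calN(u)$ produces the same right-hand side as in Proposition~\ref{prop:est-calLu}. Since $s_c' = 0$, Remark~\ref{rem:est-calLu} applies: the terms $\|t^{-\frac12-\eta}\calL u\|_{L^2L^2}$ and $\|t^{-1-\eta}\|\calL u\|_{L^2}\|_{L^1}$ have small coefficients for large $T_0$ and are absorbed into the left-hand side. By Lemma~\ref{lem:L2-conservation-Hartree}, the term $\|t^{-1-2\eta}\|u\|_{L^2}\|_{L^1}$ is $\aleq \eps$. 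The remaining linear term is $\|t^{-\frac14-\eta}\|t^{\frac32}u\|_{L^\infty_x}\|_{L^2_t}$, which is kept on the right.

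\textbf{Step 2: the nonlinear source.} Write $V := |x|^{-1} * |u|^2$, which is real. The structure to use is that $L_\mu$ obeys a Leibniz rule adapted to gauge-covariant products: $L_\mu(Vu) = V L_\mu u + 2it(\p_\mu V)u$. Combined with the conjugation identity \eqref{eq:conj_calLu_calHU}, this gives
\[
\calL(Vu) = V\calL u + 4it\, a^{\mu\nu}(\p_\mu V) L_\nu u + O(t^2)\bigl(\p^2 V\, u + \p V\, u\bigr) + O(t)\, b\,(\p V)\, u .
\]
Each piece is then estimated in $L^1L^2$ as follows.
\begin{itemize}
\item By Lemma~\ref{lem:est-Hartree-Nu} and the bootstrap assumption \eqref{eq:BA1-NL2}, $\|V\|_{L^\infty} \aleq \eps^2 t^{-1}$. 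Hence $\|V\calL u\|_{L^2} \aleq \eps^2 t^{-1}\|\calL u\|_{L^2}$, which integrates to $\eps^3 t^{\frac14-\frac\eta4}$ under \eqref{eq:BA2-NL2}.
\item For the terms with derivatives on $V$, write $\p_\mu V = |x|^{-1} * 2\Re(\bar u\p_\mu u)$, using $2it\p_\mu u = L_\mu u - x_\mu u$. Then $t\p V = |x|^{-1} * \Im(\bar u L u)$ up to a real, harmless piece. Estimate this via Lorentz H\"older (Lemma~\ref{lem:Holder_Lorentz}) together with $L^3$ and $L^6$ bounds on $Lu$ from Lemma~\ref{lem:L3L6-est}. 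Those bounds are $\aleq t\bigl(t^{-\frac74}\|\calL u\|_{L^2} + \|u\|_{L^\infty} + t^{-\frac32}\|u\|_{L^2}\bigr)$, which under the bootstrap assumptions is $\aleq \eps t^{-\frac12}$.
\item Similarly, $t^2\p^2 V$ is expressed through $L^{(2)}u$ and $(Lu)^2$. It is controlled by the two-scale elliptic estimates (Lemmas~\ref{lem:elliptic-near} and \ref{lem:elliptic-rad}) in terms of $\|\calL u\|_{L^2}$.
\end{itemize}

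\textbf{Main obstacle.} The hardest term is $t(\p V)\, L u$, together with the top-order piece $t^2(\p^2 V) u$. A crude count gives each of them size $\eps^3 t^{-1}\|\calL u\|$-type, which after time integration yields only $\eps t^{\frac14-\eta}\log t$. This $\log$ is precisely why the bootstrap \eqref{eq:BA2-NL2} carries the weaker power $t^{\frac14-\frac\eta4}$. The plan is to accept this and bound these terms by $\eps^3 t^{-1+\frac14-\frac{\eta}{4}} \cdot t^{-\frac{3\eta}{4}}$-type quantities. Where that gain is not available, I will trade the logarithm for $t^{\frac{3\eta}{4}}$, which still integrates to $\aleq \eps t^{\frac14-\frac\eta4}$. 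For $\|u\|_{L^2}$-type factors I use conservation of mass to avoid weighted $L^2$ losses. Collecting Steps 1 and 2 gives the stated inequality.
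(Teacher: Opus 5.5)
Your Step~1 and your bound on $V\calL u$ are correct and agree with the paper. Your Leibniz computation is also exact: for $\calH=-\lap+c$,
\[
\calL(Vu)=V\calL u+4it\,\nabla V\cdot Lu-4t^{2}(\lap V)\,u,
\]
with the $c$-terms cancelling. The gap is that the two remaining terms are never actually estimated. The mechanisms you sketch run through $L^{(2)}u$ or through unlocalized norms of $Lu$, and these are exactly the quantities that $\calL u$ fails to control when $c\neq 0$. One only has $\|L^{(2)}u\|_{L^2}\lesssim\|\calL u\|_{L^2}+t^{2}\|cu\|_{L^2}\lesssim\eps t^{1/2}$, and this growth is generic, cf.~\eqref{eq:growth-general-LmuLnu}.

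For the second-order term you are missing the key identity the paper uses. Since $|x|^{-1}$ is $4\pi$ times the fundamental solution of $-\lap$, we have $-\lap V=4\pi|u|^{2}$, so the term is simply $16\pi t^{2}|u|^{2}u$. Its $L^2$ norm is $\lesssim t^{2}\|u\|_{L^\infty}^{2}\|u\|_{L^2}\lesssim\eps^{3}t^{-1}$, contributing only $\eps^{3}\log t$. Rewriting it through $L^{(2)}u$ and $(Lu)^2$ just imports the growth.

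For the cross term, your ``crude count'' $\eps^{3}t^{-1}\|\calL u\|_{L^2}$ tacitly assumes a global bound $\|Lu\|_{L^2}\lesssim\|\calL u\|_{L^2}+(\text{lower order})$, which you never prove.
\begin{itemize}
\item Interpolation, $\|Lu\|_{L^2}^{2}\lesssim\|L^{(2)}u\|_{L^2}\|u\|_{L^2}$, yields only $\eps t^{1/4}$. After integrating against $\|t\nabla V\|_{L^\infty}\lesssim\eps^{2}t^{-1}$ this gives $\eps^{3}t^{1/4}$, which misses $\eps t^{1/4-\eta/4}$ by a power. No trading of logarithms repairs that, and $\int^{t}s^{-1}s^{1/4-\eta}\,ds$ has no logarithm to begin with.
\item Lemma~\ref{lem:L3L6-est} gives only dyadic $\ell^\infty L^{3}$ and $\ell^\infty L^{6}$ bounds. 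A global pairing such as $\|t\nabla V\|_{L^{3}}\|Lu\|_{L^{6}}\lesssim\eps^{2}\cdot\eps t^{-1/2}$ is far too weak.
\end{itemize}

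Two inputs close the argument.
\begin{itemize}
\item Bound $t\nabla V$ directly rather than through $\bar u Lu$; the latter route needs global Lorentz bounds on $Lu$ that Lemma~\ref{lem:L3L6-est} does not give. Since $|\nabla |x|^{-1}|=|x|^{-2}\in L^{3/2,\infty}$, Lemma~\ref{lem:Holder_Lorentz} and Corollary~\ref{cor:Lorentz_interpolation} give
\[
\|t\nabla V\|_{L^\infty}\lesssim t\|u\|_{L^{6,2}}^{2}\lesssim t\|u\|_{L^2}^{2/3}\|u\|_{L^\infty}^{4/3}\lesssim\eps^{2}t^{-1}.
\]
\item Obtain the $L^2$ bound on $Lu$ by summing the \emph{localized} two-scale estimates over dyadic shells. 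Lemma~\ref{lem:elliptic-near} with $R=t^{1/2}$ gives $\|Lu\|_{L^2(r<t^{1/2})}\lesssim t^{-1/2}\|\calL u\|_{L^2}+t^{5/4}\|u\|_{L^\infty}$. Corollary~\ref{cor:out-scale-ell-loc} on $\{r\sim\lambda\}$, $t^{1/2}\le\lambda\le t$, gives $\|Lu\|_{L^2(r\sim\lambda)}\lesssim\tfrac{\lambda}{t}\|\calL u\|_{L^2}+\lambda^{1/2}t\|u\|_{L^\infty}$, which sums geometrically. Finally \eqref{eq:ell-r>t-U} with $R=t$ handles $\{r>t\}$.
\end{itemize}
Altogether $\|Lu\|_{L^2}\lesssim\|\calL u\|_{L^2}+t^{3/2}\|u\|_{L^\infty}+\|u\|_{L^2}\lesssim\eps t^{1/4-\eta/4}+\eps$. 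The cross term then contributes $\eps^{3}t^{1/4-\eta/4}+\eps^{3}\log t$ to the $L^1L^2$ norm.

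With these two inputs your route becomes a genuine, slightly cleaner alternative to the paper's. Because the $c$-terms cancel in $\calL(Vu)-V\calL u$, you never meet the potential term $\brk{x}^{-2-2\eta}(|\cdot|^{-1}*|U|^{2})U$, which the paper handles by a near/far split at $r\sim t^{1/2-2\delta}$. You also make explicit the cross term $\nabla V\cdot\nabla U$, which the paper's product-rule bound for $\calH\big((|\cdot|^{-1}*|U|^2)U\big)$ does not display separately.
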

\begin{proof}
    It follows from Proposition~\ref{prop:est-calLu} and the corresponding absorption process (when $s_c' = 0$) stated in Remark~\ref{rem:est-calLu}, for $T_0$ sufficiently large, \begin{align*}
        \|\calL u\|_{L^\infty L^2[T_0, t]} \aleq\, &\|\calL u(T_0, \cdot)\|_{L^2} + \|t^{-\frac12 - \eta}\|t^{\frac32} u\|_{L^\infty_x}\|_{L^2_t[T_0, t]} \\
        &+ \|t^{-1 - 2\eta}\| u\|_{L^2_x}\|_{L^2_t[T_0, t]} +  \|\calL \calN(u)\|_{\brk{x}^{-1}L^2L^2+L^1L^2[T_0, t]}.
    \end{align*}
    Under our assumption $\calH = -\Delta + c$, using the standard product rule estimate $\|\Delta (gh)\|_{L^2} \aleq \|(\Delta g)h\|_{L^2} + \|g(\Delta h)\|_{L^2}$, the fact that $|x|^{-1}$ is a multiple of the fundamental solution to $-\Delta$, and Lemma~\ref{lem:est-Hartree-Nu}, we have \begin{equation}\label{eq:est-calL-Nu-Hartree}
        \begin{aligned}
            \|\calL\calN(u)\|_{L^2_x} &= 4t^{-\frac52}\|(\Delta - c)\big((|\cdot|^{-1}*|U(t, \cdot)|^2)(x) U(t, x) \big)\|_{L^2_x} \\
        &\aleq t^{-\frac52}\left(\|(|\cdot|^{-1}*|U(t, \cdot)|^2)(x)\|_{L^\infty}\|\calH U\|_{L^2} + \|\brk{x}^{-2-2\eta}(|\cdot|^{-1}*|U(t, \cdot)|^2)(x) U\|_{L^2}  + \|U\|_{L^\infty}^2 \|U\|_{L^2} \right) \\
        &\aleq \|\calL u\|_{L^2}\|u\|_{L^2}^{\frac43}\|u\|_{L^\infty}^{\frac23} + t^{-\frac52}\|\brk{x}^{-2-2\eta}(|\cdot|^{-1}*|U(t, \cdot)|^2)(x) U\|_{L^2} + t^2\|u\|_{L^\infty}^2\|u\|_{L^2}.
        \end{aligned}
    \end{equation}
    In view of Lemma~\ref{lem:est-Hartree-Nu}, we estimate the second term on the ``intermediate- \& far-away zones'' (slightly enlarged compared to Remark~\ref{rem:zones}) as follows:\begin{align*}
        t^{-\frac52}\|\brk{x}^{-2-2\eta}(|\cdot|^{-1}*|U(t, \cdot)|^2)(x) U\|_{L^2(r \ageq t^{\frac12 - 2\dlt})} &\aleq t^{-\frac52} \|r^{-2-2\eta} U\|_{L^2(r \ageq t^{\frac12 - 2\dlt})} \|U\|_{L^2}^{\frac43} \|U\|_{L^\infty}^{\frac23} \\
        &\aleq t^2 \|u\|_{L^2}^{\frac43}\|u\|_{L^\infty}^{\frac53} \|r^{-2-2\eta}\|_{L^2(r \ageq t^{\frac12 - 2\dlt})} \aleq \eps t^{-\frac34 -(\eta - \dlt(1 + 4\eta))}.
    \end{align*}
    For the ``near zone'', we apply the weighted Sobolev estimate for free Laplacian. Thus, \begin{align*}
        &t^{-\frac52}\|\brk{x}^{-2-2\eta}(|\cdot|^{-1}*|U(t, \cdot)|^2)(x) U\|_{L^2(r \aleq t^{\frac12 - 2\dlt})} \\ \aleq&\,  t^{-\frac52}\|U\|_{L^\infty} \big(\|r^{\frac12 + \dlt} |U|^2\|_{L^2(r \aleq t^{\frac12 - 2\dlt})}  + \|r^{-\frac32+\dlt}(|\cdot|^{-1}*|U(t, \cdot)|^2)(x)\|_{L^\infty(r \sim t^{\frac12 -2\dlt})}\big)  \\
        \aleq&\, t^{\frac94-\frac{\dlt}{2}}\|u\|_{L^2}\|u\|_{L^\infty}^2 + t^2 t^{-(\frac32 - \dlt)(\frac12 - 2\dlt)}\|u\|_{L^\infty}^{\frac53} \|u\|_{L^2}^{\frac43} \aleq \eps t^{-\frac34-\frac{\dlt}{2}}.
    \end{align*}
    Choosing $\dlt = \frac{\eta}{2(1 + 4\eta)} \in (\frac{\eta}{4}, \frac{\eta}{2})$, together with the estimate above,  \eqref{eq:BA1-NL2}, \eqref{eq:BA2-NL2}, and Lemma~\ref{lem:L2-conservation-Hartree}, the proof follows.
\end{proof}

\begin{proposition} \label{prop:hartree-gamma-evol}
    Under the assumptions of Main~Theorem~\ref{thm:nonlinear-2} and bootstrap assumptions \eqref{eq:BA1-NL2}, \eqref{eq:BA2-NL2}, we define a modified profile \begin{align*}
        \tilde\gmm(t, v) := \gmm(t, v) \exp\left(-i\int_1^t \frac1{s}\big(|\cdot|^{-1} * |\gmm(s, \cdot)|^2\big)(v) \, \ud s \right).
    \end{align*}
    Then there exists $T_0$ (independent of $T_f$) such that for all $T_0 < t < T_f$, for any solution $u$ to \eqref{eq:nls},
    \begin{align*}
        \left\| \tfrac{d}{dt}\tilde\gmm(t, v)\right\|_{L^\infty_v} \aleq t^{-\frac54+\frac{\eta}{2}}\|\calL u\|_{L^2} + t^{\frac12 - \frac{\eta}{4}}\|u\|_{L^\infty} + \eps t^{-1-\eta}.
    \end{align*}
\end{proposition}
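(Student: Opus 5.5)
The plan is to differentiate $\tilde\gmm$ directly:
\[
\tfrac{d}{dt}\tilde\gmm = \Big(\tfrac{d}{dt}\gmm - \tfrac{i}{t}\big(|\cdot|^{-1}*|\gmm(t,\cdot)|^2\big)(v)\,\gmm\Big)\exp(\cdots).
\]
The exponential factor has modulus one because $\gmm$ enters it only through the real quantity $|\gmm|^2$. So it suffices to bound
\[
\tfrac{d}{dt}\gmm + \tfrac{i}{t}\big(|\cdot|^{-1}*|\gmm|^2\big)(v)\,\gmm .
\]
By Proposition~\ref{prop:dotgmm_decay} with $f=\calN(u)$ and $N=0$, we have $\tfrac{d}{dt}\gmm = -i\brk{\calN(u),\Psi_v} + O(t^{-\frac54+\frac{\eta}{2}}\|\calL u\|_{L^2} + t^{\frac12-\frac{\eta}4}\|u\|_{L^\infty})$. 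The task therefore reduces to showing
\[
\big|\brk{\calN(u),\Psi_v} - t^{-1}\big(|\cdot|^{-1}*|\gmm|^2\big)(v)\,\gmm(t,v)\big| \aleq \eps t^{-1-\eta}.
\]
We add to this the remaining admissible terms $t^{-\frac54+\frac{\eta}{2}}\|\calL u\|_{L^2} + t^{\frac12-\frac\eta4}\|u\|_{L^\infty}$, which come for free.

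To compare the two sides we pass to $U$. Write $\calN(u) = t^{-\frac32}e^{i\bfPhi^\ob}\,V\,U$ with $V(t,x) := (|\cdot|^{-1}*|u|^2)(x) = t^{-1}(|\cdot|^{-1}*|U(t,t\,\cdot)|^2)(x/t)$, using scaling of the Coulomb kernel. Then $\brk{\calN(u),\Psi_v} = t^{-\frac32}\int V U\chi^v\,\ud x$. Set $W(t,y) := (|\cdot|^{-1}*|U(t,t\cdot)|^2)(y)$, so that $V(t,x) = t^{-1}W(t,x/t)$. The difference then splits into three pieces.

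First, replace $W(x/t)$ by $W(v)$ on the support of $\chi^v$, where $|x/t - v|\le t^{-1/2}$. Here we use a H\"older bound on $W$. Since $\nabla W = \nabla|\cdot|^{-1} * |U(t,t\cdot)|^2$ and $\nabla|x|^{-1}\in L^{3/2,\infty}$, Lorentz H\"older (Lemma~\ref{lem:Holder_Lorentz}) and interpolation (Corollary~\ref{cor:Lorentz_interpolation}) control $\|\nabla W\|_{L^\infty}$ by $\|U(t,t\cdot)\|_{L^2}^{1/2}\|U\|_{L^\infty}^{3/2}$. Since $\|U(t,t\cdot)\|_{L^2_y} = \|u\|_{L^2}$ is conserved (Lemma~\ref{lem:L2-conservation-Hartree}) and $\|U\|_{L^\infty}\aleq\eps$ by \eqref{eq:BA1-NL2}, this gives an error of size $t^{-1}\cdot t^{-\frac12}\cdot\eps^3$, which is acceptable.

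Second, the term $t^{-1}W(v)\cdot t^{-\frac32}\int U\chi^v$ equals exactly $t^{-1}W(v)\gmm(t,v)$ by \eqref{eq:defn_gmm}. It remains to replace $W(v)$ by $(|\cdot|^{-1}*|\gmm|^2)(v)$. This is the main obstacle. I would apply Lemma~\ref{lem:est-Hartree-Nu} bilinearly to $|U(t,t\cdot)|^2 - |\gmm|^2 = (U(t,t\cdot)-\gmm)\overline{U(t,t\cdot)} + \gmm\,\overline{(U(t,t\cdot)-\gmm)}$. For the difference factor, Lemma~\ref{lemma_L2_control_Uminusgmm} combined with \eqref{eq:BA1-NL2}--\eqref{eq:BA2-NL2} gives $L^2_v$ size $\eps t^{-\frac\eta4}$. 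Its $L^\infty$ size comes from Proposition~\ref{prop:diff-u-gmmvarphi0-wo-1/tdv-involved} together with $\varphi_0-1$ and the bound $\|\gmm\|_{L^\infty}\aleq\eps$. The concern is that the $\varphi_0$ correction is not small near $v=0$. I would handle this by splitting $|v|\lessgtr t^{-\frac12+\delta}$: the small-$v$ region has $L^2_v$ measure $t^{-\frac32+3\delta}$, and outside it $|\varphi_0-1|\aleq t^{-\eta/2}$. Lemma~\ref{lem:est-Hartree-Nu} only needs $L^2$ and $L^\infty$ bounds on each factor, so the result is a gain $t^{-c\eta}$. This is where the assumption $\eta\le\frac1{10}$ enters: with the $\frac23$-power on the $L^2$ factor, the numerology yields roughly $\eps t^{-1-\eta}$ after the $\eta$-loss in \eqref{eq:BA2-NL2}. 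If the exponents fall short, I would reduce the power by absorbing a weaker $t^{-1-c\eta}$ bound, since any integrable gain suffices for the later convergence argument.
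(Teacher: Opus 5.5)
The step that fails is your treatment of $|U(t,t\cdot)|^2-|\gmm|^2$. You take the $L^2_v$ size of $U(t,t\cdot)-\gmm$ to be $\eps t^{-\eta/4}$. But Lemma~\ref{lemma_L2_control_Uminusgmm}, together with \eqref{eq:BA1-NL2}, \eqref{eq:BA2-NL2} and mass conservation, gives $t^{-\frac12}\|\calL u\|_{L^2}+t\|u\|_{L^\infty}+t^{-\frac12}\|u\|_{L^2}\aleq\eps t^{-\frac14-\frac{\eta}{4}}$.

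Having undersold this, you try to extract decay from the $L^\infty$ factor through Proposition~\ref{prop:diff-u-gmmvarphi0-wo-1/tdv-involved}, and that cannot work. Under \eqref{eq:BA2-NL2} its right-hand side contains $t^{-\frac14+\frac{\eta}{2}}\|\calL u\|_{L^2}\aleq\eps t^{\frac{\eta}{4}}$. This is a $v$-uniform bound that grows in $t$, so no $t^{-c\eta}$ gain comes out, and the split $|v|\lessgtr t^{-\frac12+\dlt}$ does not help. Your numbers therefore yield at best $\eps^3t^{-1-\frac{\eta}{6}}$. Settling for a weaker $t^{-1-c\eta}$ does not prove the proposition as stated.

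The repair is what the paper does. Feed Lemma~\ref{lem:est-Hartree-Nu} the correct $L^2_v$ rate and only the trivial bound $\|U(t,t\cdot)-\gmm\|_{L^\infty_v}\le 2\|U\|_{L^\infty}\aleq\eps$. This gives $t^{-1}|\gmm|\,\eps^2t^{-\frac23(\frac14+\frac{\eta}{4})}\aleq\eps^3t^{-\frac76-\frac{\eta}{6}}\le\eps^3t^{-1-\eta}$ for $\eta\le\frac15$, and $\varphi_0$ never enters.

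Two further remarks. First, your reduction silently flips a sign. By your own first display you must bound $\frac{d}{dt}\gmm-\frac{i}{t}(|\cdot|^{-1}*|\gmm|^2)\gmm$. Since $\frac{d}{dt}\gmm\approx-i\brk{\calN(u),\Psi_v}\approx-\frac{i}{t}(|\cdot|^{-1}*|\gmm|^2)\gmm$, the leading terms add rather than cancel. The cancellation you actually use is exactly \eqref{eq:Hartree_nonlin_est}, which is the right one, but it corresponds to the profile $\gmm\exp(+i\int_1^t\cdots)$. The paper's proof makes the same slip when it writes $\p_t\gmm=\frac{i}{t}(\cdots)\gmm+\cdots$ despite $R_\calF=-i\brk{\calN(u),\Psi_v}$. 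You should state explicitly that the phase in the definition of $\tilde\gmm$ needs the opposite sign, rather than absorb it.

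Second, your handling of $W(x/t)-W(v)$ is a genuinely better route than the paper's. You use the Lipschitz bound $\|\nabla W\|_{L^\infty}\aleq\||U(t,t\cdot)|^2\|_{L^{3,1}}\aleq\|u\|_{L^2}^{2/3}\|U\|_{L^\infty}^{4/3}$; the correct exponents are these, not $\frac12,\frac32$. This exploits the cancellation in $\frac1{|x-x'|}-\frac1{|vt-x'|}$ and gives $\eps^3t^{-\frac32}$. The paper instead splits at $|x-x'|\sim t^{1-\dlt}$ and bounds the two kernels separately near the diagonal by a maximal-function argument. That only reaches $\eps^3t^{-1-2\dlt}$ with $\dlt=\frac{\eta}{2}$.
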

\begin{proof}
    Going through the computation in Section~\ref{sec-proof_prop:dotgmm_decay}, one notices that \[
    \p_t\gmm = R_{1,0} + R_{5,0} + R_\calF = -\frac{i}{4t^2}\brk{\calL u, \Psi_v} - \frac1{4it\sqrt{t}}\brk{Lu, \chi^v} + R_\calF,
    \]
    where $R_{\calF} := -i\brk{\calN(u), \Psi_v}$.
    We write \begin{equation}\label{eq:Hartree_nonlin_est}
    \begin{aligned}
        \brk{\calN(u), \Psi_v} - \frac1{t}\big(|\cdot|^{-1} &* |\gmm(t, \cdot)|^2\big)(v)\gmm(t, v) = \gmm(t,v) \int \frac1{|vt - v't|} (|U(t, v't)|^2 - |\gmm(t, v')|^2)\, \ud v' \\
        &\quad\quad\qquad+ t^{-\frac32}\int \left(\int \left(\frac1{|x-x'|} - \frac1{|vt - x'|}\right) |u(t, x')|^2\, \ud x'\right)\, U(t, x) \chi(\tfrac{x-vt}{\sqrt{t}})\, \ud x.
        \end{aligned}
    \end{equation}
    \noindent\textit{The first term in \eqref{eq:Hartree_nonlin_est}. }
    Applying Lemma~\ref{lem:Holder_Lorentz}, Corollary~\ref{cor:Lorentz_interpolation} and the trivial bound from convolution \eqref{eq:defn_gmm}, we arrive at \[\begin{aligned}
        &\left| \int \frac1{|v - v'|} (|U(t, v't)|^2 - |\gmm(t, v')|^2)\, \ud v'  \right| \lesssim \left\| |U(t, vt)|^2 - |\gmm(t, v)|^2 \right\|_{L^{\frac32, 1}_v} \\
        \lesssim&\, \|U(t, vt) - \gmm(t, v)\|_{L^{3,2}_v} \|U(t, vt) + \gmm(t, v)\|_{L^{3,2}_v} \\
        \lesssim&\, \|U(t, vt) - \gmm(t, v)\|_{L^2_v}^{\frac23} \|U(t, vt) - \gmm(t, v)\|_{L^\infty_v}^{\frac13} \|U(t, tv)\|_{L^2_v}^{\frac23}\|U(t, tv)\|_{L^\infty_v}^{\frac13}\\
        \lesssim&\, \|U(t, vt) - \gmm(t, v)\|_{L^2_v}^{\frac23} \|u(t, \cdot)\|_{L^2_x}^{\frac23}\|U(t, \cdot)\|_{L^\infty}^{\frac23} \aleq C^2\eps^2 t^{-\frac16+\frac{2}{3}\eta},
    \end{aligned}
    \]
    where the last line can be further bounded by Lemma~\ref{lemma_L2_control_Uminusgmm}. Note that this decay rate is acceptable since $\eta \leq \frac{1}{10}$.

    \noindent\textit{The second term in \eqref{eq:Hartree_nonlin_est}. }
    Moreover, using the support condition $x - vt \aeq t^{\frac12}$, we know that for sufficiently large $T_0$ (compared to the annuli of the support of $\chi$),
    \[
        \{|x - x'| + |vt - x'| \gtrsim t^{1-\dlt}\} = \{|x - x'| \gtrsim t^{1-\dlt}\} = \{|vt - x'| \gtrsim t^{1-\dlt}\}, \hbox{ for } t > T_0.
    \]
    Therefore, \[
        \left|\int_{|x - x'| + |vt - x'| \gtrsim t^{1-\dlt}} \frac{|vt-x'| - |x-x'|}{|x-x'||vt - x'|} |u(t, x')|^2\, \ud x'\right| \lesssim \frac{t^{\frac12}}{t^{2-2\dlt}} \|u\|_{L^2}^2 .
    \]
    On the other hand, a dyadic decomposition in $|x - x'|$ and $|vt - x'|$, respectively, implies \[\begin{aligned}
        &\left|\int_{|x - x'| + |vt - x'| \lesssim t^{1-\dlt}} \left(\frac1{|x-x'|} - \frac1{|vt - x'|}\right) |u(t, x')|^2\, \ud x'\right| \\ \lesssim& \int_{|x - x'| \lesssim t^{1-\dlt}} \frac1{|x-x'|}|u(t, x')|^2\, \ud x' + \int_{|vt - x'| \lesssim t^{1-\dlt}} \frac1{|vt - x'|} |u(t, x')|^2\, \ud x' \lesssim t^{2-2\dlt} \|M(|u|^2)\|_{L^\infty} \lesssim t^{-1-2\dlt} \|U(t, \cdot)\|_{L^\infty}^2.
    \end{aligned}
    \]

    Therefore, thanks to \eqref{eq:BA1-NL2} and  \eqref{eq:BA2-NL2}, by choosing $\dlt = \frac{\eta}{2}$, we have \[
        \p_t\gmm(t, v) = i\frac1{t}\big(|\cdot|^{-1} * |\gmm(t, \cdot)|^2\big)(v)\gmm(t, v) + R_{1,0} + R_{5, 0} + R_H, \quad |R_H| \lesssim \eps t^{-1-\eta}.
    \]
    It is then clear that one should consider \[
        \tilde\gmm(t, v) := \gmm(t, v) \exp\left(-i\int_1^t \frac1{s}\big(|\cdot|^{-1} * |\gmm(s, \cdot)|^2\big)(v) \, \ud s \right),
    \]
    which satisfies \[
        \p_t{\tilde\gmm}(t, v) = (R_{1,0} + R_{5, 0} + R_H)\exp\left(-i\int_1^t \frac1{s}\big(|\cdot|^{-1} * |\gmm(s, \cdot)|^2\big)(v) \, \ud s \right)
    \]
    and hence the result follows by combining with the estimates for $R_{1, 0}$ and $R_{5,0}$ in Section~\ref{sec-proof_prop:dotgmm_decay}.
\end{proof}

Now we need to establish an appropriate local well-posedness result to facilitate the bootstrap argument.
\begin{lemma}\label{lem:LWP-Hartree}
    Given initial data $u(t_0) = u_0$ such that \[
        \begin{cases}
            \|\brk{x}^2\p_x^{(\leq 2)} u_0\|_{L^2} \aleq \eps \ll 1, \hbox{ when $t_0 = 0$}, \\
            \|u_0\|_{L^2} + \|\calL|_{t = t_0} u_0\|_{L^2} \aleq \eps \ll 1, \hbox{ when $t_0 > 0$}.
        \end{cases}
    \]
    Then the equation is locally wellposed with a solution $u$ on $[t_0, t_0 + T]$ for some $T > 0$ such that $u \in L^\infty L^2$ and $\calL u \in L^\infty L^2$.
\end{lemma}
\begin{proof}
    We will focus on the case $t_0 > 0$ in what follows. Note that one can replace $L_\mu$ in the following computations by $\p_\mu$ to establish the standard $H^{2,2}$-well-posedness to pass from $t = 0$ to nonzero $t_0$.

    For $t_0 > 0$, we consider $w = e^{-i\mathring{\bfPhi}}u$ and then $\calL u = 4t^2\calH w$. This leads to the conjugated version of Gagliardo--Nirenberg inequality \begin{equation}
        \label{eq:GN-ineq-conj-ver}
        t^{\frac32}\|u(t, \cdot)\|_{L^\infty} \aleq \|\calL u(t,\cdot)\|_{L^2}^{\frac34} \|u(t, \cdot)\|_{L^2}^{\frac14} + t^{\frac32}\|u(t, \cdot)\|_{L^2}.
    \end{equation}

    Applying Lemma~\ref{lem:est-Hartree-Nu}, we have \[
        \|\calN(u)\|_{L^1L^2} \aleq T\|u\|_{L^2}^{\frac73}\|u\|_{L^\infty}^{\frac23} \aleq_{t_0} T(\|u\|_{L^2}^{\frac52}\|\calL u\|_{L^2}^{\frac12} + \|u\|_{L^2}^3) .
    \]
    Moreover, the equation for $\calL u$ has the form \[
    (i\p_t - \calH) \calL u = \calL\calN(u) + [i\p_t - \calH, \calL]u.
    \]
    Thanks to \eqref{eq:est-calL-Nu-Hartree}, \[
        \|\calL\calN(u)\|_{L^1L^2} \aleq_{t_0} T(T + t_0)\|\calL u\|_{L^2}^{\frac32} \|u\|_{L^2}^{\frac32} + T(T + t_0)^2 \|u\|_{L^2}^{\frac52}\|\calL u\|_{L^2}^{\frac12} + T(T + t_0)^2\|u\|_{L^2}^3.
    \]
    In addition, according to \eqref{eq:comm_id_in_proof} and our special form $\calH = -\Delta + c$, we have \[
        \|[i\p_t - \calH, \calL]u\|_{L^1L^2} \aleq T(T + t_0)\|u\|_{L^2}.
    \]
    Similar bounds can be proved for differences and hence, one can apply contraction mapping theorem on the space \[
        \set{u \in L^\infty L^2: \calL u \in L^\infty L^2, \|u\|_{L^\infty L^2} + \|\calL u\|_{L^\infty L^2} \leq M}, \quad d(u, v) = \|u - v\|_{L^\infty L^2}.
    \]
    Therefore, local well-posedness holds true.
\end{proof}
\begin{remark}
    If Strichartz estimates hold for $\calH$, then one first obtains local well-posedness in $L^2$, and consequently in the stronger regularity space
    \[
    \|u\|_{L^2} + \|\calL|_{t_0} u\|_{L^2} \ll 1, \qquad \text{for any } t_0 \geq 0.
    \]
\end{remark}

\begin{lemma}[Base iteration]\label{lem:base-iter-main-thm-2}
    Under the assumptions of Main~Theorem~\ref{thm:nonlinear-2} and \eqref{eq:BA1-NL2}, \eqref{eq:BA2-NL2}, we have \[
        \|\calL u\|_{L^\infty L^2[0, t]} \aleq \eps \brk{t}^2, \quad \|u(t, \cdot)\|_{L^\infty} \aleq \eps
    \]
    on the interval of existence.
\end{lemma}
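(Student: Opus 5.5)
The plan is to mirror the proof of Lemma~\ref{lem:base_iteration}, now for $\calH = -\lap + c$ with the Hartree nonlinearity and at the regularity level $M_0 = 2$.

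\textbf{Step 1: the $L^\infty$ bound.} We first record an $H^2$ energy bound. Conservation of mass (Lemma~\ref{lem:L2-conservation-Hartree}) gives $\|u\|_{L^2}\aleq \eps$. We then commute $\p_x^{(\leq 2)}$ into the equation. Because $\calH=-\lap+c$ has real $c$, the linear part produces only commutator terms $\p_x^{(\leq 2)} c \cdot \p_x^{(\leq 1)} u$. These are bounded in $L^1L^2$ on bounded time intervals. For the nonlinear part we use Lemma~\ref{lem:est-Hartree-Nu} for the convolution factor, and Hardy--Littlewood--Sobolev for its derivatives, which fall on $|u|^2$. These give $\|\p_x^{(\leq 2)}\calN(u)\|_{L^2}\aleq \|u\|_{L^\infty}^{2/3}\|u\|_{L^2}^{4/3}\|\p_x^{(\leq 2)}u\|_{L^2}$ plus lower-order terms. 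Under \eqref{eq:BA1-NL2} the coefficient $\|u\|_{L^\infty}^{2/3}\aleq \eps^{2/3}\brk{t}^{-1}$, and Gronwall yields $\|\p_x^{(\leq 2)}u(t)\|_{L^2}\aleq \eps\,\brk{t}^{C\eps^{2/3}}$. In fact \eqref{eq:BA1-NL2} directly gives $\|u\|_{L^\infty}\aleq \eps$, so this Sobolev route is only needed if one wants independence from the bootstrap constant. We therefore simply read $\|u(t)\|_{L^\infty}\aleq\eps$ off \eqref{eq:BA1-NL2} (with $C\eps\aleq\eps$).

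\textbf{Step 2: the bound $\|\calL u\|_{L^2}\aleq\eps\brk{t}^2$.} As in Lemma~\ref{lem:base_iteration}, we commute $x$ and then $x^2$ into the equation:
\[
(i\p_t-\calH)(xu)=\nabla u+x\calN(u),\qquad (i\p_t-\calH)(x^2u)=\nabla(xu)+x\nabla u+x^2\calN(u).
\]
The commutator $[x,c]$ vanishes, so only the free commutators appear. We run $L^2$ energy estimates, which are lossless since $c$ is real, on $x^{(\leq 2)}\p_x^{(\leq 2-k)}u$. Here $x\calN(u)=(|\cdot|^{-1}*|u|^2)\,xu$, and the factor $\||\cdot|^{-1}*|u|^2\|_{L^\infty}\aleq \eps^{2/3}\|u\|_{L^\infty}^{2/3}$ is again integrable against Gronwall's lemma up to a harmless $\brk{t}^{C\eps^{2/3}}$. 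Derivatives falling on the convolution are handled by Hardy--Littlewood--Sobolev together with the $H^2$ bound from Step 1. The initial weights are controlled by \eqref{eq:D-u} with $M_0=2$. This gives
\[
\|\p_x^{(\leq 1)}(xu)\|_{L^2}\aleq\eps\brk{t},\qquad \|x^2u\|_{L^2}\aleq\eps\brk{t}^2 .
\]
Finally we expand $\calL = L_\mu L^\mu + 4t^2 c$ with $L_\mu = x_\mu + 2it\p_\mu$. This produces terms $x^2u$, $t\,x\p u$, $t^2\p^2u$ and $t^2cu$, each bounded by $\eps\brk{t}^2$. On the existence interval this yields $\|\calL u\|_{L^\infty L^2[0,t]}\aleq\eps\brk{t}^2$.

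\textbf{Main obstacle.} The delicate step is preventing the Gronwall factor from destroying the polynomial rate. The fix is to use the decay in \eqref{eq:BA1-NL2} so that the Hartree potential is $O(\eps^{2/3}\brk{t}^{-1})$. The resulting growth $\brk{t}^{C\eps^{2/3}}$ is absorbed because $\eps$ is small. If a strict $\brk{t}^2$ is required, one can instead use the sharper decay $\|u\|_{L^\infty}^{2/3}\|u\|_{L^2}^{4/3}\aleq\eps^2\brk{t}^{-1}$; its time integral only yields $\eps^2\log t$, whose exponential is again $\brk{t}^{O(\eps^2)}$.
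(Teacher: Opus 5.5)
\noindent Your Step~2 depends on a time-uniform $H^2$ bound. You need $\nrm{\p_x^{(\leq 2)}u(t)}_{L^2}\aleq\eps$ both for the term $t^2\lap u$ in $\calL u$ and to feed the energy estimates for $xu$, $\p_x(xu)$ and $x^2u$, whose sources are $\nabla u$ and $\nabla^2 u$. In Lemma~\ref{lem:base_iteration} this was the hypothesis \eqref{eq:E-u}. Main~Theorem~\ref{thm:nonlinear-2} assumes nothing of the sort, and your Step~1 does not supply it, for two reasons.

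First, the theorem is stated with $M_c=0$, so \ref{hyp:af} controls only $(\brk{x}\p_x)^{(\leq 1)}c$ and $\calS c$. The term $(\p_x^2 c)u$ created by commuting $\p_x^2$ through $c$ is therefore not controlled at all.

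Second, even granting $\p_x^2c\in L^\infty$, the linear commutator terms are not time-integrable. Since $\nrm{(\p_x c)u}_{L^2}\aleq A_c\eps$ uniformly in time, the $L^2$ energy estimate only gives $\nrm{\p_x u(t)}_{L^2}\aleq\eps\brk{t}$, and then $\nrm{\p_x^2u(t)}_{L^2}\aleq\eps\brk{t}^2$. Gronwall absorbs the Hartree coefficient, not these linear sources, so the factor $\brk{t}^{C\eps^{2/3}}$ is not what your argument yields; doing better would need local smoothing for the nonlinear flow. Propagated through Step~2, this gives at best $\nrm{\calL u}_{L^2}\aleq\eps\brk{t}^4$, and no uniform $L^\infty$ bound via Sobolev.

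Falling back on \eqref{eq:BA1-NL2} for $\nrm{u}_{L^\infty}$ does not help. That constant is the bootstrap constant $2C$, while the point of the lemma is a $C$-independent bound on $[0,T_0]$ that improves the bootstrap there. With $C$-dependent constants, both conclusions are immediate from \eqref{eq:BA1-NL2}--\eqref{eq:BA2-NL2}, and there is nothing to prove.

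The missing idea is to avoid $H^2$ and separate weighted norms altogether, and commute with $\calL$ itself. For $\calH=-\lap+c$ the good commutator identity collapses to $[\calL,i\p_t-\calH]=-4it(\calS c+2c)$. This is a multiplication operator of size $A_c\, t\brk{x}^{-2-2\eta}$ and needs only the $M_c=0$ bounds. Since $c$ is real, the $L^2$ energy estimate and mass conservation give
\[
\nrm{\calL u(t)}_{L^2}\aleq \nrm{\brk{x}^2u_0}_{L^2}+\eps t^2+\int_0^t\nrm{\calL\calN(u)(s)}_{L^2}\,\ud s .
\]
The bound \eqref{eq:est-calL-Nu-Hartree} controls the integrand purely in terms of $\nrm{\calL u}_{L^2}$, $\nrm{u}_{L^2}$ and $\nrm{u}_{L^\infty}$, with cubic structure. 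So inserting the bootstrap assumptions there costs only powers of $C\eps$ multiplying $\eps^3$, which are absorbed for $\eps$ small. This yields $\nrm{\calL u}_{L^2}\aleq\eps\brk{t}^2$ with a $C$-independent constant. The conjugated Gagliardo--Nirenberg inequality \eqref{eq:GN-ineq-conj-ver} then gives $\nrm{u}_{L^\infty}\aleq\eps$, again independently of $C$.
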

\begin{proof}
    Commuting the equation with $\calL$ and invoking \eqref{eq:comm_id_in_proof}, we obtain \begin{align*}
        (i\p_t - \calH)\calL u = O(t\brk{r}^{-2-2\eta} u) + \calL \calN(u).
    \end{align*}
    Standard energy estimates give rise to \begin{align*}
        \|\calL u\|_{L^\infty L^2[0, t]} \aleq \eps + t^2 \|u\|_{L^\infty L^2} + t\|\calL \calN(u)\|_{L^\infty L^2}.
    \end{align*}
    Applying conservation of mass, \eqref{eq:est-calL-Nu-Hartree} and bootstrap assumptions, one concludes that $\|\calL u(t, \cdot)\|_{L^2} \aleq \eps \brk{t}^2$.

    On the other hand, the pointwise estimate follows from \eqref{eq:GN-ineq-conj-ver}.
\end{proof}

\begin{lemma}[Main iteration]
    Under the assumptions of Main~Theorem~\ref{thm:nonlinear-2} and \eqref{eq:BA1-NL2}, \eqref{eq:BA2-NL2}, there exists $T_0$ such that for any $t > T_0$, the same conclusion holds as in Lemma~\ref{lem:main_iteration} with $A \aleq \eps$ and $A' \aleq \eps$.
\end{lemma}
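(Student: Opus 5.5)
This lemma is the Hartree analogue of Lemma~\ref{lem:main_iteration}, and the plan is to repeat that proof, replacing each linear input with its nonlinear counterpart. The standing inputs are the bootstrap assumptions \eqref{eq:BA1-NL2} and \eqref{eq:BA2-NL2}, mass conservation (Lemma~\ref{lem:L2-conservation-Hartree}), and the base case (Lemma~\ref{lem:base-iter-main-thm-2}). Fix $T_0$ large enough that Propositions~\ref{prop:est-LN(u)-Hartree} and \ref{prop:hartree-gamma-evol} hold. Then assume the iteration hypotheses \eqref{eq:iter_assump} with $N=s_c'=0$ and $A\aleq\eps$, namely $\|\calL u\|_{L^2}\leq At^{\frac14-\eta+\alpha}$ and $\|u\|_{L^\infty}\leq At^{-\frac32+\alpha}$. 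Because $s_c'=0$, there is no derivative loss to track.

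\emph{Pointwise step.} Apply Proposition~\ref{prop:diff-u-gmmvarphi0-wo-1/tdv-involved} with $N=0$. Using the hypotheses and $\|u\|_{L^2}\aleq\eps$, it gives
\[
|t^{\frac32}u(t,tv)|\aleq|\gmm(t,v)|+(A+\eps)t^{\alpha-\frac{\eta}{4}}+\eps t^{-\frac14},
\]
with $\varphi_0$ bounded by Corollary~\ref{cor:decay_varphi0}. Since $|\gmm|=|\tilde\gmm|$, we only need to bound $\tilde\gmm$. Integrate Proposition~\ref{prop:hartree-gamma-evol} from $T_0$ to $t$. The right-hand side is $\aleq (A+\eps)t^{-1+\alpha-\frac{\eta}{4}}+\eps t^{-1-\eta}$, whose integral is $\aleq (A+\eps)\max\{t^{\alpha-\frac{\eta}{4}},1\}$. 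This is the one point where $\alpha-\frac{\eta}{4}=0$ would produce a $\log$, so one should either take the step $\alpha'=\max\{\alpha-\frac{\eta}{8},0\}$ or note that when $\alpha\leq\frac{\eta}{4}$ the integrand is already integrable after shrinking the exponent slightly. The initial value $|\tilde\gmm(T_0,v)|$ is controlled by the base case, since $|\gmm(T_0,v)|\aleq T_0^{\frac32}\|u(T_0)\|_{L^\infty}\aleq_{T_0}\eps$. Together these give $\|u\|_{L^\infty}\leq A't^{-\frac32+\alpha'}$ with $A'\aleq\eps$.

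\emph{$\calL u$ step.} Apply Proposition~\ref{prop:est-LN(u)-Hartree}. The term $\|\calL u(T_0)\|_{L^2}$ is $\aleq_{T_0}\eps$ by Lemma~\ref{lem:base-iter-main-thm-2}. For the middle term, insert the improved (or the assumed) pointwise bound:
\[
\|t^{-\frac14-\eta}\|t^{\frac32}u\|_{L^\infty_x}\|_{L^2_t[T_0,t]}\aleq A t^{\max\{\frac14-\eta+\alpha',0\}}.
\]
The last term is $\eps t^{\frac14-\frac{\eta}{4}}$. Combining, $\|\calL u\|_{L^2}\leq A't^{\frac14-\eta+\alpha'}$ whenever $\alpha'\geq\frac34\eta$. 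Once $\alpha'$ falls below that threshold, the nonlinear term caps the growth at $t^{\frac14-\frac{\eta}{4}}$, which is exactly the rate allowed in \eqref{eq:BA2-NL2}.

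\emph{Main obstacle.} The difficulty is the bookkeeping of this floor. Because of the nonlinear loss, the conclusion can only be "the same as in Lemma~\ref{lem:main_iteration}" with $\eta$ replaced by $\frac{\eta}{4}$ in the $\calL u$ exponent, and consistently so. The plan is therefore to define the iteration in terms of $\eta_1:=\frac{\eta}{4}$. Proposition~\ref{prop:hartree-gamma-evol} is used with the weaker $\calL u$ bound $t^{\frac14-\eta_1+\alpha}$, which still gives integrable decay because that proposition's $\calL u$ coefficient $t^{-\frac54+\frac{\eta}{2}}$ leaves room $\frac{\eta}{4}$. After checking that the exponents $-1+\alpha-\frac{\eta}{4}$ stay consistent under this replacement, every constant is linear in $A+\eps\aleq\eps$, which closes the lemma.
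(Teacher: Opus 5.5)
Your two main steps match the paper's proof: the pointwise bound via Proposition~\ref{prop:diff-u-gmmvarphi0-wo-1/tdv-involved} and Proposition~\ref{prop:hartree-gamma-evol} using $|\tilde\gmm|=|\gmm|$, then the $\calL u$ bound via Proposition~\ref{prop:est-LN(u)-Hartree}. The paper simply reruns Lemma~\ref{lem:main_iteration} with these inputs and notes that \eqref{eq:E-u} is not needed.

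You are right that the floor $\eps t^{\frac14-\frac{\eta}{4}}$ in Proposition~\ref{prop:est-LN(u)-Hartree} blocks the literal conclusion $\|\calL u\|_{L^2}\le A't^{\frac14-\eta+\alpha'}$ once $\alpha'<\frac34\eta$; the paper's short proof does not address this. However, your repair fails as written. Suppose the $\calL u$ hypothesis is weakened to $t^{\frac14-\eta_1+\alpha}$ with $\eta_1=\frac{\eta}{4}$, while Propositions~\ref{prop:diff-u-gmmvarphi0-wo-1/tdv-involved} and \ref{prop:hartree-gamma-evol} are used as stated. Then the $\calL u$ contribution to $\frac{d}{dt}\tilde\gmm$ is $t^{-\frac54+\frac{\eta}{2}}\cdot t^{\frac14-\frac{\eta}{4}+\alpha}=t^{-1+\frac{\eta}{4}+\alpha}$. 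This is not integrable for any $\alpha\ge 0$; the ``room'' of $\frac{\eta}{4}$ has the wrong sign. Likewise the approximation error $t^{-\frac14+\frac{\eta}{2}}\|\calL u\|_{L^2}$ becomes $t^{\frac{\eta}{4}+\alpha}$, which grows. So the pointwise step never reaches $\alpha'=0$, and the exponents do not stay consistent under the replacement.

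What is missing is a decoupling of the two roles of $\eta$. Hypothesis \ref{hyp:af} with $\eta$ implies \ref{hyp:af} with any smaller $\eta'$. Hence Proposition~\ref{prop:diff-u-gmmvarphi0-wo-1/tdv-involved}, and the linear bounds for $R_{1,0}$, $R_{5,0}$ feeding Proposition~\ref{prop:hartree-gamma-evol}, hold with $\eta'$ in place of $\eta$. Equivalently, take smaller $\dlt,\dlt_1$ in Lemma~\ref{lem:near-zone-asymp-zero-order}. By contrast, three quantities keep the true $\eta$: the nonlinear remainder $\eps t^{-1-\eta}$, the floor, and the term $\|t^{-\frac14-\eta}\|t^{\frac32}u\|_{L^\infty_x}\|_{L^2_t}$ in Proposition~\ref{prop:est-LN(u)-Hartree}.

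Take $\eta'=\eta_1$ (any $\eta'<2\eta_1$ works). The wave-packet errors are then $At^{\alpha-\frac{\eta_1}{2}}+At^{\alpha-\frac{\eta_1}{4}}$, and $|\frac{d}{dt}\tilde\gmm|\aleq At^{-1+\alpha-\frac{\eta_1}{4}}+\eps t^{-1-\eta}$. Proposition~\ref{prop:est-LN(u)-Hartree} gives $\|\calL u\|_{L^2}\aleq\eps+A't^{\frac14-\eta+\alpha'}+\eps t^{\frac14-\eta_1}\aleq\eps\, t^{\frac14-\eta_1+\alpha'}$. The iteration therefore closes with $\alpha'=\max\{\alpha-\frac{\eta_1}{4},0\}$ and all constants $\aleq\eps$. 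The $\log t$ at $\alpha=\frac{\eta_1}{4}$ that you flag is harmless: perturb the starting value of $\alpha$ to avoid it.
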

\begin{proof}
    We only highlight the differences from the proof of Lemma~\ref{lem:main_iteration}. First, although a stronger bound \eqref{eq:E-u} was assumed in that lemma, its proof shows that this additional information is in fact unnecessary. Moreover, the modification here is minor, since $|\tilde\gmm| = |\gmm|$.
\end{proof}

Now we are ready to prove Main~Theorem~\ref{thm:nonlinear-2}. Thanks to Lemma~\ref{lem:LWP-Hartree} and the conservation of mass, \eqref{eq:BA2-NL1} and \eqref{eq:BA2-NL2} are open conditions. It then should be noted that the base iteration improves the bootstrap assumptions on the interval $[0, T_0]$, once $T_0$ is fixed in the main iteration lemma. In conjunction with the main iteration lemma, this establishes the sharp pointwise decay globally in time and concludes the bootstrap argument. Hence, global existence and sharp pointwise decay have been established.

Then we take advantage of the improved bootstrap estimates and would arrive at a profile $\tilde\gmm_\infty \in L^\infty_v$ such that \[
    \gmm(t, v)\exp\left(-i\int_1^t \frac1{s}\big(|\cdot|^{-1} * |\gmm(s, \cdot)|^2\big)(v) \, \ud s \right) \to \tilde\gmm_\infty(v) \hbox{ in } L^\infty_v \hbox{ as } t \to \infty.
\]
Denote \[
    \theta(t, v) := \int_1^t \frac1{s}\big(|\cdot|^{-1} * (|\gmm(s, \cdot)|^2 - |\gmm_\infty(\cdot)|^2)\big)(v) \, \ud s.
\]
It then follows from Lemma~\ref{lem:est-Hartree-Nu} that \[
    |\tfrac{d}{dt}\theta(t, v)| \aleq t^{-1}t^{-\frac{\eta}{12}}.
\]
Thus, there exists $\theta_\infty(v) := \lim_{t\to\infty} \theta(t, v) \in L^\infty_v$. Set \[
    \gmm_\infty(v) := \tilde\gmm_\infty(v) e^{i\theta_\infty(v)}.
\]
This allows to write \begin{align*}
    &|U(t, tv) - \gmm_\infty(v)\exp\left(i\big(|\cdot|^{-1} * |\gmm_\infty(\cdot)|^2(v)\big) \log t\right)\varphi_0(tv)| \\
    \aleq&\, |U(t, tv) - \gmm(t, v)\varphi_0(tv)| + |\varphi_0(tv)|\left|\gmm(t,v)\exp\left(-i\big(|\cdot|^{-1} * |\gmm_\infty(\cdot)|^2(v)\big) \log t\right) - \gmm_\infty(v)\right| \\
    \aleq&\, \eps t^{-\frac{\eta}{4}} + \eps \left|\exp\left(-i\big(|\cdot|^{-1} * |\gmm_\infty(\cdot)|^2(v)\big) \log t\right) - \exp\left(-i\int_1^t \frac1{s}\big(|\cdot|^{-1} * |\gmm(s, \cdot)|^2\big)(v) \, \ud s \right) e^{i\theta_\infty(v)}\right| \\
    \aleq &\, \eps t^{-\frac{\eta}{4}} + \eps \left| e^{i\theta(t, v)} - e^{i\theta_\infty(v)}\right| \aleq \eps t^{-\frac{\eta}{12}}.
\end{align*}
This finishes the proof.

\appendix
\section{On \ref{hyp:iled-0}, \ref{hyp:iled*} and global-in-time local smoothing} \label{sec:smoothing}
We state the sharp global-in-time local smoothing when $\calH$ is a small perturbation of $-\lap$, whose exact form is borrowed from \cite{Tataru}. Description of the relevant function spaces for the Schr\"odinger equation necessarily involves fractional derivatives. For this purpose, we use Littlewood--Paley projections $P_{k}$ to frequencies $\aeq 2^{k}$ based on the usual Fourier transform on $\bbR^{3}$ and make the following definitions :
\begin{align*}
\nrm{u}_{LE_{k}} &= \begin{cases}
\nrm{u}_{L^{2} L^{2}(\abs{x} < 2)} + \sup_{j > 0} \nrm{\abs{x}^{-\frac{1}{2}} u}_{L^{2} L^{2}(2^{j-1} < \abs{x} < 2^{j+1})}, & \hbox{ if } k \geq 0, \\
2^{\frac{k}{2}} \nrm{u}_{L^{2} L^{2}(\abs{x} < 2^{-k+1})} + \sup_{j > -k} \nrm{\abs{x}^{-\frac{1}{2}} u}_{L^{2} L^{2}(2^{j-1} < \abs{x} < 2^{j+1})}, & \hbox{ if } k < 0, \\
\end{cases} \\
\nrm{u}_{LE^{\frac{1}{2}}} &= \bb( \sum_{j \in \bbZ} (2^{\frac{k}{2}} \nrm{P_{k} u}_{LE_{k}})^{2} \bb)^{\frac{1}{2}} \\
\nrm{f}_{LE_{k}^{\ast}} &= \begin{cases}
\nrm{f}_{L^{2} L^{2}(\abs{x} < 2)} + \sum_{j > 0} \nrm{\abs{x}^{\frac{1}{2}} f}_{L^{2} L^{2}(2^{j-1} < \abs{x} < 2^{j+1})}, & \hbox{ if } k \geq 0, \\
2^{\frac{k}{2}} \nrm{f}_{L^{2} L^{2}(\abs{x} < 2^{-k+1})} + \sum_{j > -k} \nrm{\abs{x}^{\frac{1}{2}} f}_{L^{2} L^{2}(2^{j-1} < \abs{x} < 2^{j+1})}, & \hbox{ if } k < 0,
\end{cases} \\
\nrm{f}_{(LE^{\ast})^{-\frac{1}{2}}} &= \bb( \sum_{k \in \bbZ} (2^{-\frac{k}{2}} \nrm{P_{k} f}_{LE^{\ast}_k})^{2} \bb)^{\frac{1}{2}}
\end{align*}
In \cite[Lemma 1]{Tataru}, the following Hardy-type inequalities were established:
\begin{equation} \label{eq:iled-le}
	\nrm{\abs{x}^{-1} u}_{L^{2} L^{2}} \aleq \nrm{u}_{LE^{\frac{1}{2}}}, \quad
	\nrm{f}_{(LE^{\ast})^{-\frac{1}{2}}} \aleq \nrm{\abs{x} f}_{L^{2} L^{2}}.
\end{equation}

We restate the following result from \cite[Theorem 2]{Tataru} :
\begin{proposition} [Global-in-time local smoothing for small perturbations of $-\lap$ when $\calK = \0$]\label{prop:kato-small}
There exists $\eps_{c} > 0$ such that if $\calH$ satisfies \ref{hyp:af} with $\eta > 0$ and $A_{c} < \eps_{c}$, then the following estimate holds:
\begin{equation} \label{eq:kato-small}
	\nrm{u}_{L^{\infty} L^{2} \cap LE^{\frac{1}{2}}} \aleq \nrm{u(0)}_{L^{2}} + \nrm{(i \rd_{t} - \calH) u}_{L^{1} L^{2} + (LE^{\ast})^{-\frac{1}{2}}}.
\end{equation}
\end{proposition}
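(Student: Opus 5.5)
The statement is Proposition~\ref{prop:kato-small}, quoted from \cite[Theorem 2]{Tataru}. I would prove it by a perturbative argument around the free flow. The first step is the constant-coefficient estimate
\begin{equation*}
	\nrm{u}_{L^{\infty} L^{2} \cap LE^{\frac{1}{2}}} \aleq \nrm{u(0)}_{L^{2}} + \nrm{(i \rd_{t} + \lap) u}_{L^{1} L^{2} + (LE^{\ast})^{-\frac{1}{2}}}.
\end{equation*}
To get it, I would localize to dyadic frequency $2^{k}$ with $P_{k}$. On each piece I would run a Morawetz/positive-commutator multiplier argument, using a bounded radial multiplier adapted to the scale $2^{-k}$, which is essentially $\rd_r$ smoothed at radius $2^{-k}$. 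This gives $2^{\frac{k}{2}}\nrm{P_k u}_{LE_k} \aleq \nrm{P_k u(0)}_{L^2} + 2^{-\frac{k}{2}}\nrm{P_k f}_{LE^{\ast}_k}$, together with the $L^\infty L^2$ bound. The $L^1L^2$ part of the source is handled by Duhamel plus the homogeneous estimate. The two norms are dual to one another, so the inhomogeneous $LE^{\ast}$ part follows from a $TT^{\ast}$/duality argument, with a Christ--Kiselev-type truncation if needed; no Christ--Kiselev loss occurs here because the relevant exponents are both $2$. Square-summing over $k$ closes this step.

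The second step treats the perturbation. I would write $i\rd_t - \calH = (i\rd_t + \lap) - (D_\mu \bfh^{\mu\nu}D_\nu + b^\mu D_\mu + D_\mu b^\mu + c)$. It then suffices to show
\begin{equation*}
	\nrm{(\calH + \lap)u}_{(LE^{\ast})^{-\frac12}} \aleq A_c \nrm{u}_{LE^{\frac12}}.
\end{equation*}
Once this holds, smallness $A_c<\eps_c$ lets us absorb the error term into the left-hand side. A continuity/bootstrap argument on $[0,T]$ with qualitatively smooth solutions then gives a bound uniform in $T$.

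The lower-order terms are straightforward. The potential satisfies $\abs{c}\aleq A_c\brk{x}^{-2-2\eta}$, so by \eqref{eq:iled-le},
\begin{equation*}
	\nrm{cu}_{(LE^{\ast})^{-\frac12}}\aleq \nrm{\abs{x}cu}_{L^2L^2}\aleq A_c\nrm{\abs{x}^{-1}u}_{L^2L^2}\aleq A_c \nrm{u}_{LE^{\frac12}}.
\end{equation*}
The first-order terms $b^\mu D_\mu$ and $D_\mu b^\mu$ are handled the same way after moving one derivative across the frequency localization. For them I would use the frequency-localized forms of $LE_k$ and $LE^{\ast}_k$, since $\abs{b}\aleq \brk{x}^{-1-2\eta}$ and a derivative costs $2^k$.

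The main obstacle is the principal term $D_\mu \bfh^{\mu\nu} D_\nu$. Here $\bfh$ decays only like $\brk{x}^{-2\eta}$, so the dyadic summation over spatial annuli relies entirely on the $\eta$-gain. One also has to commute $P_k$ past $\bfh$. I would first apply a Littlewood--Paley paraproduct decomposition of $\bfh D u$ into the pieces $(P_{<k-4}\bfh)\, P_k Du$, high--high interactions, and high--low interactions. For the main piece, on each annulus of size $2^{j}\geq 2^{-k}$ we have
\begin{equation*}
	2^{-\frac k2}\cdot 2^{k}\cdot 2^{-2\eta j}\, 2^{\frac j2}\nrm{P_k Du}_{L^2(\abs{x}\aeq 2^j)},
\end{equation*}
which is bounded by $A_c 2^{-2\eta j}\cdot 2^{\frac k2}\nrm{P_k u}_{LE_k}$. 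The factor $2^{-2\eta j}$ makes the $\ell^1_j$ sum in $LE^{\ast}_k$ converge against the $\ell^\infty_j$ norm in $LE_k$; this is exactly where $\eta>0$ is used. For the other pieces, the derivative bounds on $\bfh$ in \ref{hyp:af} make them smaller by at least $2^{-k}\brk{x}^{-1}$, which is acceptable. The low frequencies $k<0$, with the modified inner region $\abs{x}<2^{-k+1}$, will need the same computation carried out carefully. If this step proves too delicate, I would fall back on citing \cite[Theorem 2]{Tataru} directly, since its hypotheses coincide with \ref{hyp:af} when $A_c$ is small.
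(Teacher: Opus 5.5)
Your Step~2 fails at the principal part, and the problem is not the dyadic summation. The bound $\nrm{D_\mu \bfh^{\mu\nu}D_\nu u}_{(LE^{\ast})^{-\frac12}}\aleq A_c\nrm{u}_{LE^{\frac12}}$ is false for every $\calH$ with $\bfh\not\equiv 0$.

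The pair $LE^{\frac12}\to(LE^{\ast})^{-\frac12}$ leaves room for exactly one derivative and one factor of $\abs{x}^{-1}$. That is why $b^\mu D_\mu$ with $\abs{b}\aleq A_c\brk{x}^{-1-2\eta}$ and $c$ with $\abs{c}\aleq A_c\brk{x}^{-2-2\eta}$ are genuinely perturbative; your Hardy argument for $c$ is fine. By contrast, $D\bfh D$ costs two derivatives, and its coefficients decay only like $\brk{x}^{-2\eta}$.

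You can see the loss directly in your annulus computation:
\begin{itemize}
\item $\nrm{P_kDu}_{L^2(\abs{x}\aeq 2^j)}\aeq 2^k\nrm{P_k u}_{L^2(\abs{x}\aeq 2^j)}$;
\item $2^{\frac k2}\nrm{P_k u}_{LE_k}$ only controls $2^{\frac k2}2^{-\frac j2}\nrm{P_k u}_{L^2(\abs{x}\aeq 2^j)}$.
\end{itemize}
So your displayed quantity exceeds $A_c2^{-2\eta j}\cdot 2^{\frac k2}\nrm{P_ku}_{LE_k}$ by the factor $2^{k}2^{j}$ (frequency times radius). This factor is $\geq 1$ on the annuli $2^j\geq 2^{-k}$ and is unbounded. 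Concretely, take a wave packet of frequency $2^k\gg 1$ concentrated near a point $x_0$ with $\bfh(x_0)\neq 0$. The ratio of the two sides is then $\gtrsim \abs{\bfh(x_0)}\,2^k$, so no smallness of $A_c$ allows the absorption.

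The missing idea is to keep the principal part inside the positive-commutator argument, rather than moving it to the source. This is how \cite[Theorem~2]{Tataru} is proved; the paper gives no proof of its own and simply cites that result, which is your fallback. The argument runs as follows:
\begin{itemize}
\item Use a first-order, frequency-adapted multiplier $Q$ applied to $u$ itself.
\item Then $\bfh$ enters only through $i[D_\mu\bfh^{\mu\nu}D_\nu,Q]$. This is a second-order operator whose coefficients come from $\bfh$ times derivatives of the multiplier symbol and from $\nb\bfh$, so they are of size $A_c\brk{x}^{-1-2\eta}$ on each dyadic region.
\item Since this commutator loses only one derivative, it matches the one-derivative gain. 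After handling $[P_k,\bfh]$ and summing dyadically using $\eta>0$, it is dominated by $A_c$ times the positive term from $i[-\lap,Q]$, which controls $\nrm{u}_{LE^{\frac12}}^2$.
\end{itemize}
Only $b$ and $c$ should be moved to the right-hand side via Duhamel, as you did.

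A smaller inaccuracy in Step~1: when both time exponents equal $2$, the Christ--Kiselev lemma is not "lossless" but simply unavailable. The inhomogeneous $(LE^{\ast})^{-\frac12}$ bound must come directly from the multiplier identity, where $f$ is paired against $Qu$. Your multiplier argument does supply this.
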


In view of \eqref{eq:iled-le}, \eqref{eq:kato-small} implies \ref{hyp:iled*} with $M_c = 0, \bt_{c} = 1$ and $s_c = 0$.

Next, we start by formulating a version of \ref{hyp:iled-0}, which also takes into account the possible presence of an obstacle. We say that $L = i\p_t - \calH$ satisfies the \emph{integrated local energy decay estimates} up to order $M_{c}$ if the following holds:
\begin{enumerate}[label=$(\mathrm{ILED})$]
\item (Integrated local energy decay)\label{hyp:iled}
    When $\calK = \0$, assume that \ref{hyp:iled-0} holds (up to order $M_{c}$). When $\calK \neq 0$, assume that, every $N = 0, 1, \ldots, M_{c}$, the following holds. For every $F: [t_0, t_0 + T] \times \calM^3 \to \bbC$ such that \begin{align*}
        \supp F \subseteq \set{(t, x) \in [t_0, t_0 + T] \times \calM^{3}: \abs{x} < R}, \quad \sum_{2i + j \leq N} \nrm{\rd_{t}^{(\leq i)}\rd_{x}^{(\leq j+s_{c})} F}_{L^{2} L^{2}([t_0, t_0 + T] \times \set{\abs{x} < R})} < +\infty,
    \end{align*}
    there exists a unique forward solution $\psi \in C_t([t_0, t_0 + T]; H^N)$ to $L \psi = F$ in $[t_0, t_0 + T] \times \calM^3$ with $\psi = 0$ on $[t_0, \infty)_{t} \times \rd \calK$. Moreover, the solution $\psi$ satisfies
    \begin{equation*}
    	\nrm{\rd_{x}^{(\leq N)} \psi}_{L^{2} L^{2}([t_0, t_0 + T] \times \set{\abs{x} < R})} \leq C_{R, N} \sum_{2i + j \leq N} \nrm{\rd_{t}^{(\leq i)}\rd_{x}^{(\leq j+s_{c})} F}_{L^{2} L^{2}([t_0, t_0 + T] \times \set{\abs{x} < R})},
    \end{equation*}
    where $C_{R, N}$ is independent of $t_0 \geq 0, T > 0$.
\end{enumerate}

\begin{proposition} \label{prop:iled2iled-star}
Assume that \ref{hyp:af} holds true. In the case of $\calK \neq \0$, we additionally impose the hypotheses \ref{hyp:ult-stat} and \ref{hyp:en}.
If \ref{hyp:iled} holds true, then \ref{hyp:iled*} holds with an arbitrary $\bt_{c}^{\ast} \geq 0$, $s_{c}^{\ast} = s_{c} + \delta_s + 3$ and $M_{c}^* = M_c - 2 - s_c$, where \[\delta_s :=
        \begin{cases}
            \delta_s = 1, \quad \bfh^{\mu\nu} \not\equiv 0, \\
            \delta_s = 0, \quad \bfh^{\mu\nu} \equiv 0.
        \end{cases}
    \]
\end{proposition}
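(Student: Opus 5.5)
We deduce \ref{hyp:iled*} from \ref{hyp:iled} by a near/far decomposition: Proposition~\ref{prop:kato-small} supplies global-in-time smoothing in the far region, where $\calH$ is a small perturbation of $-\lap$, and \ref{hyp:iled} handles the compact region. This is the decomposition already used in the proof of Proposition~\ref{prop:iled*-L1L2+LE*}.

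\textbf{Step 1: the outer operator.} Fix $R_0$ large and set $\calH_{\mathrm{out}} := -\lap + \chi_{>R_0}(\calH + \lap)$, i.e.\ the operator of \eqref{eq:defn-Hout}. By \ref{hyp:af} its perturbation coefficients are $O(R_0^{-2\eta})$ in the weighted norms of \ref{hyp:af}. Choosing $R_0$ large makes them smaller than $\eps_c$, so Proposition~\ref{prop:kato-small} applies on all of $\bbR^3$. Given data $\psi_0$ and forcing $f$, let $v$ solve
\[
(i\rd_t - \calH_{\mathrm{out}})v = f, \qquad v(t_0) = \psi_0 .
\]
Here $\psi_0$ and $f$ are extended by zero into $\calK$ after multiplying by the cutoff $\eta_0$ of Lemma~\ref{lem:varphi0}, or we simply solve on $\bbR^3$. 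By \eqref{eq:kato-small} and \eqref{eq:iled-le}, $\nrm{v}_{L^\infty L^2} + \nrm{\brk{x}^{-1}v}_{L^2L^2}$ is bounded by $\nrm{\psi_0}_{L^2} + \nrm{\brk{x} f}_{L^2L^2}$.

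For derivatives we commute $\rd_x$ through the equation. The commutators $[\rd_x, \calH_{\mathrm{out}}]$ have coefficients of size $\brk{x}^{-1-2\eta}$ at top order. They are placed in $(LE^*)^{-\frac12}$ and absorbed using $\nrm{\brk{x}^{-1}\rd_x v}_{L^2L^2}$. Where $\bfh \not\equiv 0$ this costs one extra derivative, which is the origin of $\dlt_s$; this uses that $LE^{\frac12}$ gains half a derivative while the commutator needs a full derivative bound. The outcome, recorded as \eqref{eq:iled*-Hout}, is
\[
\nrm{\rd_x^{(\le N)} v}_{L^\infty L^2} + \nrm{\brk{x}^{-1}\rd_x^{(\le N)}v}_{L^2L^2}
\aleq \nrm{\rd_x^{(\le N+\dlt_s)}\psi_0}_{L^2} + \nrm{\brk{x}\rd_x^{(\le N+\dlt_s)} f}_{L^2L^2}.
\]

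\textbf{Step 2: the correction.} Write $\psi = \eta_0 v + w$. Then $w$ solves
\[
(i\rd_t - \calH)w = F := -\eta_0(\calH - \calH_{\mathrm{out}})v - [\calH, \eta_0]v + (1-\eta_0)f, \qquad w(t_0) = (1-\eta_0)\psi_0, \qquad w|_{\rd\calK} = 0 .
\]
The first two terms of $F$ are compactly supported in $\{r \lesssim R_0\}$, and $(1-\eta_0)f$ is too. Split $w = w_{\mathrm{hom}} + w_{\mathrm{inh}}$ into the part carrying the initial data and the part carrying $F$ with zero data.

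\textbf{Step 3: the homogeneous part, where compactly supported data must be converted into a localized forcing.} Take $\chi_1(t)$ with $\chi_1 = 1$ near $t_0$ and $\chi_1 = 0$ for $t > t_0+1$, and write $w_{\mathrm{hom}} = \chi_1 w_{\mathrm{hom}} + \tilde w$. On $[t_0, t_0+1]$, $w_{\mathrm{hom}}$ is bounded by the local energy estimate: \ref{hyp:en} with \ref{hyp:ult-stat} in the obstacle case, and Lemma~\ref{lem:eng-est-w-loss} otherwise. The remainder $\tilde w$ solves a zero-data problem with forcing $i\chi_1' w_{\mathrm{hom}}$. This forcing is supported in time in $[t_0,t_0+1]$ but not compactly supported in space, so we split it once more using the outer solution of Step 1. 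The aim is that \ref{hyp:iled} is only ever applied to compactly supported forcings.

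\textbf{Step 4: upgrading $L^2L^2$ to $L^\infty L^2$.} \ref{hyp:iled} gives $\nrm{\rd_x^{(\le N)} w_{\mathrm{inh}}}_{L^2L^2(r<R)}$, and the compact-region forcings above are bounded in $\nrm{\rd_x^{(\le N + s_c + \dlt_s)}\,\cdot\,}_{L^2L^2}$ by Step 1. To reach $L^\infty L^2$ we cut off spatially, $w_{\mathrm{inh}} = \chi_{<2R} w_{\mathrm{inh}} + \chi_{>R} w_{\mathrm{inh}}$.
\begin{itemize}
\item The exterior piece solves an $\calH_{\mathrm{out}}$-equation whose forcing $[\calH,\chi_{>R}]w_{\mathrm{inh}}$ is compactly supported and controlled in $L^2L^2$ by \ref{hyp:iled} (losing one derivative). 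Proposition~\ref{prop:kato-small} then gives it in $L^\infty L^2$.
\item The interior piece is estimated by a time-localized energy estimate. We apply a local energy inequality on unit time intervals, pigeonholing a good initial time in each interval from the $L^2L^2$ bound. This costs roughly one derivative plus the time derivatives needed in the obstacle case, which are converted via Lemma~\ref{lem:unweight-ell}.
\end{itemize}

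\textbf{Main obstacle.} The main difficulty is Step 4 together with the data conversion in Step 3: obtaining constants uniform in $T$ while tracking derivative losses. These losses account for the $+3$ in $s_c^*$ and the drop to $M_c - 2 - s_c$. Since every forcing in the compact region is compactly supported, the choice of weight $\brk{x}^{\bt_c+1}$ is immaterial, which yields arbitrary $\bt_c \ge 0$.
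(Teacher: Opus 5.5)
Your outline is essentially the paper's argument. It solves the outer problem for $\calH_{\mathrm{out}}$ with Proposition~\ref{prop:kato-small}. It corrects by $w=\psi-\eta_0 v$, whose forcing is compactly supported and is handed to \ref{hyp:iled}. It then upgrades local $L^2L^2$ to $L^\infty L^2$ by cutoffs and energy identities, using $\rd_t$-commutation and Lemma~\ref{lem:unweight-ell} when $\calK\neq\0$.

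Two steps are done differently.
\begin{enumerate}
\item The data term $(1-\eta_0)\psi_0$ exists only when $\calK\neq\0$; if $\calK=\0$ then $\eta_0\equiv1$ and $w(t_0)=0$. The paper does not convert this data into a forcing. It commutes with $\rd_t^{k}$ and applies \ref{hyp:en} on the whole interval, with \ref{hyp:ult-stat} making the commutator terms $L^1_t$. This is exactly why those two hypotheses are imposed. Your time cutoff and second outer splitting are valid but redundant here; they would only help if \ref{hyp:en} were available merely locally in time.
\item For the upgrade, the paper applies one energy identity to $w$ itself and bounds its error by $\|\brk{x}^{-1}\rd_x^{(\le N+1)}w\|_{L^2L^2}$. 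It controls the far part of that through a second Kato estimate for $\chi_{>2R}w$, losing a further derivative in $[\calH,\chi_{>2R}]$. These account for the final $+2$ in $s_c^*=s_c+\dlt_s+3$. Your split also works and seems to save a derivative: Kato for $\chi_{>R}w$ directly in $L^\infty L^2$, and the energy identity only for $\chi_{<2R}w$, whose errors are compactly supported bilinear spacetime integrals. No pigeonholing is needed, since $\chi_{<2R}w(t_0)=0$ and the identity can be integrated over the whole interval at once.
\end{enumerate}

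What you should fix is the origin of $\dlt_s$. The outer bound \eqref{eq:iled*-Hout} is lossless for every $N$. The top-order commutator $D_\mu\big((\rd\bfh)D_\nu\rd_x^{N-1}v\big)$, with coefficient $O(\brk{x}^{-1-2\eta})$, costs half a derivative in $(LE^{\ast})^{-\frac12}$. That is exactly what $LE^{\frac12}$ gains, so the term is absorbed by the smallness $\eps_c$ (\cite[Remark~5]{Tataru}).

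The loss $\dlt_s$ comes instead from the forcing of $w$. The term $(\calH-\calH_{\mathrm{out}})v$ is second order in $v$ when $\bfh\not\equiv0$ and first order otherwise. So bounding $\rd_x^{(\le N+2+s_c)}$ of it requires $\rd_x^{(\le N+s_c+\dlt_s+3)}v$, and this is also where the restriction $N\le M_c-2-s_c$ comes from.

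In your scheme $\dlt_s$ is charged twice, once in Step~1 and once implicitly in $F$. Since you never carry out the count, the stated values of $s_c^*$ and $M_c^*$ are asserted rather than derived. A minor point: the first two terms of your $F$ should carry a plus sign.
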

\begin{proof}[Proof of Proposition~\ref{prop:iled2iled-star}]
    The uniqueness part follows by considering the difference between two possible solutions. For the existence,
    For simplicity, we set $t_0 = 0$.
    Consider $(i\p_t - \calH)u = f$ with $u(0) = u_0$.
    Introducing a cut-off $\chi := \chi_{> R}$ adapted to the region $\{|x| > R\}$, we consider
    \begin{align}\label{eq:defn-Hout}
        \calH_{\mathrm{out}} u &:= - \lap u + D_{\mu} (\chi_{>R} \bfh^{\mu\nu} D_{\nu} u) \\
	    & \peq + \chi_{>R} b^{\mu}(t, x) D_{\mu} u + D_{\mu} (\chi_{>R} b^{\mu}(t, x)  u) + \chi_{>R} c(t, x) u
    \end{align}
    and fix $R$ sufficiently large so that Proposition~\ref{prop:kato-small} applies to $i \rd_{t} - \calH_{\mathrm{out}}$. Let $v: [0, \infty) \times \R^3 \to \bbC$ be the solution (existence and uniqueness follow from \cite{Tataru}) to
    \begin{equation*}
	   (i \rd_{t} - \calH_{\mathrm{out}}) v = f, \quad v(0) = u_0,
    \end{equation*}
    where we extend $u_0$ to full space when $\calK \neq \0$.

    It then follows from Proposition~\ref{prop:kato-small} that for any $N \geq 0$, \[\begin{aligned}
        \|\p_x^{(\leq N)} v\|_{L^\infty L^2 \cap LE^{\frac12}([0, \infty) \times \R^3)} &\lesssim \|\p_x^{(\leq N)} v_0\|_{L^2} + \|(i\p_t - \calH_{\mathrm{out}})\p_x^{(\leq N)}v\|_{L^1L^2 + (LE^*)^{-\frac12}} \\
        &\lesssim \|\p_x^{(\leq N)} v_0\|_{L^2} + \|\p_x^{(\leq N)}f\|_{L^1L^2 + (LE^*)^{-\frac12}} + \eps_c \|\brk{x}^{-1-2\eta}\p_x^{(\leq N+1)} v\|_{(LE^*)^{\frac12}} \\
        &\lesssim \|\p_x^{(\leq N)} v_0\|_{L^2} + \|\p_x^{(\leq N)}f\|_{L^1L^2 + (LE^*)^{-\frac12}} + \eps_c \|\p_x^{(\leq N)} v\|_{(LE)^{\frac12}}, \\
    \end{aligned}
    \]
    where the last step follows from \cite[Remark 5]{Tataru}.
    Then one can absorb the last term to the left-hand side and hence \begin{equation}\label{eq:iled*-Hout}
        \|\p_x^{(\leq N)}v\|_{L^\infty L^2 \cap LE^{\frac12}} \lesssim \|\p_x^{(\leq N)} v_0\|_{L^2} + \|\p_x^{(\leq N)}f\|_{L^1L^2 + (LE^*)^{-\frac12}}, \quad \forall N \geq 0.
    \end{equation}
    Set $w := u - \eta_0 v$, where $\eta_0$ is given by \eqref{eq:defn-eta_0}. Then $w$ solves  \begin{equation}\label{eq:iled*-H-w}
	   (i \rd_{t} - \calH) w = (1 - \eta_0)f + \eta_0(\calH - \calH_{\mathrm{out}}) v + [\calH, \eta_0]v, \quad w(0) = (1 - \eta_0) u_0, \quad w|_{[0, \infty) \times \p\calK} = 0,
    \end{equation}
    where the right-hand side is of compact support in $x$.

    {\it Case 1: $\calK = \0$. }
    Due to the zero initial data and the definition of $\calM$, we know that $w$ is a forward solution and hence \ref{hyp:iled-0} is applicable. We commute the equation with $\p_x^{(\leq N)}$ and use the multiplier $\p_x^{(\leq N)}\br w$, which leads to (an energy estimate with possible derivative loss)
    \begin{equation}
        \begin{aligned}\label{eq:iled*-w-eng}
            \|\p_x^{(\leq N)} w\|_{L^\infty L^2} &\aleq_R \|\brk{x}^{-1-2\eta}\p_x^{(\leq N+1)} w\|_{L^2 L^2} + \|\p_x^{(\leq N)} (\calH - \calH_{\mathrm{out}}) v\|_{L^2L^2}^{\frac12} \|\brk{x}^{-1}\p_x^{(\leq N)} w\|_{L^2 L^2}^{\frac12} \\
            &\aleq_R \|\brk{x}^{-1}\p_x^{(\leq N+1)} w\|_{L^2 L^2} + \|\p_x^{(\leq N + \dlt_s + 1)}  v\|_{L^2L^2}, \quad \forall 0 \leq N \leq M_c.
        \end{aligned}
    \end{equation}
    We first localize the first term on the right-hand side by considering \[
        (i \rd_{t} - \calH_{\mathrm{out}}) \chi_{> 2R} w = - [\calH, \chi_{>2R}]w,
    \]
    where $\chi_{> 2R}$ is such that $\calH = \calH_{\mathrm{out}}$ on $\supp \chi_{> 2R}$.
    Then this can be treated by Proposition~\ref{prop:kato-small}. Hence, an estimate similar to \eqref{eq:iled*-Hout} could be derived: \begin{equation}\label{eq:iled*-Hout-cut-off}
        \|\brk{x}^{-1}\p_x^{(\leq N+1)} \chi_{>2R}w\|_{L^2 L^2} \aleq_R  \|\p_x^{(\leq N + 2)} w\|_{L^2L^2(r \aeq R)}, \quad \forall N \geq 0.
    \end{equation}
    Combining with \eqref{eq:iled*-w-eng}, it yields that \begin{align}\label{eq:iled*-w-eng-update}
        \|\p_x^{(\leq N)} w\|_{L^\infty L^2}
        &\aleq_R \|\brk{x}^{-1}\p_x^{(\leq N + 2)} w\|_{L^2 L^2(r \aleq R)} + \|\p_x^{(\leq N + \dlt_s + 1)}  v\|_{L^2L^2}, \quad \forall 0 \leq N \leq M_c.
    \end{align}
    To handle the first term on the right-hand side, we apply \ref{hyp:iled-0} to \eqref{eq:iled*-H-w}. Thus, \begin{align}\label{eq:iled*-Hin-L2L2}
        \|\p_x^{(\leq N+2)} w\|_{L^2 L^2(r \lesssim R)} &\lesssim_R \|\p_x^{(\leq N + 2 + s_c)} (\calH - \calH_{\mathrm{out}}) v\|_{L^2L^2}\\ &\lesssim_R  \|\p_x^{(\leq N + s_c + \delta_s + 3)} v\|_{L^2L^2(r \aleq R)}, \quad \forall 0 \leq N \leq M_c - 2 - s_c.
    \end{align}

    Combining \eqref{eq:iled*-Hin-L2L2} and \eqref{eq:iled*-w-eng-update}, it yields that \begin{align}\label{eq:iled*-H-w-cpt-R}
        \|\p_x^{(\leq N)} w\|_{L^\infty L^2} \aleq \|\p_x^{(\leq N + s_c + \delta_s + 3)}v\|_{L^2L^2(r\aleq R)}, \quad \forall 0 \leq N \leq M_c - 2 - s_c.
    \end{align}
    Together with
    \eqref{eq:iled*-Hout}, the result follows.

    {\it Case 2: $\calK \neq \0$. }
    Since we allow for loss of derivatives on the right-hand side, it suffices to handle $N \in 2\bbZ_{\geq 0}$ while odd cases follow from interpolation, where we might experience a further derivative loss.
    Consider \[
        (i\p_t - \calH) w_1 = 0, \quad w_1(0) = u_0 - \eta_0 v_0, \quad w_1|_{[0, \infty) \times \p\calK} = 0.
    \]
    Note that $w_1 \equiv 0$ in the case that $\calK = \0$. We use $\p_t$ as commutator and derive the equation that $\p_t^k w_1$ solves with $k \leq \frac{N}{2}$. We examine the initial condition by restricting the following computation at $t = 0$: \begin{align*}
        \p_t^{k} w_1 &= -i\p_t^{k-1} \calH w_1 = -i[\p_t^{k-1}, \calH] w_1 - \calH \p_t^{k-2} \calH w_1 = \cdots \\
        &= O\big(\p_t^{(\leq k-1)}\p_x^{(\leq 2k-1)} a^{\mu\nu}, \p_t^{(\leq k-1)}\p_x^{(\leq 2k-1)} b^\mu, \p_t^{(\leq k-1)}\p_x^{(\leq 2k-2)} c \big) O(\p_x^{(\leq 2k)}w_1).
    \end{align*}
    Hence, applying \ref{hyp:en} to the equation which $\p_t^k w_1$ satisfies, one could iteratively (in $k$) obtain
    \begin{align*}
        \|\p_t^{(\leq \frac{N}{2})}w_1\|_{L^\infty L^2} \aleq \|\p_t^{(\leq \frac{N}{2})}w_1(0)\|_{L^2} \aleq \|\p_x^{(\leq N)}u_0\|_{L^2},
    \end{align*}
    where \ref{hyp:ult-stat} and \ref{hyp:af} are used in the first and second step, respectively.

    On the other hand, we set $w_2 = w - w_1$. To derive an analog of \eqref{eq:iled*-w-eng} and \eqref{eq:iled*-w-eng-update}, we consider the equation satisfied by $\p_t^{(\leq \frac{N}{2})} w_2$ instead of $\p_x^{(\leq N)} w_2$. Then we are able to bound $\|\p_t^{(\leq \frac{N}{2})} w_2\|_{L^\infty L^2}$ by \begin{align*}
        \|\p_t^{(\leq \frac{N}{2})} w_2\|_{L^\infty L^2} &\aleq \|\brk{x}^{-1}\p_t^{(\leq \frac{N}{2})} \p_x^{(\leq 1)} w_2\|_{L^2 L^2} + \|\p_t^{(\leq \frac{N}{2})}\p_x^{(\leq \dlt_s + 1)} v\|_{L^2L^2} + \|\brk{x}\p_t^{(\leq \frac{N}{2})}f\|_{L^2L^2} \\
        &\aleq \|\p_t^{(\leq \frac{N}{2})} \p_x^{(\leq 2)} w_2\|_{L^2 L^2(r \aleq R)} + \|\p_t^{(\leq \frac{N}{2})}\p_x^{(\leq \dlt_s + 2)} v\|_{L^2L^2} + \|\brk{x}\p_t^{(\leq \frac{N}{2})}\p_x^{(\leq 1)}f\|_{L^2L^2} \\
        &\aleq \|\p_x^{(\leq N + 2)} w_2\|_{L^2 L^2(r \aleq R)} + \|\p_t^{(\leq \frac{N}{2})}\p_x^{(\leq \dlt_s + 2)} v\|_{L^2L^2} + \|\brk{x} \p_t^{(\leq \frac{N}{2})} \p_x^{(\leq 1)} f\|_{L^2L^2} \\
        &\aleq \|\p_x^{(\leq N + 2)} w_2\|_{L^2 L^2(r \aleq R)} + \|\p_x^{(\leq N + \dlt_s + 2)} v\|_{L^2L^2} + \|\brk{x} \p_t^{(\leq \frac{N}{2})} \p_x^{(\leq \dlt_s + 2)} f\|_{L^2L^2}
    \end{align*}
    where we simply use the equations for $w_2$ and $v$ to convert time derivatives to spatial derivatives. Now we handle the $L^2L^2(r \aleq R)$ term in the same way as \eqref{eq:iled*-Hin-L2L2} in the obstacle-free case by applying \ref{hyp:iled} :
    \begin{align*}
        &\quad\|\p_x^{(\leq N+2)} w_2\|_{L^2 L^2(r \lesssim R)} \lesssim_R \sum_{2i + j \leq N + 2}\|\p_t^{(\leq i)}\p_x^{(\leq j + s_c)} (\p_x^{(\leq \dlt_s + 1)}v + f)\|_{L^2L^2(r \aleq R)}\\ &\lesssim_R \sum_{2i + j \leq N + 2} \|\p_t^{(\leq i)}\p_x^{(\leq j + s_c + \delta_s + 1)} v\|_{L^2L^2(r \aleq R)} + \|\p_t^{(\leq i)}\p_x^{(\leq j + s_c)} f\|_{L^2L^2(r \aleq R)}, \quad \forall 0 \leq N \leq M_c - 2 - s_c,
    \end{align*}
    which in turn gives \[
        \|\p_t^{(\leq \frac{N}{2})} w_2\|_{L^\infty L^2} \aleq \|\p_x^{(\leq N + s_c^*)} v\|_{L^2L^2(r \aleq R)} + \sum_{2i + j \leq N + s_c^*}\|\brk{x} \p_t^{(\leq i)}\p_x^{(\leq j)} f\|_{L^2L^2}, \quad \forall 0 \leq N \leq  M_c - 2 - s_c.
    \]
    Iteratively applying Lemma~\ref{lem:unweight-ell} as in the proof of Lemma~\ref{lem:pass-pt-to-px}, we derive that \begin{align*}
        \|\p_x^{(\leq N)} w_2\|_{L^\infty L^2} &\aleq \|\p_t^{(\leq \frac{N}{2})} w_2\|_{L^\infty L^2} + \sum_{2i + j \leq N} \|\p_t^{(\leq i)} \p_x^{(\leq j)} (f + \p_x^{(\leq \dlt_s + 1)} v)\|_{L^\infty L^2} \\
        &\aleq  \|\p_t^{(\leq \frac{N}{2})} w_2\|_{L^\infty L^2} + \sum_{2i + j \leq N} \|\p_t^{(\leq i + 1)} \p_x^{(\leq j)} f\|_{L^2 L^2} + \|\p_t^{(\leq i)} \p_x^{(\leq j + \dlt_s + 1)} v\|_{L^\infty L^2},
    \end{align*}
    where we use Sobolev embedding in the last step with a uniform constant for $T > 1$.

    Together with the preceding estimates, we conclude the proof.
\end{proof}

% ----------------------------------------------------------------
% \bibliographystyle{amsplain}
\bibliographystyle{alpha}
\bibliography{bibliography}
% ----------------------------------------------------------------

\end{document}